\documentclass[12pt,reqno]{amsart}

\usepackage[dvipsnames]{xcolor}
\usepackage{xspace}

\usepackage{fullpage}
\usepackage{amscd}
\usepackage{tikz}
\usepackage{tikz-cd}
\usepackage{tikz}
\usetikzlibrary{decorations.pathreplacing}
\usetikzlibrary{decorations.markings}
\usetikzlibrary{calc}
\usepackage[matrix,arrow,curve,frame]{xy}
\usepackage{xy}

\usepackage{hyperref}
\usepackage{amsmath,amsthm,amssymb,mathrsfs}

\newcommand\myshade{85}
\colorlet{mylinkcolor}{Aquamarine}
\colorlet{mycitecolor}{YellowOrange}
\colorlet{myurlcolor}{violet}

\hypersetup{
	linkcolor  = Mahogany,
	citecolor  = magenta,
	urlcolor   = myurlcolor!\myshade!black,
	colorlinks = true,
}

\newcommand\doi[2]{\href{http://dx.doi.org/#1}{#2}}

\newtheorem{thm}{Theorem}[section]
\newtheorem{mthm}{Main Theorem}
\newtheorem{cor}[thm]{Corollary}
\newtheorem{lemma}[thm]{Lemma}
\newtheorem{keylemma}[thm]{Key Lemma}
\newtheorem{introkeylemma}{Key Lemma}

\newtheorem{prop}[thm]{Proposition}
\newtheorem{conj}[thm]{Conjecture}
\theoremstyle{definition}
\newtheorem{defn}[thm]{Definition}
\newtheorem{remark}[thm]{Remark}


\newtheorem{rema}[thm]{Remark}

\newcommand{\cA}{\mathcal{A}}
\newcommand{\cB}{\mathcal{B}}
\newcommand{\cC}{\mathcal{C}}
\newcommand{\cD}{\mathcal{D}}
\newcommand{\cF}{\mathcal{F}}
\newcommand{\cG}{\mathcal{G}}

\newcommand{\cI}{\mathcal{I}}

\newcommand{\cM}{\mathcal{M}}
\newcommand{\cP}{\mathcal{P}}

\newcommand{\cR}{\mathcal{R}}
\newcommand {\cU}{\mathcal{U}}
\newcommand{\cV}{\mathcal{V}}
\newcommand {\cW}{\mathcal{V}}
\newcommand{\cY}{\mathcal{Y}}
\newcommand{\cZ}{\mathcal{Z}}

\newcommand {\cVec}{\mathcal{V}ec}
\newcommand{\HHom}{\mathrm{Hom}}
\newcommand{\intHom}{\underline{\HHom}}
\newcommand {\cCop}{\mathcal{C}^{\mathrm{op}}}
\newcommand {\cCrev}{\mathcal{C}^{\mathrm{rev}}}
\newcommand{\obj}{\mathrm{Obj}}
\newcommand{\irr}{\mathrm{Irr}}

\newcommand{\vac}{\mathbf{1}}

\newcommand{\Hom}[2]{\text{Hom}_{#1}\left( #2 \right)}

\newcommand{\NN}{\mathbb{N}}
\newcommand{\ZZ}{\mathbb{Z}}
\newcommand{\CC}{\mathbb{C}}
\newcommand{\one}{\mathbf{1}}
\newcommand{\Id}{\mathrm{Id}}

\DeclareMathOperator{\LHS}{LHS}
\DeclareMathOperator{\RHS}{RHS}

\DeclareMathOperator{\fin}{fin}

\DeclareMathOperator{\rep}{Rep}
\DeclareMathOperator{\rev}{rev}

\newcommand{\voa}{vertex operator algebra\xspace}
\newcommand{\voas}{vertex operator algebras\xspace}
\newcommand{\fus}{\boxtimes}
\newcommand{\pfus}[1]{\boxtimes_{P( #1 )}}
\newcommand{\alg}[1]{\mathsf{#1}}
\newcommand{\aA}{{\alg{A}}}
\newcommand{\aU}{{\alg{U}}}
\newcommand{\aW}{{\alg{V}}}
\newcommand{\aC}{{\alg{C}}}

\newcommand{\Mod}[1]{\mathsf{#1}}

\newcommand{\mI}{{\Mod{I}}}
\newcommand{\mM}{{\Mod{M}}}
\newcommand{\mN}{{\Mod{N}}}
\newcommand{\mU}{{\Mod{U}}}
\newcommand{\mW}{{\Mod{V}}}
\newcommand{\mX}{{\Mod{X}}}
\newcommand{\mY}{{\Mod{Y}}}
\newcommand{\mZ}{{\Mod{Z}}}
\newcommand{\aV}{{\Mod{V}}}

\newcommand{\repA}{{\rep\aA}}
\usepackage{bbm}

\numberwithin{equation}{section}

\title{Gluing Vertex Algebras}
\author{Thomas Creutzig, Shashank Kanade and Robert McRae}
\date{}

\address{(TC) Department of Mathematical and Statistical Sciences, University of Alberta, Edmonton, Alberta T6G 2G1, Canada}

 \email{creutzig@ualberta.ca}

 \address{(SK) Department of Mathematics, University of Denver, Denver, CO 80208, USA}
\email{\texttt{shashank.kanade@du.edu}}
 
 \address{(RM) Department of Mathematics,
Vanderbilt University, 
Nashville, Tennessee 37240, USA}
\curraddr{Yau Mathematical Sciences Center, Tsinghua University, Beijing 100084, China}
  \email{rhmcrae@tsinghua.edu.cn}

\allowdisplaybreaks

\begin{document}

\begin{abstract}
  We relate commutative algebras in braided tensor categories to braid-reversed tensor equivalences, motivated by vertex algebra representation theory.
  First, for $\mathcal{C}$ a braided tensor category, we give a detailed account of the canonical algebra construction in the Deligne product $\mathcal{C}\boxtimes\mathcal{C}^{\mathrm{rev}}$. Especially, we show that if $\mathcal{C}$ 
  is semisimple but not necessarily finite or rigid, then 
  $\bigoplus_{\mX\in\mathrm{Irr}(\mathcal{C})} \mX'\boxtimes\mX$ is a commutative algebra, where $\mX'$ is a representing object 
  for the functor $\mathrm{Hom}_{\mathcal{C}}(\bullet\otimes_{\cC}\mX,\mathbf{1}_\cC)$ (assuming $\mX'$ exists) and the sum runs over all inequivalent simple objects of $\mathcal{U}$. 
  Conversely, let $\aA=\bigoplus_{i\in I} 
  \mU_i\boxtimes\mW_i$ be a simple commutative algebra in a Deligne product $\mathcal{U}\boxtimes\mathcal{V}$ 
  with $\mathcal{U}$ semisimple and rigid but not necessarily finite, and $\mathcal{V}$ 
  rigid but not necessarily semisimple. We show that if the unit objects $\mathbf{1}_{\mathcal{U}}$ and $\mathbf{1}_{\mathcal{V}}$ form a commuting pair in $\aA$ in a suitable sense, then 
  there is a braid-reversed equivalence between (sub)categories of $\mathcal{U}$ 
  and $\mathcal{V}$ that sends $\mU_i$ to $\mW_i^*$.

  These results apply when $\mathcal{U}$ and $\mathcal{V}$ are braided (vertex) tensor categories of modules for
  simple vertex operator algebras $\mU$ and $\mW$, respectively: Given $\tau:\mathrm{Irr}(\mathcal{U})\rightarrow\mathrm{Obj}(\mathcal{V})$ such that $\tau(\mU)=\mW$, we glue $\mU$ and $\mW$ along $\mathcal{U}\boxtimes\mathcal{V}$ via $\tau$ to create $\aA=\bigoplus_{\mX\in\mathrm{Irr}(\mathcal{U})} \mX'\otimes\tau(\mX)$. Then under certain conditions, $\tau$ extends to a braid-reversed equivalence between $\mathcal{U}$ and $\mathcal{V}$ if and only if $\aA$ has a simple conformal vertex algebra structure that (conformally) extends $\mU\otimes \mW$. 
  As examples, we glue suitable Kazhdan-Lusztig 
  categories at generic levels to construct new vertex algebras extending the tensor product of two affine vertex subalgebras, and we prove braid-reversed equivalences between certain module subcategories for affine vertex algebras and $W$-algebras at admissible levels.
 %
%
%
 %
 %
 \end{abstract}

\maketitle

\tableofcontents

\section{Introduction}

We study the relation between certain types of commutative associative algebra objects in braided tensor categories and braid-reversed equivalences of tensor categories, motivated by  \voa{} theory and its applications to geometry and physics. Commutative associative algebra objects in tensor categories of modules for a vertex operator algebra are the same as \voa{} extensions \cite{HKL}, and such extensions together with equivalences of vertex tensor subcategories are crucial in the context of $S$-duality for four-dimensional supersymmetric $GL$-twisted gauge theories \cite{CG} and the quantum geometric Langlands correspondence \cite{AFO}. In gauge theory, \voas{} are associated to two-dimensional intersections of three-dimensional topological boundary conditions, while categories of \voa{} modules are associated to line defects ending on these boundary conditions. Boundary conditions can be concatenated to form new types of boundary conditions, and the resulting \voas{} are precisely the type of extensions studied in this work. Categories of \voa{} modules appearing in these problems are usually not finite and are often, but not necessarily, semisimple. Thus we derive results in a setting general enough for these applications, especially allowing braided tensor categories to have infinitely many inequivalent simple objects. 

We will now describe our categorical results, followed by \voa{} applications and comments regarding \voa{} theory and existing literature.

\subsection{Tensor category results}

Let $\cC$ and $\cD$ be braided tensor categories and $\tau$ a map from simple objects in $\cC$ to objects in $\cD$. Then we consider objects in a direct sum completion $(\cC \boxtimes \cD)_\oplus$ of the Deligne product $\cC \boxtimes \cD$ of the form
\[
	\aA = \bigoplus_{\mX \in \irr(\cC)} \mX \boxtimes \tau(\mX) \ \in  \ \obj\left((\cC \boxtimes \cD)_\oplus\right).
\]
We aim to prove under suitable conditions on $\cC$ and $\cD$ that a commutative associative algebra structure on $\aA$ is equivalent to a  braid-reversed tensor equivalence between $\cC$ and $\cD$. Thus we ask two questions: under which conditions on $\cC$ and $\cD$ does a commutative algebra object imply a braid-reversed equivalence, and conversely what do we need to assume so that a braid-reversed equivalence yields a commutative associative algebra?

\subsubsection{From braid-reversed equivalences to commutative algebra objects}${}$\vspace{1mm}

It is known  (see for example \cite[Exercise 7.9.9]{EGNO}) that associative algebras can be constructed from module categories; as we could not find a complete proof in the literature, we give this construction in Theorem \ref{thm:assoc}. Essentially, for $\cC$ a (multi)tensor category and $\cM$ a $\cC$-module category such that internal Homs exist, the internal End of an object $\mM$ in $\cM$ can be given the structure of an associative algebra. 
Moreover, when $\cC$ is a braided fusion category, \cite[Lemma 3.5]{DMNO} shows that this algebra is also commutative as long as induction from $\cC$ to $\cM$ is a central functor, that is, it factors through the center of the module category. In Theorem \ref{thm:comm}, we prove this statement in detail without assuming finiteness or semisimplicity.

A useful property in showing that internal Homs exist is rigidity, that is, existence of duals. Unfortunately, braided tensor categories of vertex operator algebra modules are often not known to be rigid. However, we find that internal Homs can exist under the weaker assumption that $\cC$ has a contragredient functor, that is, a contravariant endofunctor $\mX\mapsto\mX'$ such that there is a natural isomorphism
\begin{equation*}
 \HHom_\cC(\mX\otimes\mY,\vac)\cong\HHom_\cC(\mX,\mY')
\end{equation*}
for objects $\mX$, $\mY$ in $\cC$. For vertex algebraic tensor categories, such a functor arises from the contragredient modules of \cite{FHL}, provided that the vertex operator algebra itself is self-contragredient.

With these preparations, we can state our first main result; for precise notation we refer to Section \ref{sec:canonical}. The algebra of this theorem is called the {canonical algebra} in $\cC\boxtimes \cCrev$, where $\cCrev=\cC$ as a tensor category but has reversed braidings. 
\begin{mthm}\label{mthm:canonicalalgebra}
Let $\cC$ be a (not necessarily finite) semisimple braided tensor category with a contragredient functor. Then 
\[
\aA = \bigoplus_{\mX \in \irr(\cC)} \mX' \boxtimes \mX
\]
is a commutative associative algebra in $(\cC\boxtimes \cCrev)_{\oplus}$. If $\cC$ is rigid, then $\aA$ is simple and for simple objects $\mX, \mY, \mZ$ of $\cC$, the multiplication rules are given by $M_{\mX^* \boxtimes \mX, \mY^* \boxtimes \mY^*}^{\mZ^* \boxtimes \mZ}=1$
if and only if $\mZ$ is a summand of $\mX\otimes\mY$.
\end{mthm}

 Since commutative algebras are preserved by braided tensor equivalences, we can restate Main Theorem \ref{mthm:canonicalalgebra} as follows. Let $\cC$ be a semisimple braided tensor category with a contragredient functor, and suppose $\tau:\cC\rightarrow\cD$ is a braid-reversed tensor equivalence (so that $\tau: \cCrev\rightarrow\cD$ is a braided equivalence). Then
 \begin{equation*}
  \aA=\bigoplus_{\mX\in\irr(\cC)} \mX'\fus\tau(\mX)
 \end{equation*}
is a commutative associative algebra in $(\cC\fus\cD)_\oplus$, and if $\cC$ is rigid, then $\aA$ is simple.

\subsubsection{From commutative algebra objects to braid-reversed equivalences} ${}$ \vspace{1mm}

For the converse question, we work in the following setting: 
\begin{enumerate}
\item $\cU$ is a (not necessarily finite) 
semisimple ribbon category, and $\lbrace\mU_i\rbrace_{i\in I}$ is a subset of distinct simple objects in $\cU$ that includes $\mU_0=\vac_\cU$.

\item $\cW$ is a ribbon category. In particular, both $\cU$ and $\cW$ are rigid.
\item We have a simple (commutative, associative, unital) algebra 
\begin{align*}
\aA  
= \bigoplus_{i\in I} \mU_{i}\boxtimes \mW_i.
\end{align*}
in $\cC=\cU\fus\cW$, or $\cC_\oplus$ if $I$ is infinite, where the $\mW_i$ are objects of $\cW$, not assumed to be simple except for $\mW_0=\vac_\mW$.

\item  The tensor units $\one_\cU=\mU_0, \one_\cW=\mW_0$ form a mutually commuting (or dual) pair in $\aA$, in the sense that
\begin{equation*}
\dim\Hom{\cU}{\Mod{U}_0, \Mod{U}_i} =\delta_{i,0} = \dim\Hom{\cW}{\Mod{V}_0, \Mod{V}_i}.
\end{equation*}

\item There is a partition $I=I^0\sqcup I^1$ of the index set such that $0\in I^0$ and for each $i\in I^j$, $j=0,1$, the twist satisfies $\theta_\aA\vert_{\mU_i\fus\mW_i} =(-1)^j\Id_{\mU_i\fus\mW_i}$. In particular, $\theta_{\aA}^2=\Id_\aA$.

\end{enumerate}
Under these conditions, we define $\cU_\aA\subseteq\cU$ and $\cW_\aA\subseteq\cW$ to be the full subcategories whose objects are isomorphic to direct sums of the $\mU_i$ and $\mW_i$, respectively, and prove in Proposition \ref{prop:UA_WA_ribbon} and Theorem \ref{thm:main_bequiv}:
\begin{mthm}
In the setting of this section,
\begin{enumerate}
 \item The categories $\cU_\aA\subseteq\cU$ and $\cW_\aA\subseteq\cW$ are ribbon subcategories. Moreover, $\cW_\aA$ is semisimple with distinct simple objects $\lbrace\mW_i\rbrace_{i\in I}$.
 \item There is a braid-reversed tensor equivalence $\tau:\cU_\aA\rightarrow\cW_\aA$ such that $\tau(\mU_i)\cong\mW_i^*$ for $i\in I$.
\end{enumerate}
\end{mthm}
This theorem relies on the following Key Lemma; here $\cF$ is the the induction functor from $\cC$ to the category $\repA$ of left $\aA$-modules in $\cC$:
\begin{introkeylemma}\label{lem:introkey}
	For all $i\in I$, $\cF(\mU_i\fus\vac_\cW)\cong\cF((\vac_\cU\fus\mW_i)^\ast)$
	in $\repA$.
\end{introkeylemma}

\subsection{Applications to vertex operator algebras}

Our categorical results translate into the following theorem for \voas; see Theorem \ref{thm:VOA} of the main text:
\begin{mthm}\label{mthm:voa}
 Let $\cU$ and $\cW$ be locally finite module categories for simple and self-contragredient vertex operator algebras $\mU$ and $\mW$, respectively, that are closed under contragredients and admit vertex tensor category structure as in \cite{HLZ1}-\cite{HLZ8} and thus also braided tensor category structure. Assume moreover that $\cU$ is semisimple and $\cW$ is closed under submodules and quotients.
 \begin{enumerate}
  \item Suppose $\lbrace\mU_i\rbrace_{i\in I}$ is a set of representatives of equivalence classes of  simple modules in $\cU$ with $\mU_0=\mU$ and $\tau: \cU\rightarrow\cW$ is a braid-reversed tensor equivalence with twists satisfying $\theta_{\tau(\mU_i)} =\pm\tau(\theta_{\mU_i}^{-1})$ for $i\in I$. Then
  \begin{equation*}
   \aA=\bigoplus_{i\in I} \mU_i'\otimes\tau(\mU_i)
  \end{equation*}
is a $\frac{1}{2}\mathbb{Z}$-graded conformal vertex algebra extension of $\mU\otimes\mW$. Moreover, if $\cU$ is rigid, then $\aA$ is simple and the multiplication rules of $\aA$ satisfy $M_{\mU_i'\otimes\tau(\mU_i), \mU_j'\otimes\tau(\mU_j)}^{\mU_k'\otimes\tau(\mU_k)}=1$ if and only if $\mU_k$ occurs as a submodule of $\mU_i\fus\mU_j$.

\item Conversely, suppose $\cU$ and $\cW$ are both ribbon categories, $\lbrace\mU_i\rbrace_{i\in I}$ is a set of distinct simple modules in $\cU$ with $\mU_0=\mU$, and
\begin{equation*}
 \aA=\bigoplus_{i\in I} \mU_i\otimes\mW_i
\end{equation*}
is a simple $\frac{1}{2}\mathbb{Z}$-graded conformal vertex algebra extension of $\mU\otimes\mW$, where the $\mW_i$ are objects of $\cW$  satisfying
\begin{equation*}
 \dim\HHom_\cW(\mW,\mW_i)=\delta_{i,0}
\end{equation*}
and there is a partition $I=I^0\sqcup I^1$ of the index set with $0\in I^0$ and
\begin{equation*}
 \bigoplus_{i\in I^j} \mU_i\otimes\mW_i =\bigoplus_{n\in\frac{j}{2}+\ZZ} \aA_{(n)}
\end{equation*}
for $j=0,1$. Let $\cU_\aA\subseteq\cU$, respectively $\cW_\aA\subseteq\cW$, be the full subcategories whose objects are isomorphic to direct sums of the $\mU_i$, respectively of the $\mW_i$. Then:
\begin{enumerate}
\item $\cU_\aA$ and $\cW_\aA$ are ribbon subcategories of $\cU$ and $\cW$ respectively. Moreover, $\cW_\aA$ is semisimple with distinct simple objects $\lbrace\mW_i\rbrace_{i\in I}$.
\item There is a braid-reversed equivalence $\tau: \cU_\aA\rightarrow\cW_\aA$ such that $\tau(\mU_i)\cong\mW_i'$ for all $i\in I$.
 \end{enumerate}
 \end{enumerate}
\end{mthm}

Conformal vertex algebra extensions as in the first part of the theorem have previously been constructed for certain affine Lie algebra \cite{FS, Zh} and Virasoro \cite{FZ} vertex operator algebras. Also, a closely-related construction due to Huang and Kong \cite{HK, Ko}, starting from braid-equivalent modular tensor categories of representations for vertex operator algebras, yields a conformal full field algebra in the sense of \cite{HK}.
In fact, \cite{Ko} shows that if $\cU$ and $\cW$ are braid-equivalent tensor categories of representations for strongly rational vertex operator algebras $\mU$ and $\mW$, respectively, then conformal full field algebra extensions of $\mU\otimes\mW$ with nondegenerate invariant bilinear form are equivalent to commutative Frobenius algebras in $\cU\fus\cW^{\rev}$ with trivial twist.

The second part of Main Theorem \ref{mthm:voa} (in the case that $\aA$ is $\mathbb{Z}$-graded) has been stated in \cite[Theorem 3.3]{lin} under the assumption that $\mU$ and $\mW$ are strongly rational vertex operator algebras (in particular $I$ is finite in this setting). The proof in \cite{lin} uses semisimplicity of the category $\repA$ of left $\aA$-modules in $\cC$, citing \cite{KO} for this result. However, \cite[Theorem 3.3]{KO} assumes additionally that $\dim_\cC \aA\neq 0$ to prove this semisimplicity, whereas even a modular tensor category can have objects with dimension zero. Relaxing the condition $\dim_\cC\aA \neq 0$ is the work of our Key Lemma \ref{lem:introkey}, so we have in particular filled a gap in \cite{lin}; moreover, we recover the semisimplicity of the category of $\aA$-modules as a consequence, as we now discuss. 

A \voa{} is strongly rational (using terminology from \cite{CGan}) if it is simple, self-contragredient, CFT-type, $C_2$-cofinite, and rational; for such  a \voa{}, the full category of grading-restricted generalized modules is a (semisimple) modular tensor category \cite{Hu4}.
Let $\aV$ be a strongly rational \voa{} and $\aA$ a simple CFT-type \voa{} extension of $\aV$; then $\aA$ is believed to have a modular tensor category of grading-restricted, generalized modules as well.  This indeed follows from Lemma 1.20, Theorem 3.3, and Theorem 4.5 of \cite{KO} as well as \cite[Theorem 3.5]{HKL} (see also \cite[Corollary 3.30]{DMNO}) provided the dimension of $\aA$ as a $\aV$-module is non-zero. Moreover, our previous work \cite[Theorem 3.65]{CKM} shows that this modular tensor category structure is the natural one for module categories of a \voa{}.

Using \cite{Hu4, DJX}, the dimension of $\aA$ in the modular tensor category $\cC$ of $\aV$-modules is strictly positive if all irreducible $\aV$-modules are non-negatively graded, with a non-zero conformal weight $0$ space occurring only in $\aV$ itself. This condition together with the rationality and $C_2$-cofiniteness of $\aV$ ensures that the categorical dimensions of $\aV$-modules are realized by strictly-positive ``quantum dimensions'' defined in terms of characters. But now, we can use the braid-reversed equivalence of Main Theorem \ref{mthm:voa} and \cite[Theorem 2.3]{ENO} to show $\dim_\cC \aA$ is a positive real number without grading-positivity assumptions (see Corollary \ref{cor:dim}):
\begin{cor}
In the setting of Main Theorem \ref{mthm:voa}, assume in addition that $\cU$ and $\cW$ are strongly rational and $\aA$ is a simple CFT-type ($\mathbb{Z}$-graded) \voa{}. Then $\dim_\cC \aA >0$ and $\aA$ is strongly rational; in particular its category of grading-restricted, generalized modules is a semisimple modular tensor category.
\end{cor}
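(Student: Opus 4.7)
The plan is to establish $\dim_\cC\aA>0$ using the braid-reversed equivalence $\tau\colon\cU_\aA\to\cW_\aA$ provided by Main Theorem \ref{mthm:voa}(2), and then feed this positivity into the machinery of \cite{KO,HKL,CKM} to obtain strong rationality of $\aA$. This approach avoids the Huang--Dong--Jiao--Xu route to $\dim_\cC\aA>0$ via positive quantum dimensions, which would require grading-positivity assumptions on $\mU$- and $\mW$-modules not included in the hypotheses.

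For the dimension calculation, since $\mU$ and $\mW$ are strongly rational, $\cU$ and $\cW$ are modular tensor categories with finitely many simple objects, so the index set $I$ is finite. By multiplicativity of categorical dimension under $\fus$,
\begin{equation*}
\dim_\cC\aA=\sum_{i\in I}\dim_\cU(\mU_i)\cdot\dim_\cW(\mW_i).
\end{equation*}
Because $\tau$ is a tensor equivalence (reversal of the braiding being irrelevant for dimensions), $\dim_\cW(\tau(\mU_i))=\dim_\cU(\mU_i)$, and since $\mW_i\cong\tau(\mU_i)'$ while dimension is invariant under duality in a ribbon category, this yields $\dim_\cW(\mW_i)=\dim_\cU(\mU_i)$. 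Hence
\begin{equation*}
\dim_\cC\aA=\sum_{i\in I}\bigl(\dim_\cU(\mU_i)\bigr)^2.
\end{equation*}
This is the global dimension of the ribbon subcategory $\cU_\aA\subseteq\cU$; it is nonzero by \cite[Theorem 2.3]{ENO}, and since each $\dim_\cU(\mU_i)$ is a nonzero real number in the modular tensor category $\cU$ (invertibility of $S$ plus sphericity), the sum is in fact strictly positive.

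With $\dim_\cC\aA>0$ in hand, \cite[Lemma 1.20, Theorem 3.3, Theorem 4.5]{KO} imply that $\repA$ is a semisimple modular tensor category, and via \cite[Theorem 3.5]{HKL} together with \cite[Theorem 3.65]{CKM} this is equivalent as a braided tensor category to the category of grading-restricted generalized $\aA$-modules. Semisimplicity gives rationality; self-duality of the unit in the rigid category $\repA$ gives $\aA\cong\aA'$ and hence self-contragredience; and $C_2$-cofiniteness of $\aA$ follows from standard results on finite extensions of $C_2$-cofinite \voas. Combined with the hypotheses that $\aA$ is simple and CFT-type, this establishes strong rationality. The main conceptual point is the positivity step: the braid-reversed equivalence $\tau$ allows rewriting $\dim_\cC\aA$ as a manifestly positive sum of squares in $\cU$ alone, bypassing any assumption on conformal weights.
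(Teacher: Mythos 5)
You take essentially the same route as the paper: compute $\dim_\cC\aA$ via the braid-reversed equivalence from Main Theorem \ref{mthm:voa}(2), establish positivity, and then feed this into \cite{KO} and the subsequent VOA-theoretic arguments; your final strong-rationality deduction (self-contragredience, $C_2$-cofiniteness via \cite{Li-forms,ABD}, rationality from semisimplicity via \cite{CM,McRae}) matches the paper. The only real difference is how the positivity is justified. You reduce $\dim_\cC\aA$ to $\sum_i(\dim_\cU\mU_i)^2$ by asserting $\dim_\cW(\tau(\mU_i))=\dim_\cU(\mU_i)$. The identity is correct in this setting, but your parenthetical justification---``reversal of the braiding being irrelevant for dimensions''---is not a proof: categorical dimension in a ribbon category is determined by the pivotal structure built out of $\cR$ and $\theta$, and $\tau\colon\cU_\aA\to\cW_\aA$ is only a braid-reversed tensor equivalence, not a ribbon functor for the given twists. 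It becomes a ribbon equivalence $\cU_\aA\to\cW_\aA^{\rev}$ precisely because the $\ZZ$-gradedness of $\aA$ forces $\theta_{\mW_i}=\theta_{\mU_i}^{-1}$, and one must then separately verify that dimensions computed in $\cW^{\rev}$ (with twist $\theta_\cW^{-1}$) agree with dimensions computed in $\cW$, and finally invoke sphericity to conclude each $\dim_\cU\mU_i$ is a nonzero real. The paper makes this bookkeeping explicit: it writes $\dim_\cW(\mW_i)=\dim_{\cU^{\rev}}(\mU_i')$, identifies the latter with the trace $\mathrm{Tr}_{\mU_i'}((\delta_{\mU_i}^{-1})')$, and applies \cite[Theorem 2.3]{ENO} directly to obtain $\mathrm{Tr}_{\mU_i}(\delta_{\mU_i})\,\mathrm{Tr}_{\mU_i'}((\delta_{\mU_i}^{-1})')>0$ term by term, with no separate appeal to sphericity. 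Both arguments rest on the same \cite{ENO} positivity result and both are valid for modular tensor categories; the paper's version simply makes rigorous the twist-reversal step that your proposal elides, and it is what lets the paper prove Theorem \ref{thm:dim} for arbitrary braided fusion module categories before specializing to the strongly rational case in this corollary.
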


In fact, the calculation of $\dim_\cC \aA$ does not require rationality or $C_2$-cofiniteness for either $\aU$ or $\aV$, so we prove more generally that if the categories $\cU$ and $\cW$ in Main Theorem \ref{mthm:voa} are braided fusion categories, then the category of grading-restricted generalized  $\aA$-modules in $\cC$ is also braided fusion; see Theorem \ref{thm:dim} in the main text for details. In our setting, this removes the grading-positivity assumptions from \cite[Theorem 3.5]{HKL}.

A second corollary of Main Theorem \ref{mthm:voa} relates to the multiplication rules of the extension $\aU\otimes\aV\subseteq\aA$. In general, if $\aA= \bigoplus_{i\in I} \aV_i$ is an extension of a  \voa{} $\aV=\aV_0$ by indecomposable $\aV$-modules $\aV_i$, define the multiplication rule $M_{i, j}^k$ to be $1$ if $\aV_k$ is contained in the operator product algebra of fields of $\aV_i$ with fields of $\aV_j$; otherwise the multiplication rule is $0$. It is clear that $M_{i, j}^k=0$ if the fusion rule $N_{i, j}^k=0$; a question raised in private communication by Chongying Dong is for which extensions the converse is also true. In our setting, this is precisely the content of the statements on multiplication rules in Main Theorems \ref{mthm:canonicalalgebra} and \ref{mthm:voa}. Thus we can rephrase these statements as follows:
\begin{cor}
In the setting of part (1) of Main Theorem \ref{mthm:voa}, and assuming $\cU$ is rigid, the multiplication rules for the canonical algebra
\begin{equation*}
 \aA=\bigoplus_{i\in I} \mU_i^*\otimes\tau(\mU_i)
\end{equation*}
are $0$ if and only if the corresponding fusion rules are $0$.
\end{cor}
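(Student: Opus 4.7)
The plan is to reduce this corollary to a direct restatement of the multiplication-rule part of Main Theorem \ref{mthm:voa}(1), using semisimplicity of $\cU$. The vanishing of $N_{i,j}^k$ (where $N_{i,j}^k=\dim\HHom_\cU(\mU_k,\mU_i\fus\mU_j)$) is, by semisimplicity of $\cU$, equivalent to the statement that $\mU_k$ does not occur as a summand of $\mU_i\fus\mU_j$, and equivalent (again by semisimplicity) to $\mU_k$ not occurring as a submodule of $\mU_i\fus\mU_j$.

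First I would handle the trivial direction $N_{i,j}^k=0\Rightarrow M_{i,j}^k=0$: if the $\mU\otimes\mW$-module summands of $\aV_i\cdot\aV_j\subseteq\aA$ (in the sense of operator product) all lie in $\bigoplus_\ell\aV_\ell$, then any nonzero contribution to $\aV_k=\mU_k'\otimes\tau(\mU_k)$ produces, via the action of intertwining operators, a nonzero $\mU\otimes\mW$-module map from a quotient of $\aV_i\fus(\mU\otimes\mW)\aV_j\cong(\mU_i\fus\mU_j)'\otimes(\tau(\mU_i)\fus\tau(\mU_j))$ to $\aV_k$, hence a nonzero map $\mU_k\to\mU_i\fus\mU_j$ on the $\cU$-factor; this forces $N_{i,j}^k\neq 0$.

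Second, for the nontrivial direction I would invoke Main Theorem \ref{mthm:voa}(1) directly: under the rigidity assumption, the multiplication-rule equality $M_{\mU_i'\otimes\tau(\mU_i),\mU_j'\otimes\tau(\mU_j)}^{\mU_k'\otimes\tau(\mU_k)}=1$ is stated there to be equivalent to $\mU_k$ occurring as a submodule of $\mU_i\fus\mU_j$. By the semisimplicity observation above, this is equivalent to $N_{i,j}^k\neq 0$. Combining the two directions yields the equivalence $M_{i,j}^k=0\iff N_{i,j}^k=0$.

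The only point requiring care is to match the categorical multiplication rules of Main Theorem \ref{mthm:canonicalalgebra} (defined via nonvanishing of structure constants of the algebra object $\aA$ in $(\cU\fus\cW)_\oplus$) with the operator-product multiplication rules in the \voa{} sense. This identification is effectively built into the translation from commutative algebra objects to conformal vertex algebra extensions furnished by \cite{HKL} and the construction summarized in Main Theorem \ref{mthm:voa}; since both multiplication rules are read off from the same intertwining operator structure on $\aA$, no additional argument beyond citing the main theorem is needed. I do not expect any real obstacle, since the corollary is purely a reformulation once the main theorems are in hand.
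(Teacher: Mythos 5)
Your proposal is correct and matches the paper's own (implicit) argument: the paper treats this corollary as an immediate rephrasing of the multiplication-rule clause in Main Theorem \ref{mthm:voa}(1), together with the noted triviality that $N_{i,j}^k=0$ forces $M_{i,j}^k=0$ and the semisimplicity observation that ``$\mU_k$ occurs as a submodule of $\mU_i\fus\mU_j$'' is the same as $N_{i,j}^k\neq 0$. Your added sketch of the trivial direction is harmless but not needed beyond the one-line remark already in the paper.
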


\subsection{Examples}

We illustrate our results in two examples; the first illustrates both parts of Main Theorem \ref{mthm:voa} and shows the importance of allowing categories with infinitely many simple objects. 

\subsubsection{Vertex algebras for \texorpdfstring{$S$}{S}-duality and Kazhdan-Lusztig categories at generic level}

The conjecture below is the physics prediction of \voas associated to the intersection of so-called Dirichlet boundary conditions and their general $S$-duals. These \voas{} are claimed in \cite{CG} to play a role as quantum geometric Langlands kernel \voas.

Let $\mathfrak{g}$ be a simple Lie algebra with dominant integral weights $P_+$, and let $\widehat{\mathfrak{g}}$ be the affine Lie algebra associated to $\mathfrak{g}$. For $\lambda\in P_+$ and a level $k\in\CC$, let $\mathbb{L}_{k}(\lambda)$ denote the irreducible highest-weight $\widehat{\mathfrak{g}}$-module with highest weight $\lambda$ at level $k$. For irrational levels $k$, the modules $\mathbb{L}_k(\lambda)$ form the simple objects of the Kazhdan-Lusztig category KL$_k$ of finite-length modules for the simple affine vertex operator algebra $\mathbb{L}_k(\mathfrak{g})$. By \cite{KL1}-\cite{KL4} and \cite{Zha} (see \cite{H3} for a review), KL$_k$ is a semisimple rigid vertex tensor category. Now the following conjecture is known to be true for $n=0$ as we will explain in a moment and also for $\mathfrak g=\mathfrak{sl}_2$ and $n=1$ \cite{CG} and $n=2$ \cite{CGL}:
\begin{conj}\cite[Conjecture 1.1]{CG}
Let $n$ be a non-negative integer, let $P^+_n$ be the set of dominant weights $\lambda$ such that $n\lambda^2$ is an integer, and let $\psi, \psi'$ be generic complex numbers satisfying 
\[
\frac{1}{\psi} +\frac{1}{\psi'} =n.
\]
Then the object 
\[
A^n[\mathfrak g, \psi] := \bigoplus_{\lambda\in P_n^+} \mathbb L_{\psi-h^\vee}(\lambda) \otimes  \mathbb L_{\psi'-h^\vee}(\lambda)
\]
can be given the structure of a simple vertex operator superalgebra. 
\end{conj}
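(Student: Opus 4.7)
\medskip
\noindent\textit{Proof proposal.} The plan is to apply part (1) of Main Theorem \ref{mthm:voa} to $\mU = L_{\psi-h^\vee}(\mathfrak g)$ and $\mW = L_{\psi'-h^\vee}(\mathfrak g)$, with $\cU$ and $\cW$ the subcategories of the respective (generic-level) Kazhdan--Lusztig categories whose objects are direct sums of the Weyl modules $\mathbb L_{\psi-h^\vee}(\lambda)$, respectively $\mathbb L_{\psi'-h^\vee}(\lambda)$, for $\lambda \in P_n^+$.

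First I would verify the hypotheses of the main theorem on $\cU$ and $\cW$. At generic level the full Kazhdan--Lusztig category of $L_{\psi-h^\vee}(\mathfrak g)$ is a semisimple rigid ribbon tensor category whose simples are the Weyl modules, carries vertex tensor category structure in the sense of \cite{HLZ1}-\cite{HLZ8}, and is closed under contragredients; the affine VOA at generic level is simple, self-contragredient and of CFT-type. The genuinely combinatorial point is that $P_n^+$ is closed under fusion: if $\lambda, \mu \in P_n^+$ and $\nu$ appears as a summand of the $\mathfrak g$-tensor product $V_\lambda \otimes V_\mu$, then $\nu - \lambda - \mu$ lies in the root lattice $Q$, so $n \nu^2 - n\lambda^2 - n\mu^2 - 2n\langle \lambda + \mu, \nu - \lambda - \mu \rangle \in n\langle Q, Q\rangle \subseteq \mathbb{Z}$, from which $n\nu^2 \in \mathbb Z$ follows using that $n \langle P^+, Q\rangle \subseteq \mathbb Z$ by definition of $P_n^+$.

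Second, I would construct a braid-reversed tensor equivalence $\tau : \cU \to \cW$ sending $\mathbb L_{\psi-h^\vee}(\lambda) \mapsto \mathbb L_{\psi'-h^\vee}(\lambda^\ast)$. The strategy is to use the Kazhdan--Lusztig equivalence at generic level to identify $\cU$ with a semisimple subcategory of $\mathrm{Rep}\, U_q(\mathfrak g)$ at $q = e^{i\pi/(r\psi)}$, and $\cW$ with the analogous subcategory of $\mathrm{Rep}\, U_{q'}(\mathfrak g)$ at $q' = e^{i\pi/(r\psi')}$. The relation $1/\psi + 1/\psi' = n$ gives $qq' = e^{i\pi n/r}$, so up to a sign determined by the parity of $n$ one has $q' = q^{-1}$. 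Combining the two Kazhdan--Lusztig equivalences with the canonical braid-reversed equivalence $\mathrm{Rep}\, U_q(\mathfrak g) \simeq (\mathrm{Rep}\, U_{q^{-1}}(\mathfrak g))^{\mathrm{rev}}$ (induced by inverting the universal $R$-matrix) yields $\tau$.

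Third, the twist condition of Main Theorem \ref{mthm:voa}(1) reduces to an explicit check: the conformal weight of $\mathbb L_{\psi-h^\vee}(\lambda) \otimes \mathbb L_{\psi'-h^\vee}(\lambda)$ equals
\[
\frac{\langle\lambda,\lambda+2\rho\rangle}{2\psi} + \frac{\langle\lambda,\lambda+2\rho\rangle}{2\psi'} = \frac{n\langle\lambda,\lambda+2\rho\rangle}{2} = \frac{n\lambda^2}{2} + n\langle\rho,\lambda\rangle,
\]
and since $\langle\rho,\lambda\rangle \in \mathbb Z$ for dominant integral $\lambda$, this lies in $\tfrac12 \mathbb Z$ precisely when $n\lambda^2 \in \mathbb Z$, i.e.\ when $\lambda \in P_n^+$; the parity partitions $P_n^+ = I^0 \sqcup I^1$ as required. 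Applying Main Theorem \ref{mthm:voa}(1) then gives the simple $\tfrac12 \mathbb Z$-graded conformal vertex algebra structure on $A^n[\mathfrak g,\psi]$, which is a vertex operator superalgebra.

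The main obstacle is Step 2. Although Kazhdan--Lusztig equivalence at generic level is classical, turning the numerical coincidence $q' \sim q^{-1}$ into an actual braid-reversed equivalence of \emph{vertex tensor categories} — as needed to invoke Main Theorem \ref{mthm:voa}, which is stated for \voa-module categories rather than abstract quantum group categories — requires nontrivial compatibility between the KZ/HLZ braidings and the quantum group $R$-matrices across both levels, and in the non-simply laced case one must track the lacing number $r$ carefully. Delicate sign issues in the $n$-odd case, which are responsible for the super (rather than ordinary) VOA conclusion, also need to be handled here.
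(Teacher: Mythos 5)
Your high-level architecture (apply part (1) of Main Theorem \ref{mthm:voa} to the generic-level Kazhdan--Lusztig categories, after establishing a braid-reversed equivalence) matches the paper's, but there is a genuine gap in Step 2, and the paper in fact does \emph{not} prove the full conjecture as stated --- it proves it only for $n \in N r^\vee \mathbb{Z}$, where $N$ is the level of the weight lattice (in which case $P_n^+ = P_+$, so the restriction to $P_n^+$-subcategories and the fusion-closure check in your Step 1 are moot).

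The gap is your claim that $q q' = e^{i\pi n/r}$ yields, ``up to a sign,'' $q' = q^{-1}$ and hence a braid-reversed equivalence via inverting the $R$-matrix. First, for non-simply-laced $\mathfrak g$ the factor $e^{i\pi n/r}$ is a primitive $2r$-th root of unity rather than a sign. Second, and more fundamentally, what controls the braided tensor structure of the Kazhdan--Lusztig category is not $q$ but the parameter $s = \exp\bigl(\tfrac{\pi i}{N r^\vee \kappa}\bigr)$ of the rational form of the quantum group, so the correct periodicity is $\tfrac{1}{\kappa} \equiv \tfrac{1}{\ell} \pmod{N r^\vee}$; this is strictly stronger than $q = q'$, and it is precisely why the paper's result is limited to $n \in N r^\vee\mathbb{Z}$. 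You flag Step 2 as merely requiring ``nontrivial compatibility'' and ``delicate sign issues,'' but this step is exactly where the argument breaks for general $n$, and making it work beyond the special cases known in the literature would constitute a solution of an open problem.

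The paper's route is also structurally different from yours at this step: rather than constructing the braid-reversed equivalence from scratch out of abstract quantum group data, it starts from the known simple VOA $\mathcal D_\kappa^{\mathrm{ch}}(G)$ (the chiral differential operators on the loop group --- i.e., the $n=0$ case), applies part (2) of Main Theorem \ref{mthm:voa} to \emph{extract} a braid-reversed equivalence between $\mathrm{KL}_{\kappa - h^\vee}$ and $\mathrm{KL}_{-\kappa - h^\vee}$, and then transports this equivalence using the quantum-group periodicity in $s$ to the levels satisfying $\tfrac{1}{\kappa} + \tfrac{1}{\ell} \in N r^\vee \mathbb Z$. Bootstrapping from the $n=0$ VOA, rather than trying to build the equivalence directly, is what makes the paper's argument go through, and it is the ingredient your proposal is missing.
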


Let $\kappa$ be an irrational number. 
The algebra of chiral differential operators of a compact Lie group $G$ \cite{GMS1, GMS2, AG} with Lie algebra $\mathfrak g$ at level $\kappa$ has the form 
 \[
\mathcal D_\kappa^{\text{ch}}(G) \cong  \bigoplus_{\lambda\in P_+} \mathbb L_{\kappa-h^\vee}(\lambda) \otimes  \mathbb L_{-\kappa-h^\vee}(\lambda^*)
 \]
 as an $\mathbb L_{\kappa-h^\vee}(\mathfrak g) \otimes  \mathbb L_{-\kappa-h^\vee}(\mathfrak g)$-module 
and is a simple \voa (\cite{FS, Zh} and \cite[Proposition 3.15]{Cheung}). Here $\lambda^*=-\omega(\lambda)$, where $\omega$ is the longest element of the Weyl group. 
 Thus $\mathcal D_\kappa^{\text{ch}}(G)$ is a commutative algebra in the direct sum completion of the Deligne product  KL$_{\kappa-h^\vee}$ $\boxtimes$ KL$_{-\kappa-h^\vee}$, so by part (2) of Main Theorem \ref{mthm:voa}, the categories KL$_{\kappa-h^\vee}$ and KL$_{-\kappa-h^\vee}$ are braid-reversed equivalent with $\mathbb L_{\kappa-h^\vee}(\lambda)$ sent to $\mathbb L_{-\kappa-h^\vee}(\lambda)$ under the equivalence. On the other hand, KL$_{\kappa-h^\vee}$ is also braided tensor equivalent to the category of weight modules (representations of type I) for the Lusztig quantum group $U_q(\mathfrak g)$ for $q=\exp\left(\frac{\pi i}{r^\vee\kappa}\right)$ \cite{KL1}-\cite{KL4}. Here $r^\vee=1$ for $\mathfrak{g}$ in types $A$, $D$, and $E$; $r^\vee=2$ in types $B$, $C$, and $F$; and $r^\vee=3$ in type $G$
(this is sometimes called the lacing number of $\mathfrak{g}$, for example in \cite{Ara}).

Let $N$ be the level of the weight lattice $P$ of $\mathfrak g$, that is the smallest positive integer such that $NP$ is integral. The representation category of the rational form of the quantum group is  over $\mathbb Q(s)$ with $s=\exp\left(\frac{\pi i}{Nr^\vee\kappa}\right)$ (\cite{Lus}; see also \cite[Section 1.3]{BK}), so KL$_{\kappa-h^\vee}$ and KL$_{\ell-h^\vee}$ are equivalent if $\frac{1}{\kappa} = \frac{1}{\ell} \mod r^\vee N$. Combining with the braid-reversed equivalence, this means KL$_{\kappa-h^\vee}$ and KL$_{\ell-h^\vee}$ are braid-reversed equivalent if
\[
\frac{1}{\kappa} + \frac{1}{\ell} = mr^\vee N
\]
for some $m\in\mathbb Z$, so that by part (1) of Main Theorem \ref{mthm:voa},
\[
\bigoplus_{\lambda\in P_+} \mathbb L_{\kappa-h^\vee}(\lambda) \otimes  \mathbb L_{\ell-h^\vee}(\lambda^*)
\]
for such $\kappa$ and $\ell$ has the structure of a simple \voa. We can change the simple root system for the second factor by $-\omega$ (for $\omega$ the longest element of the Weyl group) so that 
 \[
A^{mr^\vee N}[\mathfrak g, \kappa] =  \bigoplus_{\lambda\in P_+} \mathbb L_{\kappa-h^\vee}(\lambda) \otimes  \mathbb L_{\ell-h^\vee}(\lambda)
 \]
 also can be given the structure of a simple \voa{}. This proves Conjecture 1.1 of \cite{CG} for $n=mr^\vee N$, that is,
\begin{cor}
Let $N$ be the level of the weight lattice $P$ of the simple Lie algebra $\mathfrak g$. Then \cite[Conjecture 1.1]{CG} is true for $n\in Nr^\vee \mathbb{N}$.
\end{cor}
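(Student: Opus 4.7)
The plan is to combine three ingredients already laid out in the preceding discussion: the classical realization of the chiral differential operators vertex algebra $\mathcal{D}_\kappa^{\text{ch}}(G)$ at irrational level; the Kazhdan--Lusztig--Lusztig description of KL at irrational level via the rational form of the quantum group; and both directions of Main Theorem \ref{mthm:voa}.

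First, I would apply part (2) of Main Theorem \ref{mthm:voa} to $\aA = \mathcal{D}_\kappa^{\text{ch}}(G)$ at irrational $\kappa$, with $\mU = \mathbb{L}_{\kappa-h^\vee}(\mathfrak{g})$ and $\mW = \mathbb{L}_{-\kappa-h^\vee}(\mathfrak{g})$. The explicit decomposition from \cite{FS, Zh, Cheung}, simplicity of $\mathcal{D}_\kappa^{\text{ch}}(G)$ as a vertex algebra, and semisimple rigidity of KL$_{\kappa-h^\vee}$ at irrational level supply the required hypotheses: the multiplicity condition holds because each non-unit summand $\mathbb{L}_{-\kappa-h^\vee}(\lambda^*)$ is simple and inequivalent to the unit, and the twist/grading hypothesis holds because $\mathcal{D}_\kappa^{\text{ch}}(G)$ is $\mathbb{Z}$-graded. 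Part (2) then produces a braid-reversed tensor equivalence $\tau_0\colon$ KL$_{\kappa-h^\vee} \to$ KL$_{-\kappa-h^\vee}$ which, combined with the Weyl involution $\lambda \mapsto \lambda^*$ on the target, identifies $\mathbb{L}_{\kappa-h^\vee}(\lambda)$ with $\mathbb{L}_{-\kappa-h^\vee}(\lambda)$.

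Next, I would invoke the Kazhdan--Lusztig equivalence with the rational form of $U_q(\mathfrak{g})$, in which the category depends on $\kappa$ only through the parameter $s = \exp(\pi i / (N r^\vee \kappa))$, i.e., through $\frac{1}{\kappa} \bmod r^\vee N$. Composing with $\tau_0$ yields, for any $\ell$ satisfying $\frac{1}{\kappa} + \frac{1}{\ell} \in r^\vee N \mathbb{Z}$, a braid-reversed tensor equivalence $\tau\colon$ KL$_{\kappa-h^\vee} \to$ KL$_{\ell-h^\vee}$ sending $\mathbb{L}_{\kappa-h^\vee}(\lambda)$ to $\mathbb{L}_{\ell-h^\vee}(\lambda)$.

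Finally, I would feed $\tau$ into part (1) of Main Theorem \ref{mthm:voa} to obtain a simple conformal vertex algebra extension of $\mathbb{L}_{\kappa-h^\vee}(\mathfrak{g}) \otimes \mathbb{L}_{\ell-h^\vee}(\mathfrak{g})$ whose decomposition, after a Weyl-involution change of simple roots on the second tensor factor, matches $A^{m r^\vee N}[\mathfrak{g}, \kappa]$; setting $n = m r^\vee N$ then proves the conjecture for all $n \in N r^\vee \mathbb{Z}$, noting that $P^+_n = P^+$ in this range because $N \lambda^2 \in \mathbb{Z}$ for all $\lambda \in P$ by definition of $N$. I expect the main obstacle to be verifying the twist hypothesis of part (1), namely $\theta_{\tau(\mathbb{L}_{\kappa-h^\vee}(\lambda))} = \pm \tau(\theta_{\mathbb{L}_{\kappa-h^\vee}(\lambda)}^{-1})$: this reduces to a Sugawara-formula calculation showing that the conformal weights $\frac{(\lambda, \lambda + 2\rho)}{2\kappa}$ and $\frac{(\lambda, \lambda + 2\rho)}{2\ell}$ have sum in $\frac{1}{2}\mathbb{Z}$ for each $\lambda \in P^+$, which follows from $\frac{1}{\kappa} + \frac{1}{\ell} \in r^\vee N \mathbb{Z}$ together with the integrality of $N(\lambda, \lambda + 2\rho)$ for $\lambda \in P$.
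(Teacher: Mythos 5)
Your proposal is correct and follows essentially the same route as the paper: apply part (2) of the main theorem to the simple algebra $\mathcal{D}_\kappa^{\mathrm{ch}}(G)$ to get a braid-reversed equivalence $\mathrm{KL}_{\kappa-h^\vee}\to\mathrm{KL}_{-\kappa-h^\vee}$, compose with the quantum-group braided equivalence that is periodic in $1/\kappa$ modulo $r^\vee N$, and feed the result back into part (1). The paper states these steps more compressively and does not spell out the Sugawara verification of the twist hypothesis, so your more careful check of that condition and the observation that $P^+_n=P^+$ when $N\mid n$ are useful additions rather than deviations from the argument.
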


\subsubsection{Equivalences between affine \voas{} and $W$-algebras at admissible level}

There are also interesting equivalences of vertex tensor categories at admissible levels: let $\mathfrak g$ be a simple simply-laced Lie algebra,  $h^\vee$ the dual Coxeter number, and $k$ an admissible level of $\mathfrak g$. Let $P^m_+$ be the set of weights $\lambda$ such that the irreducible highest-weight representation $ \mathbb L_m(\lambda)$ is a module of the simple affine \voa $ \mathbb L_m(\mathfrak g)$ at positive integer level $m$. We parameterize $k=-h^\vee+\frac{u}{v}$ so that the simple ordinary modules of the simple affine \voa{} $ \mathbb L_k(\mathfrak g)$ of $\mathfrak g$ at level $k$ are the irreducible highest-weight modules $ \mathbb L_k(\lambda)$ of highest weight $\lambda$ at level $k$ with $\lambda \in P^{u-h^\vee}_+$. These modules are called ordinary and the category of ordinary modules at admissible level for simply-laced $\mathfrak g$ is semisimple \cite{Ara}, vertex tensor \cite{CHY}, and rigid \cite{C}. Let $\mathcal W_{k}(\mathfrak g)$ be the simple principal $W$-algebra of $\mathfrak g$ at level $k$. It is strongly rational if $k$ is non-degenerate admissible \cite{Ara2}, that is, $u, v \geq h^\vee$ in the simply-laced case. We denote the image of $ \mathbb L_k(\lambda)$ under quantum Hamiltonian reduction by $\mathcal W_k(\lambda)$. We denote the subcategory of ordinary $\mathbb{L}_{k}(\mathfrak{g})$-modules whose weights lie in the root lattice $Q$ by $\mathcal O^Q_{k, \text{ord}}(\mathfrak g)$, and we use $\mathcal C^Q_{k, \text{ord}}(\mathcal W(\mathfrak g))$ to denote the category of $\mathcal{W}_k(\mathfrak{g})$-modules whose objects are the images of modules in $\mathcal O^Q_{k, \text{ord}}(\mathfrak g)$ under quantum Hamiltonian reduction. That is, $\mathcal C^Q_{k, \text{ord}}(\mathcal{W}(\mathfrak g))$ is the semisimple category of $\mathcal{W}_k(\mathfrak{g})$-modules with simple objects $\mathcal{W}_k(\lambda)$ for $\lambda\in P^{u-h^\vee}_+\cap Q$.

Now let $\ell$ satisfy
\[
\frac{1}{k+1+ h^\vee} +\frac{1}{\ell+h^\vee} =1.
\]
With this notation, a special case of \cite[Main Theorem 3 (1)]{ACL} says that
\begin{align*}
 \mathbb L_k(\mathfrak g)\otimes  \mathbb L_1(\mathfrak g)\cong \bigoplus_{\lambda\in P^{u+v-h^{\vee}}_+\cap \, Q}
 \mathbb L_{k+1}(\lambda)\otimes \mathcal W_\ell(\lambda)
\end{align*}
as $ \mathbb L_{k+1}(\mathfrak g)\otimes \mathcal W_{\ell}(\mathfrak g)$-modules. Applying our Main Theorem \ref{mthm:voa} we have
\begin{cor}
Let $k$ be admissible and $\mathfrak g$ simply-laced, and define $\ell$ by
\[
\frac{1}{k+1+ h^\vee} +\frac{1}{\ell+h^\vee} =1.
\]
Then there is a braid-reversed equivalence between $\mathcal O^Q_{k+1, \mathrm{ord}}(\mathfrak g)$ and  $\mathcal C^Q_{\ell, \mathrm{ord}}(\mathcal W(\mathfrak g))$ sending $ \mathbb L_{k+1}(\lambda)$ to  $W_\ell(\lambda)^*$.
\end{cor}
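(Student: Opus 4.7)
The plan is to apply part (2) of Main Theorem \ref{mthm:voa} directly to the extension
\begin{equation*}
 \aA = \mathbb L_k(\mathfrak g)\otimes \mathbb L_1(\mathfrak g)\cong \bigoplus_{\lambda\in P^{u+v-h^{\vee}}_+\cap Q}\mathbb L_{k+1}(\lambda)\otimes \mathcal W_\ell(\lambda)
\end{equation*}
supplied by \cite{ACL}, regarded as a conformal extension of the diagonal subalgebra $\mathbb L_{k+1}(\mathfrak g)\otimes\mathcal W_\ell(\mathfrak g)$. Take $\aU=\mathbb L_{k+1}(\mathfrak g)$, $\aW=\mathcal W_\ell(\mathfrak g)$, $\cU$ equal to the ordinary module category of $\aU$, and $\cW$ equal to the full category of grading-restricted generalized $\aW$-modules; the braid-reversed equivalence produced by the theorem will restrict to the one asserted in the corollary.

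The categorical prerequisites are supplied by the existing literature. By \cite{Ara,CHY,C}, the ordinary module category at admissible level is a semisimple rigid vertex tensor category, and hence ribbon. Rewriting $k+1=-h^\vee+(u+v)/v$ in the parametrization $k=-h^\vee+u/v$, the duality relation $\tfrac{1}{k+1+h^\vee}+\tfrac{1}{\ell+h^\vee}=1$ gives $\ell+h^\vee=(u+v)/u$, which is again non-degenerate admissible in the simply-laced sense of \cite{Ara2}, so $\mathcal W_\ell(\mathfrak g)$ is strongly rational and $\cW$ is a semisimple modular tensor category. In particular both $\aU$ and $\aW$ are simple, CFT-type, self-contragredient VOAs, and $\cW$ is closed under submodules and quotients.

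For the hypotheses on $\aA$, simplicity is immediate since $\aA$ is a tensor product of two simple VOAs. Both factors are $\mathbb Z$-graded, so $\aA$ is a $\mathbb Z$-graded conformal extension and the twist partition is trivially satisfied by taking $I^0=I$. The modules $\mathbb L_{k+1}(\lambda)$ for $\lambda\in P^{u+v-h^\vee}_+\cap Q$ are pairwise inequivalent simple objects of $\cU$. The multiplicity condition $\dim\HHom_\cW(\mathcal W_\ell(\mathfrak g),\mathcal W_\ell(\lambda))=\delta_{\lambda,0}$ reduces, via strong rationality of $\mathcal W_\ell(\mathfrak g)$, to showing that $\mathcal W_\ell(\lambda)\not\cong\mathcal W_\ell(\mathfrak g)$ for $\lambda\neq 0$, which I would extract from the behaviour of quantum Hamiltonian reduction on ordinary admissible $\widehat{\mathfrak g}$-modules as controlled in \cite{Ara2,ACL}. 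I expect this verification of the multiplicity-$1$ condition to be the main technical point, since it is what forces the summand decomposition in the $\cW$-direction to genuinely detect the diagonal embedding.

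Once these hypotheses are in place, Main Theorem \ref{mthm:voa}(2) delivers a braid-reversed tensor equivalence $\tau:\cU_\aA\to\cW_\aA$ with $\tau(\mathbb L_{k+1}(\lambda))\cong \mathcal W_\ell(\lambda)^*$ for every $\lambda$ in the index set. To conclude, I would identify $\cU_\aA=\mathcal O^Q_{k+1,\mathrm{ord}}(\mathfrak g)$ from the description of the ordinary simples at level $k+1$, and $\cW_\aA=\mathcal C^Q_{\ell,\mathrm{ord}}(\mathcal W(\mathfrak g))$ directly from the definition of the latter as the image of $\mathcal O^Q_{k+1,\mathrm{ord}}(\mathfrak g)$ under quantum Hamiltonian reduction, yielding the corollary.
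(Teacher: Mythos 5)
Your proposal takes the same route as the paper: apply part (2) of Main Theorem \ref{mthm:voa} to the ACL decomposition of $\mathbb L_k(\mathfrak g)\otimes\mathbb L_1(\mathfrak g)$ over $\mathbb L_{k+1}(\mathfrak g)\otimes\mathcal W_\ell(\mathfrak g)$, and then identify $\cU_\aA$ with $\mathcal O^Q_{k+1,\mathrm{ord}}(\mathfrak g)$ and $\cW_\aA$ with $\mathcal C^Q_{\ell,\mathrm{ord}}(\mathcal W(\mathfrak g))$. The hypothesis checks you spell out (admissibility of $k+1$, non-degenerate admissibility of $\ell$, simplicity of $\aA$ as a tensor product of simple VOAs, the trivial $\mathbb Z$-grading, and the multiplicity-one condition via the behaviour of quantum Hamiltonian reduction) are precisely the verifications the paper leaves implicit, and they are carried out correctly.
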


\subsection{Outlook}

Our research program aims to understand representation categories of \voas using techniques 
of the theory of tensor categories. In this paper, we have proven that \voa extensions of 
suitable tensor products of two \voas{} are possible if and only if 
certain subcategories of modules of the two \voas{} are braid-reversed equivalent 
tensor categories. These results fall into the area of coset vertex algebras, 
since the commutant of a vertex subalgebra $\aV \subseteq \aA$ is called the coset $\aC$ of $\aV$ 
in $\aA$; see \cite[Section 5]{FZ0} for the original mathematical definition of commutant vertex subalgebras and \cite{GKO} for earlier work on coset models in conformal field theory. Often one is dealing with problems where one knows $\aA$ and $\aV$ fairly 
well and would like to study the coset \voa $\aC$. The very first statement we then need is the existence 
of vertex tensor category structure on suitable categories of $\aC$-modules. 
Such an existence result has recently been obtained in the related context of orbifold \voas \cite{McRae}, 
and we hope to extend these results to the more complicated setting of cosets. 

Ultimately, one of the deepest problems in the area is the conjecture that the coset \voa{} of a strongly rational \voa{} $\aA$ by a strongly rational vertex subalgebra $\aV$ is itself strongly rational. Theorem 7.6 of \cite{FFRS} is a good guide as it gives a relation between the category of modules for the coset \voa{} and the categories of $\aA$- and $\aV$-modules without assuming semisimplicity of the coset module category. It is however still proven under very strong assumptions, such as separability (traces of idempotents are non-zero). We now have techniques to prove statements avoiding assumptions like separability, and we hope to use them to come closer to the rationality conjecture for coset \voas.

A second application is that interesting affine vertex operator superalgebras can be realized as extensions of affine \voas and $W$-algebras, and then using our theory in \cite{CKM} one can study the representation theory of the superalgebras. This has for example been succesfully applied to $L_k(\mathfrak{osp}(1|2))$ at admissible level as an extension of $L_k(\mathfrak{sl}_2)$ times rational Virasoro algebras \cite{CFK, CKLiuR}. Thus another further goal is to extend the results of this work to superalgebras and construct many more interesting vertex superalgebras. With this in mind, we have already proved Key Lemma \ref{lemma:keyiso} and Proposition \ref{prop:UA_WA_ribbon} for (supercommutative) superalgebras.

\subsection*{Acknowledgements}
TC thanks Fedor Malikov for discussions on the algebra of chiral differential operators, and we all thank Yi-Zhi Huang for discussions and comments, and the referee for comments and suggestions. 
 TC is supported by NSERC $\#$RES0020460. SK is supported by a start-up grant provided by University of Denver.

\section{Algebras in braided tensor categories}
\label{sec:basic}

In this section, we review basic definitions and properties of (braided) tensor categories and (commutative, associative) algebra objects.

\subsection{Braided tensor categories}

Here we recall some definitions and structures in tensor 
categories.

\begin{defn}
Let $\cC$ be a category 
with a distinguished object $\one_{\cC}$ and bifunctor $\otimes:\cC\times\cC\rightarrow\cC$.
We say that $\cC$ is a \textbf{tensor category} if 
\begin{enumerate}
\item For any object $\mX$, there are natural isomorphisms $l_{\mX}:\one_{\cC}\otimes\mX \xrightarrow{\cong} \mX$ (left unit isomorphism) and 
$r_{\mX}:\mX\otimes \one_{\cC} \xrightarrow{\cong} \mX$ (right unit isomorphism),
\item For any triple of objects $\mX,\mY,\mZ$, there is a natural associativity isomorphism
$\cA_{\mX,\mY,\mZ}:\mX\otimes(\mY\otimes\mZ) \xrightarrow{\cong} (\mX\otimes\mY)\otimes\mZ$,
\item The isomorphisms $l$, $r$, $\cA$ satisfy the triangle axiom and the associativity $\cA$ satisfies the pentagon axiom.
\end{enumerate}
A tensor category $\cC$ is a \textbf{braided tensor category} if additionally:
\begin{enumerate}
	\item For all pairs of objects $\mX,\mY$, there is a natural braiding isomorphism $\cR_{\mX,\mY}:\mX\otimes\mY\rightarrow\mY\otimes\mX$
	\item The isomorphisms $\cR$ satisfy the hexagon axioms.
\end{enumerate}
A tensor category $\cC$ is \textbf{rigid} if
every object has a left and a right dual.
We shall only need notation for the left dual: for any object $\mX$ we denote the evaluation map by $e_{\mX}: \mX^\ast\otimes\mX\rightarrow\one$
and the coevaluation map by $i_{\mX}: \one\rightarrow \mX\otimes \mX^\ast$.

A rigid braided tensor category $\cC$ is \textbf{ribbon} if there is a natural isomorphism $\theta:\Id_\cC\rightarrow\Id_\cC$, called the \textbf{twist}, satisfying:
\begin{enumerate}
	\item $\theta_{\vac_\cC}=\Id_{\vac_\cC}$,
	\item $\theta_{\mX^\ast}=(\theta_{\mX})^\ast$, and
	\item The \textbf{balancing axiom}: $\theta_{\mX\otimes \mY}=\cR_{\mY,\mX}\circ\cR_{\mX,\mY}\circ(\theta_{\mX}\otimes\theta_{\mY})$.
\end{enumerate}
\end{defn}

We will sometimes need to consider tensor categories that are not or are not known to be rigid, since they may lack coevaluations. In these cases, however, some consequences of rigidity still hold when $\cC$ has a weaker structure (called weak rigidity in \cite{KL4}): we say that a contravariant functor $\cC\rightarrow\cC$, which we shall denote by $\mX\mapsto\mX'$, $f\mapsto f'$, is a \textbf{contragredient functor} if it permutes the simple objects of $\cC$ and there are natural isomorphisms
\begin{equation*}
 \Gamma_{\mX,\mY}: \HHom_{\cC}(\mX\otimes\mY,\vac)\rightarrow\HHom_{\cC}(\mX,\mY'),
\end{equation*}
natural in the sense that for morphisms $f: \mX_1\rightarrow\mX_2$ and $g:\mY_1\rightarrow\mY_2$ in $\cC$,
the diagram
\begin{equation}\label{Cont_Nat}
 \xymatrixcolsep{4pc}
 \xymatrix{
 \HHom_{\cC}(\mX_2\otimes\mY_2,\vac) \ar[r]^{\Gamma_{\mX_2,\mY_2}} \ar[d]^{F\mapsto F\circ(f\otimes g)} & \HHom_{\cC}(\mX_2,\mY_2') \ar[d]^{G\mapsto g'\circ G\circ f} \\
 \HHom_\cC(\mX_1\otimes\mY_1,\vac) \ar[r]^{\Gamma_{\mX_1,\mY_1}} & \HHom_{\cC}(\mX_1,\mY_1') \\
 }
\end{equation}
commutes. Given a contragredient functor, we shall denote the morphism $\Gamma_{\mX'\otimes\mX}^{-1}(\Id_{\mX'}): \mX'\otimes\mX\rightarrow\vac$ by $e_\mX$ and call it the evaluation for $\mX$. Note that if $\cC$ is rigid and braided, the duality functor $*$ is a contragredient functor with the natural isomorphisms $\Gamma_{\mX,\mY}$ obtained using $i_{\mY}$. An example of a non-rigid category with a contragredient functor is the category of all vector spaces over a field.

\begin{rema}\label{rema:deltaX}
 If $\cC$ is braided, we have a natural transformation $\psi_\mX: \mX\rightarrow(\mX')'$ given by
 \begin{equation*}
  \psi_{\mX} =\Gamma_{\mX,\mX'}(e_\mX\circ\cR_{\mX,\mX'}).
 \end{equation*}
If $\cC$ is rigid, then $\psi$ is a natural isomorphism (see for instance \cite[Section 2.2]{BK}), and it follows automatically that the contragredient functor permutes the simple objects of $\cC$. If $\cC$ is a ribbon category, then the natural isomorphisms $\delta_\mX: \mX\rightarrow\mX^{**}$ defined by
\begin{equation*}
 \delta_{\mX} =\Gamma_{\mX,\mX^*}(e_\mX\circ\cR_{\mX,\mX^*}\circ(\theta_\mX\otimes\Id_{\mX^*}))
\end{equation*}
have better properties (again see \cite[Section 2.2]{BK}).
\end{rema}

\begin{defn}
Let $\cC$ be a tensor category.
A triple $(\aA,\mu_\aA,\iota_\aA)$ with $\aA$ an object of $\cC$ and $\mu_\aA:\aA\otimes\aA\rightarrow \aA$, $\iota_\aA:\one_{\cC}\rightarrow \aA$ morphisms in $\cC$ is called an \textbf{associative algebra} if:
\begin{enumerate}
	\item Multiplication is associative: $\mu_\aA\circ (\Id_\aA \otimes \mu_\aA) = \mu_\aA\circ (\mu_\aA\otimes \Id_\aA)\circ \cA_{\aA,\aA,\aA}:\aA\otimes (\aA\otimes\aA) \rightarrow \aA$
	\item Multiplication is unital: $\mu_\aA\circ(\iota_\aA\otimes \Id_\aA) =l_\aA:\one_{\cC}\otimes\aA\rightarrow \aA$ and $\mu_\aA\circ(\Id_\aA\otimes\iota_\aA)= r_\aA: \aA\otimes\vac_\cC\rightarrow\aA$.
\end{enumerate}
If $\cC$ is braided, we say that $(\aA,\mu_\aA,\iota_\aA)$ is a \textbf{commutative algebra} if additionally:
\begin{enumerate}
	\item[(3)] Multiplication is commutative: $\mu_\aA\circ\cR_{\aA,\aA}=\mu_\aA:\aA\otimes\aA\rightarrow \aA$.
\end{enumerate}
We will sometimes drop the qualifiers ``associative'' and ``commutative'' when the context is clear.
\end{defn}

\begin{rema}
 In a commutative associative algebra, the right unit property $\mu_\aA\circ(\Id_\aA\otimes\iota_\aA)=r_\aA$ is a consequence of the left unit property and the commutativity.
\end{rema}

We shall need the definition of ``multiplication rules'' and the corresponding multiplication algebra:

\begin{defn}
	Let $(\aA,\mu_\aA,\iota_\aA)$ be an algebra in a tensor category $\cC$ and suppose $\aA$ is completely reducible in $\cC$.  For simple $\cC$-subobjects $\mX, \mY, \mZ$ of $\aA$ we define the {\bf multiplication rule} $M_{\mX, \mY}^{\mZ}$ to be
	\[
	M_{\mX, \mY}^{\mZ} := \begin{cases} 1 & \ \text{if} \ \mZ \subseteq \mathrm{Image}\left( \mu\vert_{\mX \otimes \mY} \rightarrow \aA\right),  \\ 0 & \ \text{otherwise} \end{cases}.
	\]
	and the unital {\bf multiplication algebra} of $\aA$ to be the free $\ZZ$-module with the set $B$ of inequivalent simple $\cC$-subobjects of $\aA$ as basis and product
	\[
	\mX \cdot \mY := \sum_{\mZ\in B} M_{\mX, \mY}^\mZ \mZ.
	\]
\end{defn}

\subsection{Direct sum completion}

We would like to work with algebras $\aA$ that are infinite direct sums of objects in $\cC$, and thus may not be objects of $\cC$ itself.
The most natural setting for this is the direct sum completion of the (ribbon) category $\cC$ as in \cite{AR}. 
The idea is to construct an extended category $\cC_{\oplus}$ whose objects are direct sums  $\bigoplus_{s\in S}\mX_s$ of objects in $\cC$, where $S$ is an arbitrary index set, and whose morphisms 
$f: \bigoplus_{s\in S}\mX_s \rightarrow \bigoplus_{t\in T}\mY_t$
are such that for any $s\in S$, $f|_{\mX_s}$ maps to $\bigoplus_{t\in T'}\mY_t$ for some \textit{finite} subset $T'\subseteq T$.

It was shown in \cite{AR} that if $\cC$ is a tensor category, possibly with additional structure such as braiding, then $\cC_{\oplus}$ can be naturally endowed with the same structures, essentially defining all structure isomorphisms ``componentwise.'' Moreover there is a braided monoidal functor $\cC\rightarrow \cC_{\oplus}$ which is fully faithful, that is, bijective on morphisms.
In effect, if we start with a braided tensor category $\cC$ with a balancing isomorphism, we can enlarge it to contain arbitrary direct sums.

There are three caveats: 
\begin{enumerate}
\item First, if $\cC$ is abelian, then $\cC_{\oplus}$ is not in general abelian. However, this complication does not arise if $\cC$ is semisimple, since in this case $\cC_{\oplus}$ is the category of arbitrary direct sums of simple objects in $\cC$, which is closed under subobjects and quotients (see for instance \cite[Section 3.5]{J}). More generally, one could work with the smallest category that contains $\cC$ and is closed under direct sums, kernels, and cokernels; however, we will not need this here.
\item  Second, we will be working with representation categories of vertex operator algebras, where morphism spaces $\HHom(\mX_1\otimes\mX_2,\mY)$ are naturally isomorphic to spaces of intertwining operators of type $\binom{\mY}{\mX_1\,\mX_2}$. We will need this same correspondence to hold in the direct sum completion.
\item Third, even if $\cC$ is rigid, one can not define a coevaluation on $\cC_\oplus$. However, for our purposes, we will only need a contragredient functor on a subcategory of $\cC_\oplus$, as we now explain.
\end{enumerate}

Assume that $\cC$ is a semisimple tensor category with a contragredient functor. Let $\cC_\oplus^{fin}$ denote the full subcategory of $\cC_\oplus$ whose objects contain any simple object in $\cC$ with at most finite multiplicity. Note that $\cC_\oplus^{fin}$ is not a tensor subcategory of $\cC_\oplus$ unless $\cC$ has finitely many equivalence classes of simple objects (in which case $\cC_\oplus^{fin}=\cC$). However, $\cC_\oplus^{fin}$ admits a contragredient functor in a suitable sense. If $\mX=\bigoplus_{s\in S} \mX_s$ is an object of $\cC_\oplus^{fin}$, then so is $\mX'=\bigoplus_{s\in S} \mX_s'$ because the contragredient functor permutes the simple objects of $\cC$. Moreover, if $F: \mX=\bigoplus_{s\in S}\mX_s\rightarrow\mY=\bigoplus_{t\in T} \mY_t$ is a morphism in $\cC_\oplus^{fin}$, we can define $F': \mY'\rightarrow\mX'$ as follows: For any $s\in S$, $t\in T$, let $F_{s,t}: \mX_s\rightarrow\mY_t$ denote the projection onto $\mY_t$ of the restriction of $F$ to $\mX_s$. Then we define $F'$ by
\begin{equation*}
 F'\vert_{\mY_t'}=\sum_{s\in S} F_{s,t}'.
\end{equation*}
To see why this sum is well defined, note that $\mY_t$, as an object of $\cC$, is the direct sum of finitely many simple objects of $\cC$ with finite multiplicity. Then since $\mX$ is an object of $\cC_\oplus^{fin}$, these finitely many simple objects can occur in only finitely many $\mX_s$, and so $F_{s,t}=0$ for all but finitely many $s\in S$.
\begin{prop}\label{Cont_in_Cplus}
 If $\cC$ is a semisimple tensor category with a contragredient functor, then there are natural isomorphisms
 \begin{equation*}
  \Gamma_{\mX,\mY}: \HHom_{\cC_\oplus}(\mX\otimes\mY,\vac)\rightarrow\HHom_{\cC_\oplus}(\mX,\mY')
 \end{equation*}
for $\mX$ any object of $\cC_\oplus$ and $\mY$ an object of $\cC_\oplus^{fin}$.
\end{prop}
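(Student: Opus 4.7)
The plan is to define $\Gamma_{\mX,\mY}$ componentwise by reducing to the given contragredient natural isomorphisms on $\cC$, then to verify that the resulting morphism satisfies the finite-support condition built into the definition of $\cC_\oplus$. Write $\mX=\bigoplus_{s\in S}\mX_s$ and $\mY=\bigoplus_{t\in T}\mY_t$ with each $\mX_s,\mY_t$ an object of $\cC$, so that $\mX\otimes\mY\cong\bigoplus_{(s,t)\in S\times T}\mX_s\otimes\mY_t$ in $\cC_\oplus$. Since the target $\vac\in\cC$ is a single object, a morphism $F:\mX\otimes\mY\to\vac$ in $\cC_\oplus$ is nothing more than an unconstrained collection $(F_{s,t})$ with $F_{s,t}\in\HHom_\cC(\mX_s\otimes\mY_t,\vac)$. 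For each such $F$ I would set $G_{s,t}:=\Gamma_{\mX_s,\mY_t}(F_{s,t})\in\HHom_\cC(\mX_s,\mY_t')$ and declare $\Gamma_{\mX,\mY}(F)$ to be the morphism whose restriction to $\mX_s$ is $\sum_{t}G_{s,t}$, landing in $\mY'=\bigoplus_{t\in T}\mY_t'$.

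The main obstacle, and the only place the $\cC_\oplus^{fin}$ hypothesis on $\mY$ enters, is showing that for each fixed $s\in S$ only finitely many $G_{s,t}$ are nonzero, so that $\Gamma_{\mX,\mY}(F)$ is actually a morphism in $\cC_\oplus$. This is the same argument already used in the text to define the contragredient $F'$ of a morphism: by semisimplicity of $\cC$, each $\mX_s$ decomposes as a finite sum of simples $\mZ_1,\ldots,\mZ_n\in\cC$, and since the contragredient functor permutes simples, $G_{s,t}\neq 0$ forces some $\mZ_i$ to appear as a simple summand of $\mY_t'$, equivalently some $\mZ_i'$ to appear in $\mY_t$. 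Because $\mY\in\cC_\oplus^{fin}$, each simple object of $\cC$ occurs in at most finitely many $\mY_t$ in total, and taking the union over the finitely many $i$ shows that only finitely many $t$ can contribute.

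For the inverse direction, given $G\in\HHom_{\cC_\oplus}(\mX,\mY')$ the restriction $G|_{\mX_s}$ already factors through some finite $\bigoplus_{t\in T'_s}\mY_t'$ by definition of morphism in $\cC_\oplus$; projecting yields $G_{s,t}:\mX_s\to\mY_t'$, and we set $F_{s,t}:=\Gamma_{\mX_s,\mY_t}^{-1}(G_{s,t})$, assembling into $F:\mX\otimes\mY\to\vac$ with no constraint needed on the $\vac$-side. That the two assignments are mutually inverse is immediate from bijectivity of $\Gamma$ on $\cC$. Naturality of $\Gamma_{\mX,\mY}$ in both $\mX$ and $\mY$ reduces componentwise to the diagram \eqref{Cont_Nat} in $\cC$; in particular, for any morphism $g:\mY_1\to\mY_2$ in $\cC_\oplus^{fin}$ the contragredient $g':\mY_2'\to\mY_1'$ is precisely the morphism constructed just before the statement of the proposition, which is why the naturality square even makes sense on the $\mY$ side.
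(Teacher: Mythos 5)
Your argument is correct and mirrors the paper's proof: both identify $\HHom_{\cC_\oplus}(\mX\otimes\mY,\vac)$ with the unconstrained product $\prod_{(s,t)}\HHom_\cC(\mX_s\otimes\mY_t,\vac)$ (the finite-support condition in $\cC_\oplus$ is vacuous when the target is a single object of $\cC$), apply $\Gamma_{\mX_s,\mY_t}$ componentwise, and then use semisimplicity of $\cC$ together with the $\cC_\oplus^{fin}$ hypothesis on $\mY$ to check the finite-support condition on the result. One small imprecision: the claim that ``$\mZ_i$ appears in $\mY_t'$'' is \emph{equivalent} to ``$\mZ_i'$ appears in $\mY_t$'' would require $'' \cong \Id$, which is not assumed in general; what you actually need (and what your argument really uses) is only that $\mZ_i$ appearing in $\mY_t'$ forces the \emph{preimage} of $\mZ_i$ under the permutation induced by $'$ to appear in $\mY_t$, and since the $\mZ_i$ form a finite set and $'$ is a bijection on simples, the relevant set of simples is still finite, so the conclusion is unaffected.
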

\begin{proof}
 Suppose $\mX=\bigoplus_{s\in S} \mX_s$ and $\mY=\bigoplus_{t\in T} \mY_t$. Then because
 \begin{equation*}
  \mX\otimes\mY =\bigoplus_{(s,t)\in S\times T} \mX_s\otimes\mY_t,
 \end{equation*}
we have natural isomorphisms
\begin{equation*}
 \HHom_{\cC_\oplus}(\mX\otimes\mY,\vac)\cong\prod_{(s,t)\in S\times T} \HHom_{\cC}(\mX_s\otimes\mY_t,\vac)\cong\prod_{(s,t)\in S\times T} \HHom_\cC(\mX_s,\mY_t').
\end{equation*}
Now, for any tuple $\lbrace F_{s,t}\rbrace_{s\in S,t\in T}\in \prod_{(s,t)\in S\times T} \HHom_\cC(\mX_s,\mY_t')$ and $s\in S$, we must have $F_{s,t}=0$ for all but finitely many $t\in T$ since the finitely many simple objects occurring in $\mX_s$ can occur in only finitely many $\mY_t'$, given that $\mY$ is an object of $\cC_\oplus^{fin}$. This shows that in fact
\begin{equation*}
 \prod_{(s,t)\in S\times T} \HHom_\cC(\mX_s,\mY_t') \cong\HHom_{\cC_\oplus} (\mX,\mY'),
\end{equation*}
as required.
\end{proof}

The details of the definitions and structures in $\cC_\oplus$ are
gathered in Appendix \ref{app:dirsum}. Since most arguments in the following sections do not change when $\cC$ is a semisimple ribbon category and $\aA$ is an algebra in $\cC_\oplus$ rather than in $\cC$, we shall frequently omit references to $\cC_\oplus$.

\subsection{Representation categories of an algebra object}
\label{subsec:repA}

Now we define representations of an algebra in a tensor category and recall some important theorems from \cite{KO}, \cite{HKL} and \cite{CKM}. From now on, we will assume that the tensoring functors $\mX\otimes\bullet$ and $\bullet\otimes\mX$ for an object $\mX$ in a tensor category $\cC$ are right exact (so that in particular, these functors preserve surjections). This is needed to guarantee that the category of representations of an algebra object, defined below, has a tensor product bifunctor.

\begin{defn}
Suppose that $(\aA,\mu_\aA,\iota_\aA)$ is an associative algebra in $\cC$.
Define $\repA$ to be the category of pairs $(\mX,\mu_\mX)$ where $\mX\in\obj(\cC)$
and $\mu_{\mX}\in\HHom_{\cC}(\aA\otimes \mX,\mX)$ satisfy the following:
\begin{enumerate}
	\item Unit property: $l_{\mX}=\mu_\mX\circ(\iota_\aA\otimes\Id_{\mX}): \vac_{\cC}\otimes\mX\rightarrow \mX$,
	\item Associativity: $\mu_\mX\circ (\Id_{\aA}\otimes\mu_\mX) = \mu_\mX\circ(\mu_\aA\otimes\Id_{\mX}) \circ\cA_{\aA,\aA,\mX}: \aA\otimes(\aA\otimes\mX)\rightarrow \mX$.
\end{enumerate}
A morphism $f\in\HHom_{\repA}((\mX_1,\mu_{\mX_1}) , (\mX_2,\mu_{\mX_2}))$
is a morphism $f\in\HHom_{\cC}(\mX_1,\mX_2)$ such that $\mu_{\mX_2}\circ(\Id_{\aA}\otimes f)=f\circ\mu_{\mX_1}$.

When $\aA$ is commutative, we define $\rep^0\aA$ to be the full subcategory of $\repA$ containing ``dyslectic'' objects: those $(\mX,\mu_{\mX})$ such that $\mu_{\mX}\circ\cR_{\mX,\aA}\circ \cR_{\aA,\mX}=\mu_\mX$.
\end{defn}

	Note that $\repA$ is the category of left $\aA$-modules. 
	One may define the category of right $\aA$-modules analogously.
	It is easy to show that if $(\mX,\mu_\mX)$ is in $\repA$, then $(\mX,\mu_{\mX}\circ\cR_{\mX,\aA})$ and $(\mX,\mu_{\mX}\circ\cR^{-1}_{\aA,\mX})$ are right modules for the opposite algebras $(\aA,\mu_\aA\circ\cR_{\aA,\aA},\iota_\aA)$ and $(\aA,\mu_{\aA}\circ\cR_{\aA,\aA}^{-1}, \iota_\aA)$, respectively. Note that when $\aA$ is commutative, both opposite algebras are equal to $\aA$ and the dyslectic modules $\mX$ (objects of $\rep^0\aA$) are precisely those for which the two right $\aA$-module structures on $\mX$ coincide.

Clearly, $(\aA,\mu_\aA)$ is both a left and a right $\aA$-module, and an object of $\rep^0\aA$, when $\aA$ is commutative.

We have an induction functor
\begin{equation*}
\cF: \cC\rightarrow\repA
\end{equation*}
given on objects by $\cF(\Mod{W})=\aA\otimes\Mod{W}$ for objects $\Mod{W}$ in $\cC$ and on 
morphisms by $\cF(f)=\Id_\aA\otimes  f$ for $f: \Mod{W}_1\rightarrow \Mod{W}_2$ in $\cC$. Note that if $\aA$ is an algebra in  $\cC_{\oplus}$, we will still take $\cC$ as the domain of our induction functor to $\repA$. Critically, the induction functor $\cF$ satisfies \textbf{Frobenius reciprocity} \cite{KO,CKM}, that is, it is left adjoint to the forgetful functor $\cG$ from $\repA$ to $\cC$: there is a natural isomorphism
\begin{align}
\Hom{\repA}{\cF(\Mod{W}), \mX} \cong \Hom{\cC}{\Mod{W}, \cG(\mX)}
\label{eqn:frobenius}
\end{align}
for objects $\Mod{W}$ in $\cC$ and $\mX$ in $\repA$. Under this isomorphism, $f\in \Hom{\repA}{\cF(\Mod{W}), \mX}$ maps to the composition
	\begin{align*}
	\Mod{W} \rightarrow \one \otimes \Mod{W} 
	\xrightarrow{\iota_{\aA}\otimes \Id_{\Mod{W} }} \aA\otimes \Mod{W} 
	\xrightarrow{f} \mX, 
	\end{align*}
and $g\in \Hom{\cC}{\Mod{W}, \cG(\mX)}$ maps to the composition
\begin{align*}
\aA\otimes\Mod{W} \xrightarrow{\Id_{\aA} \otimes  g} \aA\otimes \mX\xrightarrow{\mu_{\mX}}\mX.
\end{align*}

When $\aA$ is commutative, the category $\repA$ is a tensor category with tensor product $\otimes_{\aA}$ and unit object $\aA$, and the subcategory
$\rep^0 \aA$ is a braided tensor category (see for example \cite{KO, CKM}). Since $\otimes_\aA$ is defined as the cokernel of a certain morphism in $\cC$, if $\aA$ is an algebra in $\cC_\oplus$, we assume that $\cC$ is semisimple to guarantee $\cC_{\oplus}$ is abelian. Crucially, the induction functor is \textbf{monoidal} \cite{KO,CKM}, that is, there are natural $\repA$-isomorphisms
\begin{align}
f_{\Mod{W}_1,\Mod{W}_2}:\cF(\Mod{W}_1\otimes \Mod{W}_2)\xrightarrow{\sim} \cF(\Mod{W}_1)\otimes_{\aA}\cF(\Mod{W}_2) \label{eqn:Fmon}
\end{align}
which together with the $\repA$-isomorphism $r_\aA: \cF(\vac)\xrightarrow{\sim} \aA$ are suitably compatible with the unit and associativity isomorphisms in $\cC$ and $\repA$.

The following lemma in the case that $\cC$ is rigid amounts to part of \cite[Theorem 1.15]{KO}, but here we assume only that $\cC$ has a contragredient functor because we will need to apply the result in $\cC_\oplus$.
\begin{lemma}
If $\cC$ has a contragredient functor and $(\mX,\mu_\mX)$ is an object of $\repA$, then $(\mX',\mu_{\mX'})$ is a right $\aA$-module, where $\mu_{\mX'}: \mX'\otimes\aA\rightarrow\mX'$ is given by
\begin{equation*}
 \mu_{\mX'}=\Gamma_{\mX'\otimes\aA,\mX}\left(e_{\mX}\circ(\Id_{\mX'}\otimes\mu_\mX)\circ\cA_{\mX',\aA,\mX}^{-1}\right).
\end{equation*}
\end{lemma}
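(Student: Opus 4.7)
The plan is to verify the two right-$\aA$-module axioms for $(\mX',\mu_{\mX'})$ directly by exploiting the adjunction-like property of $\Gamma$. First I note the following consequence of the naturality diagram \eqref{Cont_Nat}: for any morphism $f:\mY\rightarrow\mX'$, taking $\mX_1=\mY$, $\mX_2=\mX'$, $\mY_1=\mY_2=\mX$, $g=\Id_\mX$, and $G=\Id_{\mX'}$ (whose preimage under $\Gamma$ is $e_\mX$ by definition) yields
\[
\Gamma^{-1}_{\mY,\mX}(f)=e_\mX\circ(f\otimes\Id_\mX).
\]
In particular, since $\Gamma_{-,\mX}$ is a bijection, equality of two morphisms $\mY\rightrightarrows\mX'$ is equivalent to equality of the corresponding morphisms $\mY\otimes\mX\rightrightarrows\vac$ after tensoring with $\Id_\mX$ on the right and composing with $e_\mX$.

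The unit axiom $\mu_{\mX'}\circ(\Id_{\mX'}\otimes\iota_\aA)=r_{\mX'}$ is checked as follows. Applying the observation above and the definition of $\mu_{\mX'}$, the left-hand side corresponds to
\[
e_\mX\circ(\Id_{\mX'}\otimes\mu_\mX)\circ\cA^{-1}_{\mX',\aA,\mX}\circ\bigl((\Id_{\mX'}\otimes\iota_\aA)\otimes\Id_\mX\bigr).
\]
Using naturality of $\cA^{-1}$ to slide $\iota_\aA$ past the associator, then the left-module unit property $\mu_\mX\circ(\iota_\aA\otimes\Id_\mX)=l_\mX$, and finally the triangle axiom, this collapses to $e_\mX\circ(r_{\mX'}\otimes\Id_\mX)$, which is precisely the image of $r_{\mX'}$ under $\Gamma^{-1}_{\mX'\otimes\vac,\mX}$.

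Associativity $\mu_{\mX'}\circ(\mu_{\mX'}\otimes\Id_\aA)=\mu_{\mX'}\circ(\Id_{\mX'}\otimes\mu_\aA)\circ\cA_{\mX',\aA,\aA}$ is handled similarly. Apply the $e_\mX\circ(-\otimes\Id_\mX)$ translation to both sides; using the definition of $\mu_{\mX'}$ twice on the left-hand side (once to unfold the outer $\mu_{\mX'}$, then again for the inner one after naturality of the associator), and once on the right-hand side, both sides become expressions of the form $e_\mX$ composed with $\Id_{\mX'}\otimes(\text{action of }\aA\otimes\aA\text{ on }\mX)$ preceded by a chain of associators on $((\mX'\otimes\aA)\otimes\aA)\otimes\mX$. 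The left-hand side carries $\mu_\mX\circ(\Id_\aA\otimes\mu_\mX)$, while the right-hand side carries $\mu_\mX\circ(\mu_\aA\otimes\Id_\mX)$; these agree by the left-module associativity axiom for $\mX$ (modulo one associator $\cA_{\aA,\aA,\mX}$). The two associator chains are then equal by a single application of the pentagon axiom, finishing the verification.

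The main step requiring care is the bookkeeping of associators in the associativity check: the two sides land on different parenthesizations of $((\mX'\otimes\aA)\otimes\aA)\otimes\mX$, and matching them requires the pentagon identity together with naturality of $\cA^{-1}$ applied several times. There is no conceptual obstacle, and since the argument only invokes the existence of $e_\mX$, the natural isomorphism $\Gamma$, and coherence, it goes through verbatim whether $\cC$ is rigid or merely equipped with a contragredient functor, and whether we work in $\cC$ or in the relevant subcategory $\cC_\oplus^{fin}$ on which $\Gamma$ is defined by Proposition \ref{Cont_in_Cplus}.
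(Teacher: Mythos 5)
Your proof is correct and follows essentially the same strategy as the paper: both reduce each right-module axiom to an equality of morphisms into $\vac$ via the naturality of $\Gamma$ (your upfront observation $\Gamma^{-1}_{\mY,\mX}(f)=e_\mX\circ(f\otimes\Id_\mX)$ is exactly the consequence the paper extracts from its naturality diagrams), and then close using the left-module axioms for $\mX$, naturality of associators, and the triangle/pentagon identities. Note one small typo: in your statement of right-module associativity the final morphism should be $\cA^{-1}_{\mX',\aA,\aA}$ rather than $\cA_{\mX',\aA,\aA}$ so that the composition typechecks on $(\mX'\otimes\aA)\otimes\aA$.
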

\begin{proof}
 To prove the unit property of $\mu_{\mX'}$, we use the commutative diagram
 \begin{equation*}
  \xymatrixcolsep{4pc}
 \xymatrix{
 \HHom_{\cC}((\mX'\otimes\aA)\otimes\mX,\vac)  \ar[d]_{F\mapsto F\circ((\Id_{\mX}'\otimes\iota_\aA)r_{\mX'}^{-1}\otimes\Id_\mX)} & \HHom_{\cC}(\mX'\otimes\aA,\mX') \ar[l]_(.45){\Gamma^{-1}_{\mX'\otimes\aA,\mX}} \ar[d]^{G\mapsto G\circ(\Id_{\mX'}\otimes\iota_\aA)\circ r_{\mX'}^{-1}} \\
 \HHom_\cC(\mX'\otimes\mX,\vac)  & \HHom_{\cC}(\mX',\mX') \ar[l]_(.47){\Gamma^{-1}_{\mX',\mX}} \\
 }
 \end{equation*}
given by the naturality of $\Gamma$. Applying both compositions to $\mu_{\mX'}$, and then using the definition of $\mu_{\mX'}$, the naturality of the associativity isomorphisms, the triangle axiom, and the unit property for $\mX$, we get
\begin{align*}
 \Gamma^{-1}_{\mX',\mX}(\mu_{\mX'}\circ( & \Id_{\mX'}\otimes\iota_\aA)\circ r_{\mX'}^{-1})  = \Gamma^{-1}_{\mX'\otimes\aA,\mX}(\mu_{\mX'})\circ((\Id_{\mX'}\otimes\iota_\aA)\otimes\Id_\mX)\circ(r_{\mX'}^{-1}\otimes\Id_\mX)\nonumber\\
 & = e_\mX\circ(\Id_{\mX'}\otimes\mu_\mX)\circ\cA^{-1}_{\mX',\aA,\mX}\circ((\Id_{\mX'}\otimes\iota_\aA)\otimes\Id_\mX)\circ(r_{\mX'}^{-1}\otimes\Id_\mX)\nonumber\\
 & =e_\mX\circ(\Id_{\mX'}\otimes\mu_\mX)\circ(\Id_{\mX'}\otimes(\iota_\aA\otimes\Id_\mX))\circ\cA_{\mX',\vac,\mX}^{-1}\circ(r_{\mX'}^{-1}\otimes\Id_\mX)\nonumber\\
 & =e_\mX\circ(\Id_{\mX'}\otimes\mu_\mX)\circ(\Id_{\mX'}\otimes(\iota_\aA\otimes\Id_\mX))\circ(\Id_{\mX'}\otimes l_\mX^{-1})\nonumber\\
 & = e_\mX.
\end{align*}
We conclude that
\begin{equation*}
 \mu_{\mX'}\circ(\Id_{\mX'}\otimes\iota_\aA)\circ r_{\mX'}^{-1} =\Gamma_{\mX',\mX}(e_{\mX})=\Id_{\mX'}
\end{equation*}
as required.

To prove the associativity of $\mu_{\mX'}$, we first use the commutative diagram
\begin{equation*}
 \xymatrixcolsep{4pc}
 \xymatrix{
 \HHom_\cC(\mX'\otimes\aA,\mX') \ar[r]^(.45){\Gamma^{-1}_{\mX'\otimes\aA,\mX}} \ar[d]_{G\mapsto G\circ(\mu_{\mX'}\otimes\Id_\aA)} & \HHom_\cC((\mX'\otimes\aA)\otimes\mX,\vac) \ar[d]^{F\mapsto F\circ((\mu_{\mX'}\otimes\Id_\aA)\otimes\Id_\mX)} \\
 \HHom_{\cC}((\mX'\otimes\aA)\otimes\aA, \mX') \ar[r]^(.47){\Gamma^{-1}_{(\mX'\otimes\aA)\otimes\aA,\mX}} & \HHom_\cC(((\mX'\otimes\aA)\otimes\aA)\otimes\mX,\vac) \\
 }
\end{equation*}
given by the naturality of $\Gamma$. Applying both compositions to $\mu_{\mX'}$ and then using the definition of $\mu_{\mX'}$ and the naturality of the associativity isomorphisms, we get
\begin{align}\label{Wprime_assoc_calc}
 \Gamma^{-1}_{(\mX'\otimes\aA)\otimes\aA,\mX}(  \mu_{\mX'} & \circ(\mu_{\mX'}\otimes\Id_\aA)) = \Gamma_{\mX'\otimes\aA,\mX}^{-1}(\mu_{\mX'})\circ((\mu_{\mX'}\otimes\Id_\aA)\otimes\Id_\mX)\nonumber\\
 & = e_\mX\circ(\Id_{\mX'}\otimes\mu_\mX)\circ\cA^{-1}_{\mX',\aA,\mX}\circ((\mu_{\mX'}\otimes\Id_\aA)\otimes\Id_\mX)\nonumber\\
 & = e_\mX\circ(\mu_{\mX'}\otimes\Id_\mX)\circ(\Id_{\mX'\otimes\aA}\otimes\mu_\mX)\circ\cA_{\mX'\otimes\aA,\aA,\mX}^{-1}.
\end{align}
Now, the naturality of $\Gamma$ implies
\begin{align*}
 e_\mX\circ(\mu_{\mX'}\otimes\Id_\mX) & =\Gamma_{\mX',\mX}^{-1}(\Id_{\mX'})\circ(\mu_{\mX'}\otimes\Id_\mX)\nonumber\\
 &= \Gamma^{-1}_{\mX'\otimes\aA,\mX}(\Id_{\mX'}\circ\mu_{\mX'}) = e_\mX\circ(\Id_{\mX'}\otimes\mu_\mX)\circ\cA_{\mX',\aA,\mX}^{-1}.
\end{align*}
Putting this back into \eqref{Wprime_assoc_calc} and using the naturality of the associativity isomorphisms, the associativity of $\mu_{\mX}$ the pentagon axiom, the definition of $\mu_{\mX'}$, and the naturality of $\Gamma$, we get
\begin{align*}
 & \Gamma^{-1}_{(\mX'\otimes\aA)\otimes\aA,\mX} (  \mu_{\mX'}  \circ(\mu_{\mX'}\otimes\Id_\aA))\nonumber\\
  & \hspace{2em}= e_\mX\circ(\Id_{\mX'}\otimes\mu_\mX)\circ\cA_{\mX',\aA,\mX}^{-1}\circ(\Id_{\mX'\otimes\aA}\otimes\mu_\mX)\circ\cA_{\mX'\otimes\aA,\aA,\mX}^{-1}\nonumber\\
  &\hspace{2em} =e_\mX\circ(\Id_{\mX'}\otimes\mu_\mX)\circ(\Id_{\mX'}\otimes(\Id_\aA\otimes\mu_\mX))\circ\cA^{-1}_{\mX',\aA,\aA\otimes\mX}\circ\cA^{-1}_{\mX'\otimes\aA,\aA,\mX}\nonumber\\
  &\hspace{2em} =e_\mX\circ(\Id_{\mX'}\otimes\mu_\mX)\circ(\Id_{\mX'}\otimes(\mu_\aA\otimes\Id_\mX))\circ(\Id_{\mX'}\otimes\cA_{\aA,\aA,\mX})\circ\cA^{-1}_{\mX',\aA,\aA\otimes\mX}\circ\cA^{-1}_{\mX'\otimes\aA,\aA,\mX}\nonumber\\
  &\hspace{2em} =e_\mX\circ(\Id_{\mX'}\otimes\mu_\mX)\circ(\Id_{\mX'}\otimes(\mu_\aA\otimes\Id_\mX))\circ\cA_{\mX',\aA\otimes\aA,\mX}^{-1}\circ(\cA_{\mX',\aA,\aA}^{-1}\otimes\Id_\mX)\nonumber\\
  &\hspace{2em} =e_\mX\circ(\Id_{\mX'}\otimes\mu_\mX)\circ\cA_{\mX',\aA,\mX}^{-1}\circ((\Id_{\mX'}\otimes\mu_\aA)\otimes\Id_\mX)\circ(\cA_{\mX',\aA,\aA}^{-1}\otimes\Id_\mX)\nonumber\\
  &\hspace{2em} =\Gamma^{-1}_{\mX'\otimes\aA,\mX}(\mu_{\mX'})\circ((\Id_{\mX'}\otimes\mu_\aA)\otimes\Id_\mX)\circ(\cA_{\mX',\aA,\aA}^{-1}\otimes\Id_\mX)\nonumber\\
  &\hspace{2em} = \Gamma_{(\mX'\otimes\aA)\otimes\aA,\mX}^{-1}(\mu_{\mX'}\circ(\Id_{\mX'}\otimes\mu_\aA)\circ\cA_{\mX',\aA,\aA}^{-1}).
\end{align*}
Applying $\Gamma_{(\mX'\otimes\aA)\otimes\aA,\mX}$ to both sides, we conclude
\begin{equation*}
 \mu_{\mX'}  \circ(\mu_{\mX'}\otimes\Id_\aA) =\mu_{\mX'}\circ(\Id_{\mX'}\otimes\mu_\aA)\circ\cA_{\mW',\aA,\aA}^{-1},
\end{equation*}
which is the associativity of $\mu_{\mX'}$.
\end{proof}

Although in this paper we are mostly concerned with commutative algebras, we will prove some results for superalgebras, which are associative algebras that in particular satisfy $\mu_{\aA}\circ\cR_{\aA,\aA}^2 =\mu_\aA$. For such an associative algebra, we have a single opposite algebra $\aA^{\text{op}}=(\aA,\mu_\aA\circ\cR_{\aA,\aA},\iota_\aA)$. Thus in light of the preceding lemma and Proposition \ref{Cont_in_Cplus}, we immediately have:
\begin{cor}\label{cor:AdualinRepA}
Assume that $\cC$ is a braided tensor category with contragredient functor and that either:
\begin{enumerate}
 \item $\aA$ is an associative algebra in $\cC$, or
 \item $\cC$ is semisimple and $\aA$ is an associative algebra in $\cC_\oplus^{fin}$.
\end{enumerate}
If $\mu_\aA\circ\cR_{\aA,\aA}^2=\mu_\aA$, then $(\aA',\mu_{\aA'}^+=\mu_{\aA'}\circ \cR_{\aA,\aA'})$ and 
$(\aA',\mu_{\aA'}^-=\mu_{\aA'}\circ \cR^{-1}_{\aA',\aA})$
are objects of $\rep\,\aA^{\mathrm{op}}$. 
\end{cor}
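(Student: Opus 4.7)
The plan is to combine the preceding lemma with the remark in Section \ref{subsec:repA} that braidings convert one-sided modules over $\aA$ into modules of the opposite handedness over the opposite algebras. Thus the corollary is essentially a concatenation of two conversions: (a) left $\aA$-module to right $\aA$-module via the contragredient functor (the preceding lemma), then (b) right $\aA$-module to left module over an opposite algebra via the braiding.

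For step (a), observe that $(\aA, \mu_\aA)$ is tautologically an object of $\repA$, so the preceding lemma produces a right $\aA$-module structure $\mu_{\aA'}: \aA'\otimes\aA \to \aA'$ on the contragredient. In case (1) this is immediate. In case (2) the same argument works essentially verbatim: since $\aA$, $\aA'\otimes\aA$, and $(\aA'\otimes\aA)\otimes\aA$ all lie in $\cC_\oplus^{\fin}$, Proposition \ref{Cont_in_Cplus} supplies the natural isomorphisms $\Gamma$ used in the lemma's construction, and the lemma's verification of the unit and associativity axioms is a purely formal diagram chase (using only $\Gamma$, associators, and the evaluation $e_\aA$) that transports to $\cC_\oplus$ without change.

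For step (b), precomposing $\mu_{\aA'}$ with $\cR_{\aA,\aA'}$ and with $\cR_{\aA',\aA}^{-1}$ produces the two candidate morphisms $\mu_{\aA'}^\pm: \aA\otimes\aA' \to \aA'$ appearing in the statement. A short diagram chase, analogous to the left-to-right conversion in Section \ref{subsec:repA} and using naturality of $\cR$, the hexagon axiom (to commute the braiding past $\mu_\aA$), and the right-module associativity of $\mu_{\aA'}$ established in step (a), shows that $(\aA',\mu_{\aA'}^+)$ and $(\aA',\mu_{\aA'}^-)$ are left modules over the opposite algebras $(\aA, \mu_\aA\circ\cR_{\aA,\aA},\iota_\aA)$ and $(\aA, \mu_\aA\circ\cR_{\aA,\aA}^{-1},\iota_\aA)$, respectively. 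The unit axiom reduces, via the naturality of $\cR$ and the identities $\cR_{\vac,\aA'}=l_{\aA'}\circ r_{\aA'}^{-1}$ (and likewise $\cR_{\aA',\vac}^{-1}$), to the unit property for $\mu_{\aA'}$ already established.

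Finally, the hypothesis $\mu_\aA \circ \cR_{\aA,\aA}^2 = \mu_\aA$ yields, upon precomposition with $\cR_{\aA,\aA}^{-1}$, the identity $\mu_\aA\circ\cR_{\aA,\aA} = \mu_\aA\circ\cR_{\aA,\aA}^{-1}$, so the two opposite algebras of the preceding paragraph coincide with $\aA^{\mathrm{op}}$, and therefore both $(\aA',\mu_{\aA'}^\pm)$ are objects of $\rep\aA^{\mathrm{op}}$. I expect the main obstacle to be purely bookkeeping: keeping track of which braiding corresponds to which opposite algebra in the right-to-left conversion in step (b), and, in case (2), ensuring that all morphisms invoked at each stage live in the appropriate $\HHom_{\cC_\oplus}$-spaces afforded by Proposition \ref{Cont_in_Cplus}, i.e. that the objects to which $\Gamma$ is applied remain in $\cC_\oplus^{\fin}$.
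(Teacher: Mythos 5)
Your proposal is correct and matches the paper's own one-line proof, which simply cites the preceding lemma (producing the right $\aA$-module structure $\mu_{\aA'}$ on $\aA'$) together with Proposition \ref{Cont_in_Cplus} to handle the $\cC_\oplus^{\fin}$ case, and then implicitly invokes the right-to-left module conversion via the braiding noted earlier in Section \ref{subsec:repA}. Two small bookkeeping remarks: the identity you invoke should read $\cR_{\vac,\aA'} = r_{\aA'}^{-1}\circ l_{\aA'}$ (your version has the composition reversed, which does not typecheck); and while the ``respective'' assignment of the two candidate left-module structures to the two opposite algebras $(\aA,\mu_\aA\circ\cR_{\aA,\aA},\iota_\aA)$ and $(\aA,\mu_\aA\circ\cR^{-1}_{\aA,\aA},\iota_\aA)$ requires a careful hexagon-axiom diagram chase, you correctly observe that the standing hypothesis $\mu_\aA\circ\cR_{\aA,\aA}^2=\mu_\aA$ collapses the two opposite algebras to the single $\aA^{\mathrm{op}}$, so any ambiguity in the assignment is immaterial for the conclusion.
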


Now the following lemma in the case that $\aA$ is an algebra in $\cC$ is essentially a special case of \cite[Theorem 1.17.3]{KO}, but again we will need the result for algebras in $\cC_\oplus^{fin}$:
\begin{lemma}
 Assume that $\cC$ is a rigid braided tensor category and that either:
 \begin{enumerate}
  \item $\aA$ is an associative algebra in $\cC$, or
  \item $\cC$ is semisimple and $\aA$ is an associative algebra in $\cC_\oplus^{fin}$.
 \end{enumerate}
If $\mu_\aA\circ\cR_{\aA,\aA}^2=\mu_{\aA}$, then the morphism $\psi_{\aA}$ of Remark \ref{rema:deltaX} is an isomorphism in $\repA$ from $(\aA,\mu_\aA)$ to $(\aA^{**},\mu_{\aA^{**}}\circ\cR_{\aA,\aA^{**}})$, where the right $\aA^{\mathrm{op}}$-module structure $\mu_{\aA^{**}}$ is obtained using the left $\aA^{\mathrm{op}}$-module structure $\mu_{\aA^*}^+$ on $\aA^*$.
\end{lemma}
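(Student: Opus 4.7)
The morphism $\psi_\aA\colon\aA\to\aA^{**}$ is already an isomorphism in $\cC$ (or in $\cC_\oplus^{\fin}$) by Remark \ref{rema:deltaX} together with rigidity, and the forgetful functor $\cG\colon\repA\to\cC$ reflects isomorphisms, so it suffices to check that $\psi_\aA$ intertwines the two module structures:
\[
\psi_\aA\circ\mu_\aA \;=\; \bigl(\mu_{\aA^{**}}\circ\cR_{\aA,\aA^{**}}\bigr)\circ(\Id_\aA\otimes\psi_\aA)
\]
as morphisms $\aA\otimes\aA\to\aA^{**}$. The plan is to verify this after transferring both sides through the natural isomorphism $\Gamma^{-1}_{\aA\otimes\aA,\aA^*}$ into an identity of morphisms $(\aA\otimes\aA)\otimes\aA^*\to\vac$. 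By the naturality square \eqref{Cont_Nat} applied to $\mu_\aA\otimes\Id_{\aA^*}$, and the defining formula $\psi_\aA = \Gamma_{\aA,\aA^*}(e_\aA\circ\cR_{\aA,\aA^*})$, the left-hand side transfers to the compact expression $(e_\aA\circ\cR_{\aA,\aA^*})\circ(\mu_\aA\otimes\Id_{\aA^*})$.

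The right-hand side is unpacked using the preceding lemma, which presents $\mu_{\aA^{**}}$ as the $\Gamma$-image of $e_{\aA^*}\circ(\Id_{\aA^{**}}\otimes\mu_{\aA^*}^+)\circ\cA^{-1}$, together with $\mu_{\aA^*}^+ = \mu_{\aA^*}\circ\cR_{\aA,\aA^*}$ and $\mu_{\aA^*}$ itself built from $e_\aA\circ(\Id_{\aA^*}\otimes\mu_\aA)\circ\cA^{-1}$. The key simplifying identity is
\[
e_{\aA^*}\circ(\psi_\aA\otimes\Id_{\aA^*}) \;=\; e_\aA\circ\cR_{\aA,\aA^*},
\]
which is obtained by evaluating the naturality square \eqref{Cont_Nat} on $\Id_{\aA^{**}}$; substituting it into the unfolded right-hand side eliminates both $e_{\aA^*}$ and the internal occurrence of $\psi_\aA$, leaving an expression built only from $\mu_\aA$, $e_\aA$, braidings, and associators. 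From there the standard moves finish the job: one applies naturality of $\cR$ to push $\mu_\aA$ past the braidings, one of the hexagon axioms to expand $\cR_{\aA\otimes\aA,\aA^*}$ on the left-hand side, the associativity of $\mu_\aA$, and finally the superalgebra relation $\mu_\aA\circ\cR_{\aA,\aA}^2=\mu_\aA$ to cancel a double braiding appearing on the two $\aA$-factors that feed the multiplication. With these moves the two sides coincide.

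The main obstacle is purely organizational: the chain of associators and braidings to track is uncomfortably long for a direct calculation. A cleaner realization is to draw all morphisms as string diagrams in the rigid braided category, in which case the reduction above becomes a short sequence of Reidemeister-type moves together with a single use of the superalgebra relation erasing a full twist on one incoming $\aA$-strand. This graphical form also makes transparent that commutativity of $\mu_\aA$ is not used anywhere, so the lemma covers genuine superalgebras exactly as stated.
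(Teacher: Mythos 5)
Your proposal is correct and follows essentially the same route as the paper's proof: the same observation that $\psi_\aA$ is already invertible in $\cC$ (or $\cC_\oplus^{fin}$) so only the module-map condition needs checking, the same transfer of the desired identity through $\Gamma^{-1}_{\aA\otimes\aA,\aA^*}$ to a morphism $(\aA\otimes\aA)\otimes\aA^*\to\vac$, the same characterizations $e_{\aA^*}\circ(\psi_\aA\otimes\Id_{\aA^*})=e_\aA\circ\cR_{\aA,\aA^*}$ and $e_{\aA^*}\circ(\mu_{\aA^{**}}\otimes\Id_{\aA^*})=e_{\aA^*}\circ(\Id\otimes\mu_{\aA^*}^+)\circ\cA^{-1}$, and the same final reliance on naturality of $\cR$, hexagons, associativity of $\mu_\aA$, and the monodromy-freeness $\mu_\aA\circ\cR_{\aA,\aA}^2=\mu_\aA$. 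The only (cosmetic) difference is that you aim directly at the left-module intertwining relation, whereas the paper first converts it to the equivalent right-$\aA^{\mathrm{op}}$-module condition $\mu_{\aA^{**}}\circ(\psi_\aA\otimes\Id_\aA)=\psi_\aA\circ\mu_\aA\circ\cR_{\aA,\aA}$ before unpacking through $\Gamma^{-1}$.
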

\begin{proof}
First observe that $(\aA^{**},\mu_{\aA^{**}}\circ\cR_{\aA,\aA^{**}})$ is indeed an object of $\repA$ because our assumption $\mu_\aA\circ\cR_{\aA,\aA}^2=\mu_{\aA}$ implies that $(\aA^{\mathrm{op}})^{\mathrm{op}}=\aA$.

 If $\aA$ is an algebra in $\cC$, then $\psi_\aA$ is an isomorphism in $\cC$ because $\cC$ is rigid. If on the other hand $\aA=\bigoplus_{s\in S} \aA_s$ is an algebra in $\cC_\oplus^{fin}$, then using the definition
 \begin{equation*}
  \psi_{\aA}=\Gamma_{\aA,\aA^*}(e_{\aA}\circ\cR_{\aA,\aA^*}),
 \end{equation*}
where $\Gamma_{\aA,\aA^*}$ is the natural isomorphism of Proposition \ref{Cont_in_Cplus}, together with the natural isomorphism
\begin{equation*}
 \HHom_{\cC_\oplus}(\aA,\aA^{**})\cong\prod_{(s,t)\in S\times S} \HHom_\cC(\aA_s,\aA_t^{**}),
\end{equation*}
we can identify $\psi_\aA$ with the tuple
\begin{equation*}
 \lbrace \delta_{s,t}\psi_{\aA_s}\rbrace_{(s,t)\in S\times S}\in\prod_{(s,t)\in S\times S} \HHom_\cC(\aA_s,\aA_t^{**}).
\end{equation*}
Now since $\cC$ is rigid, each $\psi_{\aA_s}$ is an isomorphism in $\cC$, and so $\psi_\aA$ is an isomorphism in $\cC_{\oplus}$.

It remains to show that $\psi_{\aA}$ is actually a morphism in $\repA$. We need to show that
\begin{equation*}
 \mu_{\aA^{**}}\circ\cR_{\aA,\aA^{**}}\circ(\Id_\aA\otimes\psi_\aA)=\psi_\aA\circ\mu_\aA.
\end{equation*}
By the naturality of the braiding isomorphisms and the assumption $\mu_\aA\circ\cR_{\aA,\aA}^2=\mu_{\aA}$, this is equivalent to showing
\begin{equation*}
 \mu_{\aA^{**}}\circ(\psi_\aA\otimes \Id_\aA)=\psi_\aA\circ\mu_\aA\circ\cR_{\aA,\aA}
\end{equation*}
(that is, $\psi_\aA$ is a homomorphism of right $\aA^{\mathrm{op}}$-modules). For this, we first note a general fact about contragredient functors: if $f: \Mod{W}\otimes\mX\rightarrow\vac$ is a morphism in $\cC$, then $\Gamma_{\Mod{W},\mX}(f)$ is the unique morphism in $\HHom_\cC(\Mod{W},\mX')$ such that $e_\mX\circ(\Gamma_{\Mod{W},\mX}(f)\otimes\Id_\mX)=f$. This follows from applying both compositions in the commutative diagram
\begin{equation*}
 \xymatrixcolsep{4pc}
 \xymatrix{
 \HHom_\cC(\mX'\otimes\mX,\vac) \ar[r]^{\Gamma_{\mX',\mX}} \ar[d]_{F\mapsto F\circ(\Gamma(f)\otimes\Id_{\mX})} & \HHom_\cC(\mX',\mX') \ar[d]^{G\mapsto G\circ\Gamma(f)} \\
 \HHom_\cC(\Mod{W}\otimes\mX, \vac) \ar[r]^{\Gamma_{\Mod{W},\mX}} & \HHom_\cC(\Mod{W},\mX') \\ 
 }
\end{equation*}
to $e_{\mX}=\Gamma_{\mX',\mX}^{-1}(\Id_{\mX'})$. In the cases $\Gamma_{\Mod{W},\mX}(f)=\mu_{\aA^{**}}$ and $\Gamma_{\Mod{W},\mX}(f)=\psi_\aA$, we see that $\mu_{\aA^{**}}: \aA^{**}\otimes\aA\rightarrow\aA^{**}$ is the unique morphism such that
\begin{equation*}
 e_{\aA^*}\circ(\mu_{\aA^{**}}\otimes\Id_{\aA^*})=e_{\aA^*}\circ(\Id_{\aA^{**}}\otimes\mu_{\aA^*}^+)\circ\cA^{-1}_{\aA^{**},\aA,\aA^*}
\end{equation*}
and $\psi_\aA:\aA\rightarrow\aA^{**}$ is the unique morphism such that
\begin{equation*}
 e_{\aA^*}\circ(\psi_\aA\otimes\Id_{\aA^*})=e_\aA\circ\cR_{\aA,\aA^*}.
\end{equation*}
Using these relations together with the naturality of the associativity isomorphisms, we obtain the commutative diagram
\begin{equation*}
 \xymatrixcolsep{4pc}
 \xymatrix{
 (\aA\otimes\aA)\otimes\aA^* \ar[r]^{\cA_{\aA,\aA,\aA^*}^{-1}} \ar[d]^{(\psi_\aA\otimes\Id_\aA)\otimes\Id_{\aA^*}} & \aA\otimes(\aA\otimes\aA^*) \ar[r]^{\Id_\aA\otimes\cR_{\aA,\aA^*}} \ar[d]^{\psi_\aA\otimes\Id_{\aA\otimes\aA^*}} & \aA\otimes(\aA^*\otimes\aA) \ar[d]^{\psi_\aA\otimes\Id_{\aA^*\otimes\aA}} \ar[rd]^{\Id_\aA\otimes\mu_{\aA^*}} & \\
 (\aA^{**}\otimes\aA)\otimes\aA^* \ar[r]^{\cA^{-1}_{\aA^{**}\otimes\aA\otimes\aA^*}} \ar[rd]^{\mu_{\aA^{**}}\otimes\Id_{\aA^*}} & \aA^{**}\otimes(\aA\otimes\aA^*) \ar[r]^{\Id_{\aA^{**}}\otimes\cR_{\aA,\aA^*}} \ar[rd]^{\Id_{\aA^{**}}\otimes\mu_{\aA^*}^+} & \aA^{**}\otimes(\aA^*\otimes\aA) \ar[d]^{\Id_{\aA^{**}}\otimes\mu_{\aA^*}} & \aA\otimes \aA^* \ar[ld]^{\psi_\aA\otimes\Id_{\aA^*}} \ar[d]^{\cR_{\aA,\aA^*}} \\
& \aA^{**}\otimes\aA^* \ar[rd]^{e_{\aA^*}}  & \aA^{**}\otimes\aA^* \ar[d]^{e_{\aA^*}} & \aA^*\otimes\aA \ar[ld]^{e_\aA} \\
&  & \vac &  \\
 }
\end{equation*}

Now we can use the naturality of the braiding isomorphisms and the hexagon axioms to rewrite the outer composition on the top and right sides of the diagram as
\begin{align*}
 (\aA\otimes\aA)\otimes\aA^* & \xrightarrow{\cA_{\aA,\aA,\aA^*}^{-1}} \aA\otimes(\aA\otimes\aA^*)\xrightarrow{\Id_\aA\otimes\cR_{\aA,\aA^*}} \aA\otimes(\aA^*\otimes\aA)\xrightarrow{\cA_{\aA,\aA^*,\aA}} (\aA\otimes\aA^*)\otimes\aA\nonumber\\
 & \xrightarrow{\cR_{\aA,\aA^*}\otimes\Id_\aA} (\aA^*\otimes\aA)\otimes\aA\xrightarrow{\cA^{-1}_{\aA^*,\aA,\aA}} \aA^*\otimes(\aA\otimes\aA) \xrightarrow{\Id_{\aA^*}\otimes\cR_{\aA,\aA}} \aA^*\otimes(\aA\otimes\aA)\nonumber\\
 & \xrightarrow{\cA_{\aA^*,\aA,\aA}} (\aA^*\otimes\aA)\otimes\aA \xrightarrow{\mu_{\aA^*}\otimes\Id_\aA} \aA^*\otimes\aA\xrightarrow{e_\aA} \vac.
\end{align*}
Next we use the hexagon axioms to rewrite the first five arrows and use the relation
\begin{equation*}
 e_\aA\circ(\mu_{\aA^*}\otimes\Id_\aA) = e_\aA\circ(\Id_{\aA^*}\otimes\mu_\aA)\circ\cA_{\aA^*,\aA,\aA}^{-1}
\end{equation*}
to obtain
\begin{align*}
 (\aA\otimes\aA)\otimes\aA^*\xrightarrow{\cR_{\aA\otimes\aA,\aA^*}} \aA^*\otimes(\aA\otimes\aA)\xrightarrow{\Id_{\aA^*}\otimes\cR_{\aA,\aA}} \aA^*\otimes(\aA\otimes\aA)\xrightarrow{\Id_{\aA^*}\otimes\mu_\aA} \aA^*\otimes\aA\xrightarrow{e_\aA} \vac.
\end{align*}
The naturality of the braiding now implies this is
\begin{equation*}
 (\aA\otimes\aA)\otimes\aA^*\xrightarrow{\cR_{\aA,\aA}\otimes\Id_{A^*}}(\aA\otimes\aA)\otimes\aA^*\xrightarrow{\mu_\aA\otimes\Id_{\aA^*}} \aA\otimes\aA^*\xrightarrow{\cR_{\aA,\aA^*}} \aA^*\otimes\aA\xrightarrow{e_\aA}\vac,
\end{equation*}
which is $\Gamma_{\aA,\aA^*}^{-1}(\psi_\aA)\circ((\mu_\aA\circ\cR_{\aA,\aA})\otimes\Id_{\aA^*})$.

Our calculations have now shown that
\begin{align*}
 e_{\aA^*}\circ(\mu_{\aA^{**}}\circ(\psi_\aA\otimes\Id_\aA)\otimes\Id_{\aA^*}) & = \Gamma_{\aA,\aA^*}^{-1}(\psi_\aA)\circ((\mu_\aA\circ\cR_{\aA,\aA})\otimes\Id_{\aA^*})\nonumber\\
 &= \Gamma^{-1}_{\aA\otimes\aA,\aA^*}(\psi_\aA\circ\mu_\aA\circ\cR_{\aA,\aA}), 
\end{align*}
where we have used the naturality of $\Gamma$ for the second equality. Applying $\Gamma_{\aA\otimes\aA,\aA^*}$ to both sides then yields the desired equality $\mu_{\aA^{**}}\circ(\psi_{\aA}\otimes\Id_\aA)=\psi_\aA\circ\mu_\aA\circ\cR_{\aA,\aA}$.
\end{proof}

The main reason we need the preceding lemma is the following corollary:
\begin{cor}\label{Adualsimple}
  Assume that $\cC$ is a rigid braided tensor category and that either:
 \begin{enumerate}
  \item $\aA$ is an associative algebra in $\cC$, or
  \item $\cC$ is semisimple and $\aA$ is an associative algebra in $\cC_\oplus^{fin}$.
 \end{enumerate}
 If $\mu_{\aA}\circ\cR_{\aA,\aA}^2=\mu_\aA$ and $\aA$ is simple as an object of $\repA$, then $(\aA^*,\mu_{\aA^*})$ is a simple right $\aA$-module.
\end{cor}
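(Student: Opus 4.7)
My plan is to reduce the simplicity of $(\aA^*,\mu_{\aA^*})$ as a right $\aA$-module to the simplicity of $\aA^{**}$ as a left $\aA$-module; the latter is immediate from the previous lemma, which provides an isomorphism $\psi_\aA:(\aA,\mu_\aA)\xrightarrow{\cong}(\aA^{**},\mu_{\aA^{**}}\circ\cR_{\aA,\aA^{**}})$ in $\repA$, so simplicity of $\aA$ as a left $\aA$-module transfers across.

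To carry this out, I would first establish a right-module analogue of the lemma used to define $\mu_{\mX'}$: for any right $\aA$-module $(\mY,\nu_\mY)$, the contragredient $\mY'$ acquires a natural left $\aA$-module structure via an analogous formula built from $\Gamma$, the evaluation $e_\mY$, and the associator, and this construction is functorial in $\mY$. Given a nonzero right $\aA$-subobject $\iota:\mN\hookrightarrow\aA^*$, the dualized map $\iota^*:\aA^{**}\to\mN^*$ is then a morphism in $\repA$; rigidity of $\cC$ (or the contragredient structure on $\cC_\oplus^{fin}$) ensures $\iota^*$ is an epimorphism, being the contragredient of a monomorphism.

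Next, the kernel of $\iota^*$ is a left $\aA$-subobject of the simple module $(\aA^{**},\mu_{\aA^{**}}\circ\cR_{\aA,\aA^{**}})$, hence either all of $\aA^{**}$ or zero. The first case gives $\mN^*=0$, which via the isomorphism $\psi_\mN:\mN\xrightarrow{\cong}\mN^{**}$ contradicts $\mN\neq 0$. In the second case, $\iota^*$ is an isomorphism; applying the contragredient once more yields that $\iota^{**}$ is an isomorphism, and the naturality of $\psi$ (i.e. $\iota=\psi_{\aA^*}^{-1}\circ\iota^{**}\circ\psi_\mN$ up to canonical identifications) then forces $\iota$ itself to be an isomorphism, so $\mN=\aA^*$. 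Hence $\aA^*$ has no proper nonzero right $\aA$-subobjects.

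The main technical hurdle will be the right-module version of the lemma used for $\mu_{\mX'}$, requiring a diagram chase parallel to the one already carried out in the left-module case (in particular, checking that the functorially induced $\iota^*$ really is an $\aA$-module map); once this is in place, the remaining deductions follow routinely from rigidity and the simplicity of $\aA^{**}$ established in the first step.
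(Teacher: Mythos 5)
Your proposal is correct and takes essentially the same approach as the paper: both reduce the simplicity of $\aA^*$ to the simplicity of $\aA^{**}\cong\aA$ (via $\psi_\aA$ from the preceding lemma), using that the contragredient functor carries right-$\aA$-module morphisms to left-$\aA$-module morphisms and is exact. The paper dualizes the cokernel of the inclusion $\iota$ to get an injection $(\mathrm{coker}\,\iota)^*\hookrightarrow\aA^{**}$, whereas you dualize $\iota$ itself to get a surjection $\aA^{**}\twoheadrightarrow\mN^*$; these are the same argument read in opposite directions, and your final step via $\iota^{**}$ and naturality of $\psi$ can be shortened since rigidity already makes $*$ a contravariant autoequivalence.
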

\begin{proof}
 We need to show that any right $\aA$-module inclusion (equivalently, $\rep\,\aA^{\mathrm{op}}$-inclusion) $i: \mX\rightarrow\aA^*$ is either $0$ or an isomorphism. In fact, the cokernel $c: \aA^*\rightarrow\mathrm{coker}\,i$ is also a morphism in $\rep\,\aA^{\mathrm{op}}$ (see for instance \cite[Lemma 1.4]{KO} or \cite[Theorem 2.9]{CKM}). It is straightforward to show that the dual
 \begin{equation*}
  c^*: (\mathrm{coker}\,i)^*\rightarrow\aA^{**}
 \end{equation*}
is then a right $\aA^{\mathrm{op}}$-module homomorphism (equivalently, a morphism in $\repA$), and it is injective. But $\aA^{**}$ is simple in $\repA$ since it is isomorphic to $\aA$. Therefore $c^*$ is either $0$ or an isomorphism, and the same then holds for $c$ and $i$.
\end{proof}

\subsection{The center of a tensor category}\label{sec:center}

The center $\cZ(\cC)$ of a tensor category $\cC$ is an important construction we will use for studying the commutativity of algebras in $\cC$. 
\begin{defn}
Let $\cC$ be a tensor category. The \textbf{center} $\cZ(\cC)$ is the category whose objects are pairs $(\mX, \gamma^\mX)$ where $\mX\in\obj(\cC)$ and $\gamma^\mX= \{ \gamma^\mX_\mM: \mM \otimes \mX \rightarrow \mX \otimes \mM\ | \ \mM \ \in \ \cC\}$ is a family of isomorphisms in $\cC$, called a \textbf{half-braiding},
that are natural in the sense that
\begin{align*}
\xymatrix{
\mM \otimes \mX  \ar[rr]^{\gamma^\mX_\mM}\ar[d]_{f\otimes \Id_\mX} && \mX \otimes \mM \ar[d]^{\Id_\mX \otimes f} \\
\mN\otimes \mX \ar[rr]^{ \gamma^\mX_\mN}  && \mX \otimes \mN.
}
\end{align*}
commutes for all $f$ in $\text{Hom}_\cC(\mM, \mN)$ and
such that 
\begin{equation}\nonumber
\xymatrix{
&   \left(\mY \otimes \mZ \right) \otimes \mX \ar[rr]^{\phantom{aa} \gamma^\mX_{\mY \otimes \mZ}  \phantom{aa}} && \mX \otimes \left(\mY \otimes \mZ\right)  \ar[rd]^{\phantom{aa} \cA_{\mX, \mY, \mZ}} &  \\  
\mY \otimes \left( \mZ  \otimes \mX\right) \ar[ru]^{\cA_{\mY, \mZ, \mX}} \ar[rd]_{\Id_\mY \otimes \gamma^\mX_{\mZ} }
&&&&  \left(\mX \otimes \mY\right)\otimes \mZ \qquad \text{(Hexagon 1)}
\\
&   \mY\otimes \left(\mX \otimes \mZ\right)   \ar[rr]_{\phantom{aa} \cA_{\mY, \mX, \mZ}\phantom{aa} } &&  \left(\mY \otimes \mX\right) \otimes \mZ\ar[ru]_{\ \ \gamma^\mX_{\mY}\otimes \Id_\mZ} &\\
}
\end{equation}
commutes for all objects $\mY$, $\mZ$.
A morphism in $\cZ(\cC)$ from $(\mX, \gamma^\mX)$ to $(\mY, \gamma^\mY)$ is a $\cC$-morphism $f$ from $\mX$ to $\mY$ satisfying commutativity of the following diagram for all $\mM\in\obj(\cC)$.
\begin{align}\label{eq:centermor}
\xymatrix{
\mM \otimes \mX  \ar[rr]^{\Id_\mM \otimes f}\ar[d]_{\gamma^\mX_\mM} && \mM \otimes \mY\ar[d]^{\gamma^\mY_\mM} \\
\mX\otimes \mM \ar[rr]^{f \otimes \Id_\mM}  && \mY \otimes \mM.
}
\end{align}

\end{defn}

We have a forgetful functor $\cI:\cZ(\cC)\rightarrow\cC$ with $\cI(\mX,\gamma^\mX)=\mX$ for an object $(\mX,\gamma^\mX)$ and $\cI(f)=f$ for a morphism $f$.

A basic property of half-braidings that we will use is the following:
\begin{lemma}\label{lem:gamma_1}
 If $\gamma^{\mX}$ is a half-braiding, then $\gamma^\mX_\vac =r_\mX^{-1}\circ l_\mX$.
\end{lemma}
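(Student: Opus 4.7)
The plan is to derive the identity from the hexagon axiom for half-braidings together with naturality of $\gamma^\mX$, using standard monoidal coherence. The guiding intuition is that in a strict monoidal category (where $\vac \otimes \mX = \mX = \mX \otimes \vac$ on the nose), the hexagon forces $\gamma^\mX_\vac = \Id_\mX$, so the claim should be the non-strict version of this observation. Accordingly, I would expect no real obstacle; the argument is a pure coherence computation with only a small amount of bookkeeping of associators and unit isomorphisms.

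Concretely, I would first instantiate the hexagon axiom with $\mY = \vac$, obtaining
\[
\cA_{\mX, \vac, \mZ} \circ \gamma^\mX_{\vac \otimes \mZ} \circ \cA_{\vac, \mZ, \mX} = (\gamma^\mX_\vac \otimes \Id_\mZ) \circ \cA_{\vac, \mX, \mZ} \circ (\Id_\vac \otimes \gamma^\mX_\mZ).
\]
Next, naturality of $\gamma^\mX$ applied to $l_\mZ \colon \vac \otimes \mZ \to \mZ$ gives
\[
\gamma^\mX_{\vac \otimes \mZ} = (\Id_\mX \otimes l_\mZ^{-1}) \circ \gamma^\mX_\mZ \circ (l_\mZ \otimes \Id_\mX),
\]
which I substitute into the left-hand side of the hexagon.

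Then I would apply standard coherence identities to both sides: the triangle axiom in the form $(\Id_\mX \otimes l_\mZ) \circ \cA_{\mX, \vac, \mZ} = r_\mX \otimes \Id_\mZ$, together with the Kelly--Mac Lane consequences $(l_\mN \otimes \Id_\mM) \circ \cA_{\vac, \mN, \mM} = l_{\mN \otimes \mM}$ and naturality of $l$ applied to $\gamma^\mX_\mZ$. After these simplifications, both sides take the form $(\varphi \otimes \Id_\mZ) \circ \gamma^\mX_\mZ \circ l_{\mZ \otimes \mX}$, with $\varphi = r_\mX^{-1}$ on the left and $\varphi = \gamma^\mX_\vac \circ l_\mX^{-1}$ on the right. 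Cancelling the common isomorphisms $\gamma^\mX_\mZ$ and $l_{\mZ \otimes \mX}$, then specializing $\mZ = \vac$ and stripping the trailing $- \otimes \vac$ via the right unit isomorphism $r_\vac$, yields $r_\mX^{-1} = \gamma^\mX_\vac \circ l_\mX^{-1}$, equivalently $\gamma^\mX_\vac = r_\mX^{-1} \circ l_\mX$, as desired.
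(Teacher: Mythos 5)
Your proof is correct, and it is a close but genuinely different variant of the paper's argument. Both proofs instantiate Hexagon 1 with $\mY = \vac$ and combine it with naturality of $\gamma^\mX$ and unit/associativity coherence, but the paper also sets $\mZ = \vac$ at the outset. This produces an equation of the form $\gamma^\mX_\vac = \gamma^\mX_\vac\circ B\circ\gamma^\mX_\vac$ with $B$ built from unit and associativity isomorphisms, which is resolved by cancelling one copy of the invertible $\gamma^\mX_\vac$ and then simplifying $B^{-1}$. You instead keep $\mZ$ general, reduce both sides of the hexagon to expressions factoring through the common isomorphism $\gamma^\mX_\mZ\circ l_{\mZ\otimes\mX}$, cancel it to obtain $r_\mX^{-1}\otimes\Id_\mZ = (\gamma^\mX_\vac\circ l_\mX^{-1})\otimes\Id_\mZ$, and only then set $\mZ = \vac$ and strip the trailing $-\otimes\vac$ via naturality of $r$. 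Keeping $\mZ$ general avoids the self-referential equation, so the cancellation is arguably cleaner, at the cost of the final reduction step (which you handle correctly by invoking naturality of $r$ to make $-\otimes\vac$ injective on morphisms). One small slip: with the paper's orientation $\cA_{\mX,\vac,\mZ}\colon \mX\otimes(\vac\otimes\mZ)\to(\mX\otimes\vac)\otimes\mZ$, the triangle axiom reads $(r_\mX\otimes\Id_\mZ)\circ\cA_{\mX,\vac,\mZ} = \Id_\mX\otimes l_\mZ$; the version you stated, $(\Id_\mX\otimes l_\mZ)\circ\cA_{\mX,\vac,\mZ} = r_\mX\otimes\Id_\mZ$, does not typecheck. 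The identity you actually deploy in the computation, $\cA_{\mX,\vac,\mZ}\circ(\Id_\mX\otimes l_\mZ^{-1}) = r_\mX^{-1}\otimes\Id_\mZ$, is the correct form, so this is only a typo.
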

\begin{proof}
 Using the naturality of $\gamma^{\mX}$, Hexagon 1, and properties of the unit in $\cC$, the following diagram commutes:
 \begin{equation*}
  \xymatrixcolsep{5pc}
  \xymatrix{
   & \vac\otimes\mX \ar[r]^{\gamma^{\mX}_{\vac}} & X\otimes \vac & \\
   \vac\otimes(\vac\otimes\mX) \ar[ru]^{l_{\vac\otimes\mX}} \ar[r]^{\cA_{\vac,\vac,\mX}} \ar[rd]_{\Id_\vac\otimes\gamma^\mX_\vac} & (\vac\otimes\vac)\otimes\mX \ar[u]^{(l_\vac=r_\vac)\otimes\Id_{\mX}} \ar[r]^{\gamma^{\mX}_{\vac\otimes\vac}} & \mX\otimes(\vac\otimes\vac) \ar[u]^{\Id_{\mX}\otimes(l_\vac=r_\vac)} \ar[r]^{\cA_{\mX,\vac,\vac}} & (\mX\otimes\vac)\otimes\vac \ar[lu]_{r_{\mX\otimes\vac}} \\
    & \vac\otimes(\mX\otimes\vac) \ar[r]^{\cA_{\vac,\mX,\vac}} & (\vac\otimes\mX)\otimes\vac \ar[ru]_{\gamma^\mX_{\vac}\otimes\Id_{\vac}} & \\
  }
 \end{equation*}
That is,
\begin{align*}
 \gamma^\mX_{\vac} &  = r_{\mX\otimes\vac}\circ(\gamma^\mX_\vac\otimes\Id_\vac)\circ\cA_{\vac,\mX,\vac}\circ(\Id_\vac\otimes\gamma^\mX_\vac)\circ l_{\vac\otimes\mX}^{-1}\nonumber\\
 & =\gamma^\mX_{\vac}\circ r_{\vac\otimes\mX}\circ\cA_{\vac,\mX,\vac}\circ l_{\mX\otimes\vac}^{-1}\circ\gamma^\mX_\vac,
\end{align*}
using also the naturality of the unit isomorphisms. So
\begin{align*}
 \gamma^\mX_\vac & = l_{\mX\otimes\vac}\circ\cA_{\vac,\mX,\vac}^{-1}\circ r_{\vac\otimes\mX}^{-1} = l_{\mX\otimes\vac}\circ(\Id_{\vac}\otimes r_\mX^{-1}) = r_{\mX}^{-1}\circ l_\mX,
\end{align*}
by properties of the unit isomorphisms.
\end{proof}

The center is a tensor category with tensor product 
\begin{align}\label{ZC_Tens_Prod_Def}
(\mX, \gamma^\mX) \otimes (\mY, \gamma^\mY) = \left(\mX \otimes \mY, \gamma^{\mX \otimes \mY}\right), \qquad \gamma^{\mX \otimes \mY}:= \cA_{\mX, \mY,  \bullet}  \left(\Id_\mX \otimes \gamma^\mY\right)  \cA_{\mX, \bullet , \mY}^{-1}  \left(\gamma^\mX \otimes \Id_\mY\right) \cA_{\bullet, \mX, \mY}.
\end{align}
The definition of $\gamma^{\mX \otimes \mY}$ is precisely saying that the diagram
\begin{equation*}
\xymatrixcolsep{1.85pc}
\xymatrix{
&   \mM \otimes \left(\mX\otimes \mY\right) \ar[rr]^{\phantom{aa} \gamma^{\mX\otimes \mY}_{\mM}  \phantom{aa}} && \left(\mX \otimes \mY\right) \otimes \mM \ar[rd]^{\phantom{aa} \cA^{-1}_{\mX, \mY, \mM}} &  \\  
\left(\mM \otimes \mX \right) \otimes \mY \ar[ru]^{\cA^{-1}_{\mM, \mX, \mY}} \ar[rd]_{\gamma^\mX_{\mM} \otimes \Id_\mY}
&&&& \mX \otimes \left(\mY \otimes \mM\right)\qquad \text{(Hexagon 2)}
\\
& \left(\mX \otimes \mM\right) \otimes \mY \ar[rr]_{\phantom{aa} \cA^{-1}_{\mX, \mM, \mY}\phantom{aa} } && \mX \otimes \left(\mM \otimes \mY\right) \ar[ru]_{\ \ \Id_\mX \otimes \gamma_{\mM}^\mY} &\\
}
\end{equation*}
commutes for all objects $\mM$ in $\cC$. 
The unit of the center is $\one_{\cZ(\cC)} = (\one_\cC, r^{-1} \circ l)$ with $r$ and $l$ the right and left unit constraints in $\cC$.

The two commutative diagrams (Hexagon 1 and 2) together with the naturality of half-braidings ensure that for objects $(\mX,\gamma^\mX)$ and $(\mY,\gamma^\mY)$ in $\cZ(\cC)$, the $\cC$-isomorphism
\begin{equation*}
\cR_{(\mX, \gamma^\mX),(\mY, \gamma^\mY)}:= \gamma^\mY_\mX: (\mX\otimes\mY,\gamma^{\mX\otimes\mY})\rightarrow(\mY\otimes\mX,\gamma^{\mY\otimes\mX})
 \end{equation*}
is actually a morphism in $\cZ(\cC)$ and defines a braiding on the center. If $\cC$ is already a braided tensor category, then we have two tensor functors from $\cC$ to $\cZ(\cC)$:
\[
\cF: \mX \mapsto (\mX, \cR_{\bullet,\mX}),\,\,f\mapsto f 
\qquad \text{and} \qquad
\cF^{\text{rev}}: \mX \mapsto (\mX, \cR^{-1}_{\mX,  \bullet}),\,\,f\mapsto f. 
\]
Note that composing either $\cF$ or $\cF^{\rev}$ with the forgetful functor $\cI: \cZ(\cC)\rightarrow\cC$ yields the identity functor on $\cC$. In fact, $\cF$ is an example of a central functor structure on $\Id_{\cC}$. In general, a tensor functor $\cF: \cC\rightarrow\cM$ where $\cC$ is a braided tensor category is a \textbf{central functor} if there is a braided tensor functor $\cG: \cC\rightarrow\cZ(\cM)$ such that the diagram
\begin{equation}\nonumber
\xymatrix{
\cC  \ar[rr]^{\cF}\ar[rd]_{\cG} && \cM  \\
& \cZ(\cM) \ar[ur]_\cI &\\
}
\end{equation}
commutes. In the above example, $\cF^{\rev}$ is a central functor structure on $\Id_{\cC^{\rev}}$, where $\cC^{\rev}$ equals $\cC$ as a tensor category but has reversed braidings: $\cR^{\rev}_{\mX,\mY}=\cR_{\mY,\mX}^{-1}$.

We can slightly generalize the above central functor structures on the identity of a braided tensor category in the following way. Suppose we have a fully faithful tensor functor $\cF: \cC\rightarrow\cM$, with $\cC$ braided, so that $\cF$ is an equivalence of tensor categories between $\cC$ and $\cF(\cC)$ (the full subcategory of $\cM$ consisting of objects isomorphic to some $\cF(\mX)$ for $\mX$ an object of $\cC$). Then we can choose a functor $\cF': \cF(\cC)\rightarrow\cC$ and natural isomorphisms $\eta: \Id_{\cF(\cC)}\rightarrow\cF\circ\cF'$, $h: \cF'\circ\cF\rightarrow\Id_{\cC}$ which satisfy 
\begin{equation*}
 \cF(h_\mX)=\eta_{\cF(\mX)}^{-1}
\end{equation*}
for any object $\mX$ in $\cC$ (see for instance the proof of \cite[Proposition XI.1.5]{Ka}). Then $\cF: \cC\rightarrow\cF(\cC)$ is a central functor with extension $\cG: \cC\rightarrow\cZ(\cF(\cC))$ defined as follows:
\begin{equation*}
 \cG: \mX\mapsto(\cF(\mX), \gamma^{\cF(\mX)}),\,\,f\mapsto\cF(f) 
\end{equation*}
where for an object $\mY$ of $\cF(\cC)$, $\gamma^{\cF(\mX)}_\mY$ is the composition
\begin{align*}
 \mY\otimes\cF(\mX)\xrightarrow{\eta_{\mY}\otimes\Id_{\cF(\mX)}} & \cF(\cF'(\mY))\otimes\cF(\mX)\xrightarrow{\cong} \cF(\cF'(\mY)\otimes\mX)\nonumber\\
 &\xrightarrow{\cF(\cR_{\cF'(\mY),\mX})} \cF(\mX\otimes\cF'(\mY))\xrightarrow{\cong} \cF(\mX)\otimes\cF(\cF'(\mY))\xrightarrow{\Id_{\mX}\otimes\eta_{\mY}^{-1}} \cF(\mX)\otimes\mY.
\end{align*}
The tensor functor $\cF:\cC^{\rev}\rightarrow\cF(\cC)$ is also central with a braided extension given by
\begin{equation*}
 \cG^{\rev}: \mX\mapsto(\cF(\mX), (\gamma^{\rev})^{\cF(\mX)}),\,\,f\mapsto\cF(f),
\end{equation*}
where $(\gamma^{\rev})^{\cF(\mX)}_\mY$ is defined similarly to $\gamma^{\cF(\mX)}_\mY$, except that we use $\cR_{\mX,\cF'(\mY)}^{-1}$ instead of $\cR_{\cF'(\mY),\mX}$.

\begin{defn}
	Given a braided tensor category $\cC$, and a full braided tensor subcategory $\cB$, the \textbf{centralizer} $\cB'$ is the full subcategory of $\cC$ such that $\mX\in\obj(\cC)$ is an object of $\cB'$ if and only if $\cR_{\mY,\mX}\circ\cR_{\mX,\mY}=\Id_{\mX\otimes \mY}$ for all $\mY\in\obj(\cB)$. 
\end{defn}
It is easy to see from the hexagon axioms that $\cB'$ is a braided tensor subcategory of $\cC$. The following proposition is essentially \cite[Proposition 7.3]{Mue}; although it is only stated there for fusion categories, it is clear from the proof that no finiteness or semisimplicity conditions are necessary.
\begin{prop}\label{prop:mueger}
Let $\cC$ be a braided tensor category and suppose $\cF: \cC\rightarrow\cM$ is a fully faithful tensor functor. Then with the braided tensor functors $\cG:\cC\rightarrow\cZ(\cF(\cC))$ and $\cG^{\rev}:\cC^{\rev}\rightarrow\cZ(\cF(\cC))$ defined as above, we have
\begin{equation*}
 \cG(\cC)'=\cG^{\rev}(\cC^{\rev})\qquad and \qquad \cG^{\rev}(\cC^{\rev})'=\cG(\cC).
\end{equation*}
\end{prop}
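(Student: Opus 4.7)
My plan is to establish both equalities by proving each as a double inclusion, treating them symmetrically. The forward inclusions are direct computations from the definitions of the half-braidings of $\cG$ and $\cG^{\rev}$, while the reverse inclusions use that the centralizer condition pins down the half-braiding on a skeleton of $\cF(\cC)$, after which naturality does the rest.

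First I would handle the inclusion $\cG^{\rev}(\cC^{\rev})\subseteq\cG(\cC)'$. Fix $\mX,\mZ\in\cC$ and unpack the braiding in $\cZ(\cF(\cC))$: we have
\begin{equation*}
 \cR_{\cG(\mX),\cG^{\rev}(\mZ)}=(\gamma^{\rev})^{\cF(\mZ)}_{\cF(\mX)},\qquad \cR_{\cG^{\rev}(\mZ),\cG(\mX)}=\gamma^{\cF(\mX)}_{\cF(\mZ)}.
\end{equation*}
Using the explicit formulas defining $\cG$ and $\cG^{\rev}$ (which involve $\eta$, $h$, the tensoriality isomorphisms for $\cF$, and the braidings $\cR_{\cF'(-),\mX}$, $\cR^{-1}_{\mZ,\cF'(-)}$ of $\cC$), and simplifying using the identity $\cF(h_{\bullet})=\eta^{-1}_{\cF(\bullet)}$ together with naturality, the half-braiding $\gamma^{\cF(\mX)}_{\cF(\mZ)}$ equals the image under $\cF$ of $\cR_{\mZ,\mX}$, conjugated by the coherence isomorphisms of the tensor functor $\cF$; similarly $(\gamma^{\rev})^{\cF(\mZ)}_{\cF(\mX)}$ is the corresponding conjugate of $\cF(\cR^{-1}_{\mZ,\mX})$. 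Composing, all coherence isomorphisms cancel against their inverses and we are left with $\cF(\cR_{\mZ,\mX}\circ\cR^{-1}_{\mZ,\mX})=\Id_{\cF(\mX)\otimes\cF(\mZ)}$, proving centralization.

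Next I would prove the reverse inclusion $\cG(\cC)'\subseteq\cG^{\rev}(\cC^{\rev})$. Take $(\mY,\gamma^\mY)\in\cG(\cC)'$. Since $\mY$ lies in $\cF(\cC)$, put $\mX=\cF'(\mY)$ so that $\eta_\mY:\mY\xrightarrow{\sim}\cF(\mX)$; transporting $\gamma^\mY$ along $\eta_\mY$, we may assume $\mY=\cF(\mX)$ and have a half-braiding $\gamma^{\cF(\mX)}$ to identify. For each $\mZ\in\cC$, the centralizer hypothesis applied to $\cG(\mZ)$ gives
\begin{equation*}
 \gamma^{\cF(\mZ)}_{\cF(\mX)}\circ\gamma^{\cF(\mX)}_{\cF(\mZ)}=\Id,
\end{equation*}
so $\gamma^{\cF(\mX)}_{\cF(\mZ)}=\bigl(\gamma^{\cF(\mZ)}_{\cF(\mX)}\bigr)^{-1}$. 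By the computation above, the right-hand side is precisely $(\gamma^{\rev})^{\cF(\mX)}_{\cF(\mZ)}$. Hence $\gamma^{\cF(\mX)}$ agrees with the half-braiding of $\cG^{\rev}(\mX)$ on all objects of the form $\cF(\mZ)$; by naturality of half-braidings in their parameter and the fact that every object of $\cF(\cC)$ is isomorphic to some $\cF(\mZ)$, the two half-braidings agree on all of $\cF(\cC)$. Thus $(\mY,\gamma^\mY)\cong\cG^{\rev}(\mX)$, as required.

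The symmetric equality $\cG^{\rev}(\cC^{\rev})'=\cG(\cC)$ follows by the same argument with the roles of $\cG$ and $\cG^{\rev}$ swapped, using $(\cC^{\rev})^{\rev}=\cC$ and $(\cR^{\rev})^{-1}=\cR$. The main obstacle is purely bookkeeping: carefully tracking the natural isomorphisms $\eta,h$ from the equivalence $\cF:\cC\to\cF(\cC)$ and the coherence data of $\cF$ as a tensor functor, so that they cancel correctly in the centralization computation. Once this identification is in place, the algebra of the proof reduces to the elementary fact $\cR\circ\cR^{-1}=\Id$ in $\cC$.
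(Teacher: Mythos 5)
Your proof is correct and follows essentially the same approach as the paper: both rely on the observation that the centralizer condition $g^{\cF(\mX)}_{\cF(\mZ)}\circ\gamma^{\cF(\mZ)}_{\cF(\mX)}=\Id$ together with naturality in the subscript uniquely determines the half-braiding $g^{\cF(\mX)}$ on all of $\cF(\cC)$, and then an explicit unwinding of the definitions (using $\cF(h_\mX)=\eta_{\cF(\mX)}^{-1}$ and naturality of $\cR$ and of $\cF$'s coherence data) identifies it with $(\gamma^{\rev})^{\cF(\mX)}$. The only cosmetic difference is that you separate the argument into two containments and pass from the skeleton $\{\cF(\mZ)\}$ to all of $\cF(\cC)$ in a second step, whereas the paper handles an arbitrary object $\mZ$ of $\cF(\cC)$ in one pass via $\eta_\mZ$; the underlying mechanism is identical.
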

\begin{proof}
An object $(\cF(\mX), g^{\cF(\mX)})$ of $\cZ(\cF(\cC))$ is an object of $\cG(\cC)'$ precisely when
\begin{equation*}
g^{\cF(\mX)}_{\cF(\mY)}\circ\gamma^{\cF(\mY)}_{\cF(\mX)}=\Id_{\cF(\mX)\otimes\cF(\mY)} 
\end{equation*}
for all objects $\mY$ in $\cC$. Since $g^{\cF(\mX)}$ is natural, this occurs precisely when, for all objects $\mZ$ in $\cF(\cC)$,
\begin{equation*}
 g^{\cF(\mX)}_{\mZ} = (\Id_{\cF(\mX)}\otimes\eta^{-1}_{\mZ})\circ g^{\cF(\mX)}_{\cF(\cF'(\mZ))}\circ(\eta_{\mZ}\otimes\Id_{\cF(\mX)}) = (\Id_{\cF(\mX)}\otimes\eta^{-1}_{\mZ})\circ(\gamma^{\cF(\cF'(\mZ))}_{\cF(\mX)})^{-1}\circ(\eta_{\mZ}\otimes\Id_{\cF(\mX)}).
\end{equation*}
By definition, this means $g^{\cF(\mX)}_\mZ$ is the composition
\begin{align*}
 \mZ &\otimes  \cF(\mX)\xrightarrow{\eta_\mZ\otimes\eta_{\cF(\mX)}} \cF(\cF'(\mZ))\otimes\cF(\cF'(\cF(\mX))) \xrightarrow{\cong} \cF(\cF'(\mZ)\otimes\cF'(\cF(\mX)))\nonumber\\
& \xrightarrow{\cF(\cR^{-1}_{\cF'(\cF(\mX)),\cF'(\mZ)})} \cF(\cF'(\cF(\mX))\otimes\cF'(\mZ))\xrightarrow{\cong} \cF(\cF'(\cF(\mX)))\otimes\cF(\cF'(\mZ)) \xrightarrow{\eta_{\cF(\mX)}^{-1}\otimes\eta_{\mZ}^{-1}} \cF(\mX)\otimes\mZ.
\end{align*}
However, because $\eta_{\cF(\mX)}^{-1}=\cF(h_\mX)$ and because the central isomorphisms in the composition are natural, we actually have $g^{\cF(\mX)}_\mZ$ equal to the composition
\begin{align*}
 \mZ\otimes\cF(\mX)\xrightarrow{\eta_{\mZ}\otimes\Id_{\cF(\mX)}} & \cF(\cF'(\mZ))\otimes\cF(\mX)\xrightarrow{\cong} \cF(\cF'(\mZ)\otimes\mX)\nonumber\\
 &\xrightarrow{\cF(\cR^{-1}_{\mX,\cF'(\mZ)})} \cF(\mX\otimes\cF'(\mZ))\xrightarrow{\cong} \cF(\mX)\otimes\cF(\cF'(\mZ))\xrightarrow{\Id_{\mX}\otimes\eta_{\mZ}^{-1}} \cF(\mX)\otimes\mZ,
\end{align*}
which is precisely $(\gamma^{\rev})^{\cF(\mX)}_\mZ$. This shows that $\cG(\cC)'=\cG^{\rev}(\cC^{\rev})$, and the proof that $\cG^{\rev}(\cC^{\rev})'=\cG(\cC)$ is the same.
\end{proof}

\section{From braid-reversed equivalences to algebras}

In this section we construct the canonical algebra associated to a tensor category $\cC$, paying close attention to the assumptions on $\cC$ needed to ensure the existence and commutativity of this algebra. In particular, we will see that when $\cC$ is rigid, braided, and semisimple but has infinitely many equivalence classes of simple objects, then the canonical algebra may be constructed in $\cC_\oplus$.

\subsection{Associative algebras}

For this subsection, we take $\cC$ to be an $\mathbb{F}$-linear (abelian) tensor category, not necessarily braided or finite, where $\mathbb{F}$ is a field. (For our purposes in this paper, we may take $\mathbb{F}=\CC$.) We will construct associative algebras in $\cC$ associated to a $\cC$-module category $\cM$. Although this construction seems to be well known (see for example \cite[Exercise 7.9.9]{EGNO}), we could not find a complete proof in the literature.

Recall that in a $\cC$-module category $\cM$ we have natural associativity isomorphisms
\begin{align*}
\cA_{\mX_1,\mX_2,\mM}:\mX_1\otimes(\mX_2\otimes \mM)\rightarrow (\mX_1 \otimes \mX_2)\otimes \mM.
\end{align*}
for $\mX_1,\mX_2\in\obj(\cC), \mM\in\obj(\cM)$. For every pair $\mM_1,\mM_2\in\obj(\cM)$, we have a contravariant functor
\begin{align*}
\cG_{\mM_1, \mM_2} : \cC &\rightarrow \cVec_\mathbb{F}\\
\mX &\mapsto \HHom_\cM\left( \mX\otimes \mM_1, \mM_2\right)\quad\mathrm{for}\quad\mX\in\obj(\cC)\\
f&\mapsto (g\mapsto g\circ(f\otimes\Id_{\mM_1})) \quad\mathrm{for}\quad f\in\HHom_{\cC}(\mX,\mY), g\in\HHom_{\cM}(\mY\otimes \mM_1,\mM_2).
\end{align*}
Assume that $\cG_{\mM_1,\mM_2}$ is representable, which means that there exists $\intHom(\mM_1, \mM_2)$, perhaps in a suitable completion of $\cC$, called the internal Hom of $\mM_1$ and $\mM_2$ such that there are natural isomorphisms
\begin{equation}\label{eq:lambda}
\lambda_\mX(\mM_1, \mM_2): \HHom_\cM\left(\mX\otimes \mM_1, \mM_2\right)  \xrightarrow{\ \ \cong\ \ } \HHom_\cC\left( \mX, \intHom(\mM_1, \mM_2)\right).
\end{equation}
So, for $f: \mX\rightarrow \mY$ with $\mX,\mY\in\obj(\cC)$, we have the following commuting diagram:
\begin{align*}
\xymatrix{
\HHom_\cM\left(\mX\otimes \mM_1, \mM_2\right) \ar[r]^(.45){\lambda_\mX} &  \HHom_\cC\left( \mX, \intHom(\mM_1, \mM_2)\right) \\
\HHom_\cM\left(\mY\otimes \mM_1, \mM_2\right) \ar[r]^(.45){\lambda_\mY} \ar[u]^{g\,\mapsto\, g\circ(f\otimes\Id_{\mM_1})} &  \HHom_\cC\left( \mY, \intHom(\mM_1, \mM_2)\right)\ar[u]_{h\,\mapsto\, h\circ f}.
}
\end{align*}

\begin{rema}\label{Grepresent}
 By \cite[Corollary 1.8.11, Section 7.9]{EGNO}, $\cG_{\mM_1,\mM_2}$ is representable if $\cC$ is a finite (multi)tensor category. If $\cC$ is not finite, then \cite[Section 7.9]{EGNO} states that internal Homs still exist as ind-objects of the completion $ind-\cC$. In the setting of vertex operator algebras, certain internal Homs were constructed in \cite{Li} as weak modules (and thus not necessarily objects of $\cC$ itself).
\end{rema}

Fix an object $\mM\in\obj(\cM)$ and abbreviate $\lambda_\mX:= \lambda_\mX({\mM, \mM})$ and $\aA:= \intHom(\mM, \mM)$. For $i=1,2,3$, we will be using the notation $\varphi_i$ for a morphism in $\HHom_\cM\left(\mX_i\otimes \mM, \mM\right)$, where $\mX_i \in \obj(\cC)$. For $\mX_1,\mX_2\in\obj(\cC)$, we have a linear map
\begin{equation*}
 \nu_{\mX_1,\mX_2}: \HHom_\cM\left(\mX_1\otimes \mM, \mM\right) \otimes_\mathbb{F} \HHom_\cM\left(\mX_2\otimes \mM, \mM\right) \rightarrow \HHom_\cM\left((\mX_1 \otimes \mX_2)\otimes \mM, \mM\right)
\end{equation*}
under which $\varphi_1\otimes_\mathbb{F}\varphi_2$ is sent to the composition
\begin{equation*}
(\mX_1 \otimes \mX_2) \otimes \mM  \xrightarrow{\cA^{-1}_{\mX_1, \mX_2, \mM}} \mX_1 \otimes (\mX_2 \otimes \mM ) \xrightarrow{\Id_{\mX_1} \otimes \varphi_2} \mX_1 \otimes \mM \xrightarrow{\  \varphi_1 \\ } \mM.
\end{equation*}
Then the natural family of isomorphisms $\{ \lambda_\mX \}$ induces a natural family of linear maps
\begin{align*}
\mu_{\mX_1, \mX_2}: \HHom_\cC\left( \mX_1, \aA \right) \otimes_\mathbb{F} \HHom_\cC\left( \mX_2, \aA \right) & \rightarrow \HHom_\cC\left( (\mX_1 \otimes \mX_2), \aA\right), \\
\lambda_{\mX_1}(\varphi_1) \otimes_\mathbb{F}  \lambda_{\mX_2}(\varphi_2) & \mapsto  \lambda_{\mX_1\otimes \mX_2}(\nu_{\mX_1,\mX_2}(\varphi_1\otimes_\mathbb{F}\varphi_2)).
\end{align*}
{Note that $\nu$ is a natural transformation of contravariant bifunctors $\cM\times \cM\rightarrow \cV ec_\mathbb{F}$ and
	 $\mu$ is a natural transformation of contravariant bifunctors $\cC\times \cC\rightarrow \cVec_\mathbb{F}$.}
{By definition, we  have:
\begin{align}
\lambda_{\mX_1\otimes \mX_2}\circ \nu_{\mX_1,\mX_2}=
\mu_{\mX_1,\mX_2}\circ (\lambda_{\mX_1}\otimes_{\mathbb{F}}\lambda_{\mX_2}).
\end{align}}
Consider the diagram 
\begin{equation}\nonumber
\begin{split}
\xymatrix{
\left( \left(\mX_1 \otimes \mX_2 \right)\otimes \mX_3\right) \otimes \mM  \ar[rr]^{  \cA^{-1}_{\mX_1, \mX_2, \mX_3} \otimes \Id_\mM }\ar[d]_{\cA^{-1}_{\mX_1\otimes \mX_2, \mX_3, \mM}} 
&&  \left(\mX_1 \otimes \left(\mX_2 \otimes \mX_3\right)\right) \otimes \mM \ar[d]^{\cA^{-1}_{\mX_1, \mX_2\otimes \mX_3, \mM}}
\\
\left(\mX_1 \otimes \mX_2 \right)\otimes \left(\mX_3 \otimes \mM\right)   \ar[d]_{\Id_{\mX_1 \otimes \mX_2} \otimes\varphi_3} \ar[rrd]^{\cA^{-1}_{\mX_1, \mX_2, \mX_3\otimes \mM}} 
&&  \mX_1 \otimes \left(\left(\mX_2 \otimes \mX_3\right)  \otimes \mM\right) \ar[d]^{\Id_{\mX_1} \otimes\cA^{-1}_{ \mX_2, \mX_3, \mM}}
\\
\left(\mX_1 \otimes \mX_2 \right)\otimes  \mM   \ar[rd]_{\cA^{-1}_{\mX_1, \mX_2, \mM}} 
&&  \mX_1 \otimes \left(\mX_2 \otimes \left(\mX_3 \otimes \mM\right) \right)  \ar[ld]^{\qquad \ \ \ \Id_{\mX_1} \otimes (\Id_{\mX_2} \otimes\varphi_3)}
\\
& \mX_{1} \otimes (\mX_2 \otimes \mM)\ar[d]_{\varphi_1 \circ(\Id_{\mX_1} \otimes\varphi_2)} &
\\
& \mM &
\\
} \\ 
\end{split}
\end{equation}
which commutes due to naturality of associativity and the pentagon axiom for $\cM$.
By naturality of both $\lambda$ and associativity we then have the corresponding commutative diagram
\begin{equation}\label{eq:comd1}
\begin{split}
\xymatrix{
\left(\mX_1 \otimes \mX_2 \right)\otimes \mX_3   \ar[rr]^{  \cA^{-1}_{\mX_1, \mX_2, \mX_3} }\ar[rd]_{\LHS(\varphi_1, \varphi_2, \varphi_3)\ \ \ } 
&&  \mX_1 \otimes \left(\mX_2 \otimes \mX_3\right)  \ar[ld]^{\ \ \ \RHS(\varphi_1, \varphi_2, \varphi_3)}
\\
& \aA &
\\
} 
\end{split}
\end{equation}
where (leaving out identity morphisms for readability)
\begin{equation*}
\begin{split}
\LHS(\varphi_1, \varphi_2, \varphi_3)&=\lambda_{(\mX_1\otimes \mX_2) \otimes \mX_3}\left( \varphi_1 \circ \varphi_2 \circ\varphi_3 \circ \cA^{-1}_{\mX_1, \mX_2, \mX_3\otimes \mM} \circ \cA^{-1}_{\mX_1\otimes \mX_2, \mX_3, \mM}\right)\\
&= \lambda_{(\mX_1\otimes \mX_2) \otimes \mX_3}\left( \varphi_1 \circ \varphi_2 \circ \cA^{-1}_{\mX_1, \mX_2, \mM} \circ\varphi_3  \circ \cA^{-1}_{\mX_1\otimes \mX_2, \mX_3, \mM}\right), \\
\RHS(\varphi_1, \varphi_2, \varphi_3) & = \lambda_{\mX_1\otimes \left(\mX_2\otimes \mX_3\right)}\left( \varphi_1 \circ \varphi_2 \circ\varphi_3 \circ \cA^{-1}_{\mX_2, \mX_3\otimes M} \circ \cA^{-1}_{\mX_1, \mX_2\otimes \mX_3, \mM}\right) .
\end{split}
\end{equation*} 

We now define 
\begin{align}
m :=  \mu_{\aA, \aA}\left(\Id_\aA \otimes_\CC \Id_\aA\right) = \lambda_{\aA\otimes \aA}\left( \lambda^{-1}_\aA(\Id_\aA) \circ \lambda^{-1}_\aA(\Id_\aA) \circ \cA^{-1}_{\aA, \aA, \mM}\right) .
\end{align}
Given any $f_1:\mX_1\rightarrow \aA$ and $f_2: \mX_2\rightarrow \aA$, we get the following commuting diagram:
\begin{align}
\xymatrix{
\HHom(\mX_1\otimes \mX_2, \aA) & \HHom(\aA\otimes \aA, \aA) \ar[l]\\
\HHom(\mX_1,\aA)\otimes_{\mathbb{F}}\HHom(\mX_2, \aA)  \ar[u]^{\mu_{\mX_1,\mX_2}} 
& \HHom(\aA, \aA)\otimes_\mathbb{F} \HHom(\aA,\aA) \ar[u]^{\mu_{\aA,\aA}} \ar[l].}
\end{align}
Tracing the image of $\Id_\aA\otimes_{\mathbb{F}}\Id_\aA\in \HHom(\aA, \aA)\otimes_\mathbb{F} \HHom(A,A)$ gets us the following commutative diagram:
\begin{equation}\label{eq:muproperty}
\xymatrix{
\mX_1\otimes \mX_2 \ar[rr]^{\mu_{\mX_1, \mX_2}(f_1\otimes_\CC f_2)}\ar[rd]_{f_1\otimes f_2} && \aA \\
& \aA\otimes \aA \ar[ur]_m
}.
\end{equation}
Taking $\mX_1=\aA\otimes \aA$, $\mX_2=\aA$, $f_1=m$, $f_2=\Id_\aA$,
this diagram implies that 
\begin{equation*}
\begin{split}
m &\circ\left( m\otimes \Id_\aA\right) = \mu_{\aA\otimes \aA, \aA}\left(m \otimes_\mathbb{F} \Id_\aA\right) \\
&= \mu_{\aA\otimes \aA, \aA}\left(\lambda_{\aA\otimes \aA}\left( \lambda^{-1}_\aA(\Id_\aA) \circ \lambda^{-1}_\aA(\Id_\aA) \circ \cA^{-1}_{\aA, \aA, \mM}\right)  \otimes_\mathbb{F} \Id_\aA\right) \\ 
&=\lambda_{(\aA \otimes \aA) \otimes \aA}\left( \lambda_{\aA \otimes \aA}^{-1}\left(\lambda_{\aA\otimes \aA}\left( \lambda^{-1}_\aA(\Id_\aA) \circ \lambda^{-1}_\aA(\Id_\aA) \circ \cA^{-1}_{\aA, \aA, \mM}\right)\right) \circ  \lambda_\cA^{-1}(\Id_\aA)\circ\cA^{-1}_{\aA\otimes \aA, \aA, \mM}\right)\\
&=\lambda_{(\aA \otimes \aA) \otimes \aA}\left( \lambda^{-1}_\aA(\Id_\aA) \circ \lambda^{-1}_\aA(\Id_\aA) \circ \cA^{-1}_{\aA, \aA, \mM} \circ  \lambda^{-1}_\aA(\Id_\aA)\circ\cA^{-1}_{\aA\otimes \aA, \aA, \mM}\right)\\
&= \LHS(\lambda^{-1}_\aA(\Id_\aA), \lambda^{-1}_\aA(\Id_\aA), \lambda^{-1}_\aA(\Id_\aA)).
\end{split}
\end{equation*}
Analogously
\begin{equation*}
\begin{split}
m \circ\left( \Id_\aA \otimes m\right) &= \mu_{\aA, \aA\otimes \aA}\left(\Id_\aA \otimes_\mathbb{F} m\right) \\
&= \mu_{\aA, \aA\otimes \aA}\left( \Id_\aA \otimes_\mathbb{F}  \lambda_{\aA\otimes \aA}\left( \lambda^{-1}_\aA(\Id_\aA) \circ \lambda^{-1}_\aA(\Id_\aA) \circ \cA^{-1}_{\aA, \aA, \mM}\right) \right) \\ 
&=\lambda_{\aA \otimes (\aA \otimes \aA)}\left( \lambda_\aA^{-1}(\Id_\aA)\circ  \lambda^{-1}_\aA(\Id_\aA) \circ \lambda^{-1}_\aA(\Id_\aA) \circ \cA^{-1}_{\aA, \aA, \mM} \circ 
 \cA^{-1}_{\aA, \aA\otimes \aA, \mM}\right)\\
&= \RHS(\lambda^{-1}_\aA(\Id_\aA), \lambda^{-1}_\aA(\Id_\aA), \lambda^{-1}_\aA(\Id_A)), 
\end{split}
\end{equation*}
so that this computation together with \eqref{eq:comd1} implies that all triangles of the diagram commute:
\begin{equation}\label{eq:comd2}
\begin{split}
\xymatrix{
(\aA \otimes \aA)\otimes \aA    \ar[rr]^{  \cA^{-1}_{\aA, \aA, \aA} } \ar[d]_{m\otimes \Id_\aA }\ar[rdd]^{ \LHS\ \ \ } 
&&  \aA \otimes \left(\aA \otimes \aA\right)  \ar[d]^{\Id_\aA\otimes m}\ar[ldd]_{\ \ \ \RHS}
\\
\aA\otimes \aA \ar[rd]_{m} && \aA\otimes \aA \ar[ld]^m \\
& \aA &
\\
} 
\end{split}
\end{equation}
with $\LHS=\LHS(\lambda^{-1}_\aA(\Id_\aA), \lambda^{-1}_\aA(\Id_\aA), \lambda^{-1}_\aA(\Id_\aA))$,   $\RHS=\RHS(\lambda^{-1}_\aA(\Id_\aA), \lambda^{-1}_\aA(\Id_\aA), \lambda^{-1}_\aA(\Id_\aA))$. Thus the multiplication $m: \aA\otimes\aA\rightarrow\aA$ is associative.

The natural candidate for a unit morphism is $\iota_\aA: \vac\rightarrow{\aA}$ is $\lambda_\one(l_\mM)$ , where $l_\mM:\one\otimes \mM\rightarrow \mM$ is the left unit isomorphism for $\mM$.
We have:
\begin{align}
m&\circ(\iota_\aA \otimes \Id_\aA)=\mu_{\one, \aA}(\iota_\aA \otimes_\mathbb{F} \Id_\aA)
=\lambda_{\one\otimes \aA}\left( \lambda_\one^{-1}\left(\lambda_\one(l_\mM)\right) \circ \lambda_\aA^{-1}(\Id_\aA) \circ \cA^{-1}_{\one, \aA, \mM}\right)\notag\\
&=\lambda_{\one\otimes \aA}\left( l_\mM\circ \lambda_\aA^{-1}(\Id_\aA) \circ \cA^{-1}_{\one, \aA, \mM}\right)
=\lambda_{\one\otimes \aA}\left(\lambda_\aA^{-1}(\Id_\aA)\circ (l_\aA\otimes \Id_\mM) \right)
=\Id_\aA\circ l_\aA = l_\aA,
\end{align}
where the first equality follows by \eqref{eq:muproperty}, fourth by properties of unit isomorphisms and fifth by naturality of $\lambda$.
We conclude
\begin{thm}\label{thm:assoc}
Let $\cC$ be a multitensor category, $\cM$ a left $\cC$-module category, and $\mM$ an object of $\cM$ such that the functor $\cG_{\mM,\mM}$ is representable. Then with the natural isomorphisms $\lambda$ defined by \eqref{eq:lambda}, $\aA:=\intHom(\mM, \mM)$ together with left unit $\iota_\aA=\lambda_\one(l_\mM)$ and multiplication $m=\lambda_{\aA\otimes \aA}\left( \lambda^{-1}_\aA(\Id_\aA) \circ \lambda^{-1}_\aA(\Id_\aA) \circ \cA^{-1}_{\aA, \aA, \mM}\right)$ is a left-unital associative algebra in $\cC$. 
\end{thm}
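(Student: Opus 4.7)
The plan is to leverage the representability isomorphisms $\lambda_\mX(\mM_1,\mM_2)$ to translate identities among morphisms in $\cC$ involving $\aA=\intHom(\mM,\mM)$ into equivalent identities among $\cM$-morphisms $\mX\otimes\mM\to\mM$, where the module-category pentagon and triangle axioms apply directly. Concretely, I would first record the bilinear operations $\nu_{\mX_1,\mX_2}$ that compose two morphisms $\varphi_i\colon\mX_i\otimes\mM\to\mM$ into a morphism $(\mX_1\otimes\mX_2)\otimes\mM\to\mM$ by inserting $\cA^{-1}_{\mX_1,\mX_2,\mM}$ and then applying $\Id_{\mX_1}\otimes\varphi_2$ followed by $\varphi_1$, and transport $\nu$ through $\lambda$ to natural bilinear maps $\mu_{\mX_1,\mX_2}$ on $\HHom_\cC(-,\aA)$. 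By the definition of $m$, naturality of $\mu$ yields the key identity $m\circ(f_1\otimes f_2)=\mu_{\mX_1,\mX_2}(f_1\otimes f_2)$ for all $f_i\colon\mX_i\to\aA$, which is the workhorse for both axioms.

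For associativity, I would compute $m\circ(m\otimes\Id_\aA)$ and $m\circ(\Id_\aA\otimes m)\circ\cA^{-1}_{\aA,\aA,\aA}$ separately. Applying $\lambda^{-1}_{(\aA\otimes\aA)\otimes\aA}$ together with the identity above, each expression becomes a triple composition in $\cM$ involving three copies of the universal morphism $\lambda_\aA^{-1}(\Id_\aA)\colon\aA\otimes\mM\to\mM$, sandwiched with associators in different orders. The pentagon axiom for the $\cC$-module structure on $\cM$, combined with naturality of the associators used to slide them past morphisms of the form $\Id\otimes\varphi$, equates the two expressions; applying $\lambda_{(\aA\otimes\aA)\otimes\aA}$ then yields associativity of $m$ in $\cC$.

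For the left unit law, I would set $\iota_\aA=\lambda_\vac(l_\mM)$, so that $m\circ(\iota_\aA\otimes\Id_\aA)=\mu_{\vac,\aA}(\iota_\aA\otimes\Id_\aA)$ by the key identity. Unwinding this through $\lambda^{-1}$ reduces the claim to the equality $l_\mM\circ(\Id_\vac\otimes\lambda_\aA^{-1}(\Id_\aA))\circ\cA^{-1}_{\vac,\aA,\mM}=\lambda_\aA^{-1}(\Id_\aA)\circ(l_\aA\otimes\Id_\mM)$ of $\cM$-morphisms, which follows from naturality of $l$ together with the triangle-type identity $l_{\aA\otimes\mM}\circ\cA^{-1}_{\vac,\aA,\mM}=l_\aA\otimes\Id_\mM$ for module categories; transporting back through $\lambda$ then gives $m\circ(\iota_\aA\otimes\Id_\aA)=l_\aA$. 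The main obstacle throughout is bookkeeping: tracking the tree of associators precisely enough that the pentagon fits on the nose, and repeatedly invoking naturality of $\lambda$, $\mu$, and the associators. Beyond that, the argument is a purely formal Yoneda-style computation performed inside $\cM$.
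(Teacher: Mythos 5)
Your proposal is correct and follows essentially the same route as the paper's proof: defining the composition operation $\nu$ on $\HHom_\cM(-\otimes\mM,\mM)$, transporting it through $\lambda$ to $\mu$, deriving the key identity $m\circ(f_1\otimes f_2)=\mu_{\mX_1,\mX_2}(f_1\otimes_\mathbb{F} f_2)$ by naturality, and then reducing both associativity (via the pentagon for the module category) and the left unit law (via the triangle and naturality of $l$) to checks on $\cM$-morphisms. The only cosmetic difference is that the paper isolates the pentagon consequence into a separate commuting triangle before substituting the universal morphism three times, whereas you fold this into the direct computation.
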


\subsection{Commutative algebras}

Now taking $\cC$ to be a braided tensor category, we will find conditions under which the algebra $\aA$ of the previous subsection is commutative. For this, we will take $\cM$ of the previous subsection to be itself a tensor  category with tensor unit $\one_\cM$, and we will consider the algebra $\aA=\intHom(\one_\cM, \one_\cM)$. 

We assume there are natural associativity isomorphisms
\begin{equation}\label{CMassocisos}
 \cA_{\mX,\mM_1,\mM_2}: \mX\otimes(\mM_1\otimes_\cM \mM_2)\rightarrow (\mX\otimes\mM_1)\otimes_{\cM} \mM_2
\end{equation}
for objects $\mX\in\obj(\cC)$ and $\mM_1, \mM_2\in\obj(\cM)$, and that all associativity and unit isomorphisms are compatible in the sense that all triangle and pentagon diagrams commute. Let 
\[
\cF: \cC \rightarrow \cM, \qquad \mX \mapsto \mX \otimes \one_\cM
\]
be the induction functor, which is in fact a tensor functor with functorial isomorphisms
\begin{equation*}
 \cF(\mX_1\otimes X_2)\xrightarrow{\cong} \cF(X_1)\otimes\cF(X_2)
\end{equation*}
given by the composition
\begin{align*}
 (\mX_1\otimes\mX_2)\otimes\vac_\cM\xrightarrow{\cA_{\mX_1,\mX_2,\vac_\cM}^{-1}} \mX_1\otimes(\mX_2\otimes\vac_\cM) & \xrightarrow{\Id_{\mX_1}\otimes l_{\mX_2\otimes\vac_\cM}^{-1}} \mX_1\otimes(\vac_{\cM}\otimes(\mX_2\otimes\vac_\cM))\nonumber\\
  &\xrightarrow{\cA_{\mX_1,\vac_\cM,\mX_2\otimes\vac_{\cM}}} (\mX_1\otimes\vac_\cM)\otimes(\mX_2\otimes\vac_{\cM}).
\end{align*}
Assume that $\cF$ is a central functor, so that there is a braided tensor functor $\cG : \cC \rightarrow \cZ(\cM)$ such that $\cF=\cI\circ\cG$, where $\cI: \cZ(\cM)\rightarrow\cM$ is the forgetful functor.

The following theorem is \cite[Lemma 3.5]{DMNO}, but we add details and observe that neither finiteness nor semisimplicity is needed in the argument:
\begin{thm}\label{thm:comm}
In the setting of Theorem \ref{thm:assoc}, assume that $\cM$ is a tensor category, the natural associativity isomorphisms \eqref{CMassocisos} exist, and that the functor $\cG_{\vac_\cM,\vac_\cM}$ of the previous subsection is representable. Assume in addition that $\cC$ is a braided tensor category and induction $\cF: \cC\rightarrow\cM$ is a central functor. Then the multiplication on $\aA=\intHom(\one_\cM, \one_\cM)$ is commutative.
\end{thm}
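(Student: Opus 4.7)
The proof that $m\circ\cR_{\aA,\aA}=m$ rests on the idea that the central functor structure on $\cF$ lets one turn the braiding of $\cC$ into a half-braiding on $\cF(\aA)$ inside $\cZ(\cM)$, and that this half-braiding is trivial against the unit $\vac_\cM$ by Lemma~\ref{lem:gamma_1}. Along the way, the internal-Hom adjunction $\lambda$ is used to transport everything from $\HHom_\cC$ into $\HHom_\cM$.

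First I would set $\varepsilon:=\lambda_\aA^{-1}(\Id_\aA):\cF(\aA)\to\vac_\cM$, so that the defining formula for $m$ gives $\lambda_{\aA\otimes\aA}^{-1}(m)=\varepsilon\circ(\Id_\aA\otimes\varepsilon)\circ\cA^{-1}_{\aA,\aA,\vac_\cM}$. Applying $\lambda_{\aA\otimes\aA}^{-1}$ to $m\circ\cR_{\aA,\aA}=m$ and using the naturality of $\lambda$, the problem reduces to showing
\[
\varepsilon\circ(\Id_\aA\otimes\varepsilon)\circ\cA^{-1}_{\aA,\aA,\vac_\cM}\circ(\cR_{\aA,\aA}\otimes\Id_{\vac_\cM})=\varepsilon\circ(\Id_\aA\otimes\varepsilon)\circ\cA^{-1}_{\aA,\aA,\vac_\cM}
\]
in $\HHom_\cM(\cF(\aA\otimes\aA),\vac_\cM)$.

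Next, since $\cR_{\aA,\aA}\otimes\Id_{\vac_\cM}=\cF(\cR_{\aA,\aA})$ and $\cF$ lifts to a braided tensor functor $\cG:\cC\to\cZ(\cM)$, one has
\[
\cF(\cR_{\aA,\aA})=\tau_{\aA,\aA}^{-1}\circ\gamma^{\cF(\aA)}_{\cF(\aA)}\circ\tau_{\aA,\aA},
\]
where $\tau_{\aA,\aA}:\cF(\aA\otimes\aA)\to\cF(\aA)\otimes\cF(\aA)$ is the tensor structure isomorphism of $\cF$ described in the paper. Using the triangle axiom in $\cM$, one checks that $(\Id_\aA\otimes\varepsilon)\circ\cA^{-1}_{\aA,\aA,\vac_\cM}=r_{\cF(\aA)}\circ(\Id_{\cF(\aA)}\otimes\varepsilon)\circ\tau_{\aA,\aA}$, so the $\tau_{\aA,\aA}$ on the far right cancels its inverse and the left-hand side becomes
\[
\varepsilon\circ r_{\cF(\aA)}\circ(\Id_{\cF(\aA)}\otimes\varepsilon)\circ\gamma^{\cF(\aA)}_{\cF(\aA)}\circ\tau_{\aA,\aA}.
\]
Naturality of $\gamma^{\cF(\aA)}_{\bullet}$ applied to $\varepsilon:\cF(\aA)\to\vac_\cM$ then converts $(\Id_{\cF(\aA)}\otimes\varepsilon)\circ\gamma^{\cF(\aA)}_{\cF(\aA)}$ into $\gamma^{\cF(\aA)}_{\vac_\cM}\circ(\varepsilon\otimes\Id_{\cF(\aA)})$, and by Lemma~\ref{lem:gamma_1} the half-braiding at the unit equals $r_{\cF(\aA)}^{-1}\circ l_{\cF(\aA)}$.

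After this simplification the left-hand side has collapsed to $\varepsilon\circ l_{\cF(\aA)}\circ(\varepsilon\otimes\Id_{\cF(\aA)})\circ\tau_{\aA,\aA}$, while the right-hand side equals $\varepsilon\circ r_{\cF(\aA)}\circ(\Id_{\cF(\aA)}\otimes\varepsilon)\circ\tau_{\aA,\aA}$. Pulling $\varepsilon$ through $l_{\cF(\aA)}$ and $r_{\cF(\aA)}$ via naturality of the unit isomorphisms brings both sides to $l_{\vac_\cM}\circ(\varepsilon\otimes\varepsilon)\circ\tau_{\aA,\aA}=r_{\vac_\cM}\circ(\varepsilon\otimes\varepsilon)\circ\tau_{\aA,\aA}$ (using $l_{\vac_\cM}=r_{\vac_\cM}$), completing the argument. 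I expect the main obstacle to be the coherence bookkeeping in the step that rewrites $(\Id_\aA\otimes\varepsilon)\circ\cA^{-1}$ in terms of $\tau_{\aA,\aA}$; the remaining steps are direct applications of naturality and Lemma~\ref{lem:gamma_1}, but this first rewriting has to be set up carefully so that $\tau_{\aA,\aA}$ appears in precisely the right spot to cancel with its inverse.
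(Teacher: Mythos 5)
Your argument is correct and is essentially the paper's proof in linearized form: you reduce to a $\HHom_\cM$ identity, expand $\cF(\cR_{\aA,\aA})$ via the monoidal structure $\tau_{\aA,\aA}$ to bring out $\gamma^{\cF(\aA)}_{\cF(\aA)}$, use naturality of the half-braiding against $\varepsilon$ and Lemma~\ref{lem:gamma_1} to trivialize it, and finish with $l_{\vac_\cM}=r_{\vac_\cM}$ — exactly the content of the two commutative diagrams in the paper. The coherence rewriting of $(\Id_\aA\otimes\varepsilon)\circ\cA^{-1}_{\aA,\aA,\vac_\cM}$ that you flag as the main bookkeeping step is indeed where the paper invests its care, using the explicit form of $\tau_{\aA,\aA}$ and the mixed triangle constraint relating the module-category associativity with the unit isomorphisms of $\cM$.
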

\begin{proof}
We need to show that the multiplication map $m \in  \HHom_{\cC}(\aA \otimes \aA, \aA)$ is commutative. 
Recalling that
\[
m  = \lambda_{\aA\otimes \aA}\left( \lambda^{-1}_\aA(\Id_\aA) \circ \lambda^{-1}_\aA(\Id_\aA) \circ \cA_{\aA, \aA, \one_\cM}^{-1}\right), 
\]
we consider the image of $m$ under $\lambda^{-1}_{\aA\otimes \aA}$ in $\HHom_{\cM}(\cF(\aA \otimes \aA), \one_\cM)$.
By naturality of $\lambda_{\aA\otimes \aA}$ we have that
\[
\lambda^{-1}_{\aA\otimes \aA}(m\circ \cR_{\aA\otimes \aA}) = \lambda^{-1}_{\aA\otimes \aA}(m) \circ (\cR_{\aA\otimes \aA} \otimes \Id_{\one_\cM} ) = \lambda^{-1}_{\aA\otimes \aA}(m) \circ \cF(\cR_{\aA\otimes \aA}),
\]
so we must show that $\lambda_{\aA\otimes\aA}^{-1}(m)=\lambda_{\aA\otimes\aA}^{-1}(m)\circ\cF(\cR_{\aA\otimes\aA})$. To show this, we will use the diagram
\begin{equation*}
 \xymatrix{
 \cF(\aA\otimes\aA) \ar[r]^{\cong\ \ \ } \ar[d]^{\cF(\cR_{\aA,\aA})} & \cF(\aA)\otimes\cF(\aA) \ar[rr]^(.52){\lambda_\aA^{-1}(\Id_\aA)\otimes\Id_{\cF(\aA)}} \ar[d]^{\gamma^{\cF(\aA)}_{\cF(\aA)}} && \vac_\cM\otimes\cF(\aA) \ar[rr]^(.52){\Id_{\vac_\cM}\otimes\lambda_\aA^{-1}(\Id_\aA)} \ar[d]^{\gamma^{\cF(\aA)}_{\vac_\cM}} && \vac_\cM\otimes\vac_\cM \ar[r]^(.6){l_{\vac_\cM}} & \vac_{\cM} \\
 \cF(\aA\otimes\aA) \ar[r]^{\cong\ \ \ } & \cF(\aA)\otimes\cF(\aA) \ar[rr]^(.52){\Id_{\cF(\aA)}\otimes\lambda_\aA^{-1}(\Id_\aA)} && \cF(\aA)\otimes\vac_{\cM} \ar[rr]^(.52){\lambda_\aA^{-1}(\Id_\aA)\otimes\Id_{\vac_\cM}} && \vac_{\cM}\otimes\vac_\cM \ar[ru]_{r_{\vac_\cM}} & \\
 }
\end{equation*}
The left square commutes because $\cF$ lifts to the braided tensor functor $\cG$ and because the braiding isomorphism $\cR_{(\cF(\aA),\gamma^{\cF(\aA)}), (\cF(\aA),\gamma^{\cF(\aA)})}$ in $\cZ(\cM)$ is given by $\gamma^{\cF(\aA)}_{\cF(\aA)}$. The square in the middle commutes by the naturality of $\gamma^{\cF(\aA)}$, and the pentagon commutes by Lemma \ref{lem:gamma_1} and naturality of the unit isomorphisms.

Because $l_{\vac_\cM}=r_{\vac_\cM}$ we now see that it suffices to show that $\lambda_{\aA\otimes\aA}^{-1}(m)$ is given by the top (equivalently the bottom) row of the diagram. Since by definition
\begin{equation*}
 \lambda_{\aA\otimes\aA}^{-1}(m) =\lambda_\aA^{-1}(\Id_\aA)\circ(\Id_\aA\otimes\lambda_\aA^{-1}(\Id_\aA))\circ\cA_{\aA,\aA,\vac_{\cM}}^{-1},
\end{equation*}
this will follow from commutativity of the diagram
\begin{equation*}
\xymatrix{
\cF(\aA\otimes \aA)   \ar[r]^{ \cA_{\aA, \aA, \one_\cM}^{-1} }\ar[rd]_{\cong } & \aA \otimes \cF(\aA) \ar[rrr]^{\Id_\aA \otimes \lambda^{-1}_\aA(\Id_\aA)}  &&& \cF(\aA) \ar[rr]^{ \lambda^{-1}_\aA(\Id_\aA) }  && \one_{\cM}   \\
 & \cF(\aA) \otimes \cF(\aA) \ar[rrr]^{\Id_{\cF(\aA)} \otimes \lambda^{-1}_\aA(\Id_\aA)} \ar[u]_{\cong } &&& \cF(\aA) \otimes \one_{\cM} \ar[rr]^{ \lambda^{-1}_\aA(\Id_\aA) \otimes \Id_{\one_\cM}} \ar[u]_{r_{\cF(\aA)}}  && \one_{\cM} \otimes \one_\cM \ar[u]^{r_{\vac_{\cM}} }
}
\end{equation*}
In fact, the right square commutes by naturality of the unit isomorphisms, and recalling the definition of the functorial isomorphism $\cF(\aA\otimes\aA)\rightarrow\cF(\aA)\otimes\cF(\aA)$, we see that the triangle commutes if the vertical isomorphism is $(\Id_{\aA}\otimes l_{\aA\otimes\vac_\cM})\circ\cA_{\aA,\vac_\cM,\aA\otimes\vac_\cM}^{-1}$. Then commutativity of the square in the middle follows from the commutative diagram
\begin{equation*}
\xymatrixcolsep{4pc}
 \xymatrix{
 (\aA\otimes\vac_\cM)\otimes(\aA\otimes\vac_{\cM}) \ar[r]^{\cA_{\aA,\vac_\cM,\aA\otimes\vac_\cM}^{-1}} \ar[d]^{\Id_{\aA\otimes\vac_\cM}\otimes\lambda_\aA^{-1}(\Id_\aA)} & \aA\otimes(\vac_\cM\otimes(\aA\otimes\vac_\cM)) \ar[r]^(.58){\Id_\aA\otimes l_{\aA\otimes\vac_\cM}} \ar[d]^{\Id_\aA\otimes(\Id_{\vac_\cM}\otimes\lambda_\aA^{-1}(\Id_\aA))} & \aA\otimes(\aA\otimes\vac_\cM) \ar[d]^{\Id_\aA\otimes\lambda_\aA^{-1}(\Id_\aA)} \\
 (\aA\otimes \vac_\cM)\otimes\vac_{\cM} \ar[r]^{\cA_{\aA,\vac_\cM,\vac_\cM}^{-1}} & \aA\otimes(\vac_\cM\otimes\vac_\cM) \ar[r]^(.58){\Id_{\aA}\otimes l_{\vac_\cM}} & \aA\otimes\vac_{\cM} \\
 }
\end{equation*}
together with the unit triangle constraint
\begin{equation*}
 (\Id_\aA\otimes l_{\vac_\cM})\circ\cA_{\aA,\vac_\cM,\vac_\cM}^{-1} = (\Id_\aA\otimes r_{\vac_\cM})\circ\cA_{\aA,\vac_\cM,\vac_\cM}^{-1} = r_{\aA\otimes\vac_\cM}.
\end{equation*}
This completes the proof that $m\circ\cR_{\aA,\aA}=m$.
\end{proof}

\subsection{Canonical algebras}\label{sec:canonical}

We will now construct commutative algebras more concretely: we will see that what is called the canonical algebra associated to a suitable braided tensor category is always commutative.

\begin{defn}
Let $\cC$ be a monoidal category. The {\bf opposite category} $\cCop$ is the same as $\cC$ as a category but has monoidal structure
\[
\mX \otimes_{\text{op}} \mY := \mY\otimes \mX,
\]
associativity isomorphisms $\cA_{\mX, \mY, \mZ}^{\text{op}} = \cA_{\mZ, \mY, \mX}^{-1}$, and unit isomorphisms $l^{\text{op}}_\mX=r_\mX$, $r^{\text{op}}_\mX=l_{\mX}$. 
\end{defn}

By \cite[Example 7.4.2]{EGNO}, a multitensor category $\cC$ is a module category for the Deligne product $\cC\boxtimes \cCop$ with module map
\[
(\mX\boxtimes \mY) \otimes \mZ := (\mX \otimes \mZ) \otimes \mY.
\]
If we assume in addition that $\cC$ is braided, then $\cC^\text{op}$ also has a braiding given by
\begin{equation*}
 \cR_{\mX,\mY}^{\text{op}} =\cR_{\mX,\mY}^{-1}: \mX\otimes_\text{op}\mY\rightarrow\mY\otimes_\text{op}\mX.
\end{equation*}
Recalling the braid-reversed category $\cC^{\rev}$ from Section \ref{sec:center}, the identity functor on $\cC$ gives a braided tensor equivalence between $\cC^\text{rev}$ and $\cC^\text{op}$, with functorial isomorphisms
\begin{equation*}
 \cR^{-1}_{\mY,\mX} : \Id_\cC(\mX\otimes\mY) \rightarrow \Id_\cC(\mX)\otimes_\text{op}\Id_\cC(\mY).
\end{equation*}
Thus we may view $\cC$ as a module category for either $\cC\fus\cC^\text{op}$ or $\cC\fus\cC^{\text{rev}}$. In this setting, the natural associativity isomorphism
\begin{equation*}
 \cA_{\mX\fus\mY, \mZ_1,\mZ_2}: (\mX\fus\mY)\otimes(\mZ_1\otimes\mZ_2)\rightarrow((\mX\fus\mY)\otimes\mZ_1)\otimes\mZ_2
\end{equation*}
amounts to an isomorphism
\begin{equation*}
 (\mX\otimes(\mZ_1\otimes\mZ_2))\otimes\mY\rightarrow ((\mX\otimes\mZ_1)\otimes\mY)\otimes\mZ_2,
\end{equation*}
which is given by an appropriate combination of associativity isomorphisms together with $\cR_{\mY,\mZ_2}^{-1}$.

To see that the induction functor $\cF: \cC\fus\cC^\text{rev}\rightarrow\cC$ is a central functor, we note that $\cF$ is naturally isomorphic via the unit isomorphisms to the functor
\begin{equation*}
 \mX\fus\mY\mapsto\mX\otimes\mY,\quad f\fus g\mapsto f\otimes g.
\end{equation*}
That is, $\cF$ amounts to the extension to $\cC\fus\cC^\text{rev}$ of the identity functors from $\cC$ and $\cC^\text{rev}$ into $\cC$, both of which are central functors lifting to $\cZ(\cC)$ via $\mX\rightarrow(\mX,\cR_{\bullet,\mX})$ and $\mY\rightarrow(\mY,\cR_{\mY,\bullet}^{-1})$, respectively. Since the images of these two functors in $\cZ(\cC)$ centralize each other (recall Proposition \ref{prop:mueger}, or \cite[Proposition 7.3]{Mue}), the extension to $\cC\fus\cC^\text{rev}$ is also central. To be more specific, $\cF$ lifts to the functor $\cG: \cC\fus\cC^{\text{rev}}\rightarrow\cZ(\cC)$ given on objects by
$\mX\fus\mY\rightarrow(\mX\otimes\mY, \gamma^{\mX\otimes\mY})$ where $\gamma^{\mX\otimes\mY}_{\mZ}$ is given by the composition
\begin{align*}
 \mZ\otimes(\mX\otimes\mY)\xrightarrow{\cA_{\mZ,\mX,\mY}} (\mZ\otimes\mX) & \otimes\mY  \xrightarrow{\cR_{\mZ,\mX}\otimes\Id_\mY} (\mX\otimes\mZ)\otimes\mY\nonumber\\
 &\xrightarrow{\cA_{\mX,\mZ,\mY}^{-1}} \mX\otimes(\mZ\otimes\mY)\xrightarrow{\Id_\mX\otimes\cR_{\mY,\mZ}^{-1}} \mX\otimes(\mY\otimes\mZ)\xrightarrow{\cA_{\mX,\mY,\mZ}} (\mX\otimes\mY)\otimes\mZ,
\end{align*}
as in \eqref{ZC_Tens_Prod_Def}.

It now follows from Theorem \ref{thm:comm} that if the functor $\cG_{\vac,\vac}: \cC\fus\cC^{\rev}\rightarrow\cVec$ given on objects by
\begin{equation*}
 \mX\fus\mY\mapsto\HHom_\cC\left((\mX\otimes\vac)\otimes\mY,\vac\right)\cong\HHom_\cC\left(\mX\otimes\mY,\vac\right)
\end{equation*}
is representable, then $\aA=\intHom(\one, \one)$ is a commutative associative algebra in $\cC\fus\cCrev$. We have two situations in which $\cG_{\vac,\vac}$ is representable. First, recalling Remark \ref{Grepresent}, this holds when $\cC$ is a finite (in particular, rigid) braided tensor category. Secondly, when $\cC$ is not necessarily finite or rigid but is semisimple and has a contragredient functor, then $\cG_{\vac,\vac}$ is representable if we replace $\cC$ and $\cC\fus\cCrev$ with their direct sum completions. In this case we can take
\begin{equation*}
 \aA = \bigoplus_{\mX \in \irr(\cC)} \mX' \boxtimes \mX
\end{equation*}
where $\irr(\cC)$ is a set of equivalence class representatives for the simple objects in $\cC$. Note that when $\cC$ has infinitely many equivalence classes of simple objects, then $\aA$ is not an object of $\cC\fus\cCrev$ but is an object of $(\cC\fus\cCrev)_\oplus^{fin}$. For the braided tensor category structure on such completions, we refer again to \cite{CGR} and especially \cite{AR}; see also Appendix \ref{app:dirsum}.

To describe the algebra structure on $\aA$ more concretely, let us assume $\cC$ is rigid and take simple objects $\mX$, $\mY$ of $\cC$. We would like to determine the $\mZ\in\irr(\cC)$ for which $m((\mX^*\fus\mX)\otimes(\mY^*\fus\mY))\cap(\mZ^*\fus\mZ)\neq 0$.
We first observe that by the definition of $m$ and the naturality of $\lambda$,
\begin{equation*}
 m\vert_{(\mX^*\fus\mX)\otimes(\mY^*\fus\mY)} =\lambda_{(\mX^*\fus\mX)\otimes(\mY^*\fus\mY)}\left(\lambda_{\aA\otimes\aA}^{-1}(m)\vert_{((\mX^*\fus\mX)\otimes(\mY^*\fus\mY))\otimes\vac}\right).
\end{equation*}
The morphism inside parentheses here is the right-side composition in the diagram
\begin{equation*}
 \xymatrixcolsep{4pc}
 \xymatrix{
 ((\mX^*\fus\mX)\otimes(\mY^*\fus\mY))\otimes\vac \ar[d]^{\cA^{-1}_{\mX^*\fus\mX,\mY^*\fus\mY,\vac}} \ar[r]^(.625){(i_\mX\otimes i_{\mY})\otimes\Id_\vac} & (\aA\otimes\aA)\otimes\vac \ar[d]^{\cA_{\aA,\aA,\vac}^{-1}} \\
 (\mX^*\fus\mX)\otimes((\mY^*\fus\mY)\otimes\vac) \ar[d]^{\Id_{\mX^*\fus\mX}\otimes\lambda_{\mY^*\fus\mY}^{-1}(i_\mY)} \ar[r]^(.625){i_\mX\otimes(i_\mY\otimes\Id_\vac)} & \aA\otimes(\aA\otimes\vac) \ar[d]^{\Id_\aA\otimes\lambda_\aA^{-1}(\Id_\aA)}\\
 (\mX^*\fus\mX)\otimes\vac \ar[d]^{\lambda^{-1}_{\mX^*\fus\mX}(i_{\mX})} \ar[r]^(.6){i_\mX\otimes\Id_\vac} & \aA\otimes\vac \ar[ld]^{\lambda_\aA^{-1}(\Id_\aA)} \\
 \vac \\
 }
\end{equation*}
where $i_\mX$ and $i_\mY$ represent the obvious inclusions. The diagram commutes by naturality of associativity and $\lambda$. Moreover, because $\lambda$ is an isomorphism, $\lambda_{\mX^*\fus\mX}^{-1}(i_\mX)$ is a non-zero morphism in
\begin{equation*}
 \HHom_{\cC}((\mX^*\otimes\vac)\otimes\mX, \vac),
\end{equation*}
a one-dimensional space spanned by
\begin{equation*}
 d_\mX: (\mX^*\otimes\vac)\otimes\mX\xrightarrow{r_{\mX^*}\otimes\Id_\mX} \mX^*\otimes\mX\xrightarrow{e_{\mX}} \vac,
\end{equation*}
where $e_\mX$ is the evaluation morphism. Hence $\lambda_{\mX^*\fus\mX}^{-1}(i_\mX)=a_\mX d_\mX$ for some $a_\mX\neq 0$, and similarly $\lambda_{\mY^*\fus\mY}^{-1}(i_\mY) =a_\mY d_\mY$ for non-zero $a_\mY$.

From this discussion, it follows that 
\begin{equation*}
 m\vert_{(\mX^*\fus\mX)\otimes(\mY^*\fus\mY)} =a_\mX a_\mY\lambda_{(\mX^*\fus\mX)\otimes(\mY^*\fus\mY)}\left(d_\mX\circ(\Id_{\mX^*\fus\mX}\otimes d_\mY)\circ\cA^{-1}_{\mX^*\fus\mX,\mY^*\fus\mY,\vac}\right),
\end{equation*}
and now the morphism inside parentheses is a morphism from
\begin{equation*}
 ((\mX^*\fus\mX)\otimes(\mY^*\fus\mY))\otimes\vac = ((\mX^*\otimes\mY^*)\fus(\mX\otimes_\text{op}\mY))\otimes\vac =((\mX^*\otimes\mY^*)\otimes\vac)\otimes(\mY\otimes\mX)
\end{equation*}
to $\vac$. In fact, we can identify $\mX^*\otimes\mY^*=(\mY\otimes\mX)^*$, and then this morphism is simply $d_{\mY\otimes\mX}=e_{\mY\otimes\mX}\circ(r_{\mX^*\otimes\mY^*}\otimes\Id_{\mY\otimes\mX})$ (see for instance \cite{BK, T, EGNO}). Now since $\cC$ is semisimple, we have an isomorphism $\mY\otimes\mX\cong\bigoplus_{i\in I} \mZ_i$ where the $\mZ_i$ are simple objects of $\cC$ and $I$ is a finite index set. Under this isomorphism, $d_{\mY\otimes\mX}$ is identified with the direct sum of the $d_{\mZ_i}$ which in turn is identified with non-zero multiples of the inclusions $\mZ_i^*\otimes\mZ_i\hookrightarrow\aA$ under $\lambda_{\mZ_i}$. Hence under the natural isomorphism
\begin{align*}
 \HHom_{\cC\fus\cCrev}\left((\mX^*\fus\mX)\otimes(\mY^*\fus\mY),\aA\right)\cong\prod_{i,j\in I}\HHom_{\cC\fus\cCrev}\left(\mZ_i^*\fus\mZ_j ,\aA\right),
\end{align*}
$m\vert_{(\mX^*\fus\mX)\otimes(\mY^*\fus\mY)}$ is sent to the product over $i\in I$ of non-zero multiples of the inclusions of $\mZ_i^*\fus\mZ_i$ into $\aA$. We conclude that $\mZ^*\fus\mZ$ is included in $m((\mX^*\fus\mX)\otimes(\mY^*\fus\mY))$ precisely when $\mZ$ occurs as a direct summand of $\mY\otimes\mX$ (or equivalently $\mX\otimes\mY$ since $\cC$ is braided). A slightly different explanation for this observation is given in \cite[Example 7.9.14]{EGNO}.

We have shown that if $\cC$ is rigid, then the multiplication rules of $\aA$ satisfy $M_{\mX^* \boxtimes \mX, \mY^* \boxtimes \mY}^{\mZ^* \boxtimes \mZ}=1$ if and only if $\mZ$ is a summand of $\mX\otimes\mY$. We can use this to show that $\aA$ is a simple algebra: suppose $\mI\hookrightarrow\aA$ is a non-zero ideal of $\aA$. Because $\aA$ is semisimple in $(\cC\boxtimes\cCrev)_\oplus$ and all simple subobjects of $\aA$ occur with multiplicity $1$, any subobject such as $\mI$ is also semisimple and a direct sum of certain $\mX^*\fus\mX$. For such $\mX$, we have $\mX\cong\mX^{**}$ because $\cC$ is braided (recall Remark \ref{rema:deltaX}) and therefore $M^{\vac\fus\vac}_{\mX^{**}\fus\mX^*, \mX^*\fus\mX}=1$. This means $\vac\fus\vac\subseteq\mI$, and then $M^{\mY^*\fus\mY}_{\mY^*\fus\mY, \vac\fus\vac}=1$ for any $\mY\in\irr(\cC)$ implies $\mI=\aA$.

We summarize the results of this section:
\begin{thm}\label{thm:canonicalalgebra}
Let $\cC$ be a (not necessarily finite) semisimple braided tensor category with a contragredient functor. Then 
\[
\aA = \bigoplus_{\mX \in \irr(\cC)} \mX' \boxtimes \mX
\]
is a commutative associative algebra in $(\cC\boxtimes \cCrev)_{\oplus}^{fin}$. If $\cC$ is rigid, then $\aA$ is simple and for simple objects $\mX, \mY, \mZ$ of $\cC$, the multiplication rules are given by $M_{\mX^* \boxtimes \mX, \mY^* \boxtimes \mY^*}^{\mZ^* \boxtimes \mZ}=1$
if and only if $\mZ$ is a summand of $\mX\otimes\mY$.
\end{thm}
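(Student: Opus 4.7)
The plan is to realize this as an application of Theorem \ref{thm:comm}, taking the module category $\cM$ to be $\cC$ itself, regarded as a module category over $\cC\boxtimes\cCrev$. First I would recall from \cite{EGNO} that $\cC$ is a module category over $\cC\boxtimes\cCop$ via $(\mX\boxtimes\mY)\otimes\mZ=(\mX\otimes\mZ)\otimes\mY$, and observe that $\Id_\cC$ together with the functorial isomorphisms $\cR^{-1}_{\mY,\mX}$ gives a braided tensor equivalence $\cCrev\xrightarrow{\sim}\cCop$, so $\cC$ is naturally a $\cC\boxtimes\cCrev$-module category. I would then verify that the induction functor $\cF:\cC\boxtimes\cCrev\to\cC$, which up to unit isomorphisms is just $\mX\boxtimes\mY\mapsto\mX\otimes\mY$, is a central functor: it is the extension of the two identity functors $\cC\to\cC$ and $\cCrev\to\cC$, both of which lift to $\cZ(\cC)$ via $\mX\mapsto(\mX,\cR_{\bullet,\mX})$ and $\mY\mapsto(\mY,\cR^{-1}_{\mY,\bullet})$ respectively, and these lifts centralize each other by M\"{u}ger's Proposition \ref{prop:mueger}, so the combined lift $\cG:\cC\boxtimes\cCrev\to\cZ(\cC)$ exists.

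Next, I would check the representability of $\cG_{\vac,\vac}$. Working in the direct sum completion $(\cC\boxtimes\cCrev)_\oplus$ (which is needed since $\cC$ may have infinitely many simples), Proposition \ref{Cont_in_Cplus} together with the semisimplicity of $\cC$ gives, for any $\mX,\mY\in\obj(\cC)$,
\[
\HHom_\cC((\mX\otimes\vac)\otimes\mY,\vac)\cong\HHom_\cC(\mX\otimes\mY,\vac)\cong\HHom_\cC(\mX,\mY'),
\]
and this extends to the identification $\HHom_{(\cC\boxtimes\cCrev)_\oplus}(\mX\boxtimes\mY,\aA)\cong\HHom_\cC(\mX,\mY')$ with $\aA=\bigoplus_{\mX\in\irr(\cC)}\mX'\boxtimes\mX$. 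In particular $\aA$ represents $\cG_{\vac,\vac}$ and lives in $(\cC\boxtimes\cCrev)_\oplus^{fin}$ since each simple of $\cC\boxtimes\cCrev$ appears with multiplicity at most one. With these verifications in place, Theorem \ref{thm:comm} directly produces the commutative associative algebra structure on $\aA$.

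Assuming now that $\cC$ is rigid, I would pin down the multiplication rules by chasing through the construction of $m$. Under the identification $\lambda_{\mX'\boxtimes\mX}^{-1}(i_\mX)=a_\mX d_\mX$ with $d_\mX=e_\mX\circ(r_{\mX^*}\otimes\Id_\mX)$ and the analogous constant $a_\mY$, one finds that $m|_{(\mX^*\boxtimes\mX)\otimes(\mY^*\boxtimes\mY)}$ is a non-zero scalar multiple of $\lambda$ applied to the evaluation $d_{\mY\otimes\mX}:(\mY\otimes\mX)^*\otimes(\mY\otimes\mX)\to\vac$ under the identification $\mX^*\otimes\mY^*\cong(\mY\otimes\mX)^*$. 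Decomposing $\mY\otimes\mX\cong\bigoplus_{i}\mZ_i$ into simples, $d_{\mY\otimes\mX}$ splits as a sum of non-zero multiples of the individual $d_{\mZ_i}$, each of which pulls back under $\lambda$ to the canonical inclusion $\mZ_i^*\boxtimes\mZ_i\hookrightarrow\aA$. Hence $M^{\mZ^*\boxtimes\mZ}_{\mX^*\boxtimes\mX,\mY^*\boxtimes\mY}=1$ exactly when $\mZ$ is a summand of $\mY\otimes\mX$, which by the braiding is the same as a summand of $\mX\otimes\mY$.

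Finally, for simplicity, suppose $\mI\hookrightarrow\aA$ is a non-zero ideal. Since $\aA$ is a direct sum of pairwise non-isomorphic simples $\mX'\boxtimes\mX$ (here using that the contragredient functor permutes simples and thus the summands are distinct), $\mI$ contains some $\mX^*\boxtimes\mX$. Using $\mX^{**}\cong\mX$ in the rigid braided setting (Remark \ref{rema:deltaX}), the multiplication rule $M^{\vac\boxtimes\vac}_{\mX^{**}\boxtimes\mX^*,\mX^*\boxtimes\mX}=1$ forces $\vac\boxtimes\vac\subseteq\mI$, and then $M^{\mY^*\boxtimes\mY}_{\mY^*\boxtimes\mY,\vac\boxtimes\vac}=1$ for every $\mY\in\irr(\cC)$ forces $\mI=\aA$. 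The main subtlety I anticipate is purely bookkeeping: identifying the central functor structure and tracking the various unit and associativity constraints used in the computation of $m$ inside the direct sum completion; once these are set up correctly, the commutativity and the multiplication rules follow formally from Theorem \ref{thm:comm} and the explicit description of $\aA$.
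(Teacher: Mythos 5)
Your proposal is correct and follows essentially the same route as the paper's proof: realizing $\cC$ as a module category over $\cC\boxtimes\cCrev$ via the braided equivalence $\cCrev\simeq\cCop$, verifying the induction functor is central (using M\"uger's centralizer result for the two lifts to $\cZ(\cC)$), checking representability of $\cG_{\vac,\vac}$ via the contragredient functor in the direct sum completion, applying Theorem \ref{thm:comm}, and then computing multiplication rules via the factorization of $m$ through $d_{\mY\otimes\mX}$ and concluding simplicity from $\mX\cong\mX^{**}$. The paper's argument is identical in structure and in all key steps.
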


\begin{defn}
The algebra constructed in this subsection is called the {\bf canonical algebra} in $\cC\boxtimes \cCrev$ (equivalently, in $\cC\fus\cCop$). 
\end{defn}

\begin{rema}\label{rema:canonicalalgebra}
 Since commutative algebras are preserved by braided tensor equivalences, we can restate Theorem \ref{thm:canonicalalgebra} as follows. Let $\cC$ be a semisimple braided tensor category with a contragredient functor, and suppose $\tau:\cC\rightarrow\cD$ is a braid-reversed tensor equivalence (so that $\tau: \cCrev\rightarrow\cD$ is a braided equivalence). Then
 \begin{equation*}
  \aA=\bigoplus_{\mX\in\irr(\cC)} \mX'\fus\tau(\mX)
 \end{equation*}
is a commutative associative algebra in $(\cC\fus\cD)_\oplus^{fin}$, and if $\cC$ is rigid, then $\aA$ is simple.
\end{rema}

\section{From algebras to braid-reversed equivalences}

In the previous section, we showed how to construct a commutative associative algebra from a braid-reversed tensor equivalence. In this section, we consider the converse problem: given a simple algebra $\aA$ in the Deligne product of two braided tensor categories, obtain a braid-reversed equivalence between the two factors of the Deligne product. Such a braid-reversed equivalence was obtained in \cite{lin} under the strong assumptions that the two braided tensor categories are modular (in particular, finite) and that $\repA$ is semisimple. Here we obtain the equivalence without any finiteness assumptions and without any semisimplicity assumption on $\repA$.

\subsection{Mirror equivalence}\label{sec:mirrorduality}

In this section, we work in the following setting:
\begin{enumerate}
\item $\cU$ is a (not necessarily finite) 
semisimple ribbon category, and $\lbrace\mU_i\rbrace_{i\in I}$ is a subset of distinct simple objects in $\cU$ that includes $\vac_\cU$. We use the notation $\mU_0=\vac_\cU$.

\item $\cW$ is a ribbon category. In particular, both $\cU$ and $\cW$ are rigid.

\item We have a (commutative, associative, unital) algebra 
\begin{align*}
\aA = \bigoplus_{i\in I} \mM_{ii} 
= \bigoplus_{i\in I} \mU_{i}\boxtimes \mW_i.
\end{align*}
in $\cC=\cU\fus\cW$, or $\cC_\oplus$ if $I$ is infinite, where the $\mW_i$ are objects of $\cW$, not all assumed to be simple, with $\mW_0=\vac_\cW$. Thus $\mM_{00}=\vac_\cU\fus\vac_\cW=\vac_\cC$, which we will denote by $\vac$.

\item  The tensor units $\one_\cU=\mU_0, \one_\cW=\mW_0$ form a mutually commuting (or dual) pair in $\aA$, in the sense that
\begin{equation*}
\dim\Hom{\cU}{\Mod{U}_0, \Mod{U}_i} =\delta_{i,0} = \dim\Hom{\cW}{\Mod{V}_0, \Mod{V}_i}.
\end{equation*}
Note that the first equality is automatic because the $\mU_i$ are simple and distinct.

\item There is a partition $I=I^0\sqcup I^1$ of the index set with $0\in I^0$ such that for each $i\in I^j$, $j=0,1$, the twist satisfies $\theta_\aA\vert_{\mM_{ii}} =(-1)^j\Id_{\mM_{ii}}$. In particular, $\theta_{\aA}^2=\Id_\aA$.

\item Finally, $\aA$ is simple as an object of $\repA$. 

\end{enumerate}

Note that although we are not assuming $\cC=\cU\fus\cW$ is semisimple or finite, the conclusion of Proposition \ref{Cont_in_Cplus} still holds for the cases $\mY=\aA,\aA^*$ because we are assuming $\cU$ is semisimple and the $\mU_i$ are distinct. This means that we can use Corollary \ref{Adualsimple} together with the final assumption on $\aA$ to conclude that $\aA^*$ is also simple as a right $\aA$-module (and in fact simple in $\repA$ since $\aA$ is commutative).

As a consequence of the assumption that $\vac_\cU$ and $\vac_\cW$ form a dual pair in $\aA$, we have
\begin{align*}
 \HHom_{\cC_\oplus}(\vac,\aA) & = \bigoplus_{i\in I} \HHom_{\cC}(\vac_\cU\fus\vac_\cW,\mU_i\fus\mW_i)\nonumber\\
 & =\bigoplus_{i\in I} \HHom_\cU(\vac_\cU,\mU_i)\otimes_\mathbb{F} \HHom_\cW(\vac_\cW, \mW_i)\nonumber\\
 & =\HHom_{\cC}(\vac, \mU_0\fus\mW_0),
\end{align*}
which is the one-dimensional space $\mathbb{F}=\mathrm{End}_{\cC}(\vac)$. Thus we may take $\iota_\aA$ to be the canonical injection of $\vac=\mU_0\fus\mW_0$ into the direct sum. We then define $\varepsilon_\aA: \aA\rightarrow\vac$ to be the canonical projection with respect to the direct sum decomposition of $\aA$, so that $\varepsilon_\aA\circ\iota_\aA=\Id_\vac$.

Let $\cU_{\alg{A}} \subseteq \cU$ and $\cW_{\alg{A}} \subseteq \cW$ denote the full subcategories consisting of objects isomorphic to direct sums of objects appearing in the decomposition of $\aA$, that is, of the $\mU_i$ and $\mW_i$, respectively. Our main theorem will be a braid-reversed tensor equivalence between $\cU_\aA$ and $\cW_\aA$, although we have not yet shown  that they are tensor categories. The key idea is to use the induction functor $\cF: \cU\fus\cW\rightarrow\repA$ to identify $\cU_\aA$ and $\cW_\aA$ with a common subcategory of $\repA$. More specifically, we will use the two tensor functors
\begin{align*}
\begin{array}{rl}
 \cF_\cU:\cU&\rightarrow \repA\\
\mX&\mapsto \cF(\Mod{\mX} \boxtimes \one_\cW)\\
f &\mapsto \cF(f \boxtimes \Id_{\one_\cW})\\
\end{array}\qquad
\begin{array}{rl}
 \cF_\cW:\cW&\rightarrow \repA\\
\mY&\mapsto \cF(\one_\cU \boxtimes \Mod{\mY})\\
g &\mapsto \cF(\Id_{\one_\cU} \boxtimes  g)\\
\end{array}
\end{align*} 
First we show that $\cF_\cU$ and $\cF_\cW$ are fully faithful, so that $\cU$ and $\cW$ are tensor equivalent to subcategories of $\repA$:
\begin{lemma}\label{lemma:FU_FW_fully_faithful}
The functors $\cF_\cU$ and $\cF_\cW$ are fully faithful.
\end{lemma}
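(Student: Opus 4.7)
The plan is to use Frobenius reciprocity \eqref{eqn:frobenius} to reduce the computation of $\Hom{\repA}{\cF_\cU(\mX), \cF_\cU(\mZ)}$ to a Hom space in $\cC_\oplus$, then expand the direct-sum decomposition of $\aA$ and use the dual-pair assumption (4) to show only one summand survives. Concretely, I would first observe that
\begin{equation*}
\Hom{\repA}{\cF_\cU(\mX), \cF_\cU(\mZ)} \;\cong\; \Hom{\cC_\oplus}{\mX\fus\vac_\cW,\, \cG(\cF(\mZ\fus\vac_\cW))}
\end{equation*}
by adjunction, where $\cG$ is the forgetful functor. Since $\cG(\cF(\mZ\fus\vac_\cW))=\aA\otimes(\mZ\fus\vac_\cW)=\bigoplus_{i\in I}(\mU_i\otimes\mZ)\fus(\mW_i\otimes\vac_\cW)$, the right-hand side decomposes componentwise as
\begin{equation*}
\bigoplus_{i\in I}\Hom{\cU}{\mX,\,\mU_i\otimes\mZ}\otimes_{\mathbb F}\Hom{\cW}{\vac_\cW,\,\mW_i\otimes\vac_\cW},
\end{equation*}
using the Deligne-product property of $\HHom$-spaces and the fact that morphisms out of $\mX\fus\vac_\cW$ into any summand of $\cG(\cF_\cU(\mZ))$ must land in only finitely many summands (cf.\ Proposition \ref{Cont_in_Cplus}).

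The dual-pair hypothesis $\dim\Hom{\cW}{\vac_\cW,\mW_i}=\delta_{i,0}$, together with the unit isomorphism $r_{\mW_i}:\mW_i\otimes\vac_\cW\xrightarrow{\cong}\mW_i$, then collapses the sum to the $i=0$ term, giving
\begin{equation*}
\Hom{\repA}{\cF_\cU(\mX),\cF_\cU(\mZ)}\cong \Hom{\cU}{\mX,\mU_0\otimes\mZ}\otimes_{\mathbb F}\Hom{\cW}{\vac_\cW,\vac_\cW}\cong\Hom{\cU}{\mX,\mZ},
\end{equation*}
where the last step uses $l_\mZ:\mU_0\otimes\mZ\xrightarrow{\cong}\mZ$. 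The analogous argument for $\cF_\cW$ uses the other equality $\dim\Hom{\cU}{\vac_\cU,\mU_i}=\delta_{i,0}$ (which, as noted in the excerpt, is automatic from the $\mU_i$ being distinct simples).

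It remains to verify that this composite bijection is the one induced by the functor $\cF_\cU$, i.e.\ that it sends $f\in\Hom{\cU}{\mX,\mZ}$ to $\cF(f\fus\Id_{\vac_\cW})$. Tracing through the explicit form of Frobenius reciprocity from \eqref{eqn:frobenius} — a map $\widetilde f\in\Hom{\repA}{\cF_\cU(\mX),\cF_\cU(\mZ)}$ corresponds to $(\iota_\aA\otimes\Id)\circ(\text{unit})$ composed with $\widetilde f$ — and then projecting onto the $i=0$ summand via $\varepsilon_\aA$, one recovers $f$ itself, confirming functoriality of the identification. This verification is the only slightly delicate step; the rest is a direct unpacking of definitions, and no finiteness or semisimplicity assumption on $\cW$ is needed beyond what is already given.
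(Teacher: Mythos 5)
Your proposal is correct, and it organizes the argument in a cleaner way than the paper. The paper first reduces the statement (via the one-dimensionality of $\HHom_\cW(\vac_\cW,\vac_\cW)$) to showing that $\cF$ is a bijection on $\HHom_\cC(\mX_1\fus\vac_\cW,\mX_2\fus\vac_\cW)$, constructs an explicit candidate inverse $\cG(F)=l\circ(\varepsilon_\aA\otimes\Id)\circ F\circ(\iota_\aA\otimes\Id)\circ l^{-1}$, and then checks both $\cG\cF=\Id$ (easy) and $\cF\cG=\Id$ (the hard half, using the unit property of $\mu_\aA$, the dual-pair condition, and the fact that $F$ is a $\repA$-morphism). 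You instead invoke Frobenius reciprocity \eqref{eqn:frobenius}, which is already stated in the paper as a bijection, so the hard direction $\cF\cG=\Id$ is absorbed into the adjunction and never needs to be reverified. What remains is exactly what you identify: the Deligne-product decomposition of $\HHom_{\cC_\oplus}(\mX\fus\vac_\cW,\aA\otimes(\mZ\fus\vac_\cW))$, the collapse to the $i=0$ summand via $\dim\HHom_\cW(\vac_\cW,\mW_i)=\delta_{i,0}$, and the compatibility check $g\circ\cF_\cU=\Id$, which is the easy computation $(\Id_\aA\otimes(f\fus\Id))\circ(\iota_\aA\otimes\Id)=\iota_\aA\otimes(f\fus\Id)$ followed by $\varepsilon_\aA\circ\iota_\aA=\Id_\vac$. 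Your approach buys modularity (it leans on the cited adjunction rather than rederiving its surjectivity), while the paper's approach is more self-contained, spelling out the $\cF\cG=\Id$ direction explicitly. The one thing I would tighten: your final "one recovers $f$ itself" should be written out as the two-line naturality calculation above, since it is genuinely the step where the identification is pinned down.
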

\begin{proof}
We prove that $\cF_{\mathcal{U}}$ is fully faithful; the proof for $\cF_{\cW}$ is the same (in particular, the proof does not use semisimplicity). We need to show that for any objects $\mX_1$, $\mX_2$ in $\mathcal{U}$, the linear map
\begin{equation*}
 \cF_{\mathcal{U}}: \Hom{\mathcal{U}}{\mX_1,\mX_2}\rightarrow\Hom{\repA}{\aA\otimes(\mX_1\fus\vac_{\mathcal{V}}), \aA\otimes(\mX_2\fus\vac_{\mathcal{V}})}
\end{equation*}
is an isomorphism. We first observe that since in the Deligne product $\cC=\mathcal{U}\fus\mathcal{V}$ we have the isomorphism
\begin{align*}
 \Hom{\mathcal{U}}{\mX_1,\mX_2}\otimes_\mathbb{F}\Hom{\mathcal{V}}{\vac_\cW,\vac_\cW} & \xrightarrow{\cong}\Hom{\cC}{\mX_1\fus\vac_\cW, \mX_2\fus\vac_\cW}
\end{align*}
given by $f\otimes_\mathbb{F} g \mapsto f\fus g$, and since $\Hom{\mathcal{V}}{\vac_\cW,\vac_\cW}=\mathbb{F}\Id_{\vac_\cW}$, it is sufficient to show that
\begin{equation*}
 \cF: \Hom{\cC}{\mX_1\fus\vac_\cW,\mX_2\fus\vac_\cW}\rightarrow\Hom{\repA}{\aA\otimes(\mX_1\fus\vac_{\mathcal{V}}), \aA\otimes(\mX_2\otimes\vac_{\mathcal{V}})}
\end{equation*}
is an isomorphism.

We show that $\cF$ is an isomorphism by constructing an inverse: given $F: \aA\otimes(\mX_1\fus\vac_{\mathcal{V}})\rightarrow \aA\otimes(\mX_2\otimes\vac_{\mathcal{V}})$ in $\repA$, we define $\cG(F): \mX_1\fus\vac_\cW \rightarrow \mX_2\fus\vac_\cW$ in $\cC$ to be the composition
\begin{align*}
 \mX_1\fus\vac_\cW\xrightarrow{l_{\mX_1\fus\vac_\cW}^{-1}} \vac\otimes(\mX_1\fus\vac_\cW) & \xrightarrow{\iota_\aA\otimes \Id_{\mX_1\fus\vac_\cW}} \aA\otimes(\mX_1\fus\vac_\cW)\xrightarrow{F} \aA\otimes(\mX_2\fus\vac_\cW)\nonumber\\
 &\xrightarrow{\varepsilon_\aA\otimes\Id_{\mX_2\fus\vac_\cW}} \vac\otimes(\mX_2\fus\vac_\cW)\xrightarrow{l_{\mX_2\fus\vac_\cW}} \mX_2\fus\vac_\cW.
\end{align*}
For $f\in\Hom{\cC}{\mX_1\fus\vac_\cW, \mX_2\fus\vac_\cW}$, it is easy to see that $\cG(\cF(f))=f$: in fact, $\cG(\cF(f))$ is the composition
\begin{equation*}
 \mX_1\fus\vac_\cW\xrightarrow{l_{\mX_1\fus\vac_\cW}^{-1}} \vac\otimes(\mX_1\fus\vac_\cW)\xrightarrow{(\varepsilon_\aA\circ\iota_\aA)\otimes f} \vac\otimes(\mX_2\fus\vac_\cW)\xrightarrow{l_{\mX_2\fus\vac_\cW}} \mX_2\fus\vac_{\cW}.
\end{equation*}
Using $\varepsilon_\aA\circ\iota_\aA=\Id_{\vac}$ and the naturality of the left unit isomorphisms, this reduces to $f$.

On the other hand, given $F:\cF(\mX_1\otimes\vac_\cW)\rightarrow\cF(\mX_2\otimes\vac_\cW)$ in $\repA$, $\cF(\cG(F))$ is the composition
\begin{align}\label{FGofF}
 \aA\otimes(\mX_1\fus\vac_\cW) & \xrightarrow{\Id_\aA\otimes l_{\mX_1\fus\vac_\cW}^{-1}} \aA\otimes(\vac\otimes(\mX_1\fus\vac_\cW))\xrightarrow{\Id_\aA\otimes(\iota_\aA\otimes\Id_{\mX_1\fus\vac_\cW})} \aA\otimes(\aA\otimes(\mX_1\fus\vac_\cW))\nonumber\\
 &\xrightarrow{\Id_\aA\otimes F} \aA\otimes(\aA\otimes(\mX_2\fus\vac_\cW))\xrightarrow{\Id_\aA\otimes(\varepsilon_\aA\otimes\Id_{\mX_2\fus\vac_\cW})} \aA\otimes(\vac\otimes(\mX_2\fus\vac_\cW))\nonumber\\
 &\xrightarrow{\Id_\aA\otimes l_{\mX_2\fus\vac_\cW}} \aA\otimes(\mX_2\fus\vac_\cW).
\end{align}

We first use the triangle axiom, the right unit property of $\aA$, and the naturality of the associativity isomorphisms to rewrite
\begin{align*}
 \Id_\aA\otimes l_{\mX_2\fus\vac_\cW} & = (r_\aA\otimes\Id_{\mX_2\fus\vac_\cW})\circ\cA_{\aA,\vac, \mX_2\fus\vac_\cW}\nonumber\\
 & =(\mu_\aA\fus\Id_{\mX_2\fus\vac_\cW})\circ((\Id_\aA\otimes\iota_\aA)\otimes\Id_{\mX_2\fus\vac_\cW})\circ\cA_{\aA,\vac,\mX_2\fus\vac_\cW}\nonumber\\
 & =(\mu_\aA\fus\Id_{\mX_2\fus\vac_\cW})\circ\cA_{\aA,\aA,\mX_2\fus\vac_\cW}\circ(\Id_\aA\otimes(\iota_\aA\otimes\Id_{\mX_2\fus\vac_\cW})).
\end{align*}
Thus \eqref{FGofF} becomes
\begin{align}\label{FGofF2}
 \aA\otimes(\mX_1\fus\vac_\cW) & \xrightarrow{\Id_\aA\otimes\widetilde{F}} \aA\otimes(\aA\otimes(\mX_2\fus\vac_\cW))\xrightarrow{\Id_\aA\otimes((\iota_\aA\circ\varepsilon_\aA)\otimes\Id_{\mX_2\fus\vac_\cW})} \aA\otimes(\aA\otimes(\mX_2\fus\vac_\cW))\nonumber\\
 & \xrightarrow{\cA_{\aA,\aA,\mX_2\fus\vac_\cW}} (\aA\otimes \aA)\otimes(\mX_2\fus\vac_\cW)\xrightarrow{\mu_\aA\otimes\Id_{\mX_2\fus\vac_\cW}} \aA\otimes(\mX_2\fus\vac_\cW)
\end{align}
where $\widetilde{F}=F\circ(\iota_\aA\otimes\Id_{\mX_1\fus\vac_\cW}))\circ l_{\mX_1\fus\vac_\cW}^{-1}$. Now we use the assumption that $\vac_\cU$ and $\vac_\cW$ form a dual pair inside $\aA$ to observe that
\begin{align*}
 \widetilde{F}\in\, & \Hom{\cC}{\mX_1\fus\vac_\cW, \aA\otimes(\mX_2\fus\vac_\cW)}\nonumber\\
 & \cong\Hom{\cC}{\mX_1\fus\vac_\cW, \bigoplus_{i\in I} (\mU_i\fus \mW_i)\otimes(\mX_2\fus\vac_\cW)}\nonumber\\
 &\cong\bigoplus_{i\in I} \Hom{\cC}{\mX_1\fus\vac_\cW, (\mU_i\otimes \mX_2)\fus(\mW_i\otimes\vac_\cW)}\nonumber\\
 &\cong \bigoplus_{i\in I} \Hom{\cU}{\mX_1,\mU_i\otimes \mX_2}\otimes_\mathbb{F} \Hom{\cW}{\vac_\cW, \mW_i}\nonumber\\
 & \cong \Hom{\cU}{\mX_1, \vac_\cU\otimes \mX_2}\otimes_\mathbb{F} \Hom{\cW}{\vac_\cW,\vac_\cW}\nonumber\\
 & \cong\Hom{\cC}{\mX_1\fus\vac_\cW, \vac\otimes(\mX_2\fus\vac_\cW)}.
\end{align*}
In other words, the image of $\widetilde{F}$ inside $\aA\otimes(\mX_2\fus\vac_\cW)$ is contained in $\iota_\aA(\vac)\otimes(\mX_2\fus\vac_\cW)$. Since $\iota_\aA\circ\varepsilon_\aA$ is the projection from $\aA$ to $\iota_\aA(\vac)\subseteq \aA$, it follows that $((\iota_\aA\circ\varepsilon_\aA)\fus\Id_{\mX_2\fus\vac_W})\circ\widetilde{F}=\widetilde{F}$. Consequently, \eqref{FGofF2} becomes
\begin{align*}
 \aA & \otimes(\mX_1\fus\vac_{\cW})  \xrightarrow{\Id_\aA\otimes l_{\mX_1\fus\vac_\cW}^{-1}} \aA\otimes(\vac\otimes(\mX_1\fus\vac_\cW))\xrightarrow{\Id_\aA\otimes(\iota_\aA\otimes\Id_{\mX_1\fus\vac_\cW})} \aA\otimes(\aA\otimes(\mX_1\fus\vac_\cW))\nonumber\\
& \xrightarrow{\Id_\aA\otimes F} \aA\otimes(\aA\otimes(\mX_2\fus\vac_\cW))\xrightarrow{\cA_{\aA,\aA,\mX_2\fus\vac_\cW}} (\aA\otimes \aA)\otimes(\mX_2\fus\vac_\cW)\xrightarrow{\mu_\aA\otimes\Id_{\mX_2\fus\vac_\cW}} \aA\otimes(\mX_2\fus\vac_\cW).
\end{align*}
Now because $F$ is a morphism in $\repA$ and because $\mu_{\cF(\mX_i\fus\vac_\cW)}=(\mu_\aA\otimes\Id_{\mX_i\fus\vac_\cW})\circ\cA_{\aA,\aA,\mX_i\fus\vac_\cW}$ for $i=1,2$, we get
\begin{align*}
 \aA & \otimes(\mX_1\fus\vac_{\cW})  \xrightarrow{\Id_\aA\otimes l_{\mX_1\fus\vac_\cW}^{-1}} \aA\otimes(\vac\otimes(\mX_1\fus\vac_\cW))\xrightarrow{\Id_\aA\otimes(\iota_\aA\otimes\Id_{\mX_1\fus\vac_\cW})} \aA\otimes(\aA\otimes(\mX_1\fus\vac_\cW))\nonumber\\
& \xrightarrow{\cA_{\aA,\aA,\mX_1\fus\vac_\cW}} (\aA\otimes \aA)\otimes(\mX_1\fus\vac_\cW)\xrightarrow{\mu_\aA\otimes\Id_{\mX_1\fus\vac_\cW}} \aA\otimes(\mX_1\fus\vac_\cW)\xrightarrow{F} \aA\otimes(\mX_2\fus\vac_\cW).
\end{align*}
Finally, the naturality of the associativity isomorphisms, the triangle axiom, and the right unit property of $\mu_\aA$ imply that this composition equals $F$, as required.
\end{proof}

Now the following result is key for showing that $\cU_\aA$ and $\cW_\aA$ are tensor subcategories and $\cF_\cU(\cU_\aA)=\cF_\cW(\cW_\aA)$. It was proved in \cite{lin} under the assumption that $\cU$, $\cW$, and $\repA$ are all semisimple; but even if $\cU$ and $\cW$ are modular tensor categories, $\repA$ is not guaranteed to be semisimple. By \cite[Theorem 3.3]{KO}, $\repA$ is semisimple when $\aA$ is a rigid algebra in $\cC$, which by \cite[Lemma 1.20]{KO} means $\aA$ is simple \textit{and} $\dim_\cC \aA\neq 0$. Here we remove the assumption $\dim_\cC \aA\neq 0$. Because the proof is lengthy and requires some preparatory lemmas, we will defer it to Section \ref{sec:keylemma}.
\begin{keylemma}\label{lemma:keyiso}
If $\mU_i$ is simple and $\dim_{\cU} {\mU_i}\neq 0$, then $\cF({\mM_{i0}})\cong\cF(\mM_{0i}^\ast)$
in $\repA$.
\end{keylemma}

\begin{remark}\label{rem:keylemmaassumptions}
In the setting of this section, with the $\mU_i$ simple objects in a semisimple tensor category, $\dim_\cU \mU_i\neq 0$ is automatic by \cite[Proposition 4.8.4]{EGNO}. But as the proof of Key Lemma \ref{lemma:keyiso} does not use the semisimplicity of $\cU$, we have chosen to specify more precisely what conditions are needed for the result to hold. 
\end{remark}

As a consequence of the Key Lemma, we will also prove the following in Section \ref{sec:keylemma}:
\begin{prop}\label{prop:UA_WA_ribbon}
 The categories $\cU_\aA\subseteq\cU$ and $\cW_\aA\subseteq\cW$ are ribbon subcategories. Moreover, $\cW_\aA$ is semisimple with distinct simple objects $\lbrace\mW_i\rbrace_{i\in I}$.
\end{prop}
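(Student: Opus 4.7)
The strategy is to use Key Lemma \ref{lemma:keyiso}, which gives $\cF_\cU(\mU_i)\cong\cF_\cW(\mW_i^*)$ in $\repA$ for each $i\in I$, together with the fully faithfulness of $\cF_\cU$ and $\cF_\cW$ (Lemma \ref{lemma:FU_FW_fully_faithful}) and repeated applications of Frobenius reciprocity \eqref{eqn:frobenius}, to transfer structural information between $\cU_\aA$ and $\cW_\aA$. All Hom-space calculations will leverage the decomposition $\aA=\bigoplus_{i\in I}\mU_i\fus\mW_i$ together with the dual-pair assumption $\dim\HHom_\cW(\vac_\cW,\mW_i)=\delta_{i,0}$.

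First I establish that $\cW_\aA$ is semisimple with distinct simples $\lbrace\mW_i\rbrace_{i\in I}$. Combining the Key Lemma with fully faithfulness of $\cF_\cU$ and $\cF_\cW$,
\[
\HHom_\cW(\mW_i^*,\mW_j^*)\cong\Hom{\repA}{\cF_\cW(\mW_i^*),\cF_\cW(\mW_j^*)}\cong\Hom{\repA}{\cF_\cU(\mU_i),\cF_\cU(\mU_j)}\cong\HHom_\cU(\mU_i,\mU_j)=\delta_{ij}\mathbb{F}.
\]
Applying the contravariant duality equivalence on $\cW$ yields $\dim\HHom_\cW(\mW_i,\mW_j)=\delta_{ij}$, so the $\mW_i$ are pairwise non-isomorphic with one-dimensional endomorphism algebras; since $\cW_\aA$ consists by definition of direct sums of $\mW_i$'s, this makes it semisimple abelian with the claimed simples.

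Next I show closure of $\cU_\aA$ under tensor products and duals. Fix $i,j\in I$ and an arbitrary simple object $\mU_k$ of $\cU$. Using monoidality of $\cF$ and the Key Lemma, $\cF_\cU(\mU_i\otimes\mU_j)\cong\cF_\cW(\mW_i^*\otimes\mW_j^*)$; combining this with Frobenius reciprocity, the decomposition of $\aA$, and the dual-pair condition gives
\[
\HHom_\cU(\mU_k,\mU_i\otimes\mU_j)\cong\bigoplus_{\ell\in I}\HHom_\cU(\mU_k,\mU_\ell)\otimes\HHom_\cW(\vac_\cW,\mW_\ell\otimes\mW_i^*\otimes\mW_j^*),
\]
which vanishes whenever $k\notin I$ since the $\mU_\ell$ are distinct simples. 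Semisimplicity of $\cU$ then yields $\mU_i\otimes\mU_j\in\cU_\aA$. A parallel computation, using rigidity of $\cU$ to identify $\HHom_\cU(\mU_k,\mU_i^*)\cong\HHom_\cU(\mU_k\otimes\mU_i,\vac_\cU)$ and then the isomorphism $\cF_\cU(\mU_k)\otimes_\aA\cF_\cW(\mW_i^*)\cong\cF(\mU_k\fus\mW_i^*)$ coming from monoidality of $\cF$ and the Key Lemma, forces $\mU_i^*\in\cU_\aA$, so $\mU_i^*\cong\mU_{i^*}$ for a unique $i^*\in I$.

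Finally I treat $\cW_\aA$. Since monoidal functors out of rigid categories preserve duals, $\cF_\cU(\mU_i^*)$ is a dual of $\cF_\cU(\mU_i)$ and $\cF_\cW(\mW_i^{**})\cong\cF_\cW(\mW_i)$ is a dual of $\cF_\cW(\mW_i^*)$ in $\repA$, so the Key Lemma forces $\cF_\cU(\mU_i^*)\cong\cF_\cW(\mW_i)$. Comparing this with the Key Lemma applied to $i^*\in I$, which gives $\cF_\cU(\mU_i^*)\cong\cF_\cW(\mW_{i^*}^*)$, and invoking fully faithfulness of $\cF_\cW$, we obtain $\mW_i^*\cong\mW_{i^*}\in\cW_\aA$. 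Closure of $\cW_\aA$ under tensor products follows similarly: $\cF_\cW(\mW_i\otimes\mW_j)\cong\cF_\cU(\mU_i^*\otimes\mU_j^*)$ decomposes by tensor closure of $\cU_\aA$ into summands $\cF_\cU(\mU_k)\cong\cF_\cW(\mW_k^*)$ with $k\in I$, and fully faithfulness of $\cF_\cW$ then places $\mW_i\otimes\mW_j$ inside $\cW_\aA$. Since $\cU_\aA\subseteq\cU$ and $\cW_\aA\subseteq\cW$ are now full subcategories closed under tensor products and duals and containing the units, the braiding and twist of $\cU$ and $\cW$ restrict to give ribbon structures. The main obstacle is the careful orchestration of the repeated Frobenius reciprocity calculations and the appeal to duality preservation by the monoidal induction functor into the possibly non-rigid target $\repA$.
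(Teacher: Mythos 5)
Your proof is correct, but it takes a genuinely different route from the paper's. The paper proves closure of $\cU_\aA$ under tensor products by expanding $\cF(\mU_i\fus\mW_j^*)$ in two different ways (applying Key Lemma \ref{lemma:keyiso} to each tensor factor in turn, yielding isomorphisms \eqref{eqn:fusion1} and \eqref{eqn:fusion2} in the text); it gets closure under duals from Lemma \ref{lem:AiBiduals}; it derives an explicit fusion-rule identity $\mW_j\otimes\mW_i\cong\bigoplus_k N^{k^*}_{j,i}\mW_{k'}$ for $\cW_\aA$; and only then computes $\dim\HHom_\cW(\mW_i,\mW_j)=\delta_{ij}$ from that identity together with the dual-pair condition. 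You instead establish semisimplicity of $\cW_\aA$ \emph{first and directly}, via the chain $\HHom_\cW(\mW_i^*,\mW_j^*)\cong\HHom_{\repA}(\cF_\cW(\mW_i^*),\cF_\cW(\mW_j^*))\cong\HHom_{\repA}(\cF_\cU(\mU_i),\cF_\cU(\mU_j))\cong\HHom_\cU(\mU_i,\mU_j)$, which needs nothing beyond the Key Lemma and Lemma \ref{lemma:FU_FW_fully_faithful}. You then get closure of $\cU_\aA$ under $\otimes$ and $*$ by Hom-vanishing arguments (Frobenius reciprocity plus the decomposition of $\aA$ and distinctness of the $\mU_\ell$), and closure of $\cW_\aA$ by exploiting that the monoidal induction functors carry dual pairs to dual pairs. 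This is more abstract: it avoids explicit fusion-rule calculations and does not invoke Lemma \ref{lem:AiBiduals} at all, at the cost of not recording the explicit fusion rules \eqref{eqn:Wfus} for $\cW_\aA$, which the paper extracts as a by-product and which could be useful elsewhere. A few steps in your argument are compressed and would benefit from being spelled out: in particular, the $\cU_\aA$-closure-under-duals step should display the identification $\HHom_\cU(\mU_k,\mU_i^*)\cong\HHom_{\repA}(\cF(\mU_k\fus\mW_i^*),\aA)\cong\bigoplus_\ell\HHom_\cU(\mU_k,\mU_\ell)\otimes_{\mathbb F}\HHom_\cW(\mW_i^*,\mW_\ell)$ explicitly, and you should note that $\mU_i^*$ lands on a \emph{single} $\mU_{i^*}$ because duals of simple objects in a rigid category are simple.
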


Key Lemma \ref{lemma:keyiso} and Proposition \ref{prop:UA_WA_ribbon} already show that $\cF_\cU(\cU_\aA)=\cF_\cW(\cW_\aA)$, so that by Lemma \ref{lemma:FU_FW_fully_faithful}, $\cU_\aA$ and $\cW_\aA$ are tensor equivalent. However, to show that $\cU_\aA$ and $\cW_\aA$ are braid-reversed equivalent, we will need to lift to the center. Let $\cF_{\cU_{\aA}}$ and $\cF_{\cW_{\aA}}$ denote the restrictions of $\cF_\cU$ and $\cF_\cW$ to $\cU_\aA$ and $\cW_\aA$, respectively, and let $\cM=\cF_{\cU_\aA}(\cU_\aA)\subseteq\repA$. By Key Lemma \ref{lemma:keyiso} and Proposition \ref{prop:UA_WA_ribbon}, we also have $\cM=\cF_{\cW_\aA}(\cW_\aA)$. Then Lemma \ref{lemma:FU_FW_fully_faithful} and the discussion in Section \ref{sec:center} show that $\cF_{\cU_\aA}$ and $\cF_{\cW_\aA}$ are central functors that lift to braided tensor functors
\begin{equation*}
 \cG_{\cU_\aA}: \cU_\aA\rightarrow \cZ(\cM),\qquad\cG_{\cW_\aA}: \cW_\aA\rightarrow\cZ(\cM),
\end{equation*}
using the braidings on $\cU_\aA$ and $\cW_\aA$ (see Section \ref{sec:center} for the precise definitions), as well as
\begin{equation*}
 \cG^{\rev}_{\cU_\aA}: \cU^{\rev}_\aA\rightarrow \cZ(\cM),\qquad\cG^{\rev}_{\cW_\aA}: \cW^{\rev}_\aA\rightarrow\cZ(\cM)
\end{equation*}
using the inverse braidings.

With this setup, we can now conclude our main theorem:

\begin{thm}\label{thm:main_bequiv}
In the setting of this section, there is a braid-reversed tensor equivalence $\tau:\cU_\aA\rightarrow\cW_\aA$ such that $\tau(\mU_i)\cong\mW_i^*$ for $i\in I$.
\end{thm}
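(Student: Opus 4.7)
The plan is to exhibit, via the common image $\cM = \cF_{\cU_\aA}(\cU_\aA) = \cF_{\cW_\aA}(\cW_\aA) \subseteq \repA$, that the categories $\cU_\aA^{\rev}$ and $\cW_\aA$ embed as the same braided tensor subcategory of the Drinfeld center $\cZ(\cM)$. Concretely, I will compare the braided lifts $\cG_{\cU_\aA}^{\rev}: \cU_\aA^{\rev} \to \cZ(\cM)$ and $\cG_{\cW_\aA}: \cW_\aA \to \cZ(\cM)$, both of which are fully faithful because their composites with the forgetful functor $\cI: \cZ(\cM) \to \cM$ are $\cF_{\cU_\aA}$ and $\cF_{\cW_\aA}$, fully faithful by Lemma \ref{lemma:FU_FW_fully_faithful}; showing that these two functors have the same essential image will deliver a braided equivalence $\cW_\aA \xrightarrow{\sim} \cU_\aA^{\rev}$, equivalently a braid-reversed equivalence $\tau: \cU_\aA \xrightarrow{\sim} \cW_\aA$.

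The core step is to prove $\cG_{\cW_\aA}(\cW_\aA) \subseteq \cG_{\cU_\aA}(\cU_\aA)'$ in $\cZ(\cM)$, whence Müger's Proposition \ref{prop:mueger} upgrades this to $\cG_{\cW_\aA}(\cW_\aA) \subseteq \cG_{\cU_\aA}^{\rev}(\cU_\aA^{\rev})$. The relevant half-braidings descend from the braided central lift of the induction functor $\cF: \cU \boxtimes \cW \to \repA$ (which exists because the Deligne product is braided): for $\mU \in \cU_\aA$ and $\mW \in \cW_\aA$, the half-braiding $\gamma^{\cF(\mU \boxtimes \vac_\cW)}_{\cF(\vac_\cU \boxtimes \mW)}$ is the image under $\cF$, together with its monoidal coherence, of the Deligne-product braiding
\[
 \cR^{\cU \boxtimes \cW}_{\vac_\cU \boxtimes \mW,\; \mU \boxtimes \vac_\cW} \;=\; \cR^\cU_{\vac_\cU, \mU} \boxtimes \cR^\cW_{\mW, \vac_\cW},
\]
and analogously for the opposite crossing. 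By the coherence of the braiding with the unit, each factor collapses to a composition of unit isomorphisms, so the full double braiding is the identity, which yields the desired centralization. I expect this Deligne-product coherence verification to be the main technical obstacle, since it requires unpacking the precise construction of the half-braidings from Section \ref{sec:center} and matching it against the factorwise braiding on $\cU \boxtimes \cW$ restricted to objects of the form $\mU \boxtimes \vac_\cW$ and $\vac_\cU \boxtimes \mW$.

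Having secured the inclusion, the Key Lemma gives, for each $i \in I$, an isomorphism $\cF_{\cU_\aA}(\mU_i) \cong \cF_{\cW_\aA}(\mW_i^*)$ in $\cM$, so the objects $\cG_{\cW_\aA}(\mW_i^*)$ and $\cG_{\cU_\aA}^{\rev}(\mU_i)$ share the same underlying object in $\cM$. By Proposition \ref{prop:UA_WA_ribbon}, $\cU_\aA$ and $\cW_\aA$ are semisimple ribbon categories with distinct simples $\{\mU_j\}$ and $\{\mW_j\}$ respectively (hence also $\{\mW_j^*\}$ by rigidity), so $\cG_{\cW_\aA}(\mW_i^*)$ is a simple object of $\cG_{\cU_\aA}^{\rev}(\cU_\aA^{\rev})$; by full faithfulness of $\cG_{\cU_\aA}^{\rev}$ and the matching of underlying objects, it must be isomorphic in $\cZ(\cM)$ to $\cG_{\cU_\aA}^{\rev}(\mU_i)$. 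The composition $\sigma := (\cG_{\cU_\aA}^{\rev})^{-1} \circ \cG_{\cW_\aA}: \cW_\aA \to \cU_\aA^{\rev}$ is then a well-defined fully faithful braided tensor functor that is essentially surjective on simples, hence everywhere by semisimplicity; thus $\sigma$ is a braided equivalence, and its quasi-inverse $\tau: \cU_\aA \to \cW_\aA$ is the desired braid-reversed equivalence, satisfying $\tau(\mU_i) \cong \mW_i^*$ by construction.
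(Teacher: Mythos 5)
Your proposal is correct and takes essentially the same route as the paper: both pass to the central lifts of $\cF_{\cU_\aA}$ and $\cF_{\cW_\aA}$ into $\cZ(\cM)$, show that the images centralize using the fact that $\cR_{\mU\fus\vac_\cW,\,\vac_\cU\fus\mW}$ (and its reverse) collapse to unit isomorphisms, apply M\"uger's Proposition~\ref{prop:mueger}, and extract the braid-reversed equivalence.

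The two places where you deviate are cosmetic rather than substantive. First, the paper phrases the centralizer step by invoking the universal property of Deligne products to assemble $\cG_{\cU_\aA\fus\cW_\aA}$ and $\cG^{\rev}_{\cU_\aA\fus\cW_\aA}$ and then transports the trivial monodromy of $\mU\fus\vac_\cW$ against $\vac_\cU\fus\mW$, whereas you argue directly at the level of half-braidings inherited from the central structure of induction; both hinge on the same coherence observation. One caution: the half-braiding $\gamma^{\cF_{\cU_\aA}(\mU)}_{\cF_{\cW_\aA}(\mW)}$ built from $\cF'_{\cU_\aA}$ is literally a conjugate of $\cF_{\cU_\aA}(\cR^{\cU}_{\mU',\mU})$ with $\mU'=\cF'_{\cU_\aA}(\cF_{\cW_\aA}(\mW))$, not of $\cF(\cR_{\vac_\cU\fus\mW,\,\mU\fus\vac_\cW})$ on the nose; these agree because any two braided monoidal lifts of the same fully faithful tensor functor to $\cZ(\cM)$ are forced to have the same half-braidings on the image, but you should say this when filling in the ``coherence verification'' you flagged. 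Second, the paper proves the double inclusion $\cG_{\cU_\aA}(\cU_\aA) = \cG^{\rev}_{\cW_\aA}(\cW_\aA^{\rev})$ symmetrically, while you prove only the one-sided inclusion $\cG_{\cW_\aA}(\cW_\aA)\subseteq\cG^{\rev}_{\cU_\aA}(\cU_\aA^{\rev})$ and then upgrade to essential surjectivity using $\mathrm{End}(\cG_{\cW_\aA}(\mW_i^*))\cong\CC$, semisimplicity of $\cU_\aA$, Key Lemma~\ref{lemma:keyiso}, and full faithfulness of $\cF_{\cU_\aA}$ to pin down $\cG_{\cW_\aA}(\mW_i^*)\cong\cG^{\rev}_{\cU_\aA}(\mU_i)$; this shortcut works, buys a slightly leaner endgame, but uses Key Lemma data that the paper reserves for identifying $\tau(\mU_i)\cong\mW_i^*$ at the very end.
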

\begin{proof}
By the universal property of Deligne products, the four fully faithful braided tensor functors $\cG^{(\rev)}_{\cU_\aA}$, $\cG^{(\rev)}_{\cW_\aA}$ combine into braided tensor functors
\begin{equation*}
 \cG_{\cU_\aA\fus\cW_\aA}: \cU_\aA\fus\cW_\aA\rightarrow\cZ(\cM),\qquad \cG_{\cU_\aA\fus\cW_\aA}^{\rev}: \cU_\aA^{\rev}\fus\cW_\aA^{\rev}\rightarrow\cZ(\cM).
\end{equation*}
Now because $\mU\fus\vac_\cW$ for $\mU\in\obj(\cU_\aA)$ and $\vac_\cU\fus\mW$ for $\mW\in\obj(\cW_\aA)$ centralize each other in $\cU_\aA\fus\cW_\aA$ (this follows from $\cR_{\mU\fus\vac_{\cW}, \vac_{\cU}\fus\mW} =\cR_{\mU,\vac_{\cU}}\fus\cR_{\vac_\cW,\mW}$), so do their images in $\cZ(\cM)$ under $\cG^{(\rev)}_{\cU_\aA\fus\cW_\aA}$. Thus
\begin{equation*}
\begin{split}
\cG_{\cU_{\aA}}(\cU_{\aA}) = \cG_{\cU_{\aA}\boxtimes \cW_{\aA}}(\cU_{\aA})  &\subseteq  \cG_{\cU_{\aA}\boxtimes \cW_{\aA}}(\cW_{\aA})' = \cG_{\cW_{\aA}}(\cW_{\aA})',\\
\cG_{\cW_{\aA}}(\cW_{\aA}) = \cG_{\cU_{\aA}\boxtimes \cW_{\aA}}(\cW_{\aA})  &\subseteq  \cG_{\cU_{\aA}\boxtimes \cW_{\aA}}(\cU_{\aA})' = \cG_{\cU_{\aA}}(\cU_{\aA})',\\
\cG^{\rev}_{\cU_{\aA}}(\cU_{\aA}^{\rev}) = \cG^{\rev}_{\cU_{\aA}\boxtimes \cW_{\aA}}(\cU_{\aA}^{\rev})  &\subseteq  \cG^{\rev}_{\cU_{\aA}\boxtimes \cW_{\aA}}(\cW_{\aA}^{\rev})' = \cG^{\rev}_{\cW_{\aA}}(\cW_{\aA}^{\rev})',\\
\cG^{\rev}_{\cW_{\aA}}(\cW_{\aA}^{\rev}) = \cG^{\rev}_{\cU_{\aA}\boxtimes \cW_{\aA}}(\cW_{\aA}^{\rev})  &\subseteq \cG^{\rev}_{\cU_{\aA}\boxtimes \cW_{\aA}}(\cU_{\aA}^{\rev})' = \cG^{\rev}_{\cU_{\aA}}(\cU_{\aA}^{\rev})'.
\end{split}
\end{equation*}
It follows using Proposition \ref{prop:mueger}, that is, \cite[Proposition 7.3]{Mue}, that
\[
 \cG_{\cU_{\aA}}(\cU_{\aA}) \subseteq \cG_{\cW_{\aA}}(\cW_{\aA})' = \cG^{\text{rev}}_{\cW_{\aA}}(\cW_{\aA}^{\rev}) \qquad \text{and} \qquad \cG^{\text{rev}}_{\cW_{\aA}}(\cW_{\aA}^{\rev}) \subseteq \cG^{\text{rev}}_{\cU_{\aA}}(\cU_{\aA}^{\rev})'=\cG_{\cU_{\aA}}(\cU_{\aA}),
\]
that is, $\cG_{\cU_\aA}(\cU_\aA)=\cG_{\cW_\aA}^{\rev}(\cW_\aA^{\rev})$. Hence we can get a braided tensor equivalence $\tau: \cU_\aA\rightarrow\cW_\aA^{\rev}$ by composing $\cG_{\cU_\aA}$ with an inverse to $\cG_{\cW_\aA}^{\rev}$. This is the same thing as a braid-reversed tensor equivalence $\tau: \cU_\aA\rightarrow\cW_\aA$, and $\tau(\mU_i)\cong\mW_i^*$ follows directly from Key Lemma \ref{lemma:keyiso}.
\end{proof}

\subsection{Proof of Key Lemma \ref{lemma:keyiso} and Proposition \ref{prop:UA_WA_ribbon}}\label{sec:keylemma}

We use the same notation as in the previous subsection. For future use, we shall prove Key Lemma \ref{lemma:keyiso} and Proposition \ref{prop:UA_WA_ribbon} in a slightly more general setting than that of the previous subsection: we allow $\aA$ to be a superalgebra in $\cC_\oplus$. More specifically, suppose that $I=I^0\sqcup I^1$ is a partition of the index set with $0\in I^0$, and set $\aA^0=\bigoplus_{i\in I^0} \mM_{ii}$ and $\aA^1=\bigoplus_{i\in I^1} \mM_{ii}$; assume that $\mu_\aA$ is an even morphism, that is,
\begin{equation*}
 \mu_\aA(\aA^i\otimes\aA^j)\subseteq \aA^{i+j}
\end{equation*}
for $i,j\in\lbrace 0,1\rbrace$, interpreting $i+j$ mod $2\ZZ$. Following \cite{CKL}, we say that $\aA$ is:
\begin{enumerate}
 \item An \textbf{algebra of correct statistics} if $\aA$ is a commutative algebra and the twist satisfies $\theta_\aA=\Id_\aA$ (in this case we may assume $I^1=\emptyset$),
 \item An \textbf{algebra of wrong statistics} if $\aA$ is a commutative algebra and $\theta_\aA=\Id_{\aA^0}\oplus(-\Id_{\aA^1})$,
 \item A \textbf{superalgebra of correct statistics} if $\mu_{\aA}\vert_{\aA^j\otimes\aA^i}\circ\cR_{\aA^i,\aA^j} = (-1)^{ij}\mu_\aA\vert_{\aA^i\otimes\aA^j}$ for $i,j\in\lbrace 0,1\rbrace$ and $\theta_\aA=\Id_{\aA^0}\oplus(-\Id_{\aA^1})$, and
 \item A \textbf{superalgebra of wrong statistics} if $\mu_{\aA}\vert_{\aA^j\otimes\aA^i}\circ\cR_{\aA^i,\aA^j} = (-1)^{ij}\mu_\aA\vert_{\aA^i\otimes\aA^j}$ for $i,j\in\lbrace 0,1\rbrace$ and $\theta_\aA=\Id_\aA$.
\end{enumerate}
Note that both commutative algebras and superalgebras are monodromy-free: $\mu_{\aA}\circ\cR_{\aA,\aA}^2=\mu_\aA$. While in the previous subsection we only needed to assume $\aA$ simple as an object of $\repA$, here we will also assume $\aA$ is simple as a right $\aA$-module.

\begin{lemma}\label{lem:AiBiduals}
Assume $\aA$ is simple both as a left and right $\aA$-module. Then there is an involution $i\mapsto i'$ of the index set $I$ such that $\mU_{i'}\cong \mU_i^\ast$  and $\mW_{i'}\cong \mW_i^\ast$. Moreover, there is a unique isomorphism 
$\varphi_i: \mM_{i'i'}\rightarrow \mM_{ii}^\ast$ for each $i\in I$
 such that the diagram
 \begin{equation*}
\xymatrixcolsep{4pc}
\xymatrix{
	\mM_{i'i'}\otimes \mM_{ii} \ar[d]^{\varphi_i\otimes \Id} \ar[r]^(.62){\mu_\aA} & \aA \ar[d]^{\varepsilon_\aA} \\
	\mM_{ii}^\ast\otimes \mM_{ii} \ar[r]^(.6){e_{\mM_{ii}}} & \one 
}
\end{equation*}
commutes, where $e_{\mM_{ii}}=e_{\mU_i}\boxtimes e_{\mW_i}$ is the coevaluation in $\cC$.
\end{lemma}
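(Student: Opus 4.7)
The plan is to package both assertions into a single isomorphism $\Phi \colon \aA \to \aA^*$ of $\aA$-modules, and then read off the claims componentwise.

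First, using the natural isomorphism $\Gamma_{\aA,\aA}$ from Proposition \ref{Cont_in_Cplus} (applicable since $\aA$ and $\aA^*$ both lie in $(\cU\fus\cW)_\oplus^{\fin}$), I would define
\begin{equation*}
\Phi := \Gamma_{\aA,\aA}(\varepsilon_\aA \circ \mu_\aA) \colon \aA \longrightarrow \aA^*.
\end{equation*}
Precomposing the pairing $\varepsilon_\aA \circ \mu_\aA$ with $\iota_\aA \otimes \Id_\aA$, and using the unit property of $\mu_\aA$ together with $\varepsilon_\aA \circ \iota_\aA = \Id_\vac$, one sees that this pairing, and hence $\Phi$, is nonzero. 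The main technical step is then to verify that $\Phi$ is a morphism of $\aA$-modules. Here the hypothesis $\mu_\aA \circ \cR_{\aA,\aA}^2 = \mu_\aA$ (available in both the commutative and the superalgebra settings) together with the associativity of $\mu_\aA$ enters: a diagram chase in the spirit of those in the proof of the lemma preceding Corollary \ref{cor:AdualinRepA} will show that $\Phi$ intertwines the left $\aA$-action on itself with the left $\aA$-action on $\aA^*$ obtained from the right $\aA^{\mathrm{op}}$-module structure of Corollary \ref{cor:AdualinRepA} via the braiding.

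Next I would invoke simplicity. Since $\aA$ is simple both as a left and as a right $\aA$-module, the right-left symmetric version of Corollary \ref{Adualsimple} (whose proof carries over by interchanging ``left'' and ``right'') yields that $\aA^*$ is simple as a left $\aA$-module. Thus $\Phi$ is a nonzero morphism between two simple left $\aA$-modules, and Schur's lemma forces $\Phi$ to be an isomorphism.

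Finally I would decompose $\Phi$ componentwise as $\{\Phi_{ij}\}$ with $\Phi_{ij}\colon \mM_{ii}\to \mM_{jj}^*$. By the adjunction defining $\Phi$, the component $\Phi_{ij}$ corresponds under $\Gamma_{\mM_{ii},\mM_{jj}}$ to $\varepsilon_\aA\circ\mu_\aA\vert_{\mM_{ii}\otimes\mM_{jj}}$, which can be nonzero only if $\mM_{00}=\vac$ appears as a summand of $(\mU_i\otimes\mU_j)\fus(\mW_i\otimes\mW_j)$; since $\mU_i,\mU_j$ are simple, this forces $\mU_j\cong\mU_i^*$, and because the $\mU_i$ are distinct there is at most one such $j$, which I denote $i'$. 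Because $\Phi$ is an isomorphism, such an $i'$ must exist for every $i$ (else an entire row or column of $\Phi$ would vanish), so $i\mapsto i'$ is a bijection of $I$, and the canonical isomorphism $\mU_i^{**}\cong\mU_i$ in the ribbon category $\cU$ shows that it is an involution. The unique nonzero entry $\Phi_{i',i}\colon\mM_{i'i'}\to\mM_{ii}^*$ is then itself an isomorphism; writing it as $\alpha\fus f$ for some iso $\alpha\colon\mU_{i'}\xrightarrow{\cong}\mU_i^*$ and $f\colon\mW_{i'}\to\mW_i^*$ (possible because $\mU_{i'},\mU_i^*$ are isomorphic simples), the invertibility in the Deligne product forces $f$ to be an isomorphism, yielding $\mW_{i'}\cong\mW_i^*$. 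Setting $\varphi_i:=\Phi_{i',i}$, the defining property of $\Phi$ under $\Gamma_{\aA,\aA}$ restricts to exactly the commutative diagram in the statement, and uniqueness of $\varphi_i$ is immediate from the injectivity of $\Gamma_{\mM_{i'i'},\mM_{ii}}$. The main obstacle I expect is the module-map verification in the second step, in particular keeping the braiding (and, in the super setting, the sign) conventions straight when identifying the left module structure on $\aA^*$.
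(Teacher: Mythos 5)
Your proposal takes essentially the same route as the paper: define $\Phi := \Gamma_{\aA,\aA}(\varepsilon_\aA\circ\mu_\aA)$, show it is a module homomorphism, deduce it is an isomorphism from simplicity of $\aA$ and of $\aA^*$, then read off the involution and the $\varphi_i$ componentwise, noting that the $\mU_i$ are distinct simples so a nonvanishing pairing forces $\mU_j\cong\mU_i^*$. The componentwise analysis, the deduction that $g_i$ (your $f$) is an isomorphism because the Deligne tensor of an iso with a noniso is not an iso, and the uniqueness remark via injectivity of $\Gamma_{\mM_{i'i'},\mM_{ii}}$ are all exactly what the paper does.

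The one genuine difference is the side of the module structure, and it is worth noting that your choice creates the very complication you flag. The paper shows $\varphi$ is a homomorphism of \emph{right} $\aA$-modules, with $\aA$ carrying the right-multiplication structure $\mu_\aA$ and $\aA^*$ carrying the contragredient right-module structure $\mu_{\aA^*}$ from the lemma preceding Corollary \ref{cor:AdualinRepA}; this verification uses only the definition of $\mu_{\aA^*}$, naturality of associativity, and associativity of $\mu_\aA$ --- no braiding and no appeal to $\mu_\aA\circ\cR^2_{\aA,\aA}=\mu_\aA$ whatsoever. It then uses $\aA$ simple in $\repA$ to get $\aA^*$ simple as a right module (Corollary \ref{Adualsimple}), and $\aA$ simple as a right module to conclude $\varphi$ is an iso. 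Your plan instead transfers everything to left modules, forcing you to thread the braiding through both the definition of the left action on $\aA^*$ and through a ``right-left symmetric version'' of Corollary \ref{Adualsimple} and the preceding lemma on $\psi_\aA$; in the super case this brings in signs as well. This can be made to work, but it is precisely where the paper's formulation is designed to sidestep the braiding entirely, so if you carry out your version you should be prepared to reprove several supporting statements on the opposite side rather than cite them directly.
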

\begin{proof}
The morphism $\varepsilon_\aA\circ\mu_\aA: \aA\otimes\aA\rightarrow\vac$ induces $\varphi=\Gamma_{\aA,\aA}(\varepsilon_\aA\circ\mu_\aA)$, the unique morphism in $\HHom_{\cC}(\aA,\aA^*)$ making the diagram
\begin{equation}\label{rigidalgdiag}
\begin{matrix}
  \xymatrixcolsep{4pc}
  \xymatrix{
  \aA\otimes \aA \ar[d]^{\varphi\otimes \Id} \ar[r]^(.6){\mu_\aA} & \aA \ar[d]^{\varepsilon_\aA} \\
  \aA^\ast\otimes \aA \ar[r]^(.61){e_\aA} & \one\\
  }
  \end{matrix}
 \end{equation}
commute. Since $\varepsilon_{\aA}\circ\mu_{\aA}$ is non-zero by the unit property of $\aA$, $\varphi$ is also non-zero. We will show that $\varphi$ is a homomorphism of right $\aA$-modules. Then since $\aA$ is simple in $\repA$, $\aA^*$ is a simple right $\aA$-module by Corollary \ref{Adualsimple} and it will follow that $\varphi$ is an isomorphism of right $\aA$-modules.

To show that $\varphi$ is a right $\aA$-module homomorphism, we use the diagram
\begin{equation*}
 \xymatrixcolsep{4pc}
 \xymatrix{
  & (\aA\otimes\aA)\otimes\aA \ar[ld]_{(\varphi\otimes\Id_\aA)\otimes\Id_\aA\quad} \ar[d]^{\cA^{-1}_{\aA,\aA,\aA}} \ar[r]^(.57){\mu_\aA\otimes\Id_\aA} & \aA\otimes\aA \ar[rdd]^{\mu_\aA} & \\
 (\aA^*\otimes\aA)\otimes\aA \ar[d]^{\mu_{\aA^*}\otimes\Id_\aA} \ar[rd]^{\cA^{-1}_{\aA^*,\aA,\aA}} & \aA\otimes(\aA\otimes\aA) \ar[r]^(.57){\Id_\aA\otimes\mu_\aA} \ar[d]^{\varphi\otimes\Id_{\aA\otimes\aA}} & \aA\otimes\aA \ar[rd]^{\mu_\aA} \ar[d]^{\varphi\otimes\Id_\aA} & \\
 \aA^*\otimes\aA \ar[rrd]_{e_\aA} & \aA^*\otimes(\aA\otimes\aA) \ar[r]^(.57){\Id_{\aA^*}\otimes\mu_\aA} & \aA^*\otimes\aA \ar[d]^{e_\aA} & \aA \ar[ld]^{\varepsilon_\aA} \\
  & & \vac & \\
 }
\end{equation*}
which commutes as a consequence of the definitions of $\mu_{\aA^*}$ and $\varphi$, the naturality of the associativity isomorphisms, and the associativity of $\mu_\aA$. Using the outer compositions of the diagram and the naturality of $\Gamma$, we get
\begin{align*}
 e_{\aA}\circ((\mu_{\aA^*}\circ(\varphi\otimes\Id_\aA))\otimes\Id_\aA) = \Gamma_{\aA,\aA}^{-1}(\varphi)\circ(\mu_\aA\otimes\Id_{\aA}) =\Gamma^{-1}_{\aA\otimes\aA,\aA}(\varphi\circ\mu_\aA).
\end{align*}
Applying $\Gamma_{\aA\otimes\aA,\aA}$ to both sides then yields the desired equality $\mu_{\aA^*}\circ(\varphi\otimes\Id_\aA)=\varphi\circ\mu_\aA$.

Now we have 
$\aA^\ast=\bigoplus\limits_{i\in I} \mM_{ii}^\ast$, 
and the factors $\mM_{ii}^\ast=\mU_i^\ast\boxtimes \mW_i^\ast$ are inequivalent since the  
$\mU_i$ are inequivalent. Thus if we consider $\varphi\vert_{\mM_{ii}}$ for any $i$, we see that the image must be contained in a unique $\mM_{jj}^\ast=\mU_j^\ast\boxtimes \mW_j^\ast$, and $\mU_j^\ast\cong \mU_i$. Thus if we set $j=i'$, we get the involution $i\mapsto i'$ of the index set $I$ such that $\mU_{i'}\cong \mU_i^\ast$. Moreover, $$\varphi_i:=\varphi\vert_{\mM_{i'i'}}=\varphi\vert_{\mU_{i'}\boxtimes \mW_{i'}}: 
\mM_{i'i'}=
\mU_{i'}\boxtimes \mW_{i'}\rightarrow \mU_{i}^\ast\boxtimes \mW_{i}^\ast=\mM_{ii}^\ast$$
must be an isomorphism. Since $\mU_{i'}$ is a simple object in $\cU$, we can identify $\varphi_i=f_i\boxtimes g_i$ where $f_i: \mU_{i'}\rightarrow \mU_i^\ast$ is a fixed (and unique up to scale) isomorphism and $g_i: \mW_{i'}\rightarrow \mW_i^\ast$ is a morphism. But in fact $g_i$ must be an isomorphism as well because $\varphi_i$ is an isomorphism. Thus $\mW_{i'}\cong \mW_i^\ast$ as well.

Now we restrict the commutative diagram \eqref{rigidalgdiag} to
\begin{equation*}
\mM_{i'i'}\otimes \mM_{ii} =  (\mU_{i'}\boxtimes \mW_{i'})\otimes (\mU_i\boxtimes \mW_i)\subseteq \aA\otimes \aA.
\end{equation*}
If we identify
\begin{equation*}
 \aA^\ast\otimes \aA = 
 \bigoplus_{i,j\in I} (\mU_i^\ast\boxtimes \mW_i^\ast)\otimes (\mU_j\boxtimes \mW_j)=
 \bigoplus_{i,j\in I} \mM_{ii}^\ast\otimes\mM_{jj},
\end{equation*}
then under this identification,
\begin{equation*}
 e_\aA=\sum_{i\in I} e_{\mM_{ii}}\circ p_{i,i},
\end{equation*}
where $p_{i,j}: \aA\otimes \aA\rightarrow \mM_{ii}^\ast\otimes \mM_{jj}$ denotes the projection. Consequently,
\begin{align*}
 e_\aA\circ(\varphi\otimes \Id_\aA)\vert_{\mM_{i'i'}\otimes \mM_{ii}} & = \sum_{j\in I} e_{\mM_{jj}}\circ p_{j,j}\circ(\varphi_i\otimes \Id_{\mM_{ii}})= e_{\mM_{ii}}\circ(\varphi_i\otimes 1_{\mM_{ii}}),
\end{align*}
as desired.
\end{proof}

\begin{rema}\label{i_i'_same_parity}
 Note that if $\aA$ is a (super)algebra with $I^1\neq\emptyset$ (and $0\in I^0$), Lemma \ref{lem:AiBiduals} together with the evenness of $\mu_\aA$ imply that $i\in I^1$ if and only if $i'\in I^1$. 
\end{rema}

Let us use the notation $e_{ij}: \mM_{ij}^\ast\otimes \mM_{ij}\rightarrow\vac$ to denote the evaluation $e_{\mU_i}\fus e_{\mW_j}$ in $\cC$, and similarly for coevaluations. As usual in a rigid tensor category, we identify $\vac^*=\vac$ with evaluation and coevaluation given by unit isomorphisms and their inverses (and similarly for $\vac_\cU$ and $\vac_\cW$). We will need the following simple lemma:
\begin{lemma}\label{lem:tensprodevals}
 For $i, j\in I$, the composition
 \begin{align*}
  \mM_{ij}^\ast & \otimes  \mM_{ij}  \xrightarrow{(l_{\mU_i^*}^{-1}\fus r_{\mW_j^*}^{-1})\otimes(r_{\mU_i}^{-1}\fus l_{\mW_j}^{-1})} (\mM_{0j}^\ast\otimes \mM_{i0}^\ast)\otimes(\mM_{i0}\otimes \mM_{0j})\nonumber\\
  & \xrightarrow{\cA_{\mM_{0j}^\ast\otimes \mM_{i0}^\ast, \mM_{i0}, \mM_{0j}}} ((\mM_{0j}^\ast\otimes \mM_{i0}^\ast)\otimes \mM_{i0})\otimes \mM_{0j} \xrightarrow{\cA_{\mM_{0j}^\ast,\mM_{i0}^\ast,\mM_{i0}}^{-1}\otimes \Id_{\mM_{0j}}} (\mM_{0j}^\ast\otimes(\mM_{i0}^\ast\otimes \mM_{i0}))\otimes \mM_{0j}\nonumber\\
  &\xrightarrow{ (\Id_{\mM_{0j}^*}\otimes e_{i0})\otimes \Id_{\mM_{0j}}} (\mM_{0j}^\ast\otimes\vac)\otimes \mM_{0j}\xrightarrow{r_{\mM_{0j}^*}\otimes\Id_{\mM_{0j}}} \mM_{0j}^*\otimes\mM_{0j} \xrightarrow{e_{0j}} \one
 \end{align*}
is equal to $e_{ij}=e_{\mU_i}\boxtimes e_{\mW_j}$.
\end{lemma}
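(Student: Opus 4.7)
The plan is to exploit the componentwise nature of the Deligne product $\cC = \cU \boxtimes \cW$. Every structural morphism appearing in the long composition --- the unit isomorphisms, the associators, and the evaluations $e_{i0} = e_{\mU_i} \boxtimes e_{\vac_\cW}$, $e_{0j} = e_{\vac_\cU} \boxtimes e_{\mW_j}$ --- decomposes as a $\boxtimes$-product of morphisms in $\cU$ and $\cW$. Since $\boxtimes$ is compatible with composition and tensor product, the total composition factors as $F_\cU \boxtimes F_\cW$ with $F_\cU : \mU_i^* \otimes \mU_i \to \vac_\cU$ and $F_\cW : \mW_j^* \otimes \mW_j \to \vac_\cW$ the analogous one-sided compositions. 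Since $e_{ij} = e_{\mU_i} \boxtimes e_{\mW_j}$, it then suffices to check separately that $F_\cU = e_{\mU_i}$ and $F_\cW = e_{\mW_j}$.

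For the $\cU$-side, I would first identify $\vac_\cU^* = \vac_\cU$ canonically so that $e_{\vac_\cU} = l_{\vac_\cU} = r_{\vac_\cU}$; then $F_\cU$ unwinds to the composition
\[
l_{\vac_\cU} \circ (r \otimes \Id) \circ ((\Id \otimes e_{\mU_i}) \otimes \Id) \circ (\cA^{-1} \otimes \Id) \circ \cA \circ (l^{-1}_{\mU_i^*} \otimes r^{-1}_{\mU_i}).
\]
The key step is to use naturality of the associators and of the unit isomorphisms to commute $e_{\mU_i}$ past the coherence morphisms surrounding it, producing a factorization $F_\cU = \phi \circ e_{\mU_i}$ where $\phi : \vac_\cU \to \vac_\cU$ is built entirely from unit isomorphisms, associators, and the identity $l_{\vac_\cU} = r_{\vac_\cU}$. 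By Mac Lane's coherence theorem, any such morphism $\vac_\cU \to \vac_\cU$ constructed formally from the monoidal coherence data equals $\Id_{\vac_\cU}$, so $F_\cU = e_{\mU_i}$. The argument for $F_\cW$ is symmetric, with the roles of $l$ and $r$ interchanged (matching the asymmetric use of $l^{-1}, r^{-1}$ at the initial step of the composition).

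The main obstacle is purely bookkeeping: one must verify that each structural morphism in the original composition really does split as a $\boxtimes$-product (which is immediate from the way the structural isomorphisms on the Deligne product are defined) and that the morphism $\phi$ above can legitimately be treated as a formal word in the monoidal coherence data, so that Mac Lane's theorem applies. Once these two points are in place, the result is immediate and no new algebraic input is needed.
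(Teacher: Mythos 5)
Your proposal is correct and follows essentially the same route as the paper: both first factor the composition in $\cU \boxtimes \cW$ as a Deligne product $F_\cU \boxtimes F_\cW$ of one-sided compositions, and then reduce each factor to the corresponding evaluation using monoidal coherence, with the paper doing the unit-isomorphism and triangle-axiom bookkeeping by hand where you invoke Mac Lane's theorem wholesale. One small imprecision worth noting: the coherence morphism preceding $e_{\mU_i}$ begins with $l_{\mU_i^*}^{-1}\otimes r_{\mU_i}^{-1}$ and so treats $\mU_i^*$ and $\mU_i$ as separate slots, whereas $e_{\mU_i}$ has source $\mU_i^*\otimes\mU_i$; naturality of $\cA$ therefore does not directly slide $e_{\mU_i}$ across, and one should first use coherence to replace that segment by the (unique) coherence morphism $r^{-1}_{\vac_\cU\otimes(\mU_i^*\otimes\mU_i)}\circ l^{-1}_{\mU_i^*\otimes\mU_i}$ treating $\mU_i^*\otimes\mU_i$ as a single block, after which the naturality step works as you claim.
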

\begin{proof}
This composition in $\cU\fus\cW$ is the Deligne product of a morphism in $\cU$ with a morphism in $\cW$. On the $\cU$ side we get
\begin{align*}
 \mU_{i}^*  \otimes\mU_i \xrightarrow{l_{\mU_i^*}^{-1}\otimes r_{\mU_i}^{-1}} & (\vac_\cU\otimes\mU_i^*)\otimes(\mU_i\otimes\vac_\cU) \xrightarrow{\cA_{\vac_\cU\otimes\mU_i^*,\mU_i,\vac_\cU}} ((\vac_\cU\otimes\mU_i^*)\otimes\mU_i)\otimes\vac_\cU\nonumber\\
 & \xrightarrow{\cA^{-1}_{\vac_\cU,\mU_i^*,\mU_i}\otimes\Id_{\vac_\cU}} (\vac_\cU\otimes(\mU_i^*\otimes\mU_i))\otimes\vac_\cU \xrightarrow{(\Id_{\vac_\cU}\otimes e_{\mU_i})\otimes\Id_{\vac_\cU}}(\vac_\cU\otimes\vac_\cU)\otimes\vac_\cU\nonumber\\
 & \xrightarrow{r_{\vac_\cU}\otimes\Id_{\vac_\cU}} \vac_\cU\otimes\vac_\cU\xrightarrow{l_{\vac_\cU}=r_{\vac_\cU}} \vac_\cU.
\end{align*}
By properties of unit isomorphisms, the first two arrows here equal $r^{-1}_{(\vac_\cU\otimes\mU_i^*)\otimes\mU_i}\circ(l_{\mU_i^*}^{-1}\otimes\Id_{\mU_i})$, and then we can use naturality to move this inverse right unit isomorphism to the end of the composition, where it cancels with $r_{\vac_\cU}$. Thus we get
\begin{align*}
 \mU_i^*\otimes\mU_i\xrightarrow{l_{\mU_i^*}^{-1}} (\vac_\cU\otimes\mU_i^*)\otimes\mU_i \xrightarrow{\cA_{\vac_\cU,\mU_i^*,\mU_i}^{-1}} \vac_\cU\otimes(\mU_i^*\otimes\mU_i)\xrightarrow{\Id_{\vac_\cU}\otimes e_{\mU_i}} \vac_\cU\otimes\vac_\cU\xrightarrow{r_{\vac_\cU}=l_{\vac_\cU}} \vac_\cU.
\end{align*}
Now the first two arrows are $l_{\mU_i^*\otimes\mU_i}^{-1}$, and naturality of the left unit isomorphisms implies the composition reduces to $e_{\mU_i}$, as required.

The $\cW$ side of the composition is similar:
\begin{align}\label{Wsidecomp}
 \mW_j^* \otimes & \mW_j\xrightarrow{r_{\mW_j^*}^{-1}\otimes l_{\mW_j}^{-1}}  (\mW_j^*\otimes\vac_\cW)\otimes(\vac_\cW\otimes\mW_j) \xrightarrow{\cA_{\mW_j^*\otimes\vac_\cW,\vac_\cW,\mW_j}} ((\mW_j^*\otimes\vac_\cW)\otimes\vac_\cW)\otimes\mW_j\nonumber\\
 & \xrightarrow{\cA^{-1}_{\mW_j^*,\vac_\cW,\vac_\cW}\otimes\Id_{\mW_j}}  (\mW_j^*\otimes(\vac_\cW\otimes\vac_\cW))\otimes\mW_j \xrightarrow{\Id_{\mW_j^*}\otimes(l_{\vac_\cW}=r_{\vac_\cW}))\otimes\Id_{\mW_j}} (\mW_j^*\otimes\vac_\cW)\otimes\mW_j\nonumber\\
 & \xrightarrow{r_{\mW_j^*}\otimes\Id_{\mW_j}} \mW_j^*\otimes\mW_j\xrightarrow{e_{\mW_j}} \vac_\cW.
\end{align}
We can use the triangle axiom to rewrite the first two arrows as $(r_{\mW_j^*\otimes\vac_\cW}^{-1}\otimes\Id_{\mW_j})\circ(r_{\mW_j^*}^{-1}\otimes\Id_{\mW_j})$, and then the automorphism of $\mW_j^*$ resulting from the first five arrows is
\begin{align*}
 \mW_j^*\xrightarrow{r_{\mW_j^*}^{-1}} \mW_j^* & \otimes\vac_\cW\xrightarrow{r_{\mW_j^*\otimes\vac_\cW}^{-1}} (\mW_j^*\otimes\vac_\cW)\otimes\vac_\cW\nonumber\\
 &\xrightarrow{\cA^{-1}_{\mW_j^*,\vac_\cW,\vac_\cW}} \mW_j^*\otimes(\vac_\cW\otimes\vac_\cW)\xrightarrow{\Id_{\mW_j^*}\otimes r_{\vac_\cW}} \mW_j^*\otimes\vac_\cW\xrightarrow{r_{\mW_j^*}} \mW_j^*.
\end{align*}
But the middle three arrows are $\Id_{\mW_j^*\otimes\vac_\cW}$ by properties of the right unit isomorphisms, so the whole composition is $\Id_{\mW_j^*}$. Thus \eqref{Wsidecomp} is simply $e_{\mW_j}$.
\end{proof}

Now we can begin the proof of Key Lemma \ref{lemma:keyiso}:
\begin{proof}
 Frobenius reciprocity and properties of duals show that we have natural isomorphisms
 \begin{align*}
  \Hom{\repA}{\cF(\mM_{i0}),\cF(\mM_{0i}^\ast)}
  &\cong \Hom{\cC}{\mM_{i0}, \aA\otimes \mM_{0i}^\ast}\nonumber\\
  &\cong \Hom{\cC}{\mM_{i0}\otimes \mM_{0i}, \aA}\nonumber\\
  &\cong \Hom{\cC}{\mU_i\fus \mW_i, \aA},
 \end{align*}
and similarly
\begin{equation*}
 \Hom{\repA}{\cF(\mM_{0i}^\ast),\cF(\mM_{i0})} \cong \Hom{\cC}{\mU_{i'}\fus \mW_{i'}, \aA}.
\end{equation*}
Specifically, the inclusion $\mU_i\fus \mW_i\hookrightarrow \aA$ induces a $\repA$-homomorphism 
\begin{align*}\Phi: \cF(\mM_{i0})\rightarrow\cF(\mM_{0i}^\ast)\end{align*}
given by the composition
\begin{align*}
 \aA \otimes \mM_{i0} & \xrightarrow{r_{\aA\otimes\mM_{i0}}^{-1}} (\aA\otimes\mM_{i0})\otimes\vac\xrightarrow{\Id_\aA \otimes i_{\mM_{0i}}} (\aA\otimes \mM_{i0})\otimes (\mM_{0i}\otimes \mM_{0i}^\ast)\nonumber\\
 &\xrightarrow{assoc.} (\aA\otimes (\mM_{i0}\otimes \mM_{0i}))\otimes \mM_{0i}^\ast\xrightarrow{(\Id_\aA\otimes(r_{\mU_i}\fus l_{\mW_i}))\otimes\Id_{\mM_{0i}^*}} (\aA\otimes \mM_{ii})\otimes \mM_{0i}^\ast\nonumber\\
 &\xrightarrow{(\mu_\aA\vert_{\aA\otimes \mM_{ii}}) \otimes \Id_{\mM_{0i}^*}} \aA\otimes \mM_{0i}^\ast,
 \end{align*}
where \textit{assoc.} indicates the obvious composition of associativity isomorphisms in $\cC$.
Similarly, the inclusion $\mU_{i'}\otimes \mW_{i'}\hookrightarrow \aA$ induces a $\repA$-morphism 
\begin{align*}
\Psi: \cF(\mM_{0i}^\ast)\rightarrow\cF(\mM_{i0})
\end{align*}
given by the composition
\begin{align*}
 \aA\otimes  \mM_{0i}^\ast& \xrightarrow{r_{\aA\otimes\mM_{0i}^*}^{-1}} (\aA\otimes\mM_{0i}^*)\otimes\vac\xrightarrow{\Id_\aA\otimes\widetilde{i}_{\mM_{i0}^\ast}}  (\aA\otimes \mM_{0i}^\ast)\otimes (\mM_{i0}^\ast\otimes \mM_{i0})\nonumber\\
 &\xrightarrow{assoc.} (\aA\otimes (\mM_{0i}^\ast\otimes \mM_{i0}^\ast))\otimes \mM_{i0}\xrightarrow{(\Id_\aA\otimes\varphi_i^{-1}\circ(l_{\mU_i^*}\fus r_{\mW_i^*}))\otimes\Id_{\mM_{i0}}}  (\aA\otimes \mM_{i'i'})\otimes \mM_{i0}\nonumber\\
 &\xrightarrow{\mu_\aA\vert_{\aA\otimes\mM_{i' i'}}\otimes \Id_{\mM_{i0}}} \aA\otimes \mM_{i0},
\end{align*}
where we have identified $\mM_{i0}$ as the dual of $\mM_{i0}^\ast$ using the coevaluation
\begin{equation*}
 \widetilde{i}_{\mM_{i0}^\ast} =\cR_{\mM_{i0},\mM_{i0}^\ast}\circ(\theta_{\mM_{i0}}\otimes \Id_{\mM_{i0}^\ast})\circ i_{\mM_{i0}};
\end{equation*}
the corresponding evaluation is
\begin{equation*}
 \widetilde{e}_{\mM_{i0}^\ast} = e_{\mM_{i0}}\circ\cR_{\mM_{i0}^\ast,\mM_{i0}}^{-1}\circ(\theta_{\mM_{i0}}^{-1}\otimes \Id_{\mM_{i0}^\ast}).
\end{equation*}
We represent $\Phi$ and $\Psi$ diagrammatically as follows:
\begin{align*}
\Phi = 
\begin{matrix}
\begin{tikzpicture}[scale = 1, line width = 0.75pt, baseline = {(current bounding box.center)}]
\node (mu) at (0.8,4) [draw,minimum width=40pt,minimum height=10pt,thick, fill=white] {$\mu_\aA$};
\draw (0,0) .. controls (0,3) .. (mu.210);
\draw[dashed] (1,0.5) .. controls (2,0.5) .. (2.5,1.5);
\draw (1,0) .. controls (1,2.5) .. (1.5,3) .. controls (2,2.5) .. (2,2) .. controls (2,1.3) and (3,1.3) .. (3,2) -- (3,5);
\draw (1.5,3) -- (mu.330);
\draw (mu.90) -- ($(mu.90)+(0,0.7)$);
\node at (0,-0.3) {$\aA$};
\node at (1,-0.3) {$\mM_{i0}$};
\node at (1.7,1.6) {$\mM_{0i}$};
\node at (3.3,1.6) {$\mM_{0i}^\ast$};
\end{tikzpicture}
\end{matrix},
\quad\quad
\Psi = 
\begin{matrix}
\begin{tikzpicture}[scale = 1, baseline = {(current bounding box.center)}, line width=0.75pt]
\node (mu) at (0.8,4) [draw,minimum width=40pt,minimum height=10pt,thick, fill=white] {$\mu_\aA$};
\node (th) at (2,1.75) [circle, draw=black, fill=white] {$\scriptstyle{\theta}$};
\draw (0,0) .. controls (0,3) .. (mu.210);
\draw(1,0) .. controls (1,2.75) .. (1.5,3.25) 
 .. controls (2,3)  and (3,2.25) .. (3,1.75) -- (3,1.5) .. controls (3,0.95) and (2,0.95) ..  (th.270);
\draw[white, double=black, line width = 3pt ]  (th.90) .. controls (2,2.75) and (3,2.25) .. (3,3.75) -- (3,5);
\draw[dashed] (1,0.5) .. controls (2,0.5) .. (2.6,1.1);
\draw (1.5,3.25) -- (mu.330);
\draw (mu.90) -- ($(mu.90)+(0,0.7)$);
\node at (0,-0.3) {$\aA$};
\node at (1,-0.3) {$\mM_{0i}^\ast$};
\node at (1.7,1.15) {$\mM_{i0}$};
\node at (3.3,1.15) {$\mM_{i0}^\ast$};
\end{tikzpicture}
\end{matrix}
\end{align*}

We will show that 
\begin{equation*}
\Phi\circ\Psi =(\dim_{\cU} \mU_i)\cdot \Id_{\cF(\mM_{0i}^\ast)} 
\end{equation*}
and
\begin{equation*}
 \Psi\circ\Phi =\pm(\dim_{\cW} \mW_i) \cdot \Id_{\cF(\mM_{i0})},
\end{equation*}
where the minus sign in the second equation occurs precisely when $\aA$ is a (super)algebra of wrong statistics and $i\in I^1$. This will mean that 
\begin{equation*}
 (\dim_\cU \mU_i)\cdot\Phi=\Phi\circ\Psi\circ\Phi =\pm(\dim_\cW \mW_i)\cdot\Phi,
\end{equation*}
and since $\Phi\neq 0$ (it is induced by the non-zero inclusion $\mU_i\fus\mW_i\hookrightarrow\aA$), we will get
\begin{equation*}
 \dim_\cU \mU_i=\pm(\dim_\cW \mW_i).
\end{equation*}
Since $\dim_\cU \mU_i\neq 0$ by assumption, it will then follow that $\Phi$ is an isomorphism in $\repA$ with inverse $(\dim_\cU \mU_i)^{-1}\cdot\Psi$.

In order to calculate $\Phi\circ\Psi$, we first observe that by Frobenius reciprocity and properties of duals, we have natural isomorphisms
\begin{align*}
 \Hom{\repA}{\cF(\mM_{0i}^\ast),\cF(\mM_{0i}^\ast)} & \cong \Hom{\cC}{\mM_{0i}^\ast, \aA\otimes \mM_{0i}^\ast}\cong \Hom{\cC}{\mM_{0i}^\ast\otimes \mM_{0i}, \aA}.
\end{align*}
Under these identifications, a morphism $F\in\Hom{\repA}{\cF(\mM_{0i}^\ast),\cF(\mM_{0i}^\ast)}$ corresponds to the composition
\begin{align}\label{eqn:PhiPsi}
 \mM_{0i}^\ast\otimes \mM_{0i} & \xrightarrow{l_{\mM_{0i}^*}^{-1}\otimes\Id_{\mM_{0i}}} (\vac\otimes\mM_{0i}^*)\otimes\mM_{0i} \xrightarrow{(\iota_\aA\otimes \Id_{\mM_{0i}^*})\otimes \Id_{\mM_{0i}}} (\aA\otimes \mM_{0i}^\ast)\otimes \mM_{0i}\nonumber\\
 &\xrightarrow{F\otimes \Id_{\mM_{0i}}} (\aA\otimes \mM_{0i}^\ast)\otimes \mM_{0i} \xrightarrow{\cA_{\aA,\mM_{0i}^\ast,\mM_{0i}}^{-1}} \aA\otimes (\mM_{0i}^\ast\otimes_{\cC} \mM_{0i}) \xrightarrow{\Id_\aA\otimes e_{\mM_{0i}}} \aA\otimes\vac\xrightarrow{r_\aA} \aA.
\end{align}
Thus to show $\Phi\circ\Psi=(\dim_{\cU} \mU_i) \cdot \Id_{\cF(\mM_{0i}^\ast)}$, it follows from properties of the unit isomorphisms that it suffices to show \eqref{eqn:PhiPsi} with $F=\Phi\circ\Psi$ reduces to $(\dim_{\cU} \mU_i)\cdot(\iota_\aA\circ e_{\mM_{0i}})$.

Similarly, we have a natural isomorphism
\begin{equation*}
 \Hom{\repA}{\cF(\mM_{i0}),\cF(\mM_{i0})} \cong \Hom{\cC}{\mM_{i0}\otimes \mM_{i0}^\ast, \aA}
\end{equation*}
under which $G\in\Hom{\repA}{\cF(\mM_{i0}),\cF(\mM_{i0})}$ corresponds to the composition
\begin{align}\label{eqn:PsiPhi}
\mM_{i0}\otimes\mM_{i0}^* & \xrightarrow{l^{-1}_{\mM_{i0}}\otimes\Id_{\mM_{i0}^*}} (\vac\otimes\mM_{i0})\otimes \mM_{i0}^\ast  \xrightarrow{(\iota_\aA\otimes \Id_{\mM_{i0}})\otimes \Id_{\mM_{i0}^*}} (\aA\otimes \mM_{i0})\otimes \mM_{i0}^\ast\nonumber\\
&\xrightarrow{G\otimes \Id_{\mM_{i0}^*}} (\aA\otimes \mM_{i0})\otimes \mM_{i0}^\ast\xrightarrow{\cA_{\aA,\mM_{i0},\mM_{i0}^\ast}^{-1}} \aA\otimes (\mM_{i0}\otimes \mM_{i0}^\ast) \xrightarrow{\Id_\aA\otimes \widetilde{e}_{\mM_{i0}^\ast}} \aA\otimes\vac\xrightarrow{r_\aA} \aA.
\end{align}
Again by properties of the unit isomorphisms, we need to show that the above composition for $G=\Psi\circ\Phi$ reduces to $(\pm\dim_{\cW} \mW_i)\cdot(\iota_\aA\circ\widetilde{e}_{\mM_{i0}^\ast})$.

We now calculate \eqref{eqn:PhiPsi} with $F$ replaced by $\Phi\circ\Psi$, manipulating according to the following template:
\begin{align*}
&\begin{tikzpicture}
[scale = 1, baseline = {(current bounding box.center)}, line width=0.75pt]
\node (mu) at (0.5,4) [draw,minimum width=30pt,minimum height=10pt,thick, fill=white] {$\mu_\aA$};
\node (mu2) at (1.5,6) [draw,minimum width=30pt,minimum height=10pt,thick, fill=white] {$\mu_\aA$};
\node (th1) at (1.25,1.7) [circle, draw=black, fill=white] {$\scriptstyle{\theta^{-1}}$};
\draw[white, double=black, line width = 3pt] (0.5,0) .. controls (0.5,2.75) .. (1,3.25) .. controls (1.5,3.0)
.. (2.25,2) -- (2.25,1.25) .. controls (2.25,0.75) and (1.25,0.75) .. (1.25,1.25);
\draw[white, double=black, line width = 3pt] (th1.90) .. controls (1.25,2.25) and (2,2.75).. (2,3) .. controls (2,4.5)
.. (2.5,5) .. controls (2.75,4.8) .. (3,4.5) .. controls (3,4) and (3.75,4) .. (3.75,4.5)
-- (3.75,6).. controls (3.75,6.5) and (4.25,6.5) .. (4.25,6)-- (4.25,0);
\draw (mu2.90) -- ($(mu2.90)+(0,1.5)$);
\draw (mu.90) .. controls (0.5,4.5) and (1,5).. (mu2.210);
\draw (2.5,5) --  (mu2.330);
\draw (1,3.25) -- (mu.330);
\draw[dashed] (0.5,1) .. controls (0.25,1) .. (0,2);
\draw (0,2) -- (mu.210);
\draw[dashed] (0.5,0.25) .. controls (1.5,0.5) .. (1.75,0.75);
\draw[dashed] (2,3.5) .. controls (2.5,3.5) .. (3.4,4);
\draw[dashed] (4,6.5) .. controls (4,7) .. ($(mu2.90) + (0,1)$);
\node at (0.5,-0.3) {$\mM_{0i}^\ast$};
\node at (2.6,1.5) {$\mM_{i0}^\ast$};
\node at (1.7,4) {$\mM_{i0}$};
\node at (2.75,4.25) {$\scriptstyle{\mM_{0i}}$};
\node at (3.45,5) {$\scriptstyle{\mM_{0i}^\ast}$};
\node at (4.25,-0.3) {${\mM_{0i}}$};
\node at (-0.3,2.25) {$\aA$};
\end{tikzpicture}
=\begin{tikzpicture}
[scale = 1, baseline = {(current bounding box.center)}, line width=0.75pt]
\node (mu) at (1.25,4.5) [draw,minimum width=30pt,minimum height=10pt,thick, fill=white] {$\mu_\aA$};
\node (th1) at (0.75,1.7) [circle, draw=black, fill=white] {$\scriptstyle{\theta^{-1}}$};
\draw[white, double=black, line width = 3pt] (00,0) .. controls (0,2.75) .. (0.5,3.25) .. controls (1,3.0)
.. (1.75,2.25) -- (1.75,1.25) .. controls (1.75,0.25) and (0.75,0.25) .. (0.75,1.25);
\draw[white, double=black, line width = 3pt] (th1.90) .. controls (0.75,2.5)  .. (2,3.25) .. controls (2.5,2.5) .. (2.5,0);
\draw (0.5,3.25) -- (mu.210);
\draw (2,3.25) -- (mu.330);
\draw (mu.90) -- ($(mu.90)+(0,1)$);
\draw[dashed] (0,0.25) .. controls (1,0.25) .. (1.25,0.5);
\node at (0,-0.3) {$\mM_{0i}^\ast$};
\node at (2.5,-0.3) {$\mM_{0i}$};
\node at (0.5,0.9) {$\scriptstyle{\mM_{i0}}$};
\node at (2,0.9) {$\scriptstyle{\mM_{i0}^\ast}$};
\end{tikzpicture}
=\begin{tikzpicture}
[scale = 1, baseline = {(current bounding box.center)}, line width=0.75pt]
\node (th1) at (0.75,1.7) [circle, draw=black, fill=white] {$\scriptstyle{\theta^{-1}}$};
\draw[white, double=black, line width = 3pt] (00,0) .. controls (0,2.75) .. (0.5,3.25) .. controls (1,3.0)
.. (1.75,2.25) -- (1.75,1.25) .. controls (1.75,0.25) and (0.75,0.25) .. (0.75,1.25);
\draw[white, double=black, line width = 3pt] (th1.90) .. controls (0.75,2.5)  .. (2,3.25) .. controls (2.5,2.5) .. (2.5,0);
\draw (0.5,3.25) -- (0.5,4) .. controls (0.5,4.5) and (2,4.5) .. (2,4) -- (2,3.25);
\draw[dashed] (0,0.25) .. controls (1,0.25) .. (1.25,0.5);
\draw[dashed] (1.25,4.4) -- (1.25,5.25);
\draw (1.25,5.25) -- (1.25,6);
\node at (0,-0.3) {$\mM_{0i}^\ast$};
\node at (2.5,-0.3) {$\mM_{0i}$};
\node at (0.5,0.9) {$\scriptstyle{\mM_{i0}}$};
\node at (2,0.9) {$\scriptstyle{\mM_{i0}^\ast}$};
\end{tikzpicture}
\end{align*}
\begin{align*}
&= 
\begin{tikzpicture}
[scale = 1, baseline = {(current bounding box.center)}, line width=0.75pt]
\node (th1) at (1,1.7) [circle, draw=black, fill=white] {$\scriptstyle{\theta^{-1}}$};
\draw[white, double=black, line width = 3pt] (2.5,0) -- (2.5,3.25) .. controls (2.5,4) and (3.25,4) .. (3.25,3.25) -- (3.25,0);
\draw[white, double=black, line width=3pt] (th1.90) .. controls (1,2.75) and (2,3) .. (2,3.25);
\draw[white, double=black, line width = 3pt] (th1.270) .. controls (1,0.5) and (2,0.5) .. (2,1.25) -- (2,2.25) .. controls (2,2.75) and (1,2.75) .. (1,3.25) .. controls (1,3.75) and (2,3.75).. (2,3.25);
\draw[dashed] (2.5,0.25) .. controls (1.5,0.25) .. (1.5,0.7);
\draw[dashed] (1.5,3.6) .. controls (1.5,4) and (2.25,4) .. (2.25,4.5) .. controls (2.25,4) and
(3,4) .. (2.9,3.8);
\draw[dashed] (2.25,4.6) -- (2.25,5.25);
\draw (2.25,5.25) -- (2.25,6);
\node at (2.5,-0.3) {$\mM_{0i}^\ast$};
\node at (3.25,-0.3) {$\mM_{0i}$};
\node at (0.9,0.6) {$\scriptstyle{\mM_{i0}}$};
\node at (2.1,0.6) {$\scriptstyle{\mM_{i0}^\ast}$};
\end{tikzpicture}
= (\dim_{\cU}\mU_{i})\,
\begin{tikzpicture}
[scale = 1, baseline = {(current bounding box.center)}, line width=0.75pt]
\draw[white, double=black, line width = 3pt] (2.5,0) -- (2.5,3.25) .. controls (2.5,4) and (3.25,4) .. (3.25,3.25) -- (3.25,0);
\draw[dashed] (2.87,3.9)--(2.87,5);
\draw (2.87,5) -- (2.87,6);
\node at (2.5,-0.3) {$\mM_{0i}^\ast$};
\node at (3.25,-0.3) {$\mM_{0i}$};
\end{tikzpicture}.
\end{align*}
We start with the following map in $\Hom{\cC}{\mM_{0i}^*\otimes \mM_{0i}, \aA}$, omitting subscripts from identity morphisms to save space:
\begin{align*}
 & \hspace{-.3em} \mM_{0i}^\ast  \otimes \mM_{0i} 
\xrightarrow{l_{\mM_{0i}^*}^{-1}\otimes\Id} (\vac\otimes\mM_{0i}^*)\otimes\mM_{0i} \xrightarrow{(\iota_\aA\otimes \Id)\otimes \Id} (\aA\otimes \mM_{0i}^\ast)\otimes \mM_{0i}\nonumber\\ &\xrightarrow{r_{\aA\otimes\mM_{0i}^*}^{-1}\otimes\Id} ((\aA\otimes\mM_{0i}^*)\otimes\vac)\otimes\mM_{0i}\xrightarrow{(\Id\otimes\widetilde{i}_{\mM_{i0}^\ast})\otimes \Id} ((\aA\otimes \mM_{0i}^\ast)\otimes(\mM_{i0}^\ast\otimes \mM_{i0}))\otimes \mM_{0i}\nonumber\\
&\xrightarrow{assoc.} ((\aA\otimes(\mM_{0i}^\ast\otimes \mM_{i0}^\ast))\otimes \mM_{i0})\otimes \mM_{0i}\xrightarrow{((\Id\otimes\varphi_i^{-1}\circ(l_{\mU_i^*}\fus r_{\mW_i^*}))\otimes \Id)\otimes \Id} ((\aA\otimes\mM_{i'i'})\otimes \mM_{i0})\otimes \mM_{0i}\nonumber\\
& \xrightarrow{(\mu_\aA\otimes \Id)\otimes \Id} (\aA\otimes \mM_{i0})\otimes \mM_{0i}
\xrightarrow{r_{\aA\otimes\mM_{i0}}^{-1}\otimes\Id} ((\aA\otimes\mM_{i0})\otimes\vac)\otimes\mM_{0i}\nonumber\\ &\xrightarrow{(\Id\otimes i_{\mM_{0i}})\otimes \Id } ((\aA\otimes \mM_{i0})\otimes(\mM_{0i}\otimes \mM_{0i}^\ast))\otimes \mM_{0i}\xrightarrow{assoc.} ((\aA\otimes(\mM_{i0}\otimes \mM_{0i}))\otimes \mM_{0i}^\ast)\otimes \mM_{0i} \nonumber\\
&\xrightarrow{((\Id\otimes(r_{\mU_i}\fus l_{\mW_i}))\otimes\Id)\otimes\Id} ((\aA\otimes\mM_{ii})\otimes\mM_{0i}^*)\otimes\mM_{0i}\xrightarrow{(\mu_\aA\otimes\Id)\otimes\Id} (\aA\otimes\mM_{0i}^*)\otimes\mM_{0i}\nonumber\\
& \xrightarrow{\cA^{-1}_{\aA, \mM_{0i}^\ast, \mM_{0i}}} \aA\otimes(\mM_{0i}^\ast\otimes \mM_{0i})\xrightarrow{\Id\otimes e_{\mM_{0i}}} \aA\otimes\vac\xrightarrow{r_\aA} \aA.
\end{align*}
The first two simplifications to this composition come from the 
unit property of $\mu_\aA$ and the rigidity of $\mM_{0i}$. To achieve these simplifications, we first apply naturalities move $(\iota_\aA\otimes\Id)\circ l_{\mM_{0i}^*}^{-1}$ over several arrows in the composition before applying the unit property. We also apply the triangle axiom to $r_{\aA\otimes\mM_{i0}}^{-1}$ and then naturality of associativity to collect all associativity isomorphisms from the latter half of the composition:
\begin{align*}
 & \mM_{0i}^*  \otimes\mM_{0i} \xrightarrow{r_{\mM_{0i}^*}^{-1}\otimes\Id} (\mM_{0i}^*\otimes\vac)\otimes\mM_{0i} \xrightarrow{(\Id\otimes\widetilde{i}_{\mM_{i0}^*})\otimes\Id} (\mM_{0i}^*\otimes(\mM_{i0}^*\otimes\mM_{i0}))\otimes\mM_{0i}\nonumber\\
 &\xrightarrow{\cA_{\mM_{0i}^*,\mM_{i0}^*,\mM_{i0}}\otimes\Id} ((\mM_{0i}^*\otimes\mM_{i0}^*)\otimes\mM_{i0})\otimes\mM_{0i} \xrightarrow{((l_{\mM_{0i}^*}^{-1}\otimes\Id)\otimes\Id)\otimes\Id} (((\vac\otimes\mM_{0i}^*)\otimes\mM_{i0}^*)\otimes\mM_{i0})\otimes\mM_{0i}\nonumber\\
 &\xrightarrow{(\cA_{\vac,\mM_{0i}^*,\mM_{i0}^*}\otimes\Id)\otimes\Id} ((\vac\otimes(\mM_{0i}^*\otimes\mM_{i0}^*))\otimes\mM_{i0})\otimes\mM_{0i}\nonumber\\ &\xrightarrow{((\Id\otimes\varphi_i^{-1}\circ(l_{\mU_i^*}\fus r_{\mW_i^*}))\otimes \Id)\otimes \Id} ((\vac\otimes\mM_{i'i'})\otimes \mM_{i0})\otimes \mM_{0i}\xrightarrow{(l_{\mM_{i'i'}}\otimes\Id)\otimes\Id} (\mM_{i'i'}\otimes\mM_{i0})\otimes\mM_{0i}\nonumber\\
 &\xrightarrow{\Id\otimes l_{\mM_{0i}}^{-1}} (\mM_{i'i'}\otimes\mM_{i0})\otimes(\vac\otimes\mM_{0i}) \xrightarrow{\Id\otimes(i_{\mM_{0i}}\otimes\Id)} (\mM_{i'i'}\otimes\mM_{i0})\otimes((\mM_{0i}\otimes\mM_{0i}^*)\otimes\mM_{0i})\nonumber\\
& \xrightarrow{assoc.} (\mM_{i'i'}\otimes(\mM_{i0}\otimes\mM_{0i}))\otimes(\mM_{0i}^*\otimes\mM_{0i})\xrightarrow{\mu_\aA\circ(\Id\otimes(r_{\mU_i}\fus l_{\mW_i}))\otimes e_{\mM_{0i}}} \aA\otimes\vac\xrightarrow{r_\aA} \aA.
\end{align*}
Now, the fourth and fifth arrows here are simply $l_{\mM_{0i}^*\otimes\mM_{i0}^*}^{-1}$, and then we can use naturality to cancel $l_{\mM_{i'i'}}$ with its inverse. Meanwhile, the pentagon axiom allows us to choose the first isomorphism in the arrow marked $assoc.$ to be $(\Id\otimes\Id)\otimes\cA^{-1}_{\mM_{0i},\mM_{0i}^*,\mM_{0i}}$, with the remaining associativity isomorphisms respecting the factor of $\mM_{0i}^*\otimes\mM_{0i}$ so that we can applying naturality of associativity to $e_{\mM_{0i}}$:
\begin{align*}
 & \mM_{0i}^*  \otimes\mM_{0i} \xrightarrow{r_{\mM_{0i}^*}^{-1}\otimes\Id} (\mM_{0i}^*\otimes\vac)\otimes\mM_{0i} \xrightarrow{(\Id\otimes\widetilde{i}_{\mM_{i0}^*})\otimes\Id} (\mM_{0i}^*\otimes(\mM_{i0}^*\otimes\mM_{i0}))\otimes\mM_{0i}\nonumber\\
 &\xrightarrow{\cA_{\mM_{0i}^*,\mM_{i0}^*,\mM_{i0}}\otimes\Id} ((\mM_{0i}^*\otimes\mM_{i0}^*)\otimes\mM_{i0})\otimes\mM_{0i} \xrightarrow{(\varphi_i^{-1}\circ(l_{\mU_i^*}\fus r_{\mW_i^*}))\otimes \Id)\otimes \Id} (\mM_{i'i'}\otimes\mM_{i0})\otimes\mM_{0i}\nonumber\\
 &\xrightarrow{\Id\otimes l_{\mM_{0i}}^{-1}} (\mM_{i'i'}\otimes\mM_{i0})\otimes(\vac\otimes\mM_{0i}) \xrightarrow{\Id\otimes(i_{\mM_{0i}}\otimes\Id)} (\mM_{i'i'}\otimes\mM_{i0})\otimes((\mM_{0i}\otimes\mM_{0i}^*)\otimes\mM_{0i})\nonumber\\
 & \xrightarrow{\Id\otimes\cA^{-1}_{\mM_{0i},\mM_{0i}^*,\mM_{0i}}} (\mM_{i'i'}\otimes\mM_{i0})\otimes(\mM_{0i}\otimes(\mM_{0i}^*\otimes\mM_{0i}))\xrightarrow{\Id\otimes(\Id\otimes e_{\mM_{0i}})} (\mM_{i'i'}\otimes\mM_{i0})\otimes(\mM_{0i}\otimes\vac) \nonumber\\
 &\xrightarrow{assoc.} (\mM_{i'i'}\otimes(\mM_{i0}\otimes\mM_{0i}))\otimes\vac \xrightarrow{\mu_\aA\circ(\Id\otimes(r_{\mU_i}\fus l_{\mW_i}))\otimes\Id} \aA\otimes\vac\xrightarrow{r_\aA} \aA.
\end{align*}
Now the rigidity of $\mM_{0i}$ implies that the fifth through eigth arrows above collapse to $\Id\otimes r_{\mM_{0i}}^{-1}$, and further simplifications coming from properties of the right unit isomorphisms give:
\begin{align*}
 & \mM_{0i}^*  \otimes\mM_{0i} \xrightarrow{r_{\mM_{0i}^*}^{-1}\otimes\Id} (\mM_{0i}^*\otimes\vac)\otimes\mM_{0i} \xrightarrow{(\Id\otimes\widetilde{i}_{\mM_{i0}^*})\otimes\Id} (\mM_{0i}^*\otimes(\mM_{i0}^*\otimes\mM_{i0}))\otimes\mM_{0i}\nonumber\\
 &\xrightarrow{\cA_{\mM_{0i}^*,\mM_{i0}^*,\mM_{i0}}\otimes\Id} ((\mM_{0i}^*\otimes\mM_{i0}^*)\otimes\mM_{i0})\otimes\mM_{0i} \xrightarrow{(\varphi_i^{-1}\circ(l_{\mU_i^*}\fus r_{\mW_i^*}))\otimes \Id)\otimes \Id} (\mM_{i'i'}\otimes\mM_{i0})\otimes\mM_{0i}\nonumber\\
 & \xrightarrow{\cA_{\mM_{i'i'},\mM_{i0},\mM_{0i}}^{-1}} \mM_{i'i'}\otimes(\mM_{i0}\otimes\mM_{0i})\xrightarrow{\mu_\aA\circ(\Id\otimes(r_{\mU_i}\fus l_{\mW_i}))} \aA.
\end{align*}

Now observe that the entire composition is a morphism in
\begin{align*}
 \HHom_\cC(\mM_{0i}^*\otimes\mM_{0i},\aA) & \cong\HHom_{\cU\fus\cW} (\vac_\cU\fus(\mW_i^*\otimes\mW_i),\aA)\nonumber\\
 &\cong\bigoplus_{j\in I} \HHom_\cU(\vac_\cU,\mU_j)\otimes\HHom_\cW(\mW_i^*\otimes\mW_i,\mW_j).
\end{align*}
Since $\dim\HHom_\cU(\vac_\cU,\mU_j)=\delta_{0,j}$, it follows that the image must be contained in $\mU_0\fus\mW_0=\vac$.
Consequently, post-composing with $\iota_\aA\circ\varepsilon_\aA$ has no effect on the composition, and we may use Lemma \ref{lem:AiBiduals} and naturality of the associativity isomorphisms to reduce to
\begin{align*}
 \mM_{0i}^* & \otimes\mM_{0i} \xrightarrow{r_{\mM_{0i}^*}^{-1}\otimes\Id} (\mM_{0i}^*\otimes\vac)\otimes\mM_{0i} \xrightarrow{(\Id\otimes\widetilde{i}_{\mM_{i0}^*})\otimes\Id} (\mM_{0i}^*\otimes(\mM_{i0}^*\otimes\mM_{i0}))\otimes\mM_{0i}\nonumber\\
 & \xrightarrow{assoc.} (\mM_{i0}^*\otimes\mM_{0i}^*)\otimes(\mM_{i0}\otimes\mM_{0i}) \xrightarrow{(l_{\mU_i^*}\fus r_{\mW_i^*})\otimes(r_{\mU_i}\otimes l_{\mW_i})} \mM_{ii}^*\otimes\mM_{ii}\xrightarrow{e_{\mM_{ii}}} \vac\xrightarrow{\iota_\aA} \aA.
\end{align*}
Now using Lemma \ref{lem:tensprodevals}, we get
\begin{align*}
  \mM_{0i}^*  \otimes\mM_{0i} \xrightarrow{r_{\mM_{0i}^*}^{-1}\otimes\Id} &(\mM_{0i}^*\otimes\vac)\otimes\mM_{0i} \xrightarrow{(\Id\otimes\widetilde{i}_{\mM_{i0}^*})\otimes\Id} (\mM_{0i}^*\otimes(\mM_{i0}^*\otimes\mM_{i0}))\otimes\mM_{0i}\nonumber\\
  & \xrightarrow{(\Id\otimes e_{\mM_{i0}})\otimes\Id} (\mM_{0i}^*\otimes\vac)\otimes\mM_{0i}\xrightarrow{r_{\mM_{0i}^*}\otimes\Id} \mM_{0i}^*\otimes\mM_{0i}\xrightarrow{e_{\mM_{0i}}} \vac\xrightarrow{\iota_\aA} \aA.
\end{align*}
Since by definition 
\begin{align}
   e_{\mM_{i0}}\circ\widetilde{i}_{\mM_{i0}^\ast} = \dim_{\cC} \mM_{i0},
\end{align}
and since $\dim_\cU \mU_i=\dim_\cC \mM_{0i}$, it follows that we get $(\dim_{\cU} \mU_i)(\iota_\aA\circ e_{\mM_{0i}})$, completing the proof that $\Phi\circ\Psi =(\dim_{\cU} \mU_{i})\cdot \Id_{\cF(\mM_{0i}^{\ast})}$.

Now we calculate $\Psi\circ\Phi$ by considering equation \eqref{eqn:PsiPhi} with $G=\Psi\circ\Phi$. 
We use the following diagrams as a guide:
\begin{align*}
&\begin{tikzpicture}
[scale = 1, baseline = {(current bounding box.center)}, line width=0.75pt]
\node (mu) at (0.5,3) [draw,minimum width=30pt,minimum height=10pt,thick, fill=white] {$\mu_\aA$};
\node (mu2) at (0.5,5.5) [draw,minimum width=30pt,minimum height=10pt,thick, fill=white] {$\mu_\aA$};
\node (th1) at (2,3.25) [circle, draw=black, fill=white] {$\scriptstyle{\theta}$};
\node (th2) at (2.75,4.75) [circle, draw=black, fill=white] {$\scriptstyle{\theta^{-1}}$};
\draw (0,1.5) -- (mu.210);
\draw[dashed] (0.6,0.75) .. controls (0,1) ..  (0,1.5);
\draw[white, double=black, line width = 3pt] (0.5,0) .. controls (0.5,1.75) .. (0.75,2)   
-- (1,1.75) .. controls (1, 1.25) and (1.5,1.25) .. (1.5,1.75) .. controls (1.5,4) ..
 (1.75, 4.25) .. controls (2,4) and (2.75,4).. (2.75,3.5) -- (2.75,3) .. controls (2.75,2.5) and (2,2.5) .. (th1.270);
\draw[white, double=black, line width = 3pt] (th1.90) .. controls (2,4) and (2.75,4) .. (th2.270);
\draw (2.75, 6) .. controls (2.75,6.5) and (3.5,6.5) .. (3.5,6) .. controls  (3.5,5.5) and (2.75, 5.5) ..  (th2.90);
\draw[white, double=black, line width=3pt] (3.5,0) -- (3.5,5) .. controls (3.5,5.5) and (2.75,5.5) .. (2.75, 6);
\draw[dashed] (0.5,0.5) .. controls (1,0.75) .. (1.25,1.4);
\draw[dashed] (1.5,2) .. controls (2.25,2.25) .. (2.4,2.6);
\draw (0.75,2) -- (mu.330);
\draw (mu.135) -- (mu2.225);
\draw (1.75, 4.25) .. controls (1.75,4.5)  .. (mu2.320);
\draw (mu2.90)  -- ($(mu2.90) + (0,1.5)$);
\draw[dashed] (3,6.4) .. controls (2.5,7) .. ($(mu2.90) + (0,1.25)$);
\node at (-0.25,1.75) {$\aA$};
\node at (0.5,-0.3) {$\mM_{i0}$};
\node at (0.9,1.3) {$\scriptstyle{\mM_{0i}}$};
\node at (1.7,1.3) {$\scriptstyle{\mM_{0i}^\ast}$};
\node at (1.9,2.6) {$\scriptstyle{\mM_{i0}}$};
\node at (2.9,2.5) {$\scriptstyle{\mM_{i0}^\ast}$};
\node at (3.5,-0.3) {${\mM_{i0}^\ast}$};
\end{tikzpicture}
=
\begin{tikzpicture}
[scale = 1, baseline = {(current bounding box.center)}, line width=0.75pt]
\node (mu) at (0.5,3) [draw,minimum width=30pt,minimum height=10pt,thick, fill=white] {$\mu_\aA$};
\node (mu2) at (0.5,5.5) [draw,minimum width=30pt,minimum height=10pt,thick, fill=white] {$\mu_\aA$};
\draw (0,1.5) -- (mu.210);
\draw[dashed] (0.75,0.75) .. controls (0,1) ..  (0,1.5);
\draw (0.75,0) .. controls (0.75,1.75) .. (1,2)   
-- (1.25,1.75) .. controls (1.25, 1.25) and (1.75,1.25) .. (1.75,1.75) .. controls (1.75,4) ..
 (2, 4.25) .. controls (2.25,4) and (2.5,4) .. (2.5,3.5) -- (2.5,3) .. controls (2.5,2.5) and (2.5,2.5) .. (2.5,0);
\draw[dashed] (0.75,0.5) .. controls (1.25,0.75) .. (1.5,1.4);
\draw (1,2) -- (mu.330);
\draw (mu.135) -- (mu2.225);
\draw (2, 4.25) .. controls (2,4.5)  .. (mu2.320);
\draw (mu2.90)  -- ($(mu2.90) + (0,1.5)$);
\node at (-0.25,1.75) {$\aA$};
\node at (0.5,-0.3) {$\mM_{i0}$};
\node at (1.1,1.3) {$\scriptstyle{\mM_{0i}}$};
\node at (1.95,1.3) {$\scriptstyle{\mM_{0i}^\ast}$};
\node at (2.25,-0.3) {${\mM_{i0}^\ast}$};
\end{tikzpicture}
=
\begin{tikzpicture}
[scale = 1, baseline = {(current bounding box.center)}, line width=0.75pt]
\node (mu) at (0.75,2.5) [draw,minimum width=30pt,minimum height=10pt,thick, fill=white] {$\mu_\aA$};
\draw (-0.1,0) .. controls (0,1.25) .. (0.25,1.5) ..controls (0.5,1.25) .. (0.5,1) .. controls (0.5,0.5) and (1,0.5) .. (1,1) .. controls (1,1.25) .. (1.25,1.5) .. controls (1.5,1.25) .. (1.6,0);
\draw (0.25,1.5) -- (mu.210);
\draw (1.25,1.5) -- (mu.330);
\draw (mu.90) -- ($(mu.90)+(0,0.75)$);
\node at (0,-0.3) {$\mM_{i0}$};
\node at (0.25,0.8) {$\scriptstyle{\mM_{0i}}$};
\node at (1.25,0.8) {$\scriptstyle{\mM_{0i}^\ast}$};
\node at (1.5,-0.3) {${\mM_{i0}^\ast}$};
\end{tikzpicture}
= 
\begin{tikzpicture}
[scale = 1, baseline = {(current bounding box.center)}, line width=0.75pt]
\node (mu) at (0.75,2.25) [draw,minimum width=30pt,minimum height=10pt,thick, fill=white] {$\mu_\aA$};
\draw (-0.1,0) .. controls (0,1.25) .. (0.25,1.5) ..controls (0.5,1.25) .. (0.5,1) .. controls (0.5,0.5) and (1,0.5) .. (1,1) .. controls (1,1.25) .. (1.25,1.5) .. controls (1.5,1.25) .. (1.6,0);
\draw (0.25,1.5) -- (mu.210);
\draw (1.25,1.5) -- (mu.330);
\draw (mu.90) -- ($(mu.90)+(0,0.75)$);
\draw[dashed]  ($(mu.90)+(0,0.75)$) -- ($(mu.90)+(0,1.75)$);
\draw  ($(mu.90)+(0,1.75)$) -- ($(mu.90)+(0,2.75)$);
\node at (0,-0.3) {$\mM_{i0}$};
\node at (0.25,0.8) {$\scriptstyle{\mM_{0i}}$};
\node at (1.25,0.8) {$\scriptstyle{\mM_{0i}^\ast}$};
\node at (1.5,-0.3) {${\mM_{i0}^\ast}$};
\end{tikzpicture}
\end{align*}
\begin{align*}
=
\pm\begin{tikzpicture}
[scale = 1, baseline = {(current bounding box.center)}, line width=0.75pt]
\node (mu) at (0.75,4.25) [draw,minimum width=30pt,minimum height=10pt,thick, fill=white] {$\mu_\aA$};
\node (th1) at (1.25,2.25) [circle,draw=black, fill=white] {$\scriptstyle{\theta^{-1}}$};
\draw (-0.1,0) .. controls (0,1.25) .. (0.25,1.5) ..controls (0.5,1.25) .. (0.5,1) .. controls (0.5,0.5) and (1,0.5) .. (1,1) .. controls (1,1.25) .. (1.25,1.5) .. controls (1.5,1.25) .. (1.6,0);
\draw[white, double=black, line width = 3pt] (0.25,1.5) -- (0.25, 2.75).. controls (0.25,3.5) and (1.5,3.5) ..  (mu.330);
\draw[white, double=black, line width = 3pt] (1.25,1.5) 
-- (th1.270);
\draw (mu.90) -- ($(mu.90)+(0,0.25)$);
\draw[dashed]  ($(mu.90)+(0,0.25)$) -- ($(mu.90)+(0,1.25)$);
\draw  ($(mu.90)+(0,1.25)$) -- ($(mu.90)+(0,2)$);
\draw[white, double=black, line width = 3pt] (th1.90) -- ($(th1.90)+(0,0.25)$) .. controls (1.25,3.5) and (0.25,3.5) .. (mu.210);
\node at (0,-0.3) {$\mM_{i0}$};
\node at (0.25,0.8) {$\scriptstyle{\mM_{0i}}$};
\node at (1.25,0.8) {$\scriptstyle{\mM_{0i}^\ast}$};
\node at (1.5,-0.3) {${\mM_{i0}^\ast}$};
\end{tikzpicture}
=
\pm\begin{tikzpicture}
[scale = 1, baseline = {(current bounding box.center)}, line width=0.75pt]
\node (th1) at (1.25,2.25) [circle,draw=black, fill=white] {$\scriptstyle{\theta^{-1}}$};
\draw (-0.1,0) .. controls (0,1.25) .. (0.25,1.5) ..controls (0.5,1.25) .. (0.5,1) .. controls (0.5,0.5) and (1,0.5) .. (1,1) .. controls (1,1.25) .. (1.25,1.5) .. controls (1.5,1.25) .. (1.6,0);
\draw[white, double=black, line width = 3pt] (0.25,1.5) -- (0.25, 2.75).. controls (0.25,3.5) and (1.25,3.5) ..  (1.25,4) .. controls (1.25,4.5) and (0.25,4.5) .. (0.25,4);
\draw[white, double=black, line width = 3pt] (1.25,1.5)
-- (th1.270);
\draw[dashed]  (0.75,4.5) -- (0.75,5.5);
\draw  (0.75,5.5) -- (0.75,6.5);
\draw[white, double=black, line width = 3pt] (th1.90) -- ($(th1.90)+(0,0.25)$) .. controls (1.25,3.5) and (0.25,3.5) .. (0.25,4);
\node at (0,-0.3) {$\mM_{i0}$};
\node at (0.25,0.8) {$\scriptstyle{\mM_{0i}}$};
\node at (1.25,0.8) {$\scriptstyle{\mM_{0i}^\ast}$};
\node at (1.5,-0.3) {${\mM_{i0}^\ast}$};
\end{tikzpicture}
=
\pm\begin{tikzpicture}
[scale = 1, baseline = {(current bounding box.center)}, line width=0.75pt]
\node (th1) at (1,1) [circle,draw=black, fill=white] {$\scriptstyle{\theta^{-1}}$};
\node (th2) at (3,1.5) [circle,draw=black, fill=white] {$\scriptstyle{\theta^{-1}}$};
\draw[white, double=black, line width = 3pt] (0,2.5) .. controls (0,3) and (1,3) .. (1,2.5) .. controls (1,2) and (0,2) .. (0,1.5) -- (0,0);
\draw[white, double=black, line width = 3pt] (th1.90) -- ($(th1.90)+(0,0.25)$) .. controls (1,2) and (0,2) .. (0,2.5);
\draw (1,0) -- (th1.270);
\draw[white, double=black, line width = 3pt] (2,3) .. controls (2,3.5) and (3,3.5) .. (3,3) .. controls (3,2.5) and (2,2.5) .. (2,2) -- (2,1) .. controls (2,0.25) and (3,0.25) .. (th2.270);
\draw[white, double=black, line width = 3pt] (th2.90) -- ($(th2.90)+(0,0.25)$) .. controls (3,2.5) and (2,2.5) .. (2,3);
\draw[dashed] (0.5,3) -- (0.5,3.5) .. controls (0.5,4) and (1.5,4) .. (1.5,4.5) .. 
controls (2,4) and (2.5,4) .. (2.5,3.5);
\draw[dashed] (1.5,4.5) -- (1.5,5.5);
\draw (1.5,5.5) -- (1.5,6.5);
\node at (0,-0.3) {$\mM_{i0}$};
\node at (1,-0.3) {$\mM_{i0}^\ast$};
\node at (1.75,0.5) {$\scriptstyle{\mM_{0i}}$};
\node at (3.25,0.5) {$\scriptstyle{\mM_{0i}^\ast}$};
\end{tikzpicture}
=\pm(\dim_{\cW} \mW_{i})\begin{tikzpicture}
[scale = 1, baseline = {(current bounding box.center)}, line width=0.75pt]
\node (th1) at (1,1) [circle,draw=black, fill=white] {$\scriptstyle{\theta^{-1}}$};
\draw[white, double=black, line width = 3pt] (0,2.5) .. controls (0,3) and (1,3) .. (1,2.5) .. controls (1,2) and (0,2) .. (0,1.5) -- (0,0);
\draw[white, double=black, line width = 3pt] (th1.90) -- ($(th1.90)+(0,0.25)$) .. controls (1,2) and (0,2) .. (0,2.5);
\draw (1,0) -- (th1.270);
\draw[dashed] (0.5,3) -- (0.5,4);
\draw (0.5,4) -- (0.5,5);
\node at (0,-0.3) {$\mM_{i0}$};
\node at (1,-0.3) {$\mM_{i0}^\ast$};
\end{tikzpicture} 
\end{align*}
In this case, the relevant composition is:
\begin{align*}
 & \hspace{-.3em}\mM_{i0}\otimes \mM_{i0}^\ast \xrightarrow{l_{\mM_{i0}}^{-1}\otimes\Id} (\vac\otimes\mM_{i0})\otimes\mM_{i0}^*  \xrightarrow{(\iota_\aA\otimes \Id)\otimes\Id} (\aA\otimes \mM_{i0})\otimes \mM_{i0}^\ast\nonumber\\ &\xrightarrow{r_{\aA\otimes\mM_{i0}}^{-1}\otimes\Id} ((\aA\otimes\mM_{i0})\otimes\vac)\otimes\mM_{i0}^*\xrightarrow{(\Id\otimes i_{\mM_{0i}})\otimes\Id} ((\aA\otimes \mM_{i0})\otimes(\mM_{0i}\otimes \mM_{0i}^\ast))\otimes \mM_{i0}^\ast\nonumber\\
 &\xrightarrow{assoc.} ((\aA\otimes(\mM_{i0}\otimes \mM_{0i}))\otimes \mM_{0i}^\ast)\otimes \mM_{i0}^\ast \xrightarrow{((\Id\otimes(r_{\mU_i}\fus l_{\mW_i}))\otimes\Id)\otimes\Id} ((\aA\otimes\mM_{ii})\otimes\mM_{0i}^*)\otimes\mM_{i0}^*\nonumber\\ &\xrightarrow{(\mu_\aA\otimes \Id)\otimes \Id} (\aA\otimes \mM_{0i}^\ast)\otimes \mM_{i0}^\ast \xrightarrow{r_{\aA\otimes\mM_{0i}^*}^{-1}\otimes\Id} ((\aA\otimes\mM_{0i}^*)\otimes\vac)\otimes\mM_{i0}^*\nonumber\\
 & \xrightarrow{(\Id\otimes\widetilde{i}_{\mM_{i0}^\ast})\otimes \Id} ((\aA\otimes \mM_{0i}^\ast)\otimes(\mM_{i0}^\ast\otimes \mM_{i0}))\otimes \mM_{i0}^\ast\xrightarrow{assoc.} ((\aA\otimes(\mM_{0i}^\ast\otimes \mM_{i0}^\ast))\otimes \mM_{i0})\otimes \mM_{i0}^\ast\nonumber\\
 & \xrightarrow{((\Id\otimes\varphi_i^{-1}\circ(r_{\mU_i^*}\fus l_{\mW_i^*}))\otimes \Id)\otimes \Id} ((\aA\otimes\mM_{i'i'})\otimes \mM_{i0})\otimes \mM_{i0}^\ast \xrightarrow{(\mu_\aA\otimes \Id)\otimes \Id} (\aA\otimes \mM_{i0})\otimes \mM_{i0}^\ast\nonumber\\
 &\xrightarrow{\cA_{\aA,\mM_{i0},\mM_{i0}^\ast}^{-1}} \aA\otimes(\mM_{i0}\otimes \mM_{i0}^\ast)\xrightarrow{1_\aA\otimes\widetilde{e}_{\mM_{i0}^\ast}} \aA\otimes\vac\xrightarrow{r_\aA} \aA.
\end{align*}
As before, we can simplify using the unit property of $\aA$ and the rigidity of $\mM_{i0}^\ast$ (with evaluation $\widetilde{e}_{\mM_{i0}^\ast}$ and coevaluation $\widetilde{i}_{\mM_{i0}^\ast}$) to get:
\begin{align*}
 \mM_{i0} & \otimes\mM_{i0}^*\xrightarrow{r_{\mM_{i0}}^{-1}\otimes\Id} (\mM_{i0}\otimes\vac)\otimes\mM_{i0}^*\xrightarrow{(\Id\otimes i_{\mM_{0i}})\otimes \Id} (\mM_{i0}\otimes(\mM_{0i}\otimes \mM_{0i}^\ast))\otimes \mM_{i0}^\ast\nonumber\\
& \xrightarrow{\cA_{\mM_{i0},\mM_{0i},\mM_{0i}^\ast}\otimes \Id} ((\mM_{i0}\otimes \mM_{0i})\otimes \mM_{0i}^\ast)\otimes \mM_{i0}^\ast\xrightarrow{((r_{\mU_i}\fus l_{\mW_i})\otimes\Id)\otimes\Id} (\mM_{ii}\otimes\mM_{0i}^*)\otimes\mM_{i0}^*\nonumber\\
& \xrightarrow{\cA^{-1}_{\mM_{ii}, \mM_{0i}^\ast,\mM_{i0}^\ast}} \mM_{ii}\otimes(\mM_{i0}^\ast\otimes \mM_{0i}^\ast) \xrightarrow{\Id\otimes\varphi_i^{-1}\circ(r_{\mU_i^*}\fus l_{\mW_i^*})} \mM_{ii}\otimes\mM_{i'i'}\xrightarrow{\mu_\aA} \aA.
\end{align*}
Also as before, we post-compose with $\iota_\aA\circ\varepsilon_\aA$, which does not change the above composition because $\dim\HHom_{\cW}(\vac_\cW,\mW_j)=\delta_{0,j}$.

Now we use the (super)commutativity of $\mu_\aA$ and properties of the twist $\theta_\aA$ to replace
\begin{equation*}
 \mu_\aA\vert_{\mM_{ii}\otimes\mM_{i'i'}} =\pm\mu_\aA\circ\cR^{-1}_{\mM_{i'i'}, \mM_{ii}}\circ(\Id\otimes\theta^{-1}_{\mM_{i'i'}}),
\end{equation*}
where the minus sign occurs precisely when $\aA$ is a (super)algebra of wrong statistics and $i\in I^1$ (recall Remark \ref{i_i'_same_parity}). Next applying naturality of $\cR$ and $\theta$ to $\varphi_i^{-1}$, and then using Lemma \ref{lem:AiBiduals}, our composition becomes, up to sign,
\begin{align*}
 &\mM_{i0}  \otimes\mM_{i0}^*\xrightarrow{r_{\mM_{i0}}^{-1}\otimes\Id} (\mM_{i0}\otimes\vac)\otimes\mM_{i0}^*\xrightarrow{(\Id\otimes i_{\mM_{0i}})\otimes \Id} (\mM_{i0}\otimes(\mM_{0i}\otimes \mM_{0i}^\ast))\otimes \mM_{i0}^\ast\nonumber\\
& \xrightarrow{\cA_{\mM_{i0},\mM_{0i},\mM_{0i}^\ast}\otimes \Id} ((\mM_{i0}\otimes \mM_{0i})\otimes \mM_{0i}^\ast)\otimes \mM_{i0}^\ast\xrightarrow{((r_{\mU_i}\fus l_{\mW_i})\otimes\Id)\otimes\Id} (\mM_{ii}\otimes\mM_{0i}^*)\otimes\mM_{i0}^*\nonumber\\
& \xrightarrow{\cA^{-1}_{\mM_{ii}, \mM_{0i}^\ast,\mM_{i0}^\ast}} \mM_{ii}\otimes(\mM_{i0}^\ast\otimes \mM_{0i}^\ast)\xrightarrow{\Id\otimes(r_{\mu_i^*}\fus l_{\mW_i^*})} \mM_{ii}\otimes\mM_{ii}^*\xrightarrow{\cR_{\mM_{ii}^*,\mM_{ii}}^{-1}\circ(\Id\otimes\theta_{\mM_{ii}^*})} \mM_{ii}^*\otimes\mM_{ii}\xrightarrow{e_{\mM_{ii}}} \vac\xrightarrow{\iota_\aA} \aA.
\end{align*}
We now have the Deligne product of two morphisms in $\cU$ and $\cW$, which we can calculate individually. On the $\cU$ side, we have
\begin{align*}
 \mU_i & \otimes\mU_i^*\xrightarrow{ r_{\mU_i}^{-1}\otimes\Id} (\mU_i\otimes\vac_\cU)\otimes\mU_i^*\xrightarrow{(\Id\otimes(l_{\vac_\cU}^{-1}=r_{\vac_\cU}^{-1}))\otimes\Id} (\mU_i\otimes(\vac_\cU\otimes\vac_\cU))\otimes\mU_i^*\nonumber\\
 &\xrightarrow{\cA_{\mU_i,\vac_\cU,\vac_\cU}\otimes\Id} ((\mU_i\otimes\vac_\cU)\otimes\vac_\cU)\otimes\mU_i^*\xrightarrow{(r_{\mU_i}\otimes\Id)\otimes\Id} (\mU_i\otimes\vac_\cU)\otimes\mU_i^*\xrightarrow{\cA^{-1}_{\mU_i,\vac_\cU,\mU_i^*}} \mU_i\otimes(\vac_\cU\otimes\mU_i^*)\nonumber\\
 & \xrightarrow{\Id\otimes l_{\mU_i^*}} \mU_i\otimes\mU_i^*\xrightarrow{\cR_{\mU_i^*,\mU_i}^{-1}\circ(\Id\otimes\theta_{\mU_i^*})} \mU_i^*\otimes\mU_i \xrightarrow{e_{\mU_i}}\vac_\cU\xrightarrow{\sim}\mU_0.
\end{align*}
Now, the first six arrows collapse to the identity using the triangle axiom, and then we can calculate the rest using properties of twists and duals:
\begin{align*}
 e_{\mU_i}\circ\cR^{-1}_{\mU_i^\ast,\mU_i}\circ(\Id\otimes\theta_{\mU_i^\ast}^{-1}) & = e_{\mU_i}\circ(\theta_{\mU_i^\ast}^{-1}\otimes \Id)\circ\cR_{\mU_i^\ast,\mU_i}^{-1}\nonumber\\
 & = e_{\mU_i}\circ\left( (\theta_{\mU_i}^{-1})^\ast\otimes \Id\right)\circ\cR_{\mU_i^\ast,\mU_i}^{-1}\nonumber\\
 & = e_{\mU_i}\circ(\Id\otimes \theta_{\mU_i}^{-1})\circ\cR_{\mU_i^\ast,\mU_i}^{-1} = \widetilde{e}_{\mU_i^\ast}.
\end{align*}
On the $\cW$ side, we have the composition:
\begin{align*}
 \vac_\cW & \otimes\vac_\cW\xrightarrow{r_{\vac_\cW}^{-1}\otimes\Id} (\vac_\cW\otimes\vac_\cW)\otimes\vac_\cW\xrightarrow{(\Id\otimes i_{\mW_i})\otimes\Id} (\vac_\cW\otimes(\mW_i\otimes\mW_i^*))\otimes\vac_\cW\nonumber\\
 & \xrightarrow{\cA_{\vac_\cW,\mW_i,\mW_i^*}\otimes\Id} ((\vac_\cW\otimes\mW_i)\otimes\mW_i^*)\otimes\vac_\cW\xrightarrow{(l_{\mW_i}\otimes\Id)\otimes\Id} (\mW_i\otimes\mW_i^*)\otimes\vac_\cW\nonumber\\
 &\xrightarrow{\cA^{-1}_{\mW_i,\mW_i^*,\vac_\cW}} \mW_i\otimes(\mW_i^*\otimes\vac_\cW)\xrightarrow{\Id\otimes r_{\mW_i^*}} \mW_i\otimes\mW_i^*\xrightarrow{\cR_{\mW_i^*,\mW_i}^{-1}\circ(\Id\otimes\theta_{\mW_i^*})} \mW_i^*\otimes\mW_i \xrightarrow{e_{\mW_i}}\vac_\cU\xrightarrow{\sim}\mW_0.
\end{align*}
The third and fourth arrows here simplify to $l_{\mW_i\otimes\mW_i^*}\otimes\Id$, and then we can use naturality to cancel this left unit isomorphism with the first arrow of the composition. Moreover, the fifth and sixth arrows simplify to $r_{\mW_i\otimes\mW_i^*}$, and then we can use naturality to move this right unit isomorphism to the beginning of the composition:
\begin{align*}
 \vac_\cW\otimes\vac_\cW\xrightarrow{r_{\vac_\cW}=e_{\vac_\cW}}\vac_\cW\xrightarrow{i_{\mW_i}}\mW_i\otimes\mW_i^*\xrightarrow{\cR_{\mW_i^*,\mW_i}^{-1}\circ(\Id\otimes\theta_{\mW_i^*})} \mW_i^*\otimes\mW_i \xrightarrow{e_{\mW_i}}\vac_\cU\xrightarrow{\sim}\mW_0.
\end{align*}
Now using the balancing equation, $\theta_{\vac_\cW}=\Id$, and $e_{\vac_\cW}=\widetilde{e}_{\vac_\cW}$, we calculate
\begin{align*}
 e_{\mW_i}\circ\cR_{\mW_i^\ast, \mW_i}^{-1}\circ(\Id\otimes\theta^{-1}_{\mW_i^\ast})\circ i_{\mW_i}\circ e_{\vac_\cW} & = e_{\mW_i}\circ\theta_{\mW_i^\ast\otimes \mW_i}^{-1}\circ\cR_{\mW_i,\mW_i^\ast}\circ(\theta_{\mW_i}\otimes \Id)\circ i_{\mW_i}\circ\widetilde{e}_{\vac_\cW}\nonumber\\
 & = \theta_{\one_{\cW}}^{-1}\circ e_{\mW_i}\circ\widetilde{i}_{\mW_i^\ast}\circ\widetilde{e}_{\vac_\cW} =(\dim_{\cW} \mW_i)\widetilde{e}_{\vac_\cW}.
\end{align*}
In conclusion, we have shown that the composition in \eqref{eqn:PsiPhi} with $G=\Psi\circ\Phi$ equals $$\pm(\dim_{\cW} \mW_i)(\iota_\aA\circ\widetilde{e}_{\mM_{i0}}).$$ This completes the proof that $\Phi$ and $\Psi$ are both isomorphisms in $\repA$.
\end{proof}

Finally we prove Proposition \ref{prop:UA_WA_ribbon}:
\begin{proof}
 Using Key Lemma \ref{lemma:keyiso}, we have for $ i,j\in I$ the following isomorphisms in $\repA$ and/or $\cC$:
 \begin{align}
  \cF(\mU_i\boxtimes \mW_j^\ast) & \cong \aA\otimes((\mU_i\boxtimes \one_{\cW})\otimes(\one_{\cU}\boxtimes \mW_j^\ast))\nonumber\\
  & \cong (\aA\otimes (\mU_i\boxtimes \one_{\cW}))\otimes (\one_{\cU}\boxtimes \mW_j^\ast)\nonumber\\
  & \cong (\aA\otimes(\one_{\cU}\boxtimes \mW_i^\ast))\otimes(\one_{\cU}\boxtimes \mW_j^\ast)\nonumber\\
  & \cong \aA\otimes(\one_{\cU}\boxtimes(\mW_i^\ast\otimes \mW_j^\ast))\nonumber\\
  & \cong\bigoplus_{k\in I} \mU_k\boxtimes( \mW_k\otimes(\mW_i^\ast\otimes \mW_j^\ast)),
  \label{eqn:fusion1}
 \end{align}
as well as
\begin{align}
 \cF(\mU_i\boxtimes \mW_j^\ast) & \cong\aA\otimes((\one_{\cU}\boxtimes\mW_j^\ast)\otimes (\mU_i\boxtimes \one_{\cW}))\nonumber\\
 & \cong (\aA\otimes(\one_{\cU}\boxtimes \mW_j^\ast))\otimes (\mU_i\boxtimes \one_{\cW})\nonumber\\
 &\cong (\aA\otimes(\mU_j\boxtimes \one_{\cW}))\otimes(\mU_i\boxtimes \one_{\cW})\nonumber\\
 & \cong \aA\otimes ((\mU_j\otimes \mU_i)\boxtimes \one_{\cW})\nonumber\\
 &\cong\bigoplus_{k\in I} (\mU_k\otimes(\mU_j\otimes \mU_i))\boxtimes \mW_k.
   \label{eqn:fusion2}
\end{align}
The first sequence of isomorphisms shows that $\cF(\mU_i\boxtimes \mW_j^*)$ is an object of $(\cU_\aA\fus\cW)_\oplus$, and then taking $k=0$ in the second sequence of isomorphisms shows that $\cF(\mU_i\fus\mW_j^*)$ contains $(\mU_j\otimes\mU_i)\fus\vac_\cW$ as a subobject. This means $(\mU_j\otimes\mU_i)\fus\vac_\cW$ is an object of $\cU_\aA\fus\cW$, that is, $\mU_j\otimes\mU_i$ is an object of $\cU_\aA$ for all $i,j\in I$. This shows that $\cU_\aA$ is a tensor subcategory of $\cU$. Lemma \ref{lem:AiBiduals} then shows that $\cU_\aA$ is closed under duals, and hence is a ribbon subcategory of $\cU$.

Now we examine the summand of $\cF(\mU_i\boxtimes \mW_j^\ast)$ corresponding to $\one_{\cU}$. On the one hand, \eqref{eqn:fusion1} shows this is $\vac_\cU\fus(\mW_i^\ast\otimes \mW_j^\ast)$. On the other hand, \eqref{eqn:fusion2} combined with semisimplicity of $\cU$ implies it is isomorphic to $\vac_\cU\fus\bigoplus\limits_{k\in I} N^{k^*}_{j, i} \mW_k$, where $N^{k^*}_{j,i}$ is the multiplicity of $\mU_{k}^*$ in $\mU_j\otimes \mU_i$. In other words, we have
\begin{equation*}
 \mW_{i}^\ast\otimes \mW_j^\ast\cong\bigoplus_{k\in I} N^{k^*}_{j,i}\, \mW_k
\end{equation*}
in $\cW$, or equivalently
\begin{equation}
 {\mW_{j}}\otimes \mW_i\cong\bigoplus_{k\in I} N^{k^*}_{j,i}\, \mW_{k'}.
 \label{eqn:Wfus}
\end{equation}
This shows that $\cW_\aA$ is closed under tensor products (and also duals by Lemma \ref{lem:AiBiduals}) and thus is a ribbon subcategory of $\cW$.

Now to show that $\cW_\aA$ is semisimple with the $\mW_i$ as distinct simple objects, we need to show that $\dim\Hom{\cW}{\mW_i,\mW_j} =\delta_{i,j}$ for $i,j\in I$. For this, we calculate
\begin{align*}
 \dim\Hom{\cW}{\mW_{i},\mW_j} & = \dim\Hom{\cW}{\one_{\cW}\otimes \mW_i, \mW_j} =\dim\Hom{\cW}{\one_{\cW}, \mW_j\otimes \mW_{i'}}\nonumber\\
 & =\sum_{k\in I} N^{k^*}_{j,i'}\dim\Hom{\cW}{\one_{\cW}, \mW_{k'}} = N^0_{j,i'} =\delta_{i,j},
\end{align*}
using properties of duals, equation \eqref{eqn:Wfus}, the fact that $\one_{\cU}$ and $\one_{\cW}$ form a dual pair in $\aA$, and the simplicity and mutual inequivalence of the $\mU_i$. 
\end{proof}

\begin{rema}
 The above proof only shows that the $\mW_i$ are simple in $\cW_\aA$, not that they are necessarily simple in the possibly larger category $\cW$. For example, it is conceivable that some $\mW_i$ admits a non-trivial simple quotient $\mW_i/\widetilde{\mW}$, provided that $\mW_i/\widetilde{\mW}$ does not occur as a submodule of any other $\mW_j$.
\end{rema}

\section{From tensor categories to vertex operator algebras}

Here we interpret the categorical theorems of the previous sections as theorems for vertex operator algebras.

\subsection{Vertex tensor categories}

Here to establish notation and terminology, we recall some features and structures in the notion of vertex tensor category as formulated and developed in \cite{HL-VTC}, \cite{HL-tensor1}-\cite{HL-tensor3}, \cite{H-tensor4}, \cite{HLZ1}-\cite{HLZ8}; see also the exposition in \cite[Section 3.1]{CKM}. In contrast to the preceding sections, here we will need to use the symbol $\otimes$ exclusively for vector space tensor products (over $\CC$).

We use the definitions of vertex operator algebra and module for a vertex operator algebra from \cite{LL}, except that we typically allow the Virasoro operator $L(0)$ to act non-semisimply on a module. Such modules are called \textbf{grading-restricted generalized modules} in \cite{HLZ1}. To be more specific, a grading-restricted generalized module $\mX$ has a $\CC$-grading $\mX=\bigoplus_{h\in\CC} \mX_{[h]}$, where $\mX_{[h]}$ is the generalized $L(0)$-eigenspace with generalized eigenvalue $h$, satisfying the two \textbf{grading restriction conditions}:
\begin{enumerate}
 \item Each $\mX_{[h]}$ is finite-dimensional.
 \item For any $h\in\CC$, $\mX_{[h-n]}=0$ for $n\in\NN$ sufficiently large.
\end{enumerate}
The (vector space) tensor product of two vertex operator algebras $\aU$ and $\aW$ is a vertex operator algebra \cite{FHL}, and if $\mX$ and $\mY$ are grading-restricted, generalized $\aU$- and $\aW$-modules, respectively, then $\mX\otimes\mY$ is a generalized $\aU\otimes\aW$-module with
\begin{equation*}
 (\mX\otimes \mY)_{[h]} = \bigoplus_{h_{\aU}+h_{\aW}=h} \mX_{[h_\aU]}\otimes \mY_{[h_\aW]}.
\end{equation*}
The module $\mX\otimes \mY$ will also satisfy the grading-restriction conditions if at least one of $\mX$ and $\mY$ is \textbf{strongly} grading-restricted in the sense that there are finitely many cosets $\lbrace h_i+\ZZ\rbrace$ in $\CC/\ZZ$ such that $\mX_{[h]}\neq 0$ (or $\mY_{[h]}\neq 0$) only if $h\in h_i+\ZZ$ for some $i$. From now on, we will refer to such strongly grading-restricted, generalized modules simply as \textbf{modules}.

A key feature of the tensor product theory of modules for a vertex operator algebra is a tensor product for each conformal equivalence class of spheres with two positively oriented punctures, one negatively oriented puncture, and local coordinates at the punctures. But to obtain braided tensor categories of vertex operator algebra modules, it is sufficient (see for instance \cite{HLZ8}) to focus on $P(z)$-tensor products, where $P(z)$ is the sphere with positively-oriented punctures at $0$ and $z\in\CC^\times$, a negatively-oriented puncture at $\infty$, and local coordinates $w\mapsto w$, $w\mapsto w-z$, $w\mapsto 1/w$, respectively. Therefore, we shall here abuse terminology and use the term ``vertex tensor category structure'' to refer only to the tensor product functors and natural isomorphisms corresponding to the spheres $P(z)$ for $z\in\CC^\times$.

Given a vertex operator algebra $\aV$ and a category $\cC$ of $\aV$-modules, the $P(z)$-tensor product of modules in $\cC$ is defined in terms of $P(z)$-intertwining maps. By \cite[Proposition 4.8]{HLZ3}, these are precisely maps of the form $\cY(\cdot,z)\cdot$, where
\begin{equation*}
 \cY: \mX_1\otimes\mX_2\rightarrow\mX_3[\log x]\lbrace x\rbrace
\end{equation*}
is a (logarithmic) intertwining operator of type $\binom{\mX_3}{\mX_1\,\mX_2}$ for modules $\mX_1$, $\mX_2$, $\mX_3$ in $\cC$ and the formal variable $x$ is specialized to $z\in\CC^\times$ using a choice of branch of logarithm. The range of a $P(z)$-intertwining map is the \textbf{algebraic completion} $\overline{\mX_3}$, defined as the direct product (rather than direct sum) of the homogeneous graded subspaces of the module $\mX_3$.

The $P(z)$-tensor product of two modules $\mX_1$, $\mX_2$ in $\cC$ is defined to be a representing object for the functor $\mathcal{V}[P(z)]^{\bullet}_{\mX_1, \mX_2}: \cC\rightarrow\mathcal{V}ec_\CC$,  where for a module $\mX$ in $\cC$, $\mathcal{V}[P(z)]^{\mX}_{\mX_1, \mX_2}$ is the space of $P(z)$-intertwining maps of type $\binom{\mX}{\mX_1\,\mX_2}$. That is, there are natural isomorphisms
\begin{equation*}
 \mathcal{V}[P(z)]^{\mX}_{\mX_1, \mX_2}\xrightarrow{\sim}\HHom_\cC(\mX_1\fus_{P(z)}\mX_2,\mX).
\end{equation*}
for all objects $\mX$ in $\cC$. In particular, there is a distinguished $P(z)$-intertwining map $\cdot\fus_{P(z)}\cdot$ of type $\binom{\mX_1\fus_{P(z)}\mX_2}{\mX_1\,\mX_2}$ (corresponding to $\Id_{\mX_1\fus_{P(z)}\mX_2}$) such that if $\cY$ is any intertwining operator of type $\binom{\mX}{\mX_1\,\mX_2}$ for some $\aV$-module $\mX$ in $\cC$, then there is a unique $\aV$-module homomorphism $\eta_\cY: \mX_1\fus_{P(z)}\mX_2\rightarrow\mX$ such that
\begin{equation*}
 \overline{\eta_\cY}(w_1\fus_{P(z)} w_2)=\cY(w_1,z)w_2
\end{equation*}
for $w_1\in\mX_1$, $w_2\in\mX_2$, where $\overline{\eta_\cY}$ is the natural extension of $\eta_\cY$ to the algebraic completions of $\mX_1\fus_{P(z)}\mX_2$ and $\mX$.

In addition to the $P(z)$-tensor product functors on the category $\cC$ of $\aV$-modules, vertex tensor category structure on $\cC$ includes the following natural isomorphisms:
\begin{enumerate}
 \item For continuous paths $\gamma$ in $\CC^\times$ beginning at $z_1$ and ending at $z_2$, \textbf{parallel transport isomorphisms} $T_{\gamma; \mX_1,\mX_2}: \mX_1\fus_{P(z_1)}\mX_2\rightarrow\mX_1\fus_{P(z_2)}\mX_2$.
 
 \item For $z\in\CC^\times$, \textbf{$P(z)$-unit isomorphisms} $l_{P(z),\Mod{X}}: \aV\fus_{P(z)}\Mod{X}\rightarrow\Mod{X}$ and $r_{P(z); \Mod{X}}: \Mod{X}\fus_{P(z)}\aV\rightarrow\Mod{X}$.
 
 \item For $z_1,z_2\in\CC^\times$ satisfying $\vert z_1\vert>\vert z_2\vert>\vert z_1-z_2\vert>0$, \textbf{$P(z_1,z_2)$-associativity isomorphisms}
 \begin{equation*}
  \cA_{P(z_1,z_2); \mX_1,\mX_2,\mX_3}: \mX_1\fus_{P(z_1)}(\mX_2\fus_{P(z_2)}\mX_3)\rightarrow(\mX_1\fus_{P(z_1-z_2)}\mX_2)\fus_{P(z_2)}\mX_3.
 \end{equation*}

\item For $z\in\CC^\times$, \textbf{$P(z)$-braiding isomorphisms} $\cR_{P(z); \mX_1,\mX_2}: \mX_1\fus_{P(z)}\mX_2\rightarrow\mX_2\fus_{P(-z)}\mX_1$.
\end{enumerate}

\begin{rema}
 The sphere $P(z_1,z_2)$ in the associativity isomorphisms has three positively oriented punctures at $0$, $z_1$, $z_2$ and one negatively oriented puncture at $\infty$. It can be obtained either by sewing together $P(z_1)$ and $P(z_2)$ spheres at the punctures $0$ and $\infty$, respectively, provided $\vert z_1\vert>\vert z_2\vert$, or by sewing together $P(z_1-z_2)$ and $P(z_2)$ spheres at the punctures $\infty$ and $z_2$, respectively, provided $\vert z_2\vert>\vert z_1-z_2\vert$. Thus the natural associativity isomorphisms in a vertex tensor category reflect the fact that these two sewing procedures yield conformally equivalent spheres with punctures and local coordinates.
\end{rema}

For conditions on $\cC$ guaranteeing the existence of these isomorphisms and details of their construction, see \cite{HLZ1}-\cite{HLZ8}; see also the expository article \cite{HL-rev} and \cite[Section 3.1]{CKM}. In order to obtain a braided tensor category structure from the vertex tensor category structure, one selects a particular tensor product functor, typically $\fus_{P(1)}$ which we shall denote simply by $\fus$ (it will be clear from the context when $\fus$ denotes a Deligne product category and when $\fus$ denotes a $P(1)$-tensor product). To obtain associativity and braiding isomorphisms for the single $P(1)$-tensor product, one needs to modify $P(1)$-braiding and $P(z_1,z_2)$-associativity isomorphisms using parallel transport. For details, see \cite{HLZ8}.

In general, it is difficult to show that a vertex tensor category $\cC$ is rigid, but it will frequently have a contragredient functor. Given a $\aV$-module $\mX=\bigoplus_{h\in\CC}\mX_{[h]}$, the graded dual vector space $\mX'=\bigoplus_{h\in\CC}\mX_{[h]}^*$ admits a $\aV$-module structure called the \textbf{contragredient module} \cite{FHL}. If $\aV$ is self-contragredient, that is, $\aV\cong\aV'$ as a $\aV$-module, then $\mX\mapsto\mX'$ defines a contragredient functor. By \cite[Proposition 5.3.2]{FHL}, $\mX'$ is simple if and only if $\mX$ is, and we have natural isomorphisms
\begin{align*}
 \HHom_{\cC}(\mX\fus\mY,\aV) & \cong\mathcal{V}[P(1)]^{\aV}_{\mX,\mY}\cong\mathcal{V}[P(1)]^{\mY'}_{\mX,\aV'}\nonumber\\
 &\cong\mathcal{V}[P(1)]^{\mY'}_{\mX,\aV}\cong\HHom_\cC(\mX\fus\aV,\mY')\cong\HHom_\cC(\mX,\mY')
\end{align*}
given by the definition of the $P(1)$-tensor product, symmetries of intertwining operators (see for example \cite[Proposition 5.5.2]{FHL}), the isomorphism $\aV\cong\aV'$, and the right unit isomorphisms.

The twist on a braided tensor category of modules is given by $e^{2\pi i L(0)}$. In particular, $\theta_\mX$ is a scalar precisely when $\mX$ is graded by a single coset of $\CC/\ZZ$.

\subsection{Deligne products of vertex algebraic tensor categories}

In this section, we show that under mild conditions, the Deligne product of braided tensor categories of modules for two vertex operator algebras is a braided tensor category of modules for the tensor product vertex operator algebra. Let $\aU$ and $\aW$ be vertex operator algebras, and let $\cU$ and $\cW$ be module categories for $\aU$ and $\aW$, respectively, that admit vertex tensor category structure. We first consider when we can obtain vertex tensor category structure on a suitable category of $\aU\otimes \aW$-modules.

It is natural to consider the full subcategory $\cC$ of $\aU\otimes \aW$-modules whose objects are (isomorphic to) direct sums of modules $\mX\otimes \mY$ where $\mX$ is a module in $\cU$ and $\mY$ is a module in $\cW$. For the following theorem, we make fairly minimal assumptions on the vertex operator algebras $\aU$ and $\aW$; for similar results along these lines see for instance \cite[Lemma 2.16]{lin} and \cite[Proposition 3.3]{CKLR}.
\begin{thm}\label{Cvrtxtenscat}
 If all fusion rules among modules in either $\cU$ or $\cW$ are finite, then the category $\cC$ of $\aU\otimes \aW$-modules admits the vertex tensor category structure given in \cite{HLZ1}-\cite{HLZ8}. Specifically, for modules  $\mU_i$ in $\cU$ and $\mW_i$ in $\cW$:
 \begin{enumerate}
  \item For $z\in\CC^\times$, $P(z)$-tensor products in $\cC$ are given by
  \begin{equation*}
   (\mU_1\otimes \mW_1)\pfus{z}(\mU_2\otimes \mW_2) = (\mU_1\pfus{z}\mU_2)\otimes(\mW_1\pfus{z}\mW_2),
  \end{equation*}
with tensor product $P(z)$-intertwining map
\begin{equation*}
 \pfus{z}=\pfus{z}\otimes\pfus{z}.
\end{equation*}

\item For a continuous path $\gamma$ in $\CC^\times$ beginning at $z_1$ and ending at $z_2$, the parallel transport isomorphism is
 \begin{equation*}
  T_{\gamma; \mU_1\otimes \mW_1, \mU_2\otimes \mW_2} = T_{\gamma; \mU_1, \mU_2}\otimes T_{\gamma; \mW_1, \mW_2}.
 \end{equation*}
 
 \item For $z\in\CC^\times$, the $P(z)$-unit isomorphisms are
 \begin{equation*}
  l_{P(z); \mU_i\otimes \mW_j} =l_{P(z);\mU_i}\otimes l_{P(z); \mW_j}
 \end{equation*}
and
\begin{equation*}
 r_{P(z); \mU_i\otimes \mW_j} = r_{P(z); \mU_i}\otimes r_{P(z); \mW_j}.
\end{equation*}

\item For $z_1,z_2\in\CC^\times$ such that $\vert z_1\vert>\vert z_2\vert>\vert z_1-z_2\vert>0$, the $P(z_1,z_2)$-associativity isomorphism is
\begin{equation*}
 \cA_{P(z_1,z_2); \mU_1\otimes \mW_1, \mU_2\otimes \mW_2, \mU_3\otimes \mW_3} =\cA_{P(z_1,z_2); \mU_1,\mU_2,\mU_3}\otimes\cA_{P(z_1, z_2); \mW_1, \mW_2, \mW_3}.
\end{equation*}

\item For $z\in\CC^\times$, the $P(z)$-braiding isomorphism is
\begin{equation*}
 \cR_{P(z); \mU_1\otimes \mW_1, \mU_2\otimes \mW_2} = \cR_{P(z); \mU_1,\mU_2}\otimes\cR_{P(z); \mW_1,\mW_2}.
\end{equation*}
 \end{enumerate}
 Moreover, if one of the categories $\cU$ or $\cW$ is semisimple and the other is closed under submodules and quotients, the category $\cC$ is abelian and thus is a braided tensor category.
\end{thm}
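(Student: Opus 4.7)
The plan is to reduce the construction of vertex tensor category structure on $\cC$ to a single decomposition theorem for spaces of $P(z)$-intertwining maps. Specifically, I would first establish that for modules $\mU_i$ in $\cU$ and $\mW_i$ in $\cW$ ($i=1,2,3$) there is a natural isomorphism
\begin{equation*}
\mathcal{V}[P(z)]^{\mU_3\otimes\mW_3}_{\mU_1\otimes\mW_1,\,\mU_2\otimes\mW_2} \;\cong\; \mathcal{V}[P(z)]^{\mU_3}_{\mU_1,\mU_2}\otimes_{\CC}\mathcal{V}[P(z)]^{\mW_3}_{\mW_1,\mW_2}.
\end{equation*}
The easy direction sends $\cY_\cU\otimes\cY_\cW$ to the $P(z)$-intertwining map $(u\otimes w,z)(u'\otimes w')\mapsto\cY_\cU(u,z)u'\otimes\cY_\cW(w,z)w'$; the Jacobi identity for $\aU\otimes\aW$ decouples into the Jacobi identities for $\cY_\cU$ and $\cY_\cW$ separately, because $\aU\otimes\vac_\aW$ and $\vac_\aU\otimes\aW$ are mutually commuting vertex subalgebras of $\aU\otimes\aW$. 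The substantive direction requires reconstructing such a decomposition from a general intertwining operator $\cY$: one restricts $\cY$ successively to elements of the form $u\otimes\vac_\aW$ and $\vac_\aU\otimes a$ to extract $\cY_\cU$- and $\cY_\cW$-factors, and the finite fusion rule hypothesis guarantees that $\mathcal{V}[P(z)]^{\mU_3}_{\mU_1,\mU_2}$ (or the analogous $\cW$ space) is finite-dimensional, which is exactly what permits $\cY$ to be expressed as a finite sum of external tensor product intertwining operators.

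With this isomorphism established, the $P(z)$-tensor product on building blocks is immediately identified as $(\mU_1\boxtimes_{P(z)}\mU_2)\otimes(\mW_1\boxtimes_{P(z)}\mW_2)$, with tensor product $P(z)$-intertwining map equal to the external tensor product $\boxtimes_{P(z)}\otimes\boxtimes_{P(z)}$; the representing property on arbitrary modules in $\cC$ follows by expanding every object of $\cC$ into a direct sum of such building blocks. Each structure isomorphism listed in the statement---parallel transport, $P(z)$-unit, $P(z_1,z_2)$-associativity, $P(z)$-braiding---is then defined componentwise as the external tensor product of the corresponding isomorphisms in $\cU$ and $\cW$, and every vertex tensor category axiom (triangle, pentagon, hexagon, compatibility with parallel transport, naturality) holds because both sides of each defining equation are external tensor products of the corresponding equations in $\cU$ and $\cW$, which hold by assumption.

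For the final claim about abelian structure, suppose $\cU$ is semisimple and $\cW$ is closed under submodules and quotients. Any object of $\cC$ decomposes as $\bigoplus_{i\in\irr(\cU)}\mU_i\otimes\mW_i$ for appropriate objects $\mW_i$ of $\cW$, and any $\aU\otimes\aW$-submodule or quotient preserves the $\cU$-isotypic decomposition because $\HHom_\aU(\mU_i,\mU_j)=\delta_{i,j}\CC$ in the semisimple category $\cU$. Such a subquotient therefore has the form $\bigoplus_i\mU_i\otimes\mW_i'$ where each $\mW_i'$ is a subquotient of $\mW_i$ in $\cW$, and by the hypothesis on $\cW$ this module lies in $\cC$, so $\cC$ is abelian.

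The main obstacle will be carrying out the intertwining operator decomposition rigorously in the logarithmic setting of \cite{HLZ1}-\cite{HLZ8}, where intertwining operators carry powers of $\log x$ and the Jacobi identity takes a more delicate form. One must verify that the decoupling of the two commuting actions respects the logarithmic grading and convergence requirements, and that finite-dimensionality of one fusion space genuinely forces $\cY$ to decompose as a \emph{finite} sum rather than a formal series of elementary tensor products. Some additional bookkeeping is also needed to extend the tensor product bifunctor and structure isomorphisms naturally over morphisms between arbitrary (possibly infinite) direct sums in $\cC$.
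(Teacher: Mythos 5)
Your proposal takes essentially the same route as the paper: identify the tensor product $P(z)$-intertwining map as the external tensor product, reduce the universal property to the decomposition of intertwining operator spaces (which the paper obtains by citing Abe--Dong--Li, Theorem 2.10, rather than reproving it), observe that all structure isomorphisms are then forced to factor as external tensor products, and establish abelianness via the isotypic decomposition over the semisimple factor. The one point worth noting is that the paper, instead of asserting the representing property directly, constructs the factoring morphism $\eta$ explicitly from the ADL decomposition and then proves its \emph{uniqueness} by showing that $\boxtimes_{P(z)}\otimes\boxtimes_{P(z)}$ is a surjective intertwining map (using \cite[Proposition 4.23]{HLZ3}); your sketch glosses this uniqueness step, though it would follow if you set up the natural isomorphism of intertwining-map functors carefully.
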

\begin{proof}
 Since the parallel transport and $P(z_1,z_2)$-associativity isomorphisms in a vertex tensor category of modules for a vertex operator algebra are entirely characterized in terms of tensor product intertwining maps (see \cite{HLZ8} for details), the indicated formulas for these isomorphisms in $\cC$ follow directly from the indicated identification of $P(z)$-tensor products and tensor product $P(z)$-intertwining maps in $\cC$. The formulas for the $P(z)$-unit isomorphisms and $P(z)$-braiding isomorphisms also follow from these identifications, together with the formulas $Y_{\mU_i\otimes \mW_j}=Y_{\mU_i}\otimes Y_{\mW_j}$ (from the definition in \cite[Section 4.6]{FHL}) and $e^{z L_{\mU\otimes \mW}(-1)} = e^{z L_{\mU}(-1)}\otimes e^{zL_{\mW}(-1)}$ (from the Leibniz formula). Moreover, all the isomorphisms indicated in the statement of the theorem are well defined; in particular, the convergence of compositions of intertwining maps in $\cC$ needed for the associativity isomorphisms follows from the convergence of intertwining maps in $\cU$ and $\cW$. Moreover, all coherence properties needed for a vertex tensor category follow from the corresponding coherence properties satisfied in $\cU$ and $\cW$.
 
To show that $\cC$ admits vertex tensor category structure, then, it remains to show that for modules $\mU_1$, $\mU_2$ in $\cU$ and $\mW_1$, $\mW_2$ in $\cW$, the pair
\begin{equation*}
 \left( (\mU_1\pfus{z}\mU_2)\otimes(\mW_1\pfus{z}\mW_2), \pfus{z}\otimes\pfus{z} \right)
\end{equation*}
indeed satisfies the the universal property of a $P(z)$-tensor product in $\cC$. For this, suppose $\mX$ is any module in $\cC$ and $\cI$ is any $P(z)$-intertwining map of type $\binom{\mX}{\mU_1\otimes \mW_1\,\mU_2\otimes \mW_2}$. We may identify $\mX$ with a (finite) direct sum $\mX=\bigoplus_i \mU^{(i)}\otimes \mW^{(i)}$ where the $\mU^{(i)}$ are modules in $\cU$ and the $\mW^{(i)}$ are modules in $\cW$. Under this identification, \cite[Theorem 2.10]{ADL} implies that the intertwining map $\cI$ may be identified with a (sum of) tensor products of intertwining maps: without loss of generality, we may assume that fusion rules in $\cU$ are finite, so for any $i$, $\lbrace \cI^{(1)}_{i,j}\rbrace_{j=1}^{J_i}$ is a basis for the space of $P(z)$-intertwining maps of type $\binom{\mU^{(i)}}{\mU_1\,\mU_2}$. Then
\begin{equation*}
 \cI=\sum_i \sum_{j=1}^{J_i} \cI^{(1)}_{i,j}\otimes \cI^{(2)}_{i,j},
\end{equation*}
where each $\cI^{(2)}_{i,j}$ is a $P(z)$-intertwining map of type $\binom{\mW^{(i)}}{\mW_1\,\mW_2}$.

Now the universal property of $P(z)$-tensor products in $\cU$ and $\cW$ imply that there are unique $\mU$-module homomorphisms
\begin{equation*}
 \eta^{(1)}_{i,j}: \mU_1\pfus{z}\mU_2\rightarrow \mU^{(i)}
\end{equation*}
such that $\cI^{(1)}_{i,j}=\overline{\eta^{(1)}_{i,j}}\circ\pfus{z}$, and there are unique $\mW$-module homomorphisms
\begin{equation*}
 \eta^{(2)}_{i,j}: \mW_1\pfus{z}\mW_2\rightarrow \mW^{(i)}
\end{equation*}
such that $\cI^{(2)}_{i,j}=\overline{\eta^{(2)}_{i,j}}\circ\pfus{z}$. Then the $\mU\otimes \mW$-module homomorphism $\eta=\sum_i\sum_{j=1}^{J_i} \eta^{(1)}_{i,j}\otimes\eta^{(2)}_{i,j}$ satisfies $\cI=\overline{\eta}\circ(\pfus{z}\otimes\pfus{z})$.

To show that $\eta$ is the unique $\mU\otimes \aW$-module homomorphism with this property, it suffices to show that $\pfus{z}\otimes\pfus{z}$ is a surjective intertwining map in the sense that the $\mU\otimes \mW$-module $(\mU_1\pfus{z}\mU_2)\otimes(\mW_1\pfus{z}\mW_2)$ is generated by projections to the conformal weight spaces of the form $\pi_h\left((u_1\pfus{z}u_2)\otimes(w_1\pfus{z}w_2)\right)$ for $h\in\CC$, $u_1\in \mU_1$, $u_2\in \mU_2$, $v_1\in \mW_1$, and $v_2\in \mW_2$. In fact,
\begin{equation*}
 \pi_h\left((u_1\pfus{z}u_2)\otimes(v_1\pfus{z}v_2)\right) =\sum_{h_{\mU}+h_{\mW}=h} \pi_{h_{\mU}}(u_1\pfus{z}u_2)\otimes\pi_{h_{\mW}}(v_1\pfus{z}v_2),
\end{equation*}
where the sum is finite. Since the $\mU\otimes \mW$-module generated by such projections is stable under $L_{\mU}(0)$ (and under $L_{\mW}(0)$), this submodule contains each $\pi_{h_{\mU}}(u_1\pfus{z}u_2)\otimes\pi_{h_{\mW}}(w_1\pfus{z}w_2)$ for $h_{\mU},h_{\mW}\in\CC$. Such vectors span $(\mU_1\pfus{z}\mU_2)\otimes(\mW_1\pfus{z}\mW_2)$ by \cite[Proposition 4.23]{HLZ3}, proving the desired the result. This completes the proof that $\cC$ admits the indicated vertex tensor category structure.

Now to show that $\cC$ is abelian, we may assume that $\cU$ is semisimple and that $\cW$ is closed under submodules and quotients. Since $\cC$ by definition includes direct sums, we just need to show that every $\mU\otimes \mW$-module homomorphism between modules in $\cC$ has kernel and cokernel in $\cC$. For this, it is sufficient to show that $\cC$ is closed under submodules and quotients.

First, we note that by definition of $\cC$ and semisimplicity of $\cU$, every module in $\cC$ is completely reducible as a weak $\mU$-module, and every weak $\mU$-submodule of a module in $\cC$ is also completely reducible. Specifically, if $\mX$ is a module in $\cC$, then $\mX\cong\bigoplus_i \mU^{(i)}\otimes \mW^{(i)}$ where the $\mU^{(i)}$ are distinct irreducible $\mU$-modules in $\cU$ and the $\mW^{(i)}$ are $\mW$-modules in $\cW$. Then if $\widetilde{\mX}\subseteq \mX$ is a weak $\mU$-submodule,
\begin{equation*}
 \widetilde{\mX}\cong\bigoplus_i \mU^{(i)}\otimes \widetilde{\mW}^{(i)},
\end{equation*}
where the $\widetilde{\mW}^{(i)}\subseteq \mW^{(i)}$ are subspaces (possibly equal to zero). If $\widetilde{\mX}$ is additionally a $\mU\otimes \mW$-submodule, then the $\widetilde{\mW}^{(i)}$ are $\mW$-submodules of $\mW^{(i)}$. Since $\cW$ is closed under submodules, it follows that the $\widetilde{\mW}^{(i)}$ are modules in $\cW$ and $\widetilde{\mX}$ is a module in $\cC$. Similarly, any quotient $\mX/\widetilde{\mX}$ where $\mX$ is a module in $\cC$ and $\widetilde{\mX}$ is an $\mU\otimes \mW$-submodule is isomorphic to
\begin{equation*}
\mX/\widetilde{\mX}\cong \bigoplus_i \mU^{(i)}\otimes \left(\mW^{(i)}/\widetilde{\mW}^{(i)}\right),
\end{equation*}
where the $\mU^{(i)}$ are distinct irreducible $\mU$-modules in $\cU$, the $\mW^{(i)}$ are $\mW$-modules in $\cW$, and the $\widetilde{\mW}^{(i)}$ are $\mW$-submodules. Since $\cW$ is closed under quotients, $\mX/\widetilde{\mX}$ is a module in $\cC$.
\end{proof}

\begin{rema}\label{Cbraidedtensor}
 Because the braided tensor category structure on $\cC$ derives from the vertex tensor category structure, we have the following identifications of structure isomorphisms in the braided tensor category structure:
\begin{enumerate}
 \item The unit isomorphisms are
 \begin{equation*}
  l_{\mU_i\otimes \mW_j}=l_{\mU_i}\otimes l_{\mW_j}
 \end{equation*}
and
\begin{equation*}
 r_{\mU_i\otimes \mW_j}=r_{\mU_i}\otimes r_{\mW_j}.
\end{equation*}

\item The associativity isomorphisms are
\begin{equation*}
 \cA_{\mU_1\otimes \mW_1, \mU_2\otimes \mW_2, \mU_3\otimes \mW_3}=\cA_{\mU_1,\mU_2,\mU_3}\otimes\cA_{\mW_1,\mW_2,\mW_3}.
\end{equation*}

\item The braiding isomorphisms are
\begin{equation*}
 \cR_{\mU_1\otimes \mW_1, \mU_2\otimes \mW_2} =\cR_{\mU_1,\mU_2}\otimes\cR_{\mW_1,\mW_2}.
\end{equation*}
\end{enumerate}
\end{rema}

\begin{rema}
 The identification of structure isomorphisms in $\cC$ with tensor products of structure isomorphisms in $\cU$ and $\cW$ do not follow simply from the existence of an isomorphism
 \begin{equation*}
  (\mU_1\otimes \mW_1)\fus_{P(z)}(\mU_2\otimes \mW_2)\cong(\mU_1\fus_{P(z)} \mU_2)\otimes(\mW_1\fus_{P(z)} \mW_2),
 \end{equation*}
but also from the identification
\begin{equation*}
 \fus_{P(z)}=\fus_{P(z)}\otimes\fus_{P(z)}
\end{equation*}
under this isomorphism.
\end{rema}

Now we can show that $\cC$ is actually the Deligne product of $\cU$ and $\cW$:
\begin{thm}\label{thm:CequalsDP}
 Suppose $\cU$ and $\cW$ are locally finite abelian categories, one of $\cU$ and $\cW$ is semisimple, and the other is closed under submodules and quotients. Then $\cC$ is braided tensor equivalent to the Deligne product category $\cU\fus\cW$.
\end{thm}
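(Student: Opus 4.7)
The plan is to construct a braided tensor equivalence $F: \cU\fus\cW\to\cC$ using the universal property of the Deligne product and then verify compatibility with all structure isomorphisms. Since $\cU$ and $\cW$ are locally finite abelian $\CC$-linear categories, the Deligne product $\cU\fus\cW$ exists, together with a bilinear biexact bifunctor $\fus: \cU\times\cW\to\cU\fus\cW$ universal among such bifunctors to abelian categories. The biexact bilinear bifunctor $\cU\times\cW\to\cC$ given on objects by $(\mU,\mW)\mapsto\mU\otimes\mW$ and on morphisms by $(f,g)\mapsto f\otimes g$ therefore factors through a unique functor $F:\cU\fus\cW\to\cC$ with $F(\mU\fus\mW)=\mU\otimes\mW$. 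Essential surjectivity is then immediate, since by construction every object of $\cC$ is a direct sum of objects of the form $\mU\otimes\mW$.

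Next I would verify full faithfulness. Assume without loss of generality that $\cU$ is semisimple and $\cW$ is closed under submodules and quotients. Since $F$ and the Deligne product functor are both additive, and every object of $\cU\fus\cW$ is a direct sum of objects of the form $\mU_i\fus\mW$ with $\mU_i$ simple in $\cU$, it suffices to prove that
\begin{equation*}
F: \HHom_{\cU\fus\cW}(\mU\fus\mW_1,\mU'\fus\mW_2)\to \HHom_{\aU\otimes\aW}(\mU\otimes\mW_1,\mU'\otimes\mW_2)
\end{equation*}
is an isomorphism whenever $\mU,\mU'$ are simple in $\cU$. The left side equals $\HHom_\cU(\mU,\mU')\otimes\HHom_\cW(\mW_1,\mW_2)$ by the defining property of the Deligne product, which vanishes when $\mU\not\cong\mU'$ and equals $\HHom_\cW(\mW_1,\mW_2)$ when $\mU\cong\mU'$ (since $\End_\cU(\mU)=\CC$ by local finiteness and simplicity). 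The main technical step is to establish the analogous statement on the $\cC$ side: any $\aU\otimes\aW$-homomorphism $\varphi:\mU\otimes\mW_1\to\mU'\otimes\mW_2$, viewed as a $\aU$-module map, decomposes canonically as $\Id_\mU\otimes g$ for a unique $g\in\HHom_\cW(\mW_1,\mW_2)$ when $\mU\cong\mU'$ (and vanishes otherwise). This is a Schur-type argument: for any $w\in\mW_1$ with $\varphi(\mU\otimes\{w\})\neq 0$, the restriction $\varphi|_{\mU\otimes\{w\}}$ is an $\aU$-homomorphism from a copy of $\mU$ into $\mU'\otimes\mW_2$, whose image under projection to any summand $\mU'\otimes\{w'\}$ of $\mU'\otimes\mW_2$ is either zero or an isomorphism (forcing $\mU\cong\mU'$ and identifying the components with scalar multiples of $\Id_\mU$); collecting scalars produces the required $g:\mW_1\to\mW_2$, which is automatically a $\aW$-homomorphism by the compatibility of $\varphi$ with the $\aW$-action, and lies in $\cW$ by the closure of $\cW$ under submodules and quotients.

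Finally, I would upgrade $F$ to a braided tensor equivalence. Theorem \ref{Cvrtxtenscat} and Remark \ref{Cbraidedtensor} give explicit formulas for the tensor product, unit, associativity, and braiding isomorphisms in $\cC$ on objects of the form $\mU\otimes\mW$, each expressed as a tensor product of the corresponding isomorphism in $\cU$ and in $\cW$. These are exactly the formulas which, by universality of the Deligne product, define the braided tensor structure on $\cU\fus\cW$. Hence the natural isomorphisms $F(\mX_1\fus\mX_2)\xrightarrow{\cong}F(\mX_1)\otimes F(\mX_2)$ on generators can be taken to be identities, extended by additivity, and all coherence diagrams (pentagon, triangle, hexagon) commute because they commute componentwise. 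The main obstacle is the Schur-type hom-space computation in the middle step, which requires carefully using simplicity of $\mU$ together with local finiteness to rule out exotic homomorphisms between tensor product modules; once this is in hand, the remaining verifications are formal consequences of the explicit componentwise structure isomorphisms from Theorem \ref{Cvrtxtenscat}.
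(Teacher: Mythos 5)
Your proof is correct and follows essentially the same route as the paper: both invoke the universal property of the Deligne product to obtain the functor, reduce to simple objects of $\cU$ via semisimplicity, use a Schur-type identification of Hom-spaces, and rely on the componentwise structure isomorphisms of Theorem \ref{Cvrtxtenscat} and Remark \ref{Cbraidedtensor} for the braided tensor compatibility. The paper packages the middle step as constructing an explicit quasi-inverse functor $\cG$ rather than verifying full faithfulness directly, but the content is the same; one small misattribution in your write-up is that closure of $\cW$ under subquotients is what makes $\cC$ abelian (needed for the universal property), not what places $g$ in $\HHom_\cW(\mW_1,\mW_2)$, which holds automatically because $\cW$ is a full subcategory of $\aW$-modules.
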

\begin{proof}
The functor $\cU\times\cW\rightarrow\cC$ given by $(\mX,\mY)\mapsto \mX\otimes \mY$ on objects and $(f,g)\mapsto f\otimes g$ on morphisms is right exact in both variables, so there is a unique functor
\begin{equation*}
 \cF: \cU\fus\cW\rightarrow\cC
\end{equation*}
determined on objects by $\cF(\mX\fus \mY)=\mX\otimes \mY$ and $\cF(f\fus g)=f\otimes g$ on morphisms. 

Conversely, we may assume that $\cU$ is semisimple and define a functor $\cG: \cC\rightarrow\cU\fus\cW$ on objects by $\cG(\mX\otimes\mY)=\mX\fus\mY$ for a simple object $\mX$ in $\cU$ and any object $\mY$ in $\cW$. For morphisms, we observe that if $\mX_1$ and $\mX_2$ are simple in $\cU$, then every morphism in
\begin{equation*}
 \HHom_\cC(\mX_1\otimes\mY_1, \mX_2\otimes\mY_2)
\end{equation*}
can be written as $f\otimes g$ where $f: \mX_1\rightarrow\mX_2$ is fixed (and is either an isomorphism or $0$) and $g: \mY_1\rightarrow\mY_2$ is some $\aV$-module homomorphism. Thus we can define $\cG$ on morphisms by $\cG(f\otimes g)=f\fus g$ for such $f$ and $g$.

Now, $\cF\circ\cG$ is naturally isomorphic to $\Id_\cC$ because it is an additive functor that equals the identity on indecomposable objects of $\cC$. For the other direction, the functor $\cG\circ\cF\circ\fus: \cU\times\cW\rightarrow\cU\fus\cW$  is right exact in both variables and sends $(\mX,\mY)$ to $\mX\fus\mY$ when $\mX$ is a simple module in $\cU$. Since the universal property of $\cU\fus\cW$ implies that the identity is the only endofunctor of $\cU\fus\cW$ with this property (up to natural isomorphism), we have $\cG\circ\cF\cong\Id_{\cU\fus\cW}$.

 Now because $\cU$ and $\cW$ are locally finite, spaces of intertwining operators are finite dimensional, so Theorem \ref{Cvrtxtenscat} applies showing $\cC$ is an (abelian) braided tensor category. Then Remark \ref{Cbraidedtensor} shows that $\cF$ is compatible with the braided tensor category structures on $\cU\fus\cW$ and $\cC$, and thus is an equivalence of braided tensor categories.
\end{proof}

\subsection{Algebras in vertex tensor categories}

The foundational theorem for algebras in braided tensor categories of modules for a vertex operator algebra is the following result of Huang, Kirillov, and Lepowsky:
\begin{thm}\label{thm:HKL}\cite[Theorem 3.2, Remark 3.3]{HKL}
Let $\cC$ be a category of modules for a vertex operator algebra $\aV$ that admits the vertex tensor category structure given in \cite{HLZ1}-\cite{HLZ8}, and thus also the braided tensor category structure of \cite{HLZ8}. Then the following two notions are equivalent:
\begin{enumerate}
 \item A vertex operator algebra $(\aA, Y_\aA,\vac,\omega)$ in $\cC$ (with the same vacuum and conformal vectors as $\aV$).
 \item A commutative associative algebra $(\aA,\mu_\aA,\iota_\aA)$ in $\cC$ with injective unit and trivial twist: $\theta_\aA=\Id_\aA$.
\end{enumerate}
\end{thm}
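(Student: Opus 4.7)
The plan is to translate between the conformal vertex algebra axioms for $\aA$ and the commutative associative algebra axioms in $\cC$ via the following dictionary: $\aV$-module morphisms $\aA\fus\aA\to\aA$ correspond, by the universal property of the $P(1)$-tensor product, to $P(1)$-intertwining maps of type $\binom{\aA}{\aA\,\aA}$, which in turn are intertwining operators $\cY(\cdot,x)\cdot$ in $\aA[\log x]\lbrace x\rbrace$ evaluated at $x=1$ using a chosen branch of logarithm. With this dictionary in hand, the two directions become mirror constructions.

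For the direction $(1)\Rightarrow(2)$, given a conformal extension $\aV\hookrightarrow\aA$, I would take the vertex operator $Y_\aA$, which restricted to $\aA\otimes\aA$ is a (non-logarithmic) intertwining operator in $\cC$, and evaluate at $x=1$ to obtain a $P(1)$-intertwining map, hence a $\aV$-module map $\mu_\aA\colon\aA\fus\aA\to\aA$. The unit $\iota_\aA\colon\aV\hookrightarrow\aA$ is the inclusion of subalgebras. The left unit axiom is the identity $Y_\aA(\vac,x)=\Id_\aA$; associativity of $\mu_\aA$ comes from VOA associativity (equality of iterates and products of vertex operators) transported through the $P(z_1,z_2)$-associativity isomorphism; commutativity comes from the skew-symmetry $Y_\aA(a,x)b=e^{xL(-1)}Y_\aA(b,-x)a$ combined with the construction of the $P(1)$-braiding as analytic continuation through the upper half-plane. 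Finally, $\theta_\aA=\Id_\aA$ holds because a conformal vertex algebra is $\ZZ$-graded by (ordinary, not generalized) $L(0)$-eigenvalues, so $e^{2\pi iL(0)}=\Id_\aA$.

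For the direction $(2)\Rightarrow(1)$, I would reverse the construction. Given $(\aA,\mu_\aA,\iota_\aA)$ commutative associative with injective unit and trivial twist, $\mu_\aA$ corresponds to an intertwining operator $\cY_\aA$ of type $\binom{\aA}{\aA\,\aA}$. The trivial twist $\theta_\aA=\Id_\aA$ forces $L(0)$ to act semisimply on $\aA$ with integer eigenvalues, which eliminates all logarithmic terms and yields $\cY_\aA(a,x)b\in\aA((x))$. I would then set $Y_\aA:=\cY_\aA$ and inherit the vacuum and conformal vectors from $\aV$ via the injection $\iota_\aA$. The vacuum and creation properties follow from the unit axiom for $\iota_\aA$ together with the defining property of $l_{P(z);\aA}$; the $L(-1)$-derivative property is automatic for intertwining operators. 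The Jacobi identity is equivalent to the conjunction of associativity and locality for $Y_\aA$, and these translate back from the categorical associativity and commutativity of $\mu_\aA$ via the $P(z_1,z_2)$-associativity and $P(z)$-braiding isomorphisms.

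The hard part will be in the direction $(2)\Rightarrow(1)$: arguing carefully that categorical commutativity $\mu_\aA\circ\cR_{\aA,\aA}=\mu_\aA$ together with $\theta_\aA=\Id_\aA$ really does force $\cY_\aA$ to be non-logarithmic and to satisfy the locality axiom of a vertex algebra. Heuristically, trivial twist kills the monodromy coming from $L(0)$ and commutativity with respect to the braiding kills the monodromy of interchanging the two inputs, so the relevant compositions of intertwining maps extend to single-valued meromorphic functions on the configuration space of punctures; this single-valuedness is precisely the analytic content of locality. Making this rigorous requires careful use of the convergence, extension, and monodromy results underlying the vertex tensor category formalism of \cite{HLZ1}-\cite{HLZ8}, identifying the $P(z)$-braiding with analytic continuation along a specific path in $\CC^\times$, and checking that the Jacobi identity assembles cleanly from the two halves (associativity and commutativity) of the algebra axioms.
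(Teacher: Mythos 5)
Your proposal follows the same route as the proof in \cite{HKL}, which the paper cites rather than reproduces: in both directions one uses the universal property of $\fus_{P(1)}$ to pass between $\aV$-module maps $\aA\fus\aA\rightarrow\aA$ and intertwining operators of type $\binom{\aA}{\aA\,\aA}$ specialized at $x=1$, then matches skew-symmetry to commutativity, VOA associativity to categorical associativity, the vacuum axiom to the unit, and the trivial twist to an integral semisimple $L(0)$-grading that kills logarithms. The paper records precisely this dictionary in the sketch of the generalized Theorem \ref{thm:HKL_for_Cplus}, so your approach agrees with the one the paper relies on.
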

Since we are also concerned with algebras in $\cC_\oplus$, or more particularly algebras in $\cC_{\oplus}^{fin}$ when $\cC$ is semisimple, we need the generalization of this theorem to such algebras. We note that the conformal weight gradings of objects $\aA=\bigoplus_{s\in S} \aA_s$ will not necessarily satisfy grading restriction conditions, but such an object can still be a conformal vertex algebra in the sense of \cite{HLZ1}.
\begin{thm}\label{thm:HKL_for_Cplus}
 Let $\cC$ be a semisimple category of modules for a vertex operator algebra $\aV$ that admits the vertex tensor category structure given in \cite{HLZ1}-\cite{HLZ8}, and thus also the braided tensor category structure of \cite{HLZ8}. Then the following two notions are equivalent:
 \begin{enumerate}
  \item A conformal vertex algebra $(\aA,Y_\aA,\vac,\omega)$ in $\cC_\oplus^{fin}$ with the same vacuum and conformal vectors as $\aV$.
  \item A commutative associative algebra $(\aA,\mu_\aA,\iota_\aA)$ in $\cC_\oplus^{fin}$ with injective unit and $\theta_\aA=\Id_\aA$.
 \end{enumerate}
\end{thm}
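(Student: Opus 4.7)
The plan is to reduce the statement to Theorem \ref{thm:HKL} by decomposing $\aA = \bigoplus_{s \in S} \aA_s$ into its constituent objects in $\cC$ and working componentwise, exploiting that by definition of $\cC_\oplus^{fin}$ every morphism out of any $\aA_s \boxtimes \aA_t$ factors through a \emph{finite} partial sum $\aA_F := \bigoplus_{u \in F} \aA_u$ (for some finite $F \subseteq S$). Under this dissection, both a conformal vertex algebra structure on $\aA$ and a commutative associative algebra structure on $\aA$ are completely determined by the collection of their restrictions/components indexed by pairs $(s,t) \in S \times S$, so the correspondence of Theorem \ref{thm:HKL} can be applied piecewise and then assembled.

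First I would handle the forward direction. Given a conformal vertex algebra $(\aA, Y_\aA, \vac, \omega)$ in $\cC_\oplus^{fin}$ with the same vacuum and conformal vector as $\aV$, the restriction $Y_{s,t} := Y_\aA\vert_{\aA_s \otimes \aA_t}$ is a $\aV$-module intertwining operator of type $\binom{\aA}{\aA_s\ \aA_t}$. Since only finitely many $\aA_u$ can contain any fixed simple of $\cC$, and since the image of any particular vector under $Y_{s,t}$ is a finite sum in $\aA\lbrace x\rbrace[\log x]$, the image of $Y_{s,t}$ lies inside a finite subsum $\aA_{F_{s,t}}$. Applying the universal property of the $P(1)$-tensor product in $\cC$ inside $\aA_{F_{s,t}}$ produces a $\aV$-morphism $\mu_{s,t}: \aA_s \boxtimes \aA_t \to \aA_{F_{s,t}} \hookrightarrow \aA$, and these assemble to $\mu_\aA: \aA \boxtimes \aA \to \aA$ in $\cC_\oplus$. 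Commutativity of $\mu_\aA$ follows from the skew-symmetry of vertex operators (identified with the braiding isomorphisms in $\cC$) applied componentwise as in \cite{HKL}; associativity from the associativity of vertex operators; the unit axiom from creativity of $Y_\aA(\vac, x)$; and $\theta_\aA = \Id_\aA$ from integrality of conformal weights in any conformal vertex algebra.

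For the reverse direction, starting from $(\aA, \mu_\aA, \iota_\aA)$ with $\theta_\aA = \Id_\aA$ in $\cC_\oplus^{fin}$, the restriction $\mu_\aA\vert_{\aA_s \boxtimes \aA_t}$ is a morphism in $\cC_\oplus$ whose image lies in some finite $\aA_{F_{s,t}}$, hence is a morphism in $\cC$. The universal property of $\boxtimes_{P(1)}$ then yields a $\aV$-module intertwining operator $Y_{s,t}$ of type $\binom{\aA_{F_{s,t}}}{\aA_s\ \aA_t}$, a priori valued in $\aA_{F_{s,t}}\lbrace x\rbrace[\log x]$, but the condition $\theta_\aA = \Id_\aA$ combined with the twist formulas for intertwining operators forces integer exponents and the absence of $\log x$ (this is exactly the input used in \cite{HKL} to promote an algebra object to a conformal vertex algebra). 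Gluing the $Y_{s,t}$ gives $Y_\aA: \aA \otimes \aA \to \aA\llbracket x, x^{-1} \rrbracket$; locality is the translation of $\mu_\aA \circ \cR_{\aA,\aA} = \mu_\aA$, and the Jacobi identity (equivalently, weak associativity plus lower truncation) is the translation of $\mu_\aA \circ (\mu_\aA \boxtimes \Id_\aA) \circ \cA = \mu_\aA \circ (\Id_\aA \boxtimes \mu_\aA)$, both via the $P(1,z)$-associativity isomorphisms applied inside any finite $\aA_F$ that contains the relevant images. The injectivity of $\iota_\aA$ ensures $\vac \in \aA$ is nonzero and hence a bona fide vacuum.

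The main obstacle I anticipate is not the algebraic content, which is essentially \cite[Theorem 3.2]{HKL} applied locally, but rather the bookkeeping: verifying that the tensor product $\aA \boxtimes \aA$ in $\cC_\oplus$ interacts correctly with the $P(z)$-tensor product structure from \cite{HLZ1}-\cite{HLZ8} when $\aA$ itself is not an object of $\cC$, and checking that the structure isomorphisms (unit, associativity, braiding) of $\cC_\oplus$ defined componentwise in \cite{AR} are indeed the ones induced on direct sums by the vertex tensor category structure of $\cC$. Semisimplicity of $\cC$ is used to guarantee $\cC_\oplus$ is abelian so that the universal property arguments produce actual morphisms; and the $\cC_\oplus^{fin}$ condition is precisely what ensures that each component morphism hits only finitely many summands, which is what makes the componentwise gluing rigorous in both directions.
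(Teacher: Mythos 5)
Your proposal is correct and follows essentially the same route as the paper: decompose $\aA$ componentwise, apply \cite[Theorem 3.2]{HKL} piecewise, and use the $\cC_\oplus^{fin}$ condition to control the finiteness of the index sets in both directions. One small tightening is needed in the VOA-to-algebra direction: the observation that each individual vector lands in finitely many $\aA_u$ does not by itself yield a uniform finite $F_{s,t}$; the paper instead identifies each nonzero projection $\pi_u \circ Y_\aA\vert_{\aA_s\otimes\aA_t}$ with a nonzero $P(1)$-intertwining operator, hence with a nonzero element of $\HHom_\cC(\aA_s \fus \aA_t, \aA_u)$, and concludes finiteness because $\aA_s \fus \aA_t$ decomposes into finitely many simples, each of which occurs in only finitely many $\aA_u$ by the $\cC_\oplus^{fin}$ hypothesis.
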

\begin{proof}
Since the proof is the same as that of \cite[Theorem 3.2]{HKL} with minor changes, we only indicate how to obtain a vertex operator $Y_\aA$ from an algebra multiplication $\mu_\aA$, and vice versa.

Given a commutative associative algebra $\aA=\bigoplus_{s\in S} \aA_s$ with injective unit, trivial twist, and multiplication map
 \begin{equation*}
  \mu_\aA=\lbrace(\mu_\aA)_{s_1,s_2,t}\rbrace_{s_1,s_2,t\in S}\in\prod_{(s_1,s_2,t)\in S\times S\times S} \HHom_{\cC}(\aA_{s_1}\fus\aA_{s_2}, \aA_t),
 \end{equation*}
 the vertex operator $Y_\aA$ is defined to be unique the intertwining operator $Y_\aA$ of (weak) $\aV$-modules that satisfies
 \begin{equation*}
  Y_\aA(a_1, 1)a_2=\sum_{t\in S} \overline{(\mu_{\aA})_{s_1,s_2,t}}(a_1\fus a_2)
 \end{equation*}
for $a_1\in\aA_{s_1}$, $a_2\in\aA_{s_2}$. The sum is well defined because $\mu_\aA$ is a morphism in $\cC_\oplus$ and thus for fixed $s_1,s_2\in S$, $(\mu_\aA)_{s_1,s_2,t}=0$ for all but finitely many $t\in S$.

Conversely, given a conformal vertex algebra $\aA=\bigoplus_{s\in S} \aA_s$ in $\cC_\oplus^{fin}$ with the same vacuum and conformal vector as $\aV$, we would like to define
\begin{equation*}
 \mu_\aA\in\HHom_{\cC_\oplus} (\aA\fus\aA,\aA)\subseteq\prod_{(s_1,s_2, t)\in S\times S\times S} \HHom_\cC(\aA_{s_1}\fus \aA_{s_2}, \aA_t)
\end{equation*}
to be the tuple $\lbrace(\mu_\aA)_{s_1,s_2,t}\rbrace_{s_1,s_2,t\in S}$ where 
$$(\mu_\aA)_{s_1,s_2,t}: \aA_{s_1}\fus\aA_{s_2}\rightarrow\aA_t$$
 is the unique morphism such that
 \begin{equation*}
  \overline{(\mu_\aA)_{s_1,s_2,t}}(a_1\fus a_2)=\pi_t(Y_\aA(a_1, 1)a_2)
 \end{equation*}
for $a_1\in \aA_{s_1}$, $a_2\in \aA_{s_2}$, where $\pi_t$ is the canonical projection from $\overline{\aA}$ to $\aA_t$. However, we need to show that for fixed $s_1,s_2\in S$, we have $(\mu_\aA)_{s_1,s_2,t}=0$ for all but finitely many $t\in S$. In fact, this holds because $\aA$ is an object of $\cC_\oplus^{fin}$. For, if $\pi_t\circ Y_\aA\vert_{\aA_{s_1}\otimes\aA_{s_2}}\neq 0$, it is a
non-zero intertwining operator of type $\binom{\aA_t}{\aA_{s_1}\,\aA_{s_2}}$, and then $\HHom_{\cC}(\aA_{s_1}\fus\aA_{s_2},\aA_t)\neq 0$. But since $\aA_{s_1}\fus\aA_{s_2}$ is a direct sum of finitely many simple objects in $\cC$ and because these finitely many simple objects can occur in only finitely many $\aA_t$, this space of homomorphisms is non-zero for only finitely many $t$.
\end{proof}

\begin{rema}
 We may replace the condition $\theta_\aA=\Id_\aA$ with $\theta_\aA^2=\Id_\aA$ if we wish to allow $\frac{1}{2}\ZZ$-graded conformal vertex algebra extensions of $\aV$.
\end{rema}

\begin{rema}
 The conclusion of Theorem \ref{thm:HKL_for_Cplus} also applies when $\cC=\cU\fus\cW$ where $\cU$ is semisimple and $\cW$ is not, provided we restrict our attention to algebras of the form
 \begin{equation*}
  \aA=\bigoplus_{i\in I} \mU_i\otimes\mW_i
 \end{equation*}
where $\lbrace\mU_i\rbrace_{i\in I}$ is a set of simple modules in $\cU$ containing any given simple module of $\cU$ finitely many times.
\end{rema}

\subsection{The main theorems for vertex operator algebras}

We can now combine Theorem \ref{thm:canonicalalgebra}, Remark \ref{rema:canonicalalgebra}, Proposition \ref{prop:UA_WA_ribbon}, Theorem \ref{thm:main_bequiv}, Theorem \ref{thm:CequalsDP}, Theorem \ref{thm:HKL}, and Theorem \ref{thm:HKL_for_Cplus} into the following fundamental theorem relating conformal vertex algebra extensions of tensor product vertex operator algebras to braid-reversed equivalences:
\begin{thm}\label{thm:VOA}
 Let $\cU$ and $\cW$ be locally finite module categories for simple self-contragredient vertex operator algebras $\mU$ and $\mW$, respectively, that are closed under contragredients and admit the vertex tensor category structure given in \cite{HLZ1}-\cite{HLZ8}, and thus also the braided tensor category structure of \cite{HLZ8}. Assume moreover that $\cU$ is semisimple and $\cW$ is closed under submodules and quotients.
 \begin{enumerate}
  \item Suppose $\lbrace\mU_i\rbrace_{i\in I}$ is a set of representatives of equivalence classes of  simple modules in $\cU$ with $\mU_0=\mU$ and $\tau: \cU\rightarrow\cW$ is a braid-reversed tensor equivalence with twists satisfying $\theta_{\tau(\mU_i)} =\pm\tau(\theta_{\mU_i}^{-1})$ for $i\in I$. Then
  \begin{equation*}
   \aA=\bigoplus_{i\in I} \mU_i'\otimes\tau(\mU_i)
  \end{equation*}
is a $\frac{1}{2}\mathbb{Z}$-graded conformal vertex algebra extension of $\mU\otimes\mW$. Moreover, if $\cU$ is rigid, then $\aA$ is simple and the multiplication rules of $\aA$ satisfy $M_{\mU_i'\otimes\tau(\mU_i), \mU_j'\otimes\tau(\mU_j)}^{\mU_k'\otimes\tau(\mU_k)}=1$ if and only if $\mU_k$ occurs as a submodule of $\mU_i\fus\mU_j$.

\item Conversely, suppose $\cU$ and $\cW$ are both ribbon categories, $\lbrace\mU_i\rbrace_{i\in I}$ is a set of distinct simple modules in $\cU$ with $\mU_0=\mU$, and
\begin{equation*}
 \aA=\bigoplus_{i\in I} \mU_i\otimes\mW_i
\end{equation*}
is a simple $\frac{1}{2}\mathbb{Z}$-graded conformal vertex algebra extension of $\mU\otimes\mW$, where the $\mW_i$ are objects of $\cW$  satisfying
\begin{equation*}
 \dim\HHom_\cW(\mW,\mW_i)=\delta_{i,0}
\end{equation*}
and there is a partition $I=I^0\sqcup I^1$ of the index set with $0\in I^0$ and
\begin{equation*}
 \bigoplus_{i\in I^j} \mU_i\otimes\mW_i =\bigoplus_{n\in\frac{j}{2}+\ZZ} \aA_{(n)}
\end{equation*}
for $j=0,1$. Let $\cU_\aA\subseteq\cU$, respectively $\cW_\aA\subseteq\cW$, be the full subcategories whose objects are isomorphic to direct sums of the $\mU_i$, respectively of the $\mW_i$. Then:
\begin{enumerate}
\item $\cU_\aA$ and $\cW_\aA$ are ribbon subcategories of $\cU$ and $\cW$ respectively. Moreover, $\cW_\aA$ is semisimple with distinct simple objects $\lbrace\mW_i\rbrace_{i\in I}$.
\item There is a braid-reversed equivalence $\tau: \cU_\aA\rightarrow\cW_\aA$ such that $\tau(\mU_i)\cong\mW_i'$ for all $i\in I$.
 \end{enumerate}
 \end{enumerate}
\end{thm}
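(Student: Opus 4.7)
The plan is to assemble this theorem as a direct application of the categorical and vertex-algebraic results developed throughout the paper, with the identification $\cC = \cU \fus \cW$ of Theorem \ref{thm:CequalsDP} serving as the bridge. Because $\mU$ and $\mW$ are self-contragredient and $\cU, \cW$ are closed under contragredients, both $\cU$ and $\cW$ have contragredient functors; combined with the local finiteness hypotheses and the semisimplicity/quotient-closure assumption, Theorem \ref{thm:CequalsDP} identifies the category of $\mU \otimes \mW$-modules generated by outer tensor products with the Deligne product $\cU \fus \cW$ as a braided tensor category. All subsequent arguments take place in $\cC$ or in its direct sum completion $\cC_{\oplus}^{fin}$.

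For part (1), I would first apply Remark \ref{rema:canonicalalgebra}: since $\tau: \cU \to \cW$ is a braid-reversed tensor equivalence and $\cU$ is semisimple with contragredient functor, the object $\aA = \bigoplus_{i\in I} \mU_i' \fus \tau(\mU_i)$ is a commutative associative algebra in $(\cU \fus \cW)_{\oplus}^{fin}$. The unit is injective by construction (it is the inclusion of the summand $\mU_0' \fus \tau(\mU_0) \cong \mU \otimes \mW$). To verify the twist condition $\theta_\aA^2 = \Id_\aA$, I would use that each $\mU_i$ is simple, so $\theta_{\mU_i}$ acts by a scalar and $\theta_{\mU_i'}$ acts by the same scalar; combined with the hypothesis $\theta_{\tau(\mU_i)} = \pm\tau(\theta_{\mU_i}^{-1})$ and the Deligne-product twist formula $\theta_{\mU_i' \fus \tau(\mU_i)} = \theta_{\mU_i'} \otimes \theta_{\tau(\mU_i)}$, the scalars cancel up to a sign, yielding $\theta_\aA\vert_{\mU_i' \fus \tau(\mU_i)} = \pm\Id$. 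Theorem \ref{thm:HKL_for_Cplus} (in its $\tfrac{1}{2}\ZZ$-graded variant, using $\theta_\aA^2 = \Id_\aA$ in place of $\theta_\aA = \Id_\aA$) then produces the conformal vertex algebra structure. Simplicity and the multiplication rules when $\cU$ is rigid are read off directly from Theorem \ref{thm:canonicalalgebra}, noting that $\mU_k \subseteq \mU_i \fus \mU_j$ iff $\mU_k^* \subseteq \mU_i^* \fus \mU_j^*$ iff (after identifying $\tau$ on duals) the corresponding multiplication rule is $1$.

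For part (2), the conformal vertex algebra $\aA$ yields, via Theorem \ref{thm:HKL_for_Cplus}, a simple commutative associative algebra in $(\cU \fus \cW)_{\oplus}^{fin}$; the $\tfrac{1}{2}\ZZ$-grading assumption on $\aA$ translates through $\theta = e^{2\pi i L(0)}$ into the partition $I = I^0 \sqcup I^1$ with $\theta_\aA\vert_{\mU_i \otimes \mW_i} = (-1)^j \Id$ for $i \in I^j$, hence $\theta_\aA^2 = \Id_\aA$. The assumption $\dim \HHom_\cW(\mW, \mW_i) = \delta_{i,0}$, combined with simplicity and distinctness of the $\mU_i$, realizes the mutually commuting (dual) pair condition of Section \ref{sec:mirrorduality}. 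Thus the hypotheses (1)--(6) of that section are satisfied, and Proposition \ref{prop:UA_WA_ribbon} together with Theorem \ref{thm:main_bequiv} immediately yield conclusions (a) and (b), with $\tau(\mU_i) \cong \mW_i^* \cong \mW_i'$ (the two duals coincide on a ribbon category of vertex operator algebra modules, since the rigid dual and the contragredient module agree).

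The main technical step I would expect to need care with is the verification that the hypotheses of Section \ref{sec:mirrorduality}, stated in abstract categorical terms, are indeed satisfied by the vertex-algebraic data; in particular, checking that simplicity of $\aA$ as a conformal vertex algebra corresponds to simplicity as an object of $\rep\,\aA$, and that the twist identity translates correctly through the exponential $e^{2\pi i L(0)}$ on each summand. Beyond this bookkeeping, the theorem is genuinely a packaging of Theorems \ref{thm:canonicalalgebra}, \ref{thm:main_bequiv}, \ref{thm:CequalsDP}, \ref{thm:HKL_for_Cplus}, and Proposition \ref{prop:UA_WA_ribbon}, and no new argument is required.
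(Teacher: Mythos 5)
Your proposal follows the same route as the paper: it packages Theorems \ref{thm:canonicalalgebra} (via Remark \ref{rema:canonicalalgebra}), \ref{thm:main_bequiv}, \ref{thm:CequalsDP}, \ref{thm:HKL_for_Cplus}, and Proposition \ref{prop:UA_WA_ribbon} exactly as the paper indicates, and your verification of the twist condition in part~(1) and of hypotheses~(1)--(6) of Section~\ref{sec:mirrorduality} in part~(2), including the translation of VOA simplicity into simplicity in $\repA$ and the identification $\mW_i^*\cong\mW_i'$, is sound. This matches the paper's intended proof.
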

\begin{rema}
 Note that for part (2) of the theorem, we have $\dim\HHom_\cU(\mU,\mU_i)=\delta_{i,0}$ and $\dim_\cU\mU_i\neq 0$ for $i\in I$ because the $\mU_i$ are simple objects in a semisimple ribbon category. 
\end{rema}

As discussed in the Introduction, part (1) of Theorem \ref{thm:VOA} provides a partial answer to a question of Chongying Dong, while part (2) allows us to address a general question on the rationality of coset extensions of the form $\mU\otimes\mW\subseteq\aA$: If $\mU$ and $\mW$ are strongly rational vertex operator algebras (that is, simple, self-contragredient, CFT-type, $C_2$-cofinite, and rational), is the extension $\aA$ also strongly rational? In particular, is the category of grading-restricted, generalized $\aA$-modules semisimple? We answer these questions using results from \cite{KO} together with Theorem \ref{thm:VOA} and \cite[Theorem 2.3]{ENO}:
\begin{thm}\label{thm:dim}
 Suppose $\cU$ and $\cW$ are braided fusion categories of modules for simple self-contragredient vertex operator algebras $\mU$ and $\mW$, respectively, and
 \begin{equation*}
  \aA=\bigoplus_{i\in I} \mU_i\otimes\mW_i
 \end{equation*}
is a simple $\ZZ$-graded vertex operator algebra extension of $\mU\otimes\mW$ in $\cC=\cU\fus\cW$ where the $\mU_i$ are distinct simple modules in $\cU$ including $\mU_0=\mU$ and the $\mW_i$ are modules in $\cW$ such that
\begin{equation*}
 \dim\HHom_{\cW}(\mW,\mW_i)=\delta_{i,0}.
\end{equation*}
Then $\dim_\cC\aA>0$ and the category of (grading-restricted, generalized) $\aA$-modules in $\cC$ is a braided fusion category.
\end{thm}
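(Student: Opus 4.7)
The plan is to first establish $\dim_\cC \aA > 0$ using the braid-reversed equivalence provided by Theorem \ref{thm:VOA}(2), and then invoke the rigidity and semisimplicity results of \cite{KO} to promote the category of $\aA$-modules in $\cC$ to a braided fusion category. Since $\cU$ and $\cW$ are braided fusion (hence ribbon), the hypotheses of Theorem \ref{thm:VOA}(2) are satisfied: $\cC = \cU\fus\cW$ is a braided fusion category (Deligne products of braided fusion categories are again braided fusion, with simples $\irr(\cU)\times\irr(\cW)$), and $\dim_\cU(\mU_0) = \dim_\cW(\mW_0) = 1 \neq 0$. So we obtain a braid-reversed tensor equivalence $\tau: \cU_\aA \to \cW_\aA$ sending $\mU_i$ to $\mW_i'$.

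Next I would compute $\dim_\cC \aA$ summand by summand. Braid-reversed tensor equivalences preserve duals and the evaluation/coevaluation morphisms (they are tensor equivalences as tensor categories), hence they preserve categorical dimensions; combined with the ribbon identity $\dim_\cW \mW_i = \dim_\cW \mW_i'$, this gives
\[
\dim_\cW \mW_i \;=\; \dim_\cW \tau(\mU_i) \;=\; \dim_\cU \mU_i
\]
for every $i\in I$. Since $\dim_\cC(\mU_i\otimes\mW_i) = \dim_\cU(\mU_i)\cdot\dim_\cW(\mW_i)$ in the Deligne product, it follows that
\[
\dim_\cC \aA \;=\; \sum_{i\in I}\dim_\cU(\mU_i)^2 \;=\; \sum_{i\in I} |\mU_i|^2,
\]
where $|\mX|^2:=\dim_\cU(\mX)\dim_\cU(\mX^\ast)$ is the squared norm. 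By \cite[Theorem 2.3]{ENO}, the squared norm of every non-zero object in a fusion category is strictly positive; in particular each term in the sum is positive and $\dim_\cC \aA \geq |\mU_0|^2 = 1$, so $\dim_\cC \aA > 0$.

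With positivity in hand, the rest is a direct appeal to \cite{KO}. Because $\aA$ is simple (by hypothesis) and $\dim_\cC \aA \neq 0$, \cite[Lemma 1.20]{KO} gives that $\aA$ is rigid in $\cC$; then \cite[Theorem 3.3]{KO} yields that $\repA$ is semisimple, and its full braided subcategory $\rep^0\aA$ of local modules is semisimple as well. The category of grading-restricted, generalized $\aA$-modules in $\cC$ is identified with $\rep^0\aA$ via the HKL-type correspondence used repeatedly in the paper (cf.\ \cite[Theorem 3.65]{CKM}), and it inherits rigidity from $\cC$ through the monoidal induction functor $\cF: \cC \to \repA$. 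Finally, every simple object of $\rep^0\aA$ appears as a summand of $\cF(\mX)$ for some simple $\mX \in \cC$, and $\cC$ has only finitely many equivalence classes of simple objects; thus $\rep^0\aA$ has finitely many simples, completing the proof that it is braided fusion.

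The main obstacle is securing the positivity of each $\dim_\cC(\mU_i\otimes\mW_i)$; it is precisely here that the braid-reversed equivalence is essential, as without the identification $\dim_\cW\mW_i = \dim_\cU\mU_i$ one would have no reason for the individual contributions to be non-negative. Once each summand is recognised as a squared norm $|\mU_i|^2$, the Etingof--Nikshych--Ostrik positivity theorem concludes matters and the rest is formal.
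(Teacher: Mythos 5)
Your high-level plan coincides with the paper's: obtain the braid-reversed equivalence from Theorem~\ref{thm:VOA}(2), compute $\dim_\cC\aA$ as a sum of squared norms, apply \cite[Theorem~2.3]{ENO}, and then invoke \cite{KO} plus a finite-generation argument to get a braided fusion category. The second half of your argument (rigidity and semisimplicity from \cite[Lemma~1.20, Theorems~3.2, 3.3]{KO}, and finiteness of simples via Frobenius reciprocity and single-generation of $\cF(\mM_j)$) is essentially the paper's and is fine.

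The gap is in the claim $\dim_\cW\mW_i = \dim_\cU\mU_i$, which you support by saying braid-reversed tensor equivalences ``preserve duals and the evaluation/coevaluation morphisms, hence they preserve categorical dimensions.'' This inference does not hold: a (braided or not) tensor equivalence preserves the rigid structure, but the categorical dimension $\dim_\cU(\mX)$ in a ribbon category is the trace of the ribbon pivotal isomorphism $\delta_\mX$, which involves the twist $\theta$ --- and a braid-reversed equivalence is under no obligation to match twists. Two things are silently being used. First, to say anything at all about $\dim_\cW\tau(\mU_i')$ you need $\tau\colon\cU_\aA^{\rev}\to\cW_\aA$ to be a \emph{ribbon} equivalence, i.e., $\tau$ must intertwine $\theta^{-1}$ with $\theta$; this is exactly where the hypothesis that $\aA$ is $\ZZ$-graded enters (it forces $\theta_\aA=\Id_\aA$, hence $\theta_{\mU_i}\otimes\theta_{\mW_i}=\Id$ on each summand), and your proof never mentions the grading. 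Second, even granting the ribbon equivalence you only get $\dim_\cW\mW_i = \dim_{\cU^\rev}\mU_i'$, computed with the \emph{reversed} twist; the further identity $\dim_{\cU^\rev}\mU_i' = \dim_\cU\mU_i$ for simple objects is a genuine fact about ribbon categories (provable by a short monodromy/balancing calculation, but not by pointing at ``tensor equivalence''). The paper sidesteps both points by keeping $\dim_{\cU^\rev}\mU_i'$ explicit, noting the role of the $\ZZ$-grading, and verifying by direct trace manipulation that $(\dim_\cU\mU_i)(\dim_{\cU^\rev}\mU_i') = \mathrm{Tr}_{\mU_i}(\delta_{\mU_i})\,\mathrm{Tr}_{\mU_i'}((\delta_{\mU_i}^{-1})')$, the quantity to which \cite[Theorem~2.3]{ENO} applies. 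To repair your argument you would need either to prove $\dim_{\cU^\rev}=\dim_\cU$ on simples after first establishing that $\tau$ is ribbon, or to reduce directly to the ENO squared norm as the paper does.
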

\begin{proof}
 The rigidity and semisimplicity of the braided tensor category $\rep^0\aA$ of $\aA$-modules in $\cC$ (and indeed of the larger tensor category $\repA$) follow from \cite[Theorem 1.15]{KO} and \cite[Theorems 3.2 and 3.3]{KO} provided $\dim_\cC\aA\neq 0$. Then to see why $\rep^0\aA$ has finitely many isomorphism classes of simple modules, let $\lbrace\mM_j\rbrace_{j=1}^J$ for some $J\in\ZZ_+$ be a set of equivalence class representatives of simple modules in $\cC$. Because $\cC$ is semisimple, any irreducible module $\mX$ in $\rep^0\aA$ contains at least one such $\mM_j$, and the $\mU\otimes\mW$-module inclusion $\mM_j\hookrightarrow\mX$ together with Frobenius reciprocity imply there is a non-zero $\aA$-module homomorphism
 \begin{equation*}
  \cF\left(\bigoplus_{j=1}^J \mM_j\right)\rightarrow\mX,
 \end{equation*}
which is a surjection because $\mX$ is simple. Thus every irreducible $\aA$-module in $\cC$ is a quotient of $\cF\left(\bigoplus_{j=1}^J \mM_j\right)$, and it suffices to show that this module in $\repA$ has finitely many distinct irreducible quotients. Since $\repA$ is semisimple, it suffices to show that $\cF\left(\bigoplus_{j=1}^J \mM_j\right)$ is finitely generated. In fact, since the $\mM_j$ are simple modules in $\cC$, each $\cF(\mM_j)=\aA\fus\mM_j$ is singly-generated as an $\aA$-module by any non-zero $m_j\in\mM_j$.

It remains to show that $\dim_\cC\aA>0$. The braid-reversed tensor equivalence guaranteed by part (2) of Theorem \ref{thm:VOA} implies
 \begin{align*}
  \dim_\cC\aA = \sum_{i\in I} (\dim_{\cU}\mU_i)(\dim_{\cW}\mW_i) = \sum_{i\in I} (\dim_{\cU}\mU_i)(\dim_{\cU^{\rev}}\mU_i').
 \end{align*}
We note that since we assume $\aA$ is $\ZZ$-graded, each $\mU_i\otimes\mW_i$ must be $\mathbb{Z}$-graded, which means that the proper twist to use for calculating dimensions in $\cU^{\rev}$ is $\theta^{-1}$.

Now for each $i\in I$, recall the isomorphism $\delta_{\mU_i}: \mU_i\rightarrow\mU_i''$ of Remark \ref{rema:deltaX}. By \cite[Theorem 2.3]{ENO}, we have 
$$\mathrm{Tr}_{\mU_i}(\delta_{\mU_i})\mathrm{Tr}_{\mU_i'}((\delta_{\mU_i}^{-1})')>0,$$
where $\mathrm{Tr}_{\mU_i}(\delta_{\mU_i})$ and $\mathrm{Tr}_{\mU_i'}((\delta_{\mU_i}^{-1})')$ are the scalar multiples of $\Id_\mU$ determined by the compositions
\begin{equation*}
 \mU\xrightarrow{i_{\mU_i}} \mU_i\fus\mU_i'\xrightarrow{\delta_{\mU_i}\fus \Id_{\mU_i'}} \mU_i''\fus\mU_i'\xrightarrow{e_{\mU_i'}} \mU,\qquad \mU\xrightarrow{i_{\mU_i'}} \mU_i'\fus\mU_i''\xrightarrow{(\delta_{\mU_i}^{-1})'\fus \Id_{\mU_i''}} \mU_i'''\fus\mU_i''\xrightarrow{e_{\mU_i''}} \mU,
\end{equation*}
respectively. The definition of $\delta_{\mU_i}$ shows that $\mathrm{Tr}_{\mU_i}(\delta_{\mU_i})=\dim_{\cU}\mU_i$, so we just need to show
\begin{equation*}
 \mathrm{Tr}_{\mU_i'}((\delta_{\mU_i}^{-1})') = \dim_{\cU^{\rev}}\mU_i'.
\end{equation*}
We use the definitions of $\mathrm{Tr}_{\mU_i'}((\delta_{\mU_i}^{-1})')$ and dual of a homomorphism to obtain
\begin{equation*}
 \mathrm{Tr}_{\mU_i'}((\delta_{\mU_i}^{-1})')\cdot\Id_\mU= e_{\mU_i''}\circ((\delta_{\mU_i}^{-1})'\fus \Id_{\mU_i''})\circ i_{\mU_i'} = e_{\mU_i}\circ(\Id_{\mU_i'}\fus \delta_{\mU_i}^{-1})\circ i_{\mU_i'}.
\end{equation*}
On the other hand, using the definition of $\delta_{\mU_i}$ we have
\begin{align*}
 e_{\mU_i'} & = e_{\mU_i}\circ\cR_{\mU_i,\mU_i'}\circ(\theta_{\mU_i}\fus \Id_{\mU_i'})\circ(\delta_{\mU_i}^{-1}\fus\Id_{\mU_i'})\nonumber\\
  & = e_{\mU_i}\circ(\Id_{\mU_i'}\fus \delta_{\mU_i}^{-1})\circ\cR_{\mU_i'',\mU_i'}\circ(\theta_{\mU_i''}\fus\Id_{\mU_i'}),
\end{align*}
so that 
\begin{align*}
 e_{\mU_i}\circ(\Id_{\mU_i'}\fus \delta_{\mU_i}^{-1})\circ i_{\mU_i'} &  = e_{\mU_i'}\circ(\theta^{-1}_{\mU_i''}\fus\Id_{\mU_i'})\circ\cR_{\mU_i'',\mU_i'}^{-1}\circ i_{\mU_i'}\nonumber\\
  & =e_{\mU_i'}\circ((\theta^{-1}_{\mU_i'})'\fus\Id_{\mU_i'})\circ\cR_{\mU_i'',\mU_i'}^{-1}\circ i_{\mU_i'}\nonumber\\
    & =e_{\mU_i'}\circ(\Id_{\mU_i''}\fus\theta^{-1}_{\mU_i'})\circ\cR_{\mU_i'',\mU_i'}^{-1}\circ i_{\mU_i'}\nonumber\\
      & =e_{\mU_i'}\circ\cR_{\mU_i'',\mU_i'}^{-1}\circ(\theta^{-1}_{\mU_i'}\fus\Id_{\mU_i''})\circ i_{\mU_i'}\nonumber\\
      & =(\dim_{\cU^{\rev}}\mU_i')\cdot\Id_\mU,
\end{align*}
as required.
\end{proof}

If the vertex operator algebras $\mU$ and $\mW$ of the above theorem are strongly rational, we can now show that $\aA$ will also be strongly rational, provided it is CFT-type. Self-contragrediency and $C_2$-cofiniteness follow from the corresponding properties of $\mU$ and $\mW$ via \cite[Theorem 3.1]{Li-forms} and \cite[Proposition 5.2]{ABD}. Moreover, the argument in Lemma 3.6 and Proposition 3.7 of \cite{CM} (see also \cite[Proposition 4.15]{McRae}) shows that $\aA$ is rational provided the category of grading-restricted generalized $\aA$-modules is semsimple, which is the content of Theorem \ref{thm:dim}. Thus we have:
\begin{cor}\label{cor:dim}
In the setting of Theorem \ref{thm:dim}, suppose $\mU$ and $\mW$ are strongly rational vertex operator algebras. If $\aA$ is simple and CFT-type, then $\aA$ is strongly rational.
\end{cor}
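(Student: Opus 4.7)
The plan is to verify the five defining properties of strongly rational (simple, self-contragredient, CFT-type, $C_2$-cofinite, rational) for $\aA$ one by one, leveraging Theorem \ref{thm:dim} as the main structural input. Simplicity and CFT-type are assumed outright. For the remaining three, the key observation is that the hypotheses force $\cU$ and $\cW$ to be braided fusion categories (since $\mU,\mW$ are strongly rational, so their module categories are modular tensor categories by \cite{Hu4}), so the index set $I$ is finite and $\aA$ is a finite-length module over the strongly rational VOA $\mU\otimes\mW$.

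First, I would establish the ``easy'' two properties. Self-contragrediency of $\mU\otimes\mW$ together with the fact that $\aA$ is a simple CFT-type extension allows one to invoke \cite[Theorem 3.1]{Li-forms}: since $\aA_{(0)}=\CC\vac$ (because $\aA$ is simple and CFT-type) and since the appropriate $L(1)$-conditions can be checked using the inclusion $\mU\otimes\mW\hookrightarrow\aA$, a non-degenerate invariant bilinear form on $\aA$ exists, giving $\aA\cong\aA'$. Next, since $\mU\otimes\mW$ is $C_2$-cofinite and $\aA$ is a module over $\mU\otimes\mW$ of finite length (recall $|I|<\infty$), \cite[Proposition 5.2]{ABD} yields the $C_2$-cofiniteness of $\aA$.

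The main step is then rationality. Here I would apply Theorem \ref{thm:dim} to conclude that the category of grading-restricted generalized $\aA$-modules lying in $\cC=\cU\fus\cW$ is a (semisimple) braided fusion category; note that we must first check its hypothesis $\dim\HHom_\cW(\mW,\mW_i)=\delta_{i,0}$, but in the present corollary this is part of the setting of Main Theorem~\ref{mthm:voa} (part (2)) inherited by Theorem~\ref{thm:dim}. The semisimplicity of generalized $\aA$-modules in $\cC$, combined with $C_2$-cofiniteness, is the exact input needed for the argument in \cite[Lemma 3.6, Proposition 3.7]{CM} (equivalently \cite[Proposition 4.15]{McRae}) to conclude that every grading-restricted generalized $\aA$-module is an ordinary module and that $\aA$ is rational.

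I expect the only subtle point to be checking that the category of grading-restricted generalized $\aA$-modules used in the rationality criterion coincides with (or is contained in) the category $\rep^0\aA$ considered in Theorem \ref{thm:dim}. This is where the finiteness of $\cU$ and $\cW$, the finiteness of $I$, and the fact that $\aA$ is a finite extension of $\mU\otimes\mW$ (so that every $\aA$-module restricts to a $\mU\otimes\mW$-module in $\cC$) are crucial; once established, the strong rationality of $\aA$, and hence the fact that its category of grading-restricted generalized modules is a semisimple modular tensor category (via \cite{Hu4}), follows immediately.
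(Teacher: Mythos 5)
Your proposal is correct and follows essentially the same route as the paper: self-contragrediency from \cite[Theorem 3.1]{Li-forms}, $C_2$-cofiniteness from \cite[Proposition 5.2]{ABD} (using finiteness of $I$), and rationality via the argument of \cite[Lemma 3.6, Proposition 3.7]{CM} (or \cite[Proposition 4.15]{McRae}) once Theorem~\ref{thm:dim} supplies semisimplicity of the $\aA$-module category in $\cC$. Your added remark about identifying the module category in the rationality criterion with the one handled by Theorem~\ref{thm:dim} is a reasonable point of care that the paper leaves implicit (it holds because strong rationality of $\mU$ and $\mW$ means $\cC \simeq \cU\fus\cW$ captures all grading-restricted generalized $\mU\otimes\mW$-modules, hence all $\aA$-modules by restriction).
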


\appendix

\section{Direct sum completion}
\label{app:dirsum}
In this Appendix, we gather the main constructions from \cite{AR} of the direct sum completion of a category. Given a category $\cC$ with possibly additional structures, $\cC_{\oplus}$ is essentially the smallest category closed under arbitrary direct sums. One may restrict to only countable direct sums, and this would be enough for our purposes. Even if $\cC$ is abelian, one cannot guarantee that $\cC_{\oplus}$ is abelian. Hence, we may wish to consider the smallest category containing $\cC$ closed under direct sums, kernels and cokernels; see \cite{CGR}. If $\cC$ is already semisimple, then $\cC_{\oplus}$ is also abelian (see for example Section 3.5 of \cite{J}).

If $\cC$ is a braided tensor category, then $\cC_\oplus$ also admits a braided tensor category structure. If $\cC$ has a system of isomorphisms $\theta_{\mX}$ that satisfy balancing, we get a system of balancing isomorphisms in $\cC_{\oplus}$ as well. However, even if $\cC$ is rigid, we cannot guarantee rigidity of $\cC_{\oplus}$. But we shall not need $\cC_\oplus$ to be rigid.

	Let $\cC$ be a $\CC$-linear additive category. We define the direct sum completion $\cC_\oplus$ as follows.
	The objects of $\cC_\oplus$ are:
	\begin{align}
	\obj(\cC_\oplus)=\bigg\lbrace\bigoplus_{s\in S}\mX_s \,\,\big\vert\,\, S \mathrm{\,is\, a\, set},\, \mX_s\in\obj(\cC)\,\mathrm{\,for\,all\,} s\in S\bigg\rbrace.
	\end{align}
	The morphisms are:
	\begin{align}
	\HHom_{\cC_\oplus}\bigg(\bigoplus_{s\in S}\mX_s,\bigoplus_{t\in T}\mY_t\bigg)
	=\left\lbrace  \left( \alpha, \{f_{s,t}\}_{s\in S, t\in \alpha(s)} \right) \right\rbrace / \sim,
	\end{align}
	with the following definitions.
	\begin{enumerate}
		\item Let $\cP_{\fin}(S)$ denote the set of finite subsets of a set $S$. Then 
		$\alpha : \cP_{\fin}(S)\rightarrow \cP_{\fin}(T)$ is a function that commutes with unions. By abuse of notation, we write $\alpha(s)=\alpha(\{s\})$ for all $s\in S$. 
    Since it is enough to specify $\alpha$ on singletons, we will often do so.
		Sometimes, $\alpha$ will map singletons to singletons, in which case, we shall simply write $\alpha(s) = t$ (or $\alpha: s\mapsto t$) if $\alpha(\{s\})=\{t\}$ and
		$\{f_s\}_{s\in S}$ in place of $\{f_{s,t}\}_{s\in S, t\in T}$.
		
		\item $f_{s,t}\in\HHom_{\cC}(\mX_s,\mY_t)$ for all $s\in S, t\in \alpha(s)$.
		\item $\sim$ is an equivalence relation defined by:
		\begin{align}
		\left( \alpha, \{f_{s,t}\}_{s\in S, t\in \alpha(s)} \right) \sim
		\left( \beta, \{g_{s,t}\}_{s\in S, t\in \beta(s)} \right) 
		\end{align}
		if and only if all of the following are satisfied:
		\begin{enumerate}
			\item $f_{s,t}=0$ if $t\in \alpha(s)\backslash \beta(s)$,
			\item $f_{s,t}=g_{s,t}$ if $t\in \alpha(s)\cap \beta(s)$,
			\item $g_{s,t}=0$ if $t\in \beta(s)\backslash \alpha(s)$.
		\end{enumerate}
	\end{enumerate}	
	The identity morphism on $\bigoplus_{s\in S} \mX_s$ is given by $\left( \Id_{\cP_{\fin}(S)},\{ \Id_{\mX_s}\}_{s\in S}\right)$. Note that we can also characterize morphism spaces as follows:
	\begin{equation*}
	 \HHom_{\cC_\oplus}\bigg(\bigoplus_{s\in S}\mX_s,\bigoplus_{t\in T}\mY_t\bigg) \subseteq\prod_{s\in S, t\in T} \HHom_\cC(\mX_s,\mY_t)
	\end{equation*}
is the subset of tuples $(f_{s,t})_{s\in S, t\in T}$ such that for any fixed $s\in S$, $f_{s,t}=0$ for all but finitely many $t\in T$.

	There are natural candidates for $\CC$-vector space structure on morphism spaces, 
	for a zero object, zero morphisms, and direct sums. With these, it was shown in \cite{AR} that $\cC_\oplus$ is again a $\CC$-linear additive category.
	There is also a fully faithful functor $\cI:\cC\rightarrow \cC_\oplus$ as follows:
	\begin{align*}
		\mX &\longmapsto \bigoplus\limits_{i\in \{0\}} \mX_i\,\,\mathrm{with}\,\, \mX_0:=\mX\\
		f &\longmapsto \left(\Id_{\{0\}}, \{f_{s,t}=f\}_{s\in \{0\}, t\in\{0\}}\right).
	\end{align*}
	We will sometimes abuse the notation and write $\mX=\cI(\mX)$.

	If $\cC$ is a tensor category, the tensor product bifunctor on $\cC_\oplus$ is defined by:
	\begin{align}
\bigoplus\limits_{s\in S} \mX_s\,\,\otimes\,\,\bigoplus_{t\in T} \mY_t&=	\bigoplus\limits_{(s,t)\in S\times T} \mX_s\otimes \mY_t,\\
\left(\alpha, \{f_{s,s'}\}_{s\in S, s'\in \alpha(s)} \right)\otimes 
\left(\beta, \{g_{t,t'}\}_{t\in T, t'\in \beta(t)} \right)
&= \left(\alpha \times \beta, \{f_{s,s'}\otimes g_{t,t'}\}_{(s,t)\in S\times T, (s',t')\in \alpha(s)\times \beta(t)} \right).
	\end{align}
	The unit object of $\cC_\oplus$ is 
  \begin{align}
  \cI(\one_{\cC})=\bigoplus_{s\in \{0\}}\one_{\cC}. 
  \end{align}
	The structure morphisms are defined as follows. Let 
	\begin{align}
	\overline{\mX}=\bigoplus\limits_{s\in S} \mX_s,\,\,	\overline{\mY}=\bigoplus\limits_{t\in T} \mY_t,\,\,
	\overline{\mZ}=\bigoplus\limits_{u\in U} \mZ_u.
	\end{align}
	If $\cC$ is rigid, let
	\begin{align}
	\overline{\mX}^\ast&=\bigoplus\limits_{s\in S}\mX_s^\ast.
	\end{align}
	Then define
	\begin{align}
	l_{\overline{\mX}}& = (\alpha:(0,s)\mapsto s, \{f_{(0,s)}=l_{\mX_s}\}_{s\in S}): 
  \cI(\one_{\cC})\otimes \overline{\mX} \rightarrow \overline{\mX},\\
	r_{\overline{\mX}} &= (\alpha:(s,0)\mapsto s, \{f_{(s,0)}=r_{\mX_s}\}_{s\in S}): 
\overline{\mX}\otimes \cI(\one_{\cC}) \rightarrow \overline{\mX},	\\
\cA_{\overline{\mX},\overline{\mY},\overline{\mZ}}&=
( \alpha: (s,(t,u)) \mapsto ((s,t),u), \{f_{(s,(t,u))}=\cA_{\mX_s,\mY_t,\mZ_u}\}_{(s,(t,u))\in S\times(T\times U)})\nonumber\\
&\quad\quad:
\overline{\mX}\otimes(\overline{\mY}\otimes \overline{\mZ}) \rightarrow 
(\overline{\mX}\otimes \overline{\mY})\otimes \overline{\mZ},\\
\cR_{\overline{\mX},\overline{\mY}}&=
( \alpha: (s,t) \mapsto (t,s), \{f_{(s,t)}=\cR_{\mX_s,\mY_t}\}_{(s,t)\in S\times T}):
\overline{\mX}\otimes \overline{\mY}\rightarrow \overline{\mY}\otimes \overline{\mX},\\
e_{\overline{\mX}}&= (\alpha: (s',s) \mapsto 0, \{f_{(s',s)}=\delta_{s',s}e_{\mX_s}\}_{(s',s)\in S\times S}):
\overline{\mX}^\ast\otimes \overline{\mX}\rightarrow \cI(\one_{\cC}),\\
\theta_{\overline{\mX}}&= 
( \alpha: s \mapsto s, \{f_s=\theta_{\mX_s}\}_{s\in S}):
\overline{\mX}\rightarrow \overline{\mX}.
	\end{align}
	These definitions give requisite structures on $\cC_{\oplus}$, except for rigidity. In particular, $\theta_{\overline{\mX}}$ is proved to satisfy the balancing axiom in  \cite{AR}.


\begin{thebibliography}{CKLiuR}

\bibitem[ABD]{ABD}
T. Abe, G. Buhl and C. Dong, Rationality, regularity, and $C_2$-cofiniteness, \doi{10.1090/S0002-9947-03-03413-5}{\textit{Trans. Amer. Math. Soc.} \textbf{356} (2004), no. 8, 3391--3403}.

\bibitem[ADL]{ADL}
T. Abe, C. Dong and H. Li, Fusion rules for the vertex operator algebra $M(1)$ and $V_L^+$, \doi{10.1007/s00220-004-1132-5}{\textit{Comm. Math. Phys.} \textbf{253} (2005), no. 1, 171--219}.





\bibitem[AFO]{AFO}
  M.~Aganagic, E.~Frenkel and A.~Okounkov,
  Quantum $q$-Langlands correspondence,
  \doi{10.1090/mosc/278}{\textit{Trans. Moscow Math. Soc.}\  {\bf 79} (2018),  1--83}.
  

  
  
  \bibitem[Ar1]{Ara2} T. Arakawa, Rationality of $W$-algebras: principal nilpotent cases,  \doi{10.4007/annals.2015.182.2.4}{\textit{Ann. of Math. (2)} \textbf{182} (2015), no. 2, 565--604}.
  
  \bibitem[Ar2]{Ara} T. Arakawa, Rationality of admissible affine vertex algebras in the category $\mathcal{O}$, \doi{10.1215/00127094-3165113}{\textit{Duke Math. J.}, \textbf{165} (2016), no. 1, 67--93}.



\bibitem[ACL]{ACL} T.~Arakawa, T.~Creutzig and A.~Linshaw, $W$-algebras as coset vertex algebras, \doi{10.1007/s00222-019-00884-3}{ \textit{Invent. Math.} \textbf{218} (2019), no. 1, 145--195.}

\bibitem[AG]{AG} S. Arkhipov and D. Gaitsgory, Differential operators on the loop group via chiral algebras, \doi{10.1155/S1073792802102078}{\textit{Int. Math. Res. Not.} 2002, no. 4, 165--210}.  

\bibitem[AR]{AR}
J.\ Auger and M.\ Rupert,
On infinite order simple current extensions of vertex operator algebras, \doi{10.1090/conm/711}{\emph{Vertex Algebras and Geometry}, 143--168,
Contemp.\ Math., \textbf{711}, Amer.\ Math.\ Soc., Providence, RI, 2018}. 


\bibitem[BK]{BK}
B.\ Bakalov and A.\ Kirillov, Jr.,
\textit{Lectures on Tensor Categories and Modular Functors},
University Lecture Series, \textbf{21}, American Mathematical Society, Providence, RI, 2001, x+221 pp.

\bibitem[Bu]{Bu}
G. Buhl, A spanning set for VOA modules, \doi{10.1016/S0021-8693(02)00056-X}{\textit{J. Algebra} \textbf{254} (2002), no. 1, 125--151}.

\bibitem[CM]{CM}
S. Carnahan and M. Miyamoto, Regularity of fixed-point vertex operator subalgebras, arXiv: 1603.05645.



\bibitem[Ch]{Cheung}
P.~ Cheung,
Chiral differential operators: formal loop group actions and
  associated modules, \doi{10.1016/j.aim.2014.12.036}
{\textit{Adv. Math.} \textbf{274} (2015), 323--384}.

\bibitem[Cr]{C} T.~Creutzig, Fusion categories for affine vertex algebras at admissible levels, \doi{10.1007/s00029-019-0479-6}{\textit{Selecta Math. (N.S.)} \textbf{25} (2019), no. 2, Paper No. 27, 21 pp.}

\bibitem[CFK]{CFK}
  T.~Creutzig, J.~Frohlich and S.~Kanade,
  Representation theory of $L_k(\mathfrak{osp}(1\vert2))$ from vertex tensor categories and Jacobi forms, \doi{10.1090/proc/14066}{
  \textit{Proc.\ Amer.\ Math.\ Soc.}\  {\bf 146} (2018), no. 11, 4571--4589}.

\bibitem[CGai]{CG}
  T.~Creutzig and D.~Gaiotto,
  Vertex algebras for $S$-duality, \doi{10.1007/s00220-020-03870-6}{\textit{Comm. Math. Phys.} \textbf{379} (2020), no. 3, 785--845.}

\bibitem[CGan]{CGan} T.~Creutzig and T.~Gannon,
  Logarithmic conformal field theory, log-modular tensor categories and modular forms,
\doi{10.1088/1751-8121/aa8538}{  \textit{J.\ Phys.\ A} {\bf 50} (2017) no. 40,  404004, 37 pp.}
  

\bibitem[CGL]{CGL}
  T.~Creutzig, D.~Gaiotto and A.~Linshaw,
  $S$-duality for the large $N=4$ superconformal algebra,
  \doi{10.1007/s00220-019-03673-4}{\textit{Comm. Math. Phys.} \textbf{374} (2020), no. 3, 1787--1808.}

\bibitem[CGR]{CGR} T.\ Creutzig, A.\ Gainutdinov and I.\ Runkel,
A quasi-Hopf algebra for the triplet vertex operator algebra, \doi{10.1142/S021919971950024X}{
\emph{Commun.\ Contemp.\ Math.} \textbf{22} (2020), no. 3, 1950024, 71 pp.}

\bibitem[CHY]{CHY} T. Creutzig, Y.-Z. Huang and J. Yang, {Braided tensor categories of admissible modules for affine Lie algebras}, \doi{10.1007/s00220-018-3217-6}{\textit{Comm. Math. Phys.} \textbf{362} (2018), no. 3, 827--854}. 

\bibitem[CKL]{CKL} T. Creutzig, S. Kanade and A. Linshaw, Simple current extensions beyond semi-simplicity,  \doi{10.1142/S0219199719500019}{\textit{Commun. Contemp. Math.}  \textbf{22} (2020), no. 1, 1950001, 49 pp.}

\bibitem[CKLinR]{CKLR}	T. Creutzig, S. Kanade, A. Linshaw and D. Ridout, Schur-Weyl duality for Heisenberg cosets,  \doi{10.1007/s00031-018-9497-2}{\textit{Transform. Groups} \textbf{24} (2019), no. 2, 301--354}.

\bibitem[CKLiuR]{CKLiuR}
  T.~Creutzig, S.~Kanade, T.~Liu and D.~Ridout,
  Cosets, characters and fusion for admissible-level $\mathfrak{osp}(1 \vert 2)$ minimal models, \doi{10.1016/j.nuclphysb.2018.10.022}{
  \textit{Nucl.\ Phys.\ B} {\bf 938} (2019), 22--55}.

\bibitem[CKM]{CKM} T. Creutzig, S. Kanade and R. McRae, Tensor categories for vertex operator superalgebra extensions, to appear in \textit{Mem. Amer. Math. Soc.}, arXiv:1705.05017.



\bibitem[DMNO]{DMNO} A. Davydov, M. M\"uger, D. Nikshych and V. Ostrik, The Witt group of non-degenerate braided fusion categories, \doi{10.1515/crelle.2012.014}{\textit{J. Reine Angew. Math.} \textbf{677} (2013), 135--177}.

\bibitem[DLM]{DLM2} C. Dong, H. Li and G. Mason, Vertex operator algebras and associative algebras, \doi{10.1006/jabr.1998.7425}{\textit{J. Algebra} \textbf{206} (1998), no. 1, 67--96}.

\bibitem[DJX]{DJX}
C. Dong, X. Jiao and F. Xu, Quantum dimensions and quantum Galois theory, \doi{10.1090/S0002-9947-2013-05863-1}{\textit{Trans. Amer. Math. Soc.} \textbf{365} (2013), no. 12, 6441--6469}.

\bibitem[EGNO]{EGNO} P. Etingof, S. Gelaki, D. Nikshych and V. Ostrik, \textit{Tensor Categories}, Mathematical Surveys and Monographs, \textbf{205}, American Mathematical Society, Providence, RI, 2015, xvi+343 pp.

\bibitem[ENO]{ENO}
P. Etingof, D. Nikshych and V. Ostrik, On fusion categories, \doi{10.4007/annals.2005.162.581}{\textit{Ann. of Math. (2)} \textbf{162} (2005), no. 2, 581--642}.

\bibitem[FFRS]{FFRS}
  J.~Frohlich, J.~Fuchs, I.~Runkel and C.~Schweigert, Correspondences of ribbon categories,
 \doi{10.1016/j.aim.2005.04.007}{
 \textit{Adv.\ Math.}\  {\bf 199} (2006), no. 1, 192--329}.

\bibitem[FHL]{FHL} I. Frenkel, Y.-Z. Huang and J. Lepowsky,
On axiomatic approaches to vertex operator algebras and modules,
\emph{Mem. Amer. Math. Soc.} {\bf 104} (1993), no. 494, viii+64 pp.
 
\bibitem[FS]{FS} I. Frenkel and K. Styrkas,
Modified regular representations of affine and Virasoro algebras, VOA structure and semi-infinite cohomology, \doi{10.1016/j.aim.2005.08.005}{
\textit{Adv. Math.} \textbf{206}
(2006), no. 1, 57--111}. 

\bibitem[FZ1]{FZ0} I.\ Frenkel and Y.\ Zhu,
Vertex operator algebras associated to representations of affine and Virasoro algebras,
\doi{10.1215/S0012-7094-92-06604-X}
{\textit{Duke Math.\ J.} \textbf{66} (1992), no. 1, 123--168.}

\bibitem[FZ2]{FZ} I. Frenkel and M. Zhu, 
Vertex algebras associated to modified regular representations of the Virasoro algebra, 
\doi{10.1016/j.aim.2012.02.008}{\textit{Adv.\ Math.} \textbf{229} (2012), no. 6, 3468--3507}.

\bibitem[GKO]{GKO} 
P.\ Goddard, A.\ Kent and D.\ Olive,
Virasoro algebras and coset space models,
\doi{10.1016/0370-2693(85)91145-1}
{\textit{Phys.\ Lett.\ B} \textbf{152} (1985), no. 1--2, 88--92.}

\bibitem[GMS1]{GMS1} V. Gorbounov, F. Malikov and V. Schechtman, Gerbes of chiral differential operators, \doi{10.4310/MRL.2000.v7.n1.a5}{\textit{Math. Res. Lett.} \textbf{7} (2000), no. 1, 55--66}.

\bibitem[GMS2]{GMS2} V. Gorbounov, F. Malikov and V. Schechtman, On chiral differential operators over homogeneous spaces, \doi{10.1155/S0161171201020051}{\textit{Int. J. Math. Math. Sci.} \textbf{26} (2001), no. 2, 83--106}.

\bibitem[Hu1]{H-tensor4} Y.-Z. Huang, A theory of tensor products for module categories for a vertex operator algebra, IV, \doi{10.1016/0022-4049(95)00050-7}{
\emph{J.\ Pure Appl.\ Algebra} {\bf 100} (1995), no. 1-3, 173--216}.
 
 
 \bibitem[Hu2]{H-book} Y.-Z. Huang,
\textit{Two-dimensional Conformal Geometry and Vertex Operator Algebras},
\doi{10.1007/978-1-4612-4276-5}{Progress in Mathematics, \textbf{148}, Birkh\"{a}user Boston, Inc., Boston, MA, 1997, xiv+280 pp.}
	




\bibitem[Hu3]{Hu4}
  Y.-Z.~Huang, Rigidity and modularity of vertex tensor categories, \doi{10.1142/S0219199708003083}{\textit{Commun. Contemp. Math.} \textbf{10} (2008), suppl. 1, 871--911}.
  
  \bibitem[Hu4]{H3}
  Y.-Z.~Huang, Affine Lie algebras and tensor categories,
  arXiv:1811.05123.

\bibitem[HKL]{HKL} Y.-Z. Huang, A. Kirillov and J. Lepowsky, Braided tensor categories and extensions of vertex operator algebras, \doi{10.1007/s00220-015-2292-1}{\textit{Comm. Math. Phys.}  \textbf{337}  (2015), no. 3, 1143--1159}.

\bibitem[HK]{HK} Y.-Z. Huang and L. Kong, Full field algebras, \doi{10.1007/s00220-007-0224-4}{\textit{Comm. Math. Phys.} \textbf{272} (2007), no. 2, 345--396}.
	
\bibitem[HL1]{HL-VTC} Y.-Z. Huang and J. Lepowsky,
Tensor products of modules for a vertex operator algebra and vertex tensor categories, \doi{10.1007/978-1-4612-0261-5_13}{\emph{Lie Theory and Geometry}, 349--383, Progr. Math., \textbf{123}, Birkh\"{a}user Boston, Boston, MA, 1994.}
	
\bibitem[HL2]{HL-tensor1} Y.-Z. Huang and J. Lepowsky, A theory of tensor products for module categories
for a vertex operator algebra, I, \doi{10.1007/BF01587908}{
\emph{Selecta Math. (N. S.)} \textbf{1} (1995), no. 4, 699--756}.
	
\bibitem[HL3]{HL-tensor2} Y.-Z. Huang and J. Lepowsky, A theory of tensor products for module categories for a vertex operator algebra, II, \doi{10.1007/BF01587909}{\emph{Selecta Math. (N. S.)} \textbf{1} (1995), no. 4, 757--786}.
	
\bibitem[HL4]{HL-tensor3} Y.-Z. Huang and J. Lepowsky, A theory of tensor products for module categories for a vertex operator algebra, III, \doi{10.1016/0022-4049(95)00049-3}{\emph{J. Pure Appl. Algebra} \textbf{100} (1995), no. 1-3, 141--171}.

\bibitem[HL5]{HL-rev}
Y.-Z. Huang and J. Lepowsky,
Tensor categories and the mathematics of rational and logarithmic conformal field theory, \doi{10.1088/1751-8113/46/49/494009}{
\emph{J. Phys. A} {\bf 46} (2013), no. 49, 494009, 21 pp.} 

\bibitem[HLZ1]{HLZ1}
Y.-Z. Huang, J. Lepowsky and L. Zhang, Logarithmic tensor category theory for generalized modules for a
conformal vertex algebra, I: Introduction and strongly graded algebras and their generalized modules, \doi{10.1007/978-3-642-39383-9_5}{\textit{Conformal Field Theories and Tensor Categories}, 169--248, Math. Lect. Peking Univ., Springer, Heidelberg, 2014.}
	
\bibitem[HLZ2]{HLZ2}
Y.-Z. Huang, J. Lepowsky and L. Zhang, Logarithmic tensor category theory for 
generalized modules for a conformal vertex algebra, II: Logarithmic formal 
calculus and properties of logarithmic intertwining operators, arXiv:1012.4196.
	
\bibitem[HLZ3]{HLZ3}
Y.-Z. Huang, J. Lepowsky and L. Zhang, Logarithmic tensor category theory for 
generalized modules for a conformal vertex algebra, III: Intertwining maps and 
tensor product bifunctors, arXiv:1012.4197.
	
\bibitem[HLZ4]{HLZ4}
Y.-Z. Huang, J. Lepowsky and L. Zhang, Logarithmic tensor category theory for 
generalized modules for a conformal vertex algebra, IV: Constructions of tensor 
product bifunctors and the compatibility conditions, arXiv:1012.4198.
	
\bibitem[HLZ5]{HLZ5}
Y.-Z. Huang, J. Lepowsky and L. Zhang, Logarithmic tensor category theory for 
generalized modules for a conformal vertex algebra, V: Convergence condition 
for intertwining maps and the corresponding compatibility condition, 
arXiv:1012.4199.
	
\bibitem[HLZ6]{HLZ6}
Y.-Z. Huang, J. Lepowsky and L. Zhang, Logarithmic tensor category theory for 
generalized modules for a conformal vertex algebra, VI: Expansion condition, 
associativity of logarithmic intertwining operators, and the associativity 
isomorphisms, arXiv:1012.4202.
	
\bibitem[HLZ7]{HLZ7}
Y.-Z. Huang, J. Lepowsky and L. Zhang, Logarithmic tensor category theory for 
generalized modules for a conformal vertex algebra, VII: Convergence and 
extension properties and applications to expansion for intertwining maps, 
arXiv:1110.1929.
	
\bibitem[HLZ8]{HLZ8}
Y.-Z. Huang, J. Lepowsky and L. Zhang, Logarithmic tensor category theory for 
generalized modules for a conformal vertex algebra, VIII: Braided tensor 
category structure on categories of generalized modules for a conformal vertex 
algebra, arXiv:1110.1931.

\bibitem[Ja]{J}
N.\ Jacobson, \textit{Basic Algebra II}, Second edition, W. H. Freeman and Company, New York, 1989, xviii+686 pp.

\bibitem[Ka]{Ka}
C. Kassel, \textit{Quantum Groups}, Graduate Texts in Mathematics, \textbf{155}, Springer-Verlag, New York, 1995, xii+531 pp.

\bibitem[KL1]{KL1} D. Kazhdan and G. Lusztig, Tensor structures arising from affine Lie algebras, I, \doi{10.2307/2152745}{\textit{J. Amer. Math. Soc.} \textbf{6} (1993), no. 4, 905--947}.

\bibitem[KL2]{KL2}  D. Kazhdan and G. Lusztig, Tensor structures arising from affine Lie algebras, II, \doi{10.2307/2152746}{\textit{J. Amer. Math. Soc.} \textbf{6} (1993), no. 4, 949--1011}.

\bibitem[KL3]{KL3} D. Kazhdan and G. Lusztig, Tensor structures arising from affine Lie algebras, III, \doi{10.1090/S0894-0347-1994-1239506-X}{\textit{J. Amer. Math. Soc.} \textbf{7} (1994), no. 2, 335--381}.

\bibitem[KL4]{KL4} D. Kazhdan and G. Lusztig, Tensor structures arising from affine Lie algebras, IV, \doi{10.2307/2152763}{\textit{J. Amer. Math. Soc.} \textbf{7} (1994), no. 2, 383--453}.

\bibitem[KO]{KO}
A. Kirillov, Jr. and V. Ostrik, On a $q$-analogue of the McKay correspondence and the $ADE$ classification of $\mathfrak{sl}_2$   conformal field theories, \doi{10.1006/aima.2002.2072}{\textit{Adv. Math.}  \textbf{171}  (2002), no. 2, 183--227}. 

\bibitem[Ko]{Ko}
L. Kong, Full field algebras, operads and tensor categories, \doi{10.1016/j.aim.2006.12.007}{\textit{Adv. Math.} \textbf{213} (2007), no. 1, 271--340}.

\bibitem[LL]{LL}
J. Lepowsky and H. Li, \textit{Introduction to Vertex Operator Algebras and Their Representations}, Progress in Mathematics, \textbf{227}, Birkh\"{a}user Boston, Inc., Boston, MA, 2004, xiv+318 pp.

\bibitem[Li1]{Li-forms}
H. Li, Symmetric invariant bilinear forms on vertex operator algebras, \doi{10.1016/0022-4049(94)90104-X}{\textit{J. Pure Appl. Algebra} \textbf{96} (1994), no. 3, 279--297}.


\bibitem[Li2]{Li}
H. Li, Regular representations of vertex operator algebras, \doi{10.1142/S0219199702000804}{\textit{Commun. Contemp. Math.} \textbf{4} (2002), no. 4, 639--683}.

\bibitem[Lin]{lin}
X. Lin, Mirror extensions of rational vertex operator algebras, \doi{10.1090/tran/6749}{\textit{Trans. Amer. Math. Soc.} \textbf{369} (2017), no. 6, 3821--3840.}

\bibitem[Lu]{Lus} G. Lusztig, Quantum groups at roots of $1$, \doi{10.1007/BF00147341}{\textit{Geom. Dedicata} \textbf{35} (1990), no. 1-3, 89--113.}

\bibitem[McR]{McRae}
  R.~McRae,
  On the tensor structure of modules for compact orbifold vertex operator algebras, \doi{10.1007/s00209-019-02445-z}{\textit{Math. Z.} \textbf{296} (2020), no. 1-2, 409--452.}

\bibitem[M\"u]{Mue} M. M\"uger, From subfactors to categories and topology II: The quantum double of tensor cate-
gories and subfactors, \doi{10.1016/S0022-4049(02)00248-7}{\textit{J. Pure Appl. Algebra} \textbf{180} (2003), no. 1-2, 159--219}.

\bibitem[Tu]{T} 
V.\ Turaev, 
\textit{Quantum Invariants of Knots and 3-manifolds},
Third edition, De Gruyter Studies in Mathematics, \textbf{18}, De Gruyter, Berlin, 2016, xii+596 pp. 


\bibitem[Zha]{Zha}
L. Zhang, Vertex tensor category structure on a category of Kazhdan-Lusztig, \textit{New York J. Math.} \textbf{14} (2008), 261--284.


\bibitem[Zhu]{Zh}
M. Zhu, Vertex operator algebras associated to modified regular representations of affine Lie algebras, \doi{10.1016/j.aim.2008.07.005}{\textit{Adv. Math.} \textbf{219} (2008), no. 5, 1513--1547}.
 
\end{thebibliography}
\end{document}